\documentclass[12pt, a4paper]{amsart}
\usepackage{amsmath}
\usepackage{geometry,amsthm,graphics,tabularx,amssymb,shapepar, eucal}
\usepackage{latexsym,amsfonts,amssymb,amsmath,amsxtra,bbm,color,mathrsfs}
\usepackage{amscd}
\usepackage{mathrsfs}
\usepackage{hyperref}
\usepackage{pdfsync}
\usepackage[all]{xypic}
\usepackage{verbatim}
\usepackage{braket}
\usepackage{rotating}
\usepackage{diagbox}

\usepackage[all,cmtip]{xy}

\newcommand{\BA}{{\mathbb{A}}}

\newcommand{\BC}{{\mathbb {C}}}

\newcommand{\BG}{{\mathbf {G}}}

\newcommand{\BN}{{\mathbb {N}}}

\newcommand{\BQ}{{\mathbb {Q}}}

\newcommand{\BZ}{{\mathbb {Z}}}

\newcommand{\CB}{{\mathcal {B}}}
\newcommand{\CC}{{\mathcal {C}}}

\newcommand{\CE}{{\mathcal {E}}}
\newcommand{\CF}{{\mathcal {F}}}
\newcommand{\CG}{{\mathcal {G}}}

\newcommand{\CK}{{\mathcal {K}}}
\newcommand{\CL}{{\mathcal {L}}}

\newcommand{\CO}{{\mathcal {O}}}
\newcommand{\CP}{{\mathcal {P}}}
\newcommand{\CQ}{{\mathcal {Q}}}

\newcommand{\CS}{{\mathcal {S}}}

\newcommand{\CW}{{\mathcal {W}}}
\newcommand{\CX}{{\mathcal {X}}}

\newcommand{\RA}{{\mathrm {A}}}
\newcommand{\RB}{{\mathrm {B}}}
\newcommand{\RC}{{\mathrm {C}}}
\newcommand{\RD}{{\mathrm {D}}}

\newcommand{\RG}{{\mathrm {G}}}
\newcommand{\RH}{{\mathrm {H}}}

\newcommand{\RL}{{\mathrm {L}}}
\newcommand{\RM}{{\mathrm {M}}}
\newcommand{\RN}{{\mathrm {N}}}
\newcommand{\RO}{{\mathrm {O}}}

\newcommand{\RQ}{{\mathrm {Q}}}

\newcommand{\RS}{{\mathrm {S}}}
\newcommand{\RT}{{\mathrm {T}}}
\newcommand{\RU}{{\mathrm {U}}}

\newcommand{\Ad}{{\mathrm{Ad}}}
\newcommand{\Aut}{{\mathrm{Aut}}}

\newcommand{\GL}{{\mathrm{GL}}}

\newcommand{\GSp}{{\mathrm{GSp}}}

\newcommand{\Hom}{{\mathrm{Hom}}}

\newcommand{\Ind}{{\mathrm{Ind}}}

\renewcommand{\Re}{{\mathrm{Re}}}

\newcommand{\Res}{{\mathrm{Res}}}

\newcommand{\rk}{{\mathrm{k}}}

\newcommand{\SL}{{\mathrm{SL}}}

\newcommand{\Sym}{{\mathrm{Sym}}}

\newcommand{\tr}{{\mathrm{tr}}}

\newcommand{\bs}{\backslash}

\newcommand{\con}{\textit{C}}

\newcommand{\diag}{\operatorname{diag}}

\newcommand{\od}{\operatorname{d}}

\newcommand{\oL}{\operatorname{L}}

\newcommand{\oH}{\operatorname{H}}

\newcommand{\g}{\mathfrak g}

\newcommand{\p}{\mathfrak p}

\renewcommand{\c}{\mathfrak c}
\newcommand{\n}{\mathfrak n}

\renewcommand{\l}{\mathfrak l}

\newcommand{\s}{\mathfrak s}

\newcommand{\z}{\mathfrak z}

\newcommand{\fN}{\mathfrak N}
\newcommand{\fZ}{\mathfrak Z}
\newcommand{\fG}{\mathfrak G}
\newcommand{\fP}{\mathfrak P}
\newcommand{\fL}{\mathfrak L}

\newcommand{\fV}{\mathfrak V}
\newcommand{\fS}{\mathfrak S}

\renewcommand{\rk}{\mathrm k}

\newcommand{\Z}{\mathbb{Z}}
\newcommand{\C}{\mathbb{C}}
\newcommand{\R}{\mathbb R}
\newcommand{\Q}{\mathbb Q}

\newcommand{\A}{\mathbb{A}}

\newcommand{\abs}[1]{\lvert#1\rvert}

\newcommand{\la}{\langle}
\newcommand{\ra}{\rangle}

\newcommand{\be}{\begin {equation}}
\newcommand{\ee}{\end {equation}}
\newcommand{\bee}{\begin {equation*}}
\newcommand{\eee}{\end {equation*}}

\newcommand{\cf}{\emph{cf.}~}

\theoremstyle{Theorem}

\theoremstyle{Theorem}
\newtheorem{lem}{Lemma}[section]
\newtheorem{corl}[lem]{Corollary}
\newtheorem{thml}[lem]{Theorem}
\newtheorem{leml}[lem]{Lemma}
\newtheorem{prpl}[lem]{Proposition}

\theoremstyle{Theorem}

\theoremstyle{Plain}

\newtheorem{remarkl}[lem]{Remark}

\theoremstyle{Remark}

\theoremstyle{remark}

\theoremstyle{remark}
\newtheorem{exl}[lem]{Example}

\theoremstyle{Definition}

\newtheorem{dfnl}[lem]{Definition}

\setcounter{tocdepth}{1}

\numberwithin{equation}{section}

\begin{document}

\title{Relative completed cohomologies and modular symbols}


\author[D. Liu]{Dongwen Liu}
\address{School of Mathematical Sciences,  Zhejiang University\\
  Hangzhou, 310058, China}\email{maliu@zju.edu.cn}

\author[B. Sun]{Binyong Sun}
\address{Institute for Advanced Study in Mathematics \& New Cornerstone Science Laboratory, Zhejiang University,  Hangzhou, 310058, China}\email{sunbinyong@zju.edu.cn}

\subjclass[2020]{11F67, 11F70, 11F33, 22E50} \keywords{$p$-adic L-function, critical value, completed cohomology, $p$-adic automorphic form,  Rankin-Selberg convolution}


\begin{abstract}Generalizing Emerton's completed cohomologies, we define relative completed cohomologies of arithmetic manifolds. We also define modular symbols for them, and show that the relative completed cohomology spaces interpolate the ``nearly ordinary part" of the classical automorphic cohomologies, and the modular symbols defined for them interpolate the classical modular symbols.  As applications, we use these modular symbols to construct three families of nearly ordinary $p$-adic L-functions: (i)  Rankin-Selberg  $p$-adic L-functions for $\GL_n\times \GL_{n-1}$,  (ii)  Rankin-Selberg $p$-adic L-functions for $\RU_n\times \RU_{n-1}$, and (iii) Standard  $p$-adic  L-functions  of symplectic type for $\GL_{2n}$. We define and calculate explicitly the modifying   factors at $\infty$ and at $p$, and determine the exceptional zeros of the $p$-adic  L-functions for these examples. 
The modifying  factors at $\infty$  are consistent with the conjectures given by Deligne  and Blasius, and the modifying   factors at $p$  are consistent with the conjecture given by Coates and Perrin-Riou.

\end{abstract}

 \maketitle

\tableofcontents

\section{Introduction}\label{s1}

Period integrals of automorphic forms are ubiquitous in the study of complex L-functions. However, due to the lack of $p$-adic Haar measure, there is no obvious way to integrate $p$-adic functions on $p$-adic manifolds. This obstructs the study of $p$-adic L-functions by  ``$p$-adic period integrals".

On the other hand, in his seminal  work \cite{Em06a},
M. Emerton introduces the $p$-adic spaces of completed cohomologies of arithematic manifolds, which provide $p$-adic interpolations of  the classical automorphic cohomologies.  One may expect to define ``period integrals" on these completed cohomology spaces to get $p$-adic L-functions.  
Modular symbols, which serve as topological interpretations of period integrals, furnish a fundamental tool for the arithmetic exploration of L-functions.  In this paper, generalizing Emerton's completed cohomologies, we will define relative completed cohomologies and define modular symbols for them. Generalizing the fact that Emerton's spaces of completed cohomologies interpolate all the classical automorphic cohomologies, we will show that the relative completed cohomology spaces interpolate the ``nearly ordinary part" of the classical automorphic cohomology spaces. Moreover, the modular symbols defined for  the relative completed cohomologies interpolate the classical modular symbols. 

We will give three families of examples to illustrate that our modular symbols 
produce $p$-adic L-functions that interpolate special values of classical L-functions. We define and calculate explicitly the modifying   factors at $\infty$ and at $p$, and determine the exceptional zeros of the $p$-adic  L-functions for these examples. 
The modifying  factors at $\infty$  are consistent with the conjectures given by Deligne \cite{Del79} and Blasius \cite{Bl97}.
The modifying   factors at $p$  are consistent with the conjecture given by Coates and Perrin-Riou \cite{CPR89, Co89}.

The construction of relative completed cohomologies and the explicit evaluation of modifying factors at $p$ in this paper have 
found important arithmetic applications in the recent works \cite{DZ24, Liu23, Liu24, LTX24}.


\subsection{Some notations}\label{secnotation}
Let us first fix some notation that will be used throughout the paper. Let $p$ be a fixed rational prime.   Fix an algebraic closure $\overline \Q$ of the field $\Q$ of rational numbers. 
 Let $\abs{\,\cdot\,}_p$ denote the multiplicative non-archimedean norm on $\C_p$ that is normalized so that $\abs{p}_p=p^{-1}$. We  fix field embeddings
\[
\overline \Q \hookrightarrow \C_p\quad\textrm{and}\quad   \overline \Q \hookrightarrow \C
\]
so that $\overline \Q$ is viewed as a subfield of both $\C_p$ and $\C$.  Let $\mathsf E\subset \overline{\Q}$ be a subfield, and  $E\subset \C_p$ a closed subfield  containing $\sf E$. 
 Write $\A$ for the adele ring of $\Q$. It has the usual decompositions
  \[
   \A=\R\times \A^\infty = \R\times \Q_p \times \A^{\infty p}.
 \] 
For every number field $\rk$, write $\rk_\infty:= \rk\otimes_\Q \R$, and for every ring $R$ with identity, write $R^\times$ for the group of its invertible elements. 
A superscript $\,^\vee$ over a finite-dimensional vector space indicates the dual space. 
 
Given a locally compact   Hausdorff topological group  $\CG$, we write $\CG^\circ$ for the connected component of its identity element, and write
$\CG^\natural:=\CG/ \CG^\circ$. For every  locally compact Hausdorff homogeneous space $X$  of $\CG$, we let $\mathrm M(X)$ denote the space of all  $\CG$-invariant complex Borel measures on $X$. If $X$ is furthermore totally disconnected, let $ \RD(X)\subset \mathrm M(X)$ denote the subspace of the measures with respect to which all the open compact subsets have rational volumes.  
Specifically viewing $\CG$ as a homogeneous space under the right translation, we have the space $ \mathrm M( \CG)$, and the space $ \RD(\CG)$ in the case when $\CG$ is totally disconnected.    Push-forward of measures through the conjugation action yields a representation of $\CG$ on $\mathrm M(\CG)$. When the homogeneous space $X$ is a topological manifold,   we define the $\Q$-vector space 
\[
  \mathrm O(X):=\{\textrm{$\CG$-invariant sections of $\mathscr O_{X}$}\}, 
\]
where   $\mathscr O_X$ denotes the orientation local system  on $X$ with coefficients $\Q$. 

When $\CG$ is totally disconnected, for every smooth representation $J$ of $\CG$ over a field of characteristic zero, write $$\widehat J:=\varprojlim_{\CK} J^\CK$$ for its formal completion, where $\CK$ runs over all open compact subgroups of $\CG$, and the implicit homomorphisms defining the inverse limit are the  averaging projections. See \cite[Section 6.3]{Be87} for more details. This is a (non-necessarily smooth) representation of  $\CG$ which contains $J$ as a subrepresentation.


Let 
\be\label{five}
(\mathsf G,\, \dot{\sf G}, \, {\sf Z}, \, \imath : \dot{\sf G}\rightarrow \mathsf G, \, \jmath : \dot{\sf G}\rightarrow \mathsf Z)
\ee
be a $5$-tuple where $\mathsf G$ and $\dot{\sf G}$ are linear algebraic groups over $\Q$, ${\sf Z}$ is an algebraic torus over $\Q$, and $\imath$ and $\jmath$ are algebraic homomorphisms over $\Q$. Set 
\[
G:=\mathsf G(\Q_p),\quad  \dot G:=\dot{\mathsf G}(\Q_p),\quad Z:={\mathsf Z}(\Q_p),
\]
and write $\g$, $\dot \g$, $\z$ respectively for their Lie algebras. 

Let $\p$ be a Lie subalgebra of $\g$ that is transversal to $\dot \g$ in the sense that 
\be\label{pspherical}
\p+\imath(\dot \g)=\g.
\ee
Here and henceforth, we still use $\imath$ (or $\jmath$) to denote various maps induced by the homomorphism $\imath: \dot {\sf G}\rightarrow \sf G$ (or $\jmath: \dot{\sf G}\rightarrow \sf Z$). Write
\[
\dot \p:=\imath^{-1}(\p)\subset \dot \g\quad\  \textrm{so that 
 }\ \  \g/\p=\dot \g/\dot \p.
 \]
When no confusion is possible, we will not distinguish a linear algebraic group defined over $\Q_p$ with the $p$-adic Lie group of its $\Q_p$-points.  Let  $\s\subset \p$ be a Lie subalgebra and  $Z_0\subset Z$  an algebraic subtorus   such that 
\[
  \imath(\dot \p)\subset \s  \quad \textrm{and}\quad \jmath(\dot\p)\subset \z_0,\qquad  \textrm{where $\z_0$ denotes the Lie algebra of $Z_0$}.
\]
(The group $Z_0$ is trivial in the three families of examples of this paper.)

Throughout the paper, $\ell=\infty$ or a rational prime. 
Write 
\[
D^{\mathrm{max}}_{Z_0}:=\prod_{\ell\neq\infty} D^{\mathrm{max}}_\ell 
\]
 where $D^{\mathrm{max}}_p$ is the maximal compact subgroup of $Z_0$,  and $D^{\mathrm{max}}_\ell$ is the maximal compact subgroup of $\sf Z(\Q_\ell)$ for $\ell\neq \infty, p$.  
Fix a locally constant character 
\be\label{ram}
\varepsilon = \otimes_{\ell\neq\infty}  \varepsilon_{\ell}: D^{\mathrm{max}}_{Z_0}\rightarrow \overline{\Q}^\times,
\ee
to be called the ramification type. 
We introduce the following sets  attached to $\mathsf Z$, $Z_0$ and $\varepsilon$:
\begin{itemize}
   \item 
$\mathcal X^{\mathrm{alg}}$ denotes the group of all algebraic characters ${\sf w}:  {\sf Z}_{\overline \Q}\rightarrow \GL(1)/_{\overline \Q}$;
\item    $\mathcal X^{\mathrm{aut}}$  denotes the group of all automorphic characters $\chi:\sf Z(\Q)\backslash \sf Z(\A)\rightarrow \C^\times$;
    \item 
    $\mathcal X(\varepsilon)$  denotes the set of all  characters $\chi\in \CX^{\rm aut}$ whose restriction to $D^\mathrm{max}_{Z_0}$ equals $\varepsilon$;
    \item $\mathcal X_\ell$  denotes the group  of all continuous characters $\chi_\ell: \mathsf Z(\Q_\ell)\rightarrow {\C}^\times$ ($\Q_\infty:=\R$);
        \item 
    for $\ell\neq\infty$, $\mathcal X(\varepsilon_\ell)$  denotes the set  of all  characters $\chi_\ell\in \CX_\ell$ whose restriction to $D^\mathrm{max}_\ell$ equals $\varepsilon_\ell$.
    \end{itemize}
Here $\sf Z_{\overline \Q}:=\sf Z\times_{\mathrm{Spec}( \Q)} {\mathrm{Spec}(\overline \Q)}$, and similar notation for base change will be used without further explanation.  Note that for each $\ell\neq \infty$, $\mathcal X(\varepsilon_\ell)$ is a scheme defined over  $\Q(\varepsilon_\ell)$. Here $\Q(\varepsilon_\ell)$ is the field extension of $\Q$ generated by the values of $\varepsilon_\ell$, and similar notations will be used without further explanation.  

Let $\con(\mathsf Z, E)_{\z_0\mathrm{-sm}}$ denote the space of $E$-valued continuous functions on $\sf Z(\Q)\backslash \sf Z(\A)$ that are invariant under the translations of some open subgroups of $D^\mathrm{max}_{Z_0}$. This is naturally a locally convex topological vector space under the direct limit topology. Moreover, as a locally convex topological vector space over $\C_p$, 
\[
\con(\mathsf Z, \C_p)_{\z_0\mathrm{-sm}}=\bigoplus_{\varepsilon} \con(\mathsf Z,\C_p)(\varepsilon).
\]
Here $
\varepsilon$ runs over all characters as in 
\eqref{ram}, and whenever $\Q(\varepsilon) \subset {\sf E}$, 
$ \con(\mathsf Z, E)(\varepsilon)$ denotes  the space of all continuous functions $f: \mathsf Z(\Q)\backslash \mathsf Z(\A)\rightarrow E$ such that  
\[
f(xg)=\varepsilon(g)\cdot f(x)\quad\textrm{for all } x\in \mathsf Z(\A), \ g\in D^\mathrm{max}_{Z_0},
\]
which is a Banach space under the supremum norm.

For every $\sf w\in \mathcal X^{\mathrm{alg}}$ and $\ell=\infty$ or $p$, define a character 
\be\label{defwp}
  \sf w_\ell: \mathsf Z(\Q_\ell)\subset \mathsf Z(\C_\ell)\xrightarrow{\sf w} \C_\ell^\times\qquad (\C_\infty:=\C). 
\ee
When no confusion is possible, we still use ${\sf w}_\ell$ to denote the composition of 
\[
\mathsf Z(\A)\xrightarrow{\textrm{projection}} \mathsf Z(\Q_\ell)\xrightarrow{{\sf w}_\ell}\C_\ell^\times. 
\]

\begin{dfnl}
   (a) A character $\chi_\infty\in \mathcal X_\infty$ is said to be algebraic if there is an element $\sf w\in \mathcal X^{\mathrm{alg}}$ such that the character $\sf w_\infty\cdot\chi_\infty$ is locally constant. The algebraic character $\sf w^{-1}$ is called the weight of $\chi_\infty$.
   
   \noindent (b) A character $\chi\in \mathcal X^{\mathrm{aut}}$ is said to be algebraic if so is its archimedean component $\chi_\infty$. When this is the case the weight of  $\chi_\infty$ is also called the weight of $\chi$. 
\end{dfnl}

For every automorphic character $\chi\in \mathcal X^{\mathrm{aut}}$ that is algebraic of weight $\sf w^{-1}$, define a $p$-adic automorphic  character
\[ 
\chi^\flat:=(\mathsf w_\infty\cdot \chi)\cdot \mathsf w_p^{-1}: \mathsf Z(\Q)\backslash \mathsf Z(\A)\rightarrow \C_p^\times.
\]


\subsection{The nearly ordinary part} \label{ssec1.1}
Throughout the Introduction we assume that $\mathsf G$ is connected and reductive, $\imath$ is injective so that $\dot{\sf G}$ is viewed as an algebraic subgroup of $\sf G$, and $\dot{\sf G}\cap \sf A$ is zero-dimensional. Here $\mathsf A$ denotes the largest central split torus in $\mathsf G$. 
 Fix a 
closed subgroup $K_\infty$ of $\mathsf G(\R)$ of the form
\[
K_\infty =\mathsf A(\R)^\circ\cdot K_\infty',
\]
where $K_\infty'$ is a maximal compact subgroup of ${\sf G}(\R)$. 
Put \be \label{Gnatural}
{\sf G}^\natural :=K_\infty^\natural \times \mathsf G(\A^\infty)
\ee 
and 
\[
  \mathscr X:=\mathscr X_{\mathsf G, K_\infty}:=\mathsf G(\A)/K_\infty^\circ = (\mathsf G(\R)/K_\infty^\circ )\times \mathsf G(\A^\infty).
\]
For every open compact subgroup $K$ of $\mathsf G^\natural$, define the topological space 
\[
  S^{\mathsf G}_{K}:=\mathsf G(\Q)\backslash \mathscr X/ K.
\]

Let $\mathsf V$ be a finite-dimensional $\mathsf E$-vector space carrying a geometrically irreducible algebraic representation of 
$\mathsf G_\mathsf E$. Here 
``geometrically irreducible" means that the representation of $\mathsf G(\overline \Q)$ on $\overline{\Q}\otimes_{\mathsf E}  \mathsf V$ is irreducible.
Specifically, $\mathsf V$ is a representation of $\mathsf G(\Q)\subset \mathsf G_\mathsf E(\mathsf E)$.
Define 
\[ 
  \oH_\Phi^i(\mathsf G, \mathsf V):=\varinjlim_{K} \oH_\Phi^i(S^{\mathsf G}_{K}, \mathsf V_{[K]}),\qquad (i\in \Z),
\]
 where $K$ runs over all open compact subgroups of $\mathsf G^\natural$, 
and 
$\mathsf V_{[K]}$
is a sheaf of $\mathsf E$-vector spaces over $S^{\mathsf G}_{K}$ as defined in \eqref{sheafm}. Here the subscript ``$\Phi$" indicates a support condition for the cohomology group (see Section \ref{phicon}). 
Under the right translations, $\oH_\Phi^i(\mathsf G, \mathsf V)$ is a smooth representation of $\mathsf G^\natural$ over $\mathsf E$.

Set 
\[ 
{\sf G}^{\natural, p}: =K_\infty^\natural\times \mathsf G(\A^{ \infty p})\quad \textrm{so that}\quad {\sf G}^{\natural}={\sf G}^{\natural, p}\times G. 
\]
In this Introduction and in the next section we assume that $\p$ is a parabolic subalgebra of $\g$. Let $P\subset G$ denote the normalizer of $\p$, which is a parabolic subgroup of $G$ whose Lie algebra equals $\p$. Denote by $N$ the unipotent radical of $P$ and put $L:=P/N$. The Lie algebras of $N$ and $L$ are respectively denoted by $\n$ and $\l$. 

For every representation $J'$ of $\CG\times G$, where $\CG$ is a certain totally disconnected topological group, write
\be\label{psmooth}
J'_{\p\textrm{-sm}}
\ee
for the space of all vectors in $J'$ that are fixed by some groups of the form $\CK\times \fP$, where $\CK$ is an open subgroup of $\CG$, and $\fP$ is a closed subgroup of $G$ with Lie algebra $\p$. Similar notation with $\p$ replaced by other Lie subalgebras will be used without further explanation.  
For every smooth representation $J$ of $\mathsf G^\natural$, define 
\[
\CB_P(J):=\left( \left(\widehat J\right)_{\p\textrm{-sm}}\right)^N,
\]
which is a smooth representation of ${\sf G}^{\natural, p}\times L$. Here and henceforth, unless otherwise specified, we use a superscript group or Lie algebra to indicate the invariant space. 
By the second adjointness theorem of Bernstein and Casselman (see \cite[Theorems 6.4 and 0.1]{Be87}),  $\CB_P(J)$ is isomorphic to the Jacquet module of $J$ attached to a parabolic subgroup of $G$ opposite to $P$. 

Denote by $T$  the largest central torus in $L$, and by $T^\dag$ the monid consisting of all $t\in T$ such that 
\[
   \abs{\alpha(t)}_p\geq 1
\]
for every character $\alpha: T\rightarrow  \C_p^\times$ that occurs in some irreducible subquotient of the adjoint representation of $P$ on  $ \C_p\otimes \n$ (these irreducible subquotients descend to  representations of $L$). Here and henceforth, 
we often omit the obvious base ring from the notation of a tensor product and hope this causes no confusion. For example, $ \C_p\otimes \n:=\C_p\otimes_{\Q_p} \n$. 

Set $V:=E\otimes \sf V$, which is naturally a representation of $G\subset \mathsf G_{\mathsf E}(E)$. Then $T$ acts on $V^\n$ through a character, to be denoted by $\alpha_V: T\rightarrow E^\times$. 

Now we assume that $\Phi$ satisfies the usual condition \eqref{conphi} so that the smooth representation $\oH_{\Phi}^{i}({\mathsf G}, \mathsf V)$ (and hence $\CB_P(\oH_{\Phi}^{i}({\mathsf G}, \mathsf V))$) is admissible.  The following result is essentially due to Hida (\cf \cite{Hi93, Hi95}), and we will give a proof  in Proposition \ref{hida3} in a more general setting.

\begin{prpl}\label{hidaineq}
For every character $\chi\in \Hom(T, \C_p^\times)$ that occurs  as a subquotient in \[
\C_p\otimes \CB_P(\oH_{\Phi}^{i}({\mathsf G}, \mathsf V))\otimes \RD(\g/\p) ,
\]
 one has that
\be\label{fsch}
  \abs{\chi(t)}_p\leq \abs{\alpha_V(t)}_p\qquad \textrm{for all $t\in T^\dag$. }
\ee

\end{prpl}

\begin{dfnl} \label{df:NVO}
Let  $\chi\in \Hom(T, \C_p^\times)$ be a character satisfying \eqref{fsch}. It is said to be nearly $V$-ordinary if the equality holds in \eqref{fsch} for all $t\in T^\dag$ (and hence for all $t\in T$). 
\end{dfnl}

Let $\mathscr H$ be a ${\sf G}^{\natural, p}\times L$-subrepresentation of $\CB_P(\oH_{\Phi}^{i}({\mathsf G}, \mathsf V)) \otimes \RD(\g/\p)$ such that all characters of $T$ that occur in $\C_p\otimes \mathscr H$ as  subquotients are nearly $V$-ordinary. 
This is the nearly ordinary part of the classical automorphic cohomology space that has been mentioned before. 

\subsection{Relative cohomologies and relative completed cohomologies}\label{secrc}

In what follows we  briefly review the notions of relative cohomologies and relative completed cohomologies. See Section \ref{sec:RCC} for more details. Let $R$ be a commutative ring with identity, and $\BG$ an open subgroup of $\sf G^\natural$. For each monoid  $H^+$, write $R[H^+]$ for the monoid  algebra of $H^+$ with coefficients in $R$.  As an $R$-module, it is free with basis $H^+$. 

For every $R[\mathsf G(\Q)\times \BG]$-module $M$ and every  
compact subgroup $C$ of $\BG$, we define the formal cohomology group
\[
  \oH_\Phi^i(C,M):=\varprojlim_K \oH_\Phi^i(K,M),
\]
 where $K$ runs over open compact subgroups of $\BG$ containing $C$, the transition maps are the push-forward maps, and $\oH_\Phi^i(K,M)$ is the usual sheaf cohomology group that will be defined in \eqref{hkk}. Define the relative cohomology group
\[
  \oH_\Phi^i(\mathsf G,M)^{\la \p\ra}:=\varinjlim_{D\fP }\oH_\Phi^i(D\fP,M),
\]
where $D$ runs over open compact subgroups of ${\sf G}^{\natural,p}\cap \BG$, $\fP$ runs over compact subgroups of $ G\cap \BG$ with Lie algebra $\p$, and the transition maps are the pull-back maps. 

Write $\BN:=\{0,1,2,\dots\}$ as usual. 
 Define the relative completed cohomology group
\be\label{defrcchom}
  \widetilde \oH_\Phi^i(\mathsf G,M)^{\la \p\supset \s\ra}:=\varinjlim_{D\fS }\varprojlim_{k\in \BN}\varinjlim_\fP \oH_\Phi^i(D\fP,M/p^k)\qquad ( M/p^k:=M/p^k M),
\ee
where $D$ runs over  open compact subgroups of ${\sf G}^{\natural,p}\cap \BG$, $\fS$ runs over compact subgroups of $ G\cap \BG$ with Lie algebra $\s$, and $\fP$ runs over compact subgroups of $ G\cap \BG$ with Lie algebra $\p$ that contain $\fS$. 

\begin{dfnl} \label{df:psmooth}
    Let $\CG$ be a topological group. An $R[\CG]$-module $M_0$ is said to be $p$-smooth if for every $k\in \BN$, some open subgroups of $\CG$ act trivially on $M_0/p^k$.
\end{dfnl}

It is crucial to note that, by using the trivial actions of small open compact subgroups of $G$,  the  relative completed cohomology group \eqref{defrcchom} is still defined when $M$ is replaced by an $R[\mathsf G(\Q)\times D\fS]$-module that is $p$-smooth as an $R[\fS]$-module, where $D$ is an arbitrary open compact subgroup  of ${\sf G}^{\natural,p}$ and $\fS$ is an arbitrary  compact subgroup  of $ G$ with Lie algebra $\s$.

 View $V$ as an $E[\mathsf G(\Q)\times {\mathsf G}^\natural] $-module with the given action of $G$ and the trivial action of $\mathsf G(\Q)\times {\mathsf G}^{\natural,p}$. Write 
 \[
 \CO:=\{x\in E\,:\, \abs{x}_p\leq 1\}
 \] for the ring of integers of  $E$. We define the integral relative cohomology space 
\[
{\oH}_\Phi^i({\mathsf G},  V)^{\la \p\ra, \circ}:=E\otimes {\oH}_\Phi^i({\mathsf G},  \fV)^{\la \p\ra},
\]
where $\fV$ is an $\CO$-lattice of $V$ (namely a free $\CO$-submodule whose rank equals $\dim V$). This is independent of $\fV$. Similarly, for each $(E\otimes \s)$-submodule $V_0$ of $V$ (which is necessarily stabilized by some  compact subgroups of $G$ with Lie algebra $\s$), we define the integral relative completed cohomology space 
\[
\widetilde{\oH}_\Phi^i({\mathsf G},  V_0)^{\la \p\supset \s\ra, \circ}:=E\otimes \widetilde{\oH}_\Phi^i({\mathsf G},  \fV_0)^{\la \p\supset \s \ra}, 
\]
where $\fV_0$ is an $\CO$-lattice of $V_0$. This is  independent of $\fV_0$.

We will show in Section \ref{sec:NOP} that the integral relative completed cohomology space interpolates  the nearly ordinary part $\mathscr H$. More precisely, we have a commutative diagram
\be\label{thediag00}
 \begin{CD}
\widetilde{\oH}_\Phi^i({\mathsf G},  V^\n)^{\la \p\supset \s\ra,\circ} @<\widetilde \xi<<\mathscr H@> \subset >>  \widehat{\oH_\Phi^i({\mathsf G}, \sf V)}_{{\p}\mathrm{-sm}}\otimes \RD(\g/\p)\\
      @VVV      @V V  V          @VV V\\
\widetilde{\oH}_\Phi^i({\mathsf G},  V)^{\la \p\supset \s\ra,\circ} @<<<{\oH}_\Phi^i({\mathsf G},  V)^{\la \p\ra, \circ}@>  >>     {\oH_\Phi^i({\mathsf G},  V)}^{\la \p\ra},\\
            \end{CD}
\ee
where all the arrows are canonically defined. If $P=G$, then 
$\n=\{0\}$ and
\[
\widehat{\oH_\Phi^i({\mathsf G}, \sf V)}_{{\p}\mathrm{-sm}}\otimes \RD(\g/\p)=\oH_\Phi^i({\mathsf G}, \sf V),
\]
and if furthermore $\s=\{0\}$, then 
\[
\widetilde{\oH}_\Phi^i({\mathsf G},  V^\n)^{\la \p\supset \s\ra,\circ}=\widetilde{\oH}_\Phi^i({\mathsf G},  E)^{\la \g\supset \{0\}\ra,\circ}\otimes V, 
\]
which agrees with Emerton's completed cohomology space, and the map $\widetilde \xi$ agrees with his interpolation map.  

\subsection{Modular symbols} \label{ssec:MS}

We now review some versions of modular symbols that are considered in this paper. See Section \ref{secpint} for details. 

Set $\dot K_\infty:=K_\infty \cap \dot{\sf G}(\R)$.  
By \cite[Proposition 8.1]{Sun15}, it equals $ K_\infty'\cap \dot{\sf G}(\R)$ and is thus compact.  
With the fixed closed group $\dot K_\infty$, we define various cohomology groups for $\dot{\sf G}$, as we have defined for $\sf G$ in Section \ref{secrc}. Similarly these cohomology groups are also defined for ${\sf Z}$, with the fixed closed subgroup of 
$\sf Z(\R)$  taken to be the group 
\be\label{maximalkz}
 K_{\mathsf Z,\infty}:= \sf A_{\sf Z}(\R)\cdot (\textrm{the maximal compact subgroup of $\sf Z(\R)$}),
  \ee
  where $\sf A_{\sf Z}$ is the maximal split torus in $\sf Z$.  
 In particular, we have an identification
\[
\widetilde{\oH}^0(\mathsf{Z},  E)^{\la \z\supset \z_0 \ra, \circ}=\con(\mathsf Z, E)_{\z_0\mathrm{-sm}}.
\]

For every algebraic character $\sf w \in \mathcal X^{\mathrm{alg}}$ that  is  defined over a subfield  $\sf E'$ of $\overline \Q$, write $\mathsf E'_{\sf w}:=\mathsf E'$ for the corresponding one-dimensional algebraic representation of $\sf Z_{\sf E'}$. Suppose that $\sf w \in \mathcal X^{\mathrm{alg}}$   is  defined over $\sf E$. Then the cohomology space $\oH^0({\sf Z}, {\sf E}_{\sf w})$ equals the space of all locally constant functions $f: \sf Z(\A)\rightarrow \sf E$ such that 
\[
  f(gx)=\mathsf w(g)\cdot f(x)\quad \textrm{for all }g\in \mathsf Z(\Q),\ x\in \mathsf Z(\A).
\]
Fix a functional $\lambda_{\sf V, w}\in \Hom_{\dot{\sf G}_{\sf E}}(\sf E_{\sf w}\otimes\sf V, \sf E)$. 


Similar to \eqref{Gnatural}, write 
\[ 
{\dot{\sf G}}^\natural :=\dot K_\infty^\natural  \times \dot{\mathsf G}(\A^\infty).
\]
Put \be\label{rmd}
  \mathrm{D}(\dot{\mathsf G}):=\mathrm D(\dot{\mathsf G}^{\natural})\otimes \mathrm{O}(\dot{\mathsf G}(\R)/\dot K_\infty^\circ),
  \ee
  which is a one-dimensional $\Q$-vector space. Set 
  \[i_0:=\dim (\dot{\sf G}(\R)/\dot{K}_\infty^\circ).
  \]
 
Under a natural condition \eqref{precompact2} on $\Phi$, the classical modular symbol map is defined to be the composition of 
\begin{eqnarray}
  \label{CMS}   &&\oH^0({\sf Z}, {\sf E}_{\sf w})\times \left(\oH_\Phi^{i_0}(\mathsf{G},  \mathsf V)\otimes\RD(\dot{\mathsf G})\right)\\
     \nonumber &\xrightarrow{\textrm{pull-back}}&{\oH}^0(\dot{\mathsf G},  {\sf E}_{\sf w})\times \left(\oH_{\mathrm c}^{i_0}(\dot{\mathsf{G}},  \mathsf V)\otimes \RD(\dot{\mathsf G})\right)\\
  \nonumber  &\xrightarrow{\textrm{cup product}}& \oH_{\mathrm c}^{i_0}(\dot{\mathsf{G}},  \sf E_{\sf w}\otimes\mathsf V)\otimes  \RD(\dot{\mathsf G})\\
      \nonumber      &\xrightarrow{\lambda_{\sf V, w}}& {\oH_{\mathrm c}^{i_0}(\dot{\mathsf{G}},  \sf E)}\otimes \RD(\dot{\mathsf G})\\
     \nonumber &\xrightarrow{\textrm{pairing with the fundamental class}}& \sf E.
\end{eqnarray}
Here the subscript ``$\mathrm c$" indicates the cohomology with compact support. 

Write  
\[ 
  \mathrm{D}(\dot{\mathsf G})=\mathrm{D}(\dot{\mathsf G},\dot \p)\otimes \RD(\g/\p),
\]
where  
\be\label{dgp}
\mathrm{D}(\dot{\mathsf G},\dot \p):=\mathrm D(\dot{\mathsf G}^{\natural,p})\otimes \RD(\dot \p)\otimes \mathrm{O}(\dot{\mathsf G}(\R)/\dot K_\infty^\circ),
\ee
and
\[
{\dot{\sf G}}^{\natural,p} :=\dot K_\infty^\natural  \times \dot{\mathsf G}(\A^{\infty p}).
\] 
Generalizing the classical modular symbol map, in Section \ref{secpint} we will define modular symbols for all the spaces occurring in the diagram \eqref{thediag00}. In particular, we have the stable modular symbol map
\be \label{stablems}
{\mathscr M}_{\lambda_{\sf V, w}} : \oH^0({\sf Z}, {\sf E}_{\sf w})\times \left(\widehat{\oH_\Phi^{i_0}(\mathsf{G},  \mathsf V)}_{\p-\mathrm{sm}}\otimes \RD(\dot{\mathsf G})\right)\rightarrow \sf E
\ee
which extends the classical modular symbol map \eqref{CMS}. Recall that $\s\supset \imath(\dot \p)$. Then we also have the relative completed  modular symbol map
\be\label{rcms0}  
\widetilde{\mathscr M}_{\lambda_0}: \widetilde{\oH}^0(\mathsf{Z},  E)^{\la \z\supset  \z_0 \ra, \circ} \times \left(\widetilde{\oH}_\Phi^{i_0}(\mathsf{G},  V^\n)^{\la \p\supset  \s  \ra, \circ} \otimes \RD(\dot{\mathsf G},\dot \p)\right)\rightarrow E,
\ee
where $\lambda_0\in \Hom_{E\otimes \dot \p}(V^\n,E)$. Both factors in  the domain of the bilinear map \eqref{rcms0} are  naturally   locally convex topological vector spaces over $E$ under the direct limit topologies. An important fact is that the bilinear map \eqref{rcms0} is separately continuous.

Assume that 
\be\label{w}
 \textrm{${\sf w}_p$ is trivial  on $Z_0$}.
\ee
Then we have an embedding
\be\label{embpheck00}
  \oH^0({\mathsf Z}, {\sf E}_{\mathsf w})\rightarrow \widetilde{\oH}^0(\mathsf{Z},  E)^{\la \z\supset  \z_0 \ra, \circ}=\con(\mathsf Z, E)_{\z_0\mathrm{-sm}}, \qquad f\mapsto f\cdot \mathsf w_p^{-1}. 
\ee
We have natural inclusions
\be\label{emblambda}
\Hom_{\dot{\mathsf G}_{\mathsf E}}(\mathsf E_{\mathsf w}\otimes\mathsf V, \mathsf E)\subset \Hom_{\dot{ G}}(E_{\mathsf w}\otimes V,  E)\subset \Hom_{E\otimes \dot \p}( V,  E).
\ee
In particular, $\lambda_{\sf V, w}$ is also viewed as an element of $\Hom_{E\otimes \dot \p}( V,  E)$.

 The following theorem asserts that the relative completed modular symbols interpolate the stable modular symbols on the nearly ordinary part $\mathscr H$, for various weights $\sf w$ satisfying \eqref{w}. 
\begin{thml}[Theorem \ref{thmc}]  \label{thmcint}
Suppose that  $\sf w \in \mathcal X^{\mathrm{alg}}$   is  defined over $\sf E$, ${\sf w}_p$ is trivial on $Z_0$, and  $\lambda_{\sf V, w}|_{V^\n} = \lambda_0$.
Then the diagram
\be\label{mainthmcd}
 \begin{CD}
\oH^0(\mathsf Z, \mathsf E_{\mathsf w})\times \left(\mathscr H\otimes \RD(\dot{\mathsf G},\dot \p)\right)@> {\mathscr M}_{\lambda_{\sf V, w}}>>  \mathsf E\\
            @V \eqref{embpheck00}\textrm{ and }\widetilde{\xi} VV          @VV \subset V\\
\con(\mathsf Z, E)_{\z_0\mathrm{-sm}}\times \left(\widetilde{\oH}_\Phi^{i_0}(\mathsf{G},  V^\n)^{\la \p\supset \s \ra, \circ} \otimes \RD(\dot{\mathsf G},\dot \p)\right) @> \widetilde{\mathscr M}_{\lambda_0}  >>     E\\
            \end{CD}
\ee
commutes.
\end{thml}

For every $\phi\in \mathscr H\otimes \RD(\dot{\mathsf G},\dot \p)$, $\widetilde{\mathscr M}_{\lambda_0}(\,\cdot\, , \widetilde{\xi}(\phi))$ is a continuous linear functional on 
$\widetilde{\oH}^0(\mathsf{Z},  E)^{\la \z\supset  \z_0 \ra, \circ} = \con(\mathsf Z, E)_{\z_0\mathrm{-sm}}$, 
to be denoted by $\mathcal L_{ \lambda_0\otimes \phi}$. When $\phi$ is appropriately choosen, this often yields interesing $p$-adic L-functions. 

\begin{remarkl}
It is also possible to use Theorem \ref{thmcint} to construct  multivariable $p$-adic L-functions for Hida families. We hope to carry out this construction in a future work. 
\end{remarkl}

This paper is organized as follows. In Section \ref{sec:CLPL} we outline the general formalism for the rationality of complex L-functions and construction of 
$p$-adic L-functions using relative completed cohomologies and modular symbols, and summarize the main results for three families of examples. In
Section \ref{sec:RCC} we develop the basic theory of relative cohomologies and relative completed cohomogoies, and in Section \ref{sec:PBI} we discuss the pull-back and integration of these cohomologies. In Section \ref{secpint} we compare various modular symbols and prove Theorem \ref{thmcint}. 
Sections \ref{sec:PC} and \ref{sec:NOP} are devoted to the theory of parabolic cohomologies and the nearly ordinary part $\mathscr{H}$ of the automorphic cohomology. 
In Sections \ref{sec:rs}--\ref{sec:rs3} we apply the main theory of this paper to construct the three families of nearly ordinary $p$-adic L-functions in Section \ref{sec:CLPL} in details, with explicit modifying factors and exceptional zeros and without any ramification restriction.

\section{Complex L-functions and $p$-adic L-functions} \label{sec:CLPL}

As examples of applications of Theorem \ref{thmcint}, we will consider three families of automorphic L-functions. In this section, we first develop the general formalism, and then outline the main results for these examples. See Section \ref{ssec:HR} for historical remarks and comparisons with previous results in the literature. 
The detailed construction will be given in Sections \ref{sec:rs}$-$\ref{sec:rs3}. 

Moreover,  for each of the three families,  we  supply the concrete example  of exceptional zeros of $p$-adic L-functions associated to symmetric powers of elliptic curves over $\Q$ without complex multiplication. 

\subsection{Period integrals and complex L-functions} \label{ssec:CLR}

Let $\sf H$ be an algebraic subgroup of $\sf G$, together with an automorphic character 
\[
\psi_{\sf H}=\otimes_\ell \psi_{\sf H,\ell}: \sf H(\Q)\backslash \sf H(\A)\rightarrow \C^\times.
\]
Set $\dot{\sf H}:=\sf H\cap \dot{\sf G}$. We assume that
\be\label{assh}
\begin{cases}
    \psi_{\sf H} \textrm{ is trivial on $\dot{\sf H} (\A)$;}&\\
    \textrm{$\dot{\sf H}$ is contained in the kernel of $\jmath: \dot{\sf G}\rightarrow \mathsf Z$;}&\\
    \mathrm M(\dot{\mathsf H}(\A)\backslash \dot{\mathsf G}(\A))\neq \{0\}.
\end{cases}
\ee
Let 
$
\Pi= \otimes'_\ell \Pi_\ell
$
be an irreducible subrepresentation of the space of smooth automorphic forms on $\mathsf G(\Q)\backslash \mathsf G(\A)$. 
For the definition of the restricted tensor product, we realize  $\mathsf G$ as an algebraic subgroup of a general linear group $\GL(k)/_{\Q}$ ($k\in \BN$), and set 
\[
\mathsf H'(\Z_\ell):=\GL_k(\Z_\ell)\cap \mathsf H'(\Q_\ell)
\]
for every algebraic subgroup $\mathsf H'$ of $\mathsf G$ and every $\ell\neq \infty$. For all but finitely many $\ell\neq \infty$, we fix a nonzero vector $v_\ell^\circ\in \Pi_\ell^{\mathsf G(\Z_\ell)}$, and the restricted tensor products are defined with respect to the family $\{v_\ell^\circ \}_\ell$. 

Assume that 
\be\label{mulonemod}
 \dim \Hom_{{\mathsf H}(\A)}(\Pi, \psi_{\mathsf H})=1,
\ee
and 
the integrals  
\be\label{whit}
\lambda_{\sf H}: \Pi\rightarrow \C, \quad f\mapsto \int_{ {\mathsf H}(\Q)\backslash{\mathsf H}(\A)} f(x)\psi_{\mathsf H}^{-1}(x)\od\!x
\ee
are absolutely convergent and yield a generator of the one-dimensional space in \eqref{mulonemod}. 
Here $\od\! x$ is the right invariant Tamagawa measure. 
Then we have a decomposition 
\[
  \lambda_{\sf H}=\otimes_\ell \lambda_{\mathsf H, \ell},\qquad \lambda_{\mathsf H, \ell}\in \Hom_{{\mathsf H}(\Q_\ell)}(\Pi_\ell, \psi_{\mathsf H,\ell}),
\]
such that   $\lambda_{\mathsf H, \ell}(v_\ell^\circ)=1$ for all but finitely many $\ell \neq \infty$.

 Fix an ${\sf E}^\times$-valued character $\varepsilon= \otimes_{\ell\neq\infty}  \varepsilon_{\ell}$ as in \eqref{ram}. In this subsection and the next one we assume that $Z_0=Z$. 
Suppose that meromorphic continuations and normalizations of the integrals
\[
\int_{\dot{\mathsf H}(\Q_\ell)\backslash\dot{\mathsf G}(\Q_\ell)} \chi_\ell(\jmath(g)) \cdot\la \lambda_{\mathsf H,\ell},g.f\ra\od\!\mu( g),\qquad \chi_\ell\in \mathcal X_\ell, \, f\in \Pi_\ell, \, \mu\in \mathrm M(\dot{\mathsf H}(\Q_\ell)\backslash\dot{\mathsf G}(\Q_\ell))
\]
 yield the ``normalized zeta integrals"
\be\label{nz}
  {\mathcal P}_\ell^\circ : \mathcal X_\ell\times\left(\Pi_\ell \otimes \mathrm M(\dot{\mathsf H}(\Q_\ell)\backslash\dot{\mathsf G}(\Q_\ell))\right) \rightarrow \C
\ee
with the following properties:
\begin{itemize}
\item 
  if $\ell=\infty$, then ${\mathcal P}_\ell^\circ$ is holomorphic in the first variable, linear in the second variable, and continuous;
 \item 
  if $\ell\neq \infty$, then ${\mathcal P}_\ell^\circ$ is algebraic in the first variable and linear in the second variable;
\item it holds that \be\label{equip}
  {\mathcal P}_\ell^\circ(\chi_\ell, g.\phi)=\chi_\ell^{-1}(\jmath(g)) \cdot {\mathcal P}_\ell^\circ(\chi_\ell, \phi), 
\ee
for all $\chi_\ell\in \mathcal X_\ell$, $\phi\in \Pi_\ell \otimes \mathrm M(\dot{\mathsf H}(\Q_\ell)\backslash\dot{\mathsf G}(\Q_\ell))$, and $g\in \dot{\mathsf G}(\Q_\ell)$;
\item there is a family 
\[
\{\phi_\ell^\circ\in \Pi_\ell \otimes \RM(\dot{\mathsf H}(\Q_\ell)\backslash\dot{\mathsf G}(\Q_\ell))\}_{\ell\neq \infty}
\]
  such that   \[
\textrm{${\mathcal P}_\ell^\circ(\,\cdot\,,  \phi_\ell^\circ)$ takes a nonzero constant value $(\Omega_{\Pi_\ell}(\varepsilon_{\ell}))^{-1}$  on $\mathcal X(\varepsilon_{\ell})$,}
\]
where for all but finitely many $\ell\neq \infty$,  
\[
\Omega_{\Pi_\ell}(\varepsilon_{\ell})=1 \quad \text{and}\quad  \phi_\ell^\circ=v^\circ_\ell\otimes \mu_\ell^\circ.
\]
Here $\mu_\ell^\circ\in \mathrm M(\dot{\mathsf H}(\Q_\ell)\backslash\dot{\mathsf G}(\Q_\ell))$ denotes the measure with respect to which $\dot{\mathsf H}(\Z_\ell)\backslash \dot{\mathsf G}(\Z_\ell)$ has total volume $1$. 
\end{itemize}

Let $\chi=\otimes_\ell\chi_\ell\in \mathcal X(\varepsilon)$. By taking the tensor product we get a homomorphism 
\be\label{hominv}
\mathcal P^\circ_\chi:=\otimes \mathcal P_\ell^\circ(\chi_\ell,\,\cdot\,)\in \Hom_{\dot{\mathsf G}(\A)}(\Pi\otimes \mathrm M(\dot{\mathsf H}(\A)\backslash\dot{\mathsf G}(\A)), \chi^{-1}).
\ee
Here and henceforth, when no confusion is possible, we do not distinguish a complex character of a group with the vector space $\C$ carrying the corresponding group action.  
On the other hand, under certain growth assumptions, we get the global period integral  map
\[
\begin{array}{rcl}
   \mathcal P_\chi:  \Pi \otimes \mathrm M(\dot{\mathsf G}(\Q)\backslash\dot{\mathsf G}(\A))&\rightarrow  & \chi^{-1},\\
     f\otimes \mu &\mapsto &\int_{\dot{\mathsf G}(\Q)\backslash\dot{\mathsf G}(\A)}\chi(\jmath(g)) \cdot f(g)\od\!\mu(g),
\end{array}
  \quad 
\]
which belongs to the hom space in \eqref{hominv}. By fixing the right invariant Tamagawa measure on $\dot{\mathsf H}(\A)$, and the counting measure on $\dot{\mathsf G}(\Q)$  we have identifications
\be \label{IDmeas}
\mathrm M(\dot{\mathsf H}(\A)\backslash\dot{\mathsf G}(\A))=\mathrm M(\dot{\mathsf G}(\A))=\mathrm M(\dot{\mathsf G}(\Q)\backslash\dot{\mathsf G}(\A)). 
\ee
Suppose that certain theory of ``unfolding the global period integrals" yields a constant $\mathcal L_\Pi(\chi)\in \C$ such that
\[
\mathcal P_\chi=\mathcal L_\Pi(\chi)\cdot  \CP_\chi^\circ. 
\]
The function $\chi\mapsto \mathcal L_\Pi(\chi)$ on  $\mathcal X(\varepsilon)$ is the complex L-function that we are interested in.

\subsection{Rationality of special values} \label{ssec:RSV}
Now we consider the case when $\Pi$ is cohomological and its basic arithmetic properties have been understood. More precisely, we assume that
\begin{itemize}
    \item the  total relative Lie algebra cohomology 
    \[
    \oH^{\bullet}(\g_\C,K_\infty^\circ; \mathsf V\otimes \Pi_\infty)\qquad (\g_\C\textrm{ denotes the Lie algebra of $\mathsf G(\C)$})
        \]
        is nonzero (in this case we say that $\C\otimes \sf V^\vee$ is the coefficient system of $\Pi$);
        \item 
         there is a natural injective $\mathsf G^\natural$-homomorphism
        \be\label{embcoh}
         \oH^{\bullet}(\g_\C,K_\infty^\circ; \mathsf V\otimes \Pi)=\oH^{\bullet}(\g_\C,K_\infty^\circ; \mathsf V\otimes \Pi_\infty)\otimes (\otimes'_{\ell\neq \infty} \Pi_\ell)\rightarrow\C\otimes  \oH_\Phi^{\bullet}(\mathsf G,\mathsf V)
        \ee
        whose image is defined over $\sf E$;
        \item for each $\ell\neq \infty$, by certain local theory the functional $\lambda_{\sf H,\ell}$ yields an $\sf E$-form $\Pi_{\ell}(\sf E)$ of the representation $\Pi_\ell$,
        such that $\phi_\ell^\circ \in \Pi_{\ell}(\sf E)\otimes \RD(\dot{\mathsf H}(\Q_\ell)\backslash\dot{\mathsf G}(\Q_\ell))$ and 
        $\Omega_{\Pi_\ell}(\varepsilon_\ell)\cdot \CP^\circ_\ell$ is ${\rm Aut}(\C/{\sf E})$-equivariant when restricted to
        \[\mathcal X(\varepsilon_\ell)\times\left(\Pi_\ell \otimes \mathrm M(\dot{\mathsf H}(\Q_\ell)\backslash\dot{\mathsf G}(\Q_\ell))\right).\]
       \end{itemize}
Comparison of the $\sf E$-forms via the homomorphism \eqref{embcoh}  then yields an $\sf E$-form 
\[
\oH^{i}(\g_\C,K_\infty^\circ; \mathsf V\otimes \Pi_\infty)({\sf E})
\subset\oH^{i}(\g_\C,K_\infty^\circ; \mathsf V\otimes \Pi_\infty),\quad i \in \Z.
\]
Write 
\be \label{Piinf'}
\Pi_\infty':=\oH^{i_0}(\g_\C,K_\infty^\circ; \mathsf V\otimes \Pi_\infty)\quad \text{and} \quad \Pi_\infty'({\sf E}):=\oH^{i_0}(\g_\C,K_\infty^\circ; \mathsf V\otimes \Pi_\infty)({\sf E}).
\ee
Let $\mathcal X_\infty^\mathrm{tor}\subset \mathcal X_\infty$ denote the subgroup of the finite order characters. Suppose that we are given a ``normalized archimedean modular symbol" map 
\be \label{NAMS}
\widehat{\mathcal P}_\infty^\circ : \mathcal X^{\mathrm{tor}}_\infty\times\left(\Pi_\infty' \otimes \mathrm M 
 (\dot{\mathsf H}(\R))^\vee\otimes \mathrm{O}(\dot{\mathsf G}(\R)/\dot{K}_\infty^\circ)\right)\rightarrow \C,
\ee
that is linear in the second variable and  that
\[
  \widehat{\mathcal P}_\infty^\circ (\varepsilon_\infty, g.\phi)=\varepsilon_\infty(\jmath(g)) \cdot  \widehat{\mathcal P}_\infty^\circ (\varepsilon_\infty, \phi), 
\]
for all $\varepsilon_\infty \in \mathcal X_\infty^{\mathrm{tor}}$,  $\phi$ in the second factor of the domain, and $g\in \dot{K}_\infty^\natural$. 
Here  $\dot{K}_\infty^\natural$ acts trivially on $\mathrm M 
 (\dot{\mathsf H}(\R))^\vee$. We assume that the nonvanishing hypothesis holds, namely, for some vector $\phi$ is the second factor of the domain,   $\widehat{\mathcal P}_\infty^\circ(\,\cdot\, , \phi)$ is nowhere vanishing on $\mathcal X^{\mathrm{tor}}_\infty$.

We say that a right invariant measure in $\mathrm M 
 (\dot{\mathsf H}(\R))$ is rational if its product with every element in $\mathrm D(\dot{\mathsf H}(\A^\infty))$ is a rational multiple of the right invariant Tamagawa measure on $\dot{\mathsf H}(\A)$. All  such measures constitute  a  $\Q$-form of $\mathrm M 
 (\dot{\mathsf H}(\R))$, to be denoted by $\RD 
 (\dot{\mathsf H}(\R))$.
 Fix an element \[
 \widehat \phi^\circ_\infty\in \Pi_\infty'({\sf E})\otimes  \RD  
 (\dot{\mathsf H}(\R))^\vee\otimes \mathrm{O}(\dot{\mathsf G}(\R)/\dot{K}_\infty^\circ)
 \]
such that 
\[
\widehat{\mathcal P}_\infty^\circ(\varepsilon_\infty,  \widehat \phi_\infty^\circ)\neq 0\quad \textrm{for all }\varepsilon_\infty\in \mathcal X^{\mathrm{tor}}_\infty.
\]

\begin{dfnl} \label{df:period}
    The constants \[
    \Omega_{\Pi}(\varepsilon_\infty):=\left(\widehat{\mathcal P}_\infty^\circ(\varepsilon_\infty,  \widehat \phi_\infty^\circ)\right)^{-1}\in\C^\times, \quad \varepsilon_\infty\in \mathcal X^{\mathrm{tor}}_\infty, 
    \]
    are called the  periods of $\mathcal L_\Pi$. 
\end{dfnl}

The $\sf V$-balanced condition in the following definition is necessary for the arithmetic study of the complex L-function  $\mathcal L_\Pi$ via modular symbols. 
\begin{dfnl}  (a) A character  $\sf w\in \mathcal X^{\mathrm{alg}}$ is  said to be $\sf V$-balanced if    \[   \dim \Hom_{\dot{\sf G}(\overline \Q)}(\overline{\Q}_{\sf w}\otimes {\sf V}, \overline{\Q})=1.    \]  

\noindent (b)
A character $\chi_\infty\in\mathcal X_\infty$ or $\chi\in\mathcal X^{\mathrm{aut}}$ is $\sf V$-balanced if it is algebraic and the inverse of its weight  is $\sf V$-balanced. 
\end{dfnl}

Suppose that $\sf w\in \mathcal X^{\mathrm{alg}}$ is $\sf V$-balanced and defined over $\sf E$, and that  $\lambda_{\sf V, w}\in \Hom_{\dot{\mathsf G}_{\mathsf E}}(\mathsf E_{\mathsf w}\otimes\mathsf V, \mathsf E)$ is a generator.  
Write $\C_{\sf w_\infty}:=\C$ for the representation of $\sf Z(\R)$ corresponding to the character ${\sf w}_\infty$. For every character $\chi_\infty\in \mathcal X_\infty$ that is algebraic of weight ${\sf w}^{-1}$, we define the archimedean modular symbol map 
to be the composition of 
\begin{eqnarray}
\label{AMS} &\widehat{\mathcal P}_\infty^{\lambda_{\sf V, w}} : &\oH^0(\z_\C,K_{\mathsf Z,\infty}^\circ; \C_{\mathsf w_\infty}\otimes \chi_\infty) \\
\nonumber &&\times\left(\Pi_\infty' \otimes \mathrm M 
 (\dot{\mathsf H}(\R))^\vee\otimes \mathrm{O}(\dot{\mathsf G}(\R)/\dot{K}_\infty^\circ)\right)\\
\nonumber &\xrightarrow{\textrm{pull-back}}&\oH^0(\dot \g_\C,\dot K_\infty^\circ; \C_{\mathsf w_\infty}\otimes \chi_\infty )\\
 \nonumber &&\times\left(\oH^{i_0}(\dot \g_\C,\dot K_\infty^\circ; \mathsf V\otimes \Pi_\infty) \otimes \mathrm M 
 (\dot{\mathsf H}(\R))^\vee\otimes \mathrm{O}(\dot{\mathsf G}(\R)/\dot{K}_\infty^\circ)\right)\\
\nonumber &\xrightarrow{\textrm{cup product}}&\oH^{i_0}(\dot \g_\C,\dot K_\infty^\circ; (\C_{\sf w_\infty}\otimes \mathsf V)\otimes (\chi_\infty \otimes\Pi_\infty))\\
\nonumber  && \otimes \mathrm M 
 (\dot{\mathsf H}(\R))^\vee\otimes \mathrm{O}(\dot{\mathsf G}(\R)/\dot{K}_\infty^\circ)\\
\nonumber &\xrightarrow{\lambda_{\sf V, w}\otimes \mathcal P_\infty^\circ }&\oH^{i_0}(\dot \g_\C,\dot K_\infty^\circ; \C) \otimes \mathrm M 
 (\dot{\mathsf G}(\R))^\vee\otimes \mathrm{O}(\dot{\mathsf G}(\R)/\dot{K}_\infty^\circ)\\
\nonumber &\xrightarrow{\textrm{push-forward of measures}}&\oH^{i_0}(\dot \g_\C,\dot K_\infty^\circ; \C) \otimes \mathrm M 
 (\dot{\mathsf G}(\R)/\dot K_\infty^\circ)^\vee\otimes \mathrm{O}(\dot{\mathsf G}(\R)/\dot{K}_\infty^\circ)\\
\nonumber &=&\C.
\end{eqnarray}
Here $\dot \g_\C$ denotes the Lie algebra of $\dot{\mathsf G}(\C)$. 
At least in the three families of examples we are concerned, we will prove the following archimedean period relations: when $\lambda_{\sf V, w}$ is nonzero and suitably normalized,
\[
\widehat{\mathcal P}_\infty^{\lambda_{\sf V, w}}(1,\,\cdot\,)=\Upsilon_{\Pi_\infty'}(\chi_\infty)\cdot \widehat{\mathcal P}_\infty^\circ(\mathsf w_\infty\chi_\infty,\,\cdot\,)
\]
for some  constant $\Upsilon_{\Pi_\infty'}(\chi_\infty)\in \C^\times$, where 
\[
1\in \C= \C_{\mathsf w_\infty}\otimes \chi_\infty = \oH^0(\z_\C,K_{\mathsf Z,\infty}^\circ; \C_{\mathsf w_\infty}\otimes \chi_\infty)
\]
and similar notation will be used without explanation.

\begin{dfnl}
    The constants 
    \[
    \Upsilon_{\Pi_\infty'}(\chi_\infty)\in \C^\times, \qquad \textrm{$\chi_\infty\in \mathcal X_\infty$ is $\sf V$-balanced,}
    \]
    are called the modifying factors at $\infty$ for $\Pi_\infty'$.
\end{dfnl}

Similar to \eqref{embpheck00} we have a linear embedding
\be\label{embpheck002}
  \oH^{0}(\z_\C,K_{\mathsf Z,\infty}^\circ;  \C_{\mathsf w_\infty}\otimes \chi)\rightarrow \C\otimes\oH^0({\mathsf Z}, {\sf E}_{\mathsf w}), \qquad 1\mapsto  \mathsf w_\infty\cdot \chi. 
\ee
Under certain growth conditions we have the following commutative diagram, which reflects the fact that  modular symbols interpret  period integrals: 
\be\label{cdm}
\begin{CD}
\oH^{0}(\z_\C,K_{\mathsf Z,\infty}^\circ;  \C_{\mathsf w_\infty}\otimes \chi)\times 
\oH^{i_0}(\g_\C,K_\infty^\circ;  \mathsf V\otimes \Pi)\otimes \RD(\dot{\mathsf G}) @>\widehat{\mathcal P}_\infty^{\lambda_{\sf V, w}}\otimes (\otimes_{\ell\neq \infty} {\mathcal P}_\ell^\circ)>>  \C\\
            @V \eqref{embpheck002}\textrm{ and }\eqref{embcoh} VV          @VV \mathcal L_{\Pi}(\chi) V\\
(\C\otimes \oH^0(\mathsf Z,  \mathsf E_{\mathsf w}))\times (\C\otimes\oH^{i_0}_\Phi(\mathsf G, \mathsf V)\otimes \RD(\dot{\mathsf G}))@> {\mathscr M}_{\lambda_{\sf V, w}}>>  \C.
            \end{CD}
\ee
Here $\chi\in\mathcal X^{\mathrm{aut}}$ is an algebraic automorphic  character of weight $\sf w^{-1}$, and  
the identification 
\[
\RD(\dot{\mathsf G})=\RD 
 (\dot{\mathsf H}(\R))^\vee\otimes \mathrm{O}(\dot{\mathsf G}(\R)/\dot{K}_\infty^\circ)\otimes \RD(\dot{\mathsf H}(\A^\infty)\backslash \dot{\mathsf G}(\A^\infty))
\]
is used for the definition of the top horizontal arrow. Implicitly we have used the identification $\mathrm D(\dot{\mathsf H}(\R))\otimes \mathrm D(\dot{\mathsf H}(\A^\infty))=\Q $ such that the right invariant Tamagawa measure is identified with $1\in \Q$.  In particular if $\chi\in \CX(\varepsilon)$, then applying the above commutative diagram to the test vector $\widehat\phi_\infty^\circ\otimes(\otimes_{\ell\neq \infty}\phi_\ell^\circ)$, we obtain the equality
\be\label{rational}
 \mathscr M_{\lambda_{\sf V, w}}(\mathsf w_\infty\chi, \widehat \phi_\infty^\circ\otimes(\otimes_{\ell\neq \infty} \phi_\ell^\circ)) = \frac{\Upsilon_{\Pi_\infty'}(\chi_\infty)\cdot\mathcal L_{\Pi}(\chi) } {(\prod_{\ell\neq\infty}\Omega_{\Pi_\ell}(\varepsilon_\ell)) \cdot\Omega_{\Pi}(\mathsf w_\infty\chi_\infty)}.
\ee
In this case the number in \eqref{rational} belongs to $\sf E$ as long as $ {\sf w}_\infty \chi$ is $\sf E^\times$-valued.

\subsection{$p$-adic L-functions} \label{ssec:PL}

The homomorphism \eqref{embcoh} in degree $i_0$ naturally  extends to a homomorphism 
        \begin{eqnarray*}
           \iota_\Pi&:& 
        \widehat{ \oH^{i_0}(\g_\C,K_\infty^\circ; \mathsf V\otimes \Pi)}_{\p\mathrm{-sm}}=\Pi_\infty'\otimes \left(\widehat{\Pi_p}\right)_{\p\mathrm{-sm}}\otimes(\otimes'_{\ell\neq \infty,p} \Pi_\ell)\\
        &\rightarrow&\C\otimes  \widehat{\oH_\Phi^{i_0}(\mathsf G,\mathsf V) }_{\p\mathrm{-sm}},
        \end{eqnarray*}
        and the commutative diagram \eqref{cdm} extends to a commutative diagram 
        \be\label{cdm22}
\begin{CD}
\oH^{0}(\z_\C,K_{\mathsf Z,\infty}^\circ;  \C_{\mathsf w_\infty}\otimes \chi)\times 
\widehat{ \oH^{i_0}(\g_\C,K_\infty^\circ; \mathsf V\otimes \Pi)}_{\p\mathrm{-sm}}\otimes \RD(\dot{\mathsf G}) @>\widehat{\mathcal P}_\infty^{\lambda_{\sf V, w}}\otimes (\otimes_{\ell\neq \infty} {\mathcal P}_\ell^\circ)>>  \C\\
            @V \eqref{embpheck002}\textrm{ and }\iota_{\Pi} VV          @VV \mathcal L_{\Pi}(\chi) V\\
(\C\otimes \oH^0(\mathsf Z,  \mathsf E_{\mathsf w}))\times (\C\otimes\widehat{\oH_\Phi^{i_0}(\mathsf G,\mathsf V) }_{\p\mathrm{-sm}}\otimes \RD(\dot{\mathsf G}))@> {\mathscr M}_{\lambda_{\sf V, w}}>>  \C.\\
            \end{CD}
\ee

The space \[
  \CB_P(\Pi_p):=\left( \left(\widehat{\Pi_p}\right)_{\p\textrm{-sm}}\right)^N
  \]
  is a smooth representation of $L$ of finite length, which is  defined over ${\sf E}$ 
  by the second adjointness theorem
  (see \cite[Theorems 6.4 and 0.1]{Be87}). 

\begin{dfnl}
    (a) An irreducible subrepresentation $\Pi_p'$ of $\CB_P(\Pi_p)$ defined over $\overline\Q$ is called a refinement of $\Pi_p$.  

   \noindent (b) A refinement $\Pi_p'$ of $\Pi_p$ is called nearly ordinary if the  (unique) character of $T$ that occurs in $\C_p\otimes \Pi_p'(\overline\Q)
   \otimes \RD(\g/\p)$ is nearly $V$-ordinary, where $\Pi_p'(\overline\Q)$ denotes the $\overline\Q$-form  of $\Pi_p'$.  
    \end{dfnl}

Let $\Pi_p'$ be a nearly ordinary refinement of $\Pi_p$  defined over ${\sf E}$, whose ${\sf E}$-form  is denoted by $\Pi_p'({\sf E})$. Now we assume that via the homomorphism $\iota_\Pi$, $\mathscr H$ is identified with an $\sf E$-subspace of 
 \[
 \widehat{\oH^{i_0}(\g_\C,K_\infty^\circ; \mathsf V\otimes \Pi)}_{\p\mathrm{-sm}}\otimes \RD(\g/\p)
 \]
of the form
 \[
\mathscr H=\otimes'_{\ell} \mathscr H_\ell,
\]
where 
\begin{itemize}
    \item $\mathscr H_\infty=\Pi_\infty'(\sf E)$;
    \item $\mathscr H_p=\Pi_p'(\sf E) \otimes \RD(\g/\p)$;
    \item $\mathscr H_\ell=\Pi_\ell(\sf E)$ for each $\ell\neq \infty, p$.
\end{itemize}

In the rest of this section we drop the assumption that $Z_0=Z$, 
and  fix an ${\sf E}^\times$-valued character $\varepsilon = \otimes_{\ell\neq\infty}\varepsilon_\ell$ as in 
\eqref{ram}. Set $\dot P:=\dot G\cap P$ whose Lie algebra equals $\dot \p$. For the definition of $p$-adic L-functions, suppose that we are given a ``normalized refined period" map 
\[
\widehat{\mathcal P}_p^\circ : \mathcal X_p\times \left(\Pi_p' 
 \otimes \mathrm M(\dot H\backslash \dot G)\right)\rightarrow  \C \qquad(\dot H:=\dot{\mathsf H}(\Q_p))
\]
such that
\begin{itemize}
\item 
  it is algebraic in the first variable and linear in the second variable;
\item it holds that 
\[
  \widehat{\mathcal P}_p^\circ(\chi_p, g.\phi)=\chi_p^{-1}(\jmath(g)) \cdot \widehat{\mathcal P}_p^\circ(\chi_p, \phi), 
\]
for all $\chi_p\in \mathcal X_p$, $\phi$ in the second factor of the domain, and $g\in \dot P$;
\item there is an element 
\[
\widehat \phi_p^\circ\in \Pi_p'(\mathsf E) \otimes \RD(\dot H\backslash \dot G)
\]
such that $\widehat{\mathcal P}_p^\circ(\,\cdot\,,  \widehat \phi_p^\circ)$ takes a nonzero constant value $(\Omega_{\Pi_p'}(\varepsilon_{p}))^{-1}$  on $\mathcal X(\varepsilon_{p})$, and the map  $\Omega_{\Pi_p'}(\varepsilon_{p})\cdot \widehat{\mathcal P}_p^\circ$ is $\Aut(\C/\mathsf E)$-equivariant when restricted to
        \[\mathcal X_p(\varepsilon_p)\times\left( \Pi_p' 
 \otimes \mathrm M(\dot H\backslash \dot G)\right).\] 
\end{itemize}

Write 
\begin{eqnarray*}
\widehat \phi^\circ&:=&\widehat \phi_\infty^\circ\otimes \widehat \phi_p^\circ \otimes(\otimes_{\ell\neq \infty,p}\phi^\circ_\ell)\\
&\in& \mathscr H\otimes   \RD
 (\dot{\mathsf H}(\R))^\vee\otimes \mathrm{O}(\dot{\mathsf G}(\R)/\dot{K}_\infty^\circ)\otimes   \RD(\dot \g/\dot{\p})^\vee\otimes \RD(\dot{\mathsf H}(\A^\infty)\backslash\dot{\mathsf G}(\A^\infty)) \\
 &=&\mathscr H\otimes   \RD
 (\dot{\mathsf G}(\R))^\vee\otimes \mathrm{O}(\dot{\mathsf G}(\R)/\dot{K}_\infty^\circ)\otimes   \RD(\dot \g/\dot{\p})^\vee\otimes \RD(\dot{\mathsf H}(\A)\backslash\dot{\mathsf G}(\A)) \\
 &=& \mathscr H\otimes   \RD
 (\dot{\mathsf G}, \dot \p).
    \end{eqnarray*}

    \begin{dfnl} \label{df:padicL}
The continuous linear functional        
        \[
        \mathcal L_{\varepsilon \otimes  \mathscr H }:=(\mathcal L_{\lambda_0\otimes \widehat \phi^\circ})|_{\con({\sf Z}, E)(\varepsilon)}
        \]
        is called the $p$-adic L-function attached to $\varepsilon\otimes \mathscr H$.
          \end{dfnl}

Note that the $p$-adic L-function $\mathcal L_{\varepsilon \otimes  \mathscr H }$ defined above uniquely extends to a continuous linear functional $\mathcal L_{\varepsilon \otimes  \mathscr H }: \con({\sf Z}, \C_p)(\varepsilon)\rightarrow \C_p$. 

Define a compact $p$-adic Lie group
\[
\mathsf Z^{\flat,\z_0} := \varprojlim_{\fZ} {\sf Z}(\Q) \bs {\mathsf Z}(\A) / \left({\mathsf Z}(\R)^\circ\cdot D^{\mathrm{max}}_{Z_0} \cdot \fZ  \right), 
\]
where $\fZ$ runs over all open compact subgroups of $\mathsf Z(\Q_p)$. 
If $\varepsilon$ is trivial, then as a Banach space $\con({\sf Z}, E)(\varepsilon)$ agrees with $\con(\mathsf Z^{\flat,\z_0}, E)$ (the space of all $E$-valued continuous functions on $\mathsf Z^{\flat,\z_0}$) and thus the $p$-adic L-fucntion $\mathcal L_{\varepsilon \otimes  \mathscr H }$ is an $E$-valued measure on $\mathsf Z^{\flat,\z_0}$. In general,  $\con({\sf Z}, E)(\varepsilon)$ is either a zero space or isometrically isomorphic to $\con(\mathsf Z^{\flat,\z_0}, E)$.  

    \begin{prpl}\label{defp}
If
\be\label{mulonep}
\dim \Hom_{\dot P}(\chi_p\otimes \mathscr H_p \otimes \RD(\dot \g/\dot{\p})^\vee\otimes \mathrm M(\dot H\backslash \dot G),\C)\leq 1
\ee
for all $\chi_p\in \CX_p(\varepsilon_p)$,  
then $\mathcal L_{\varepsilon \otimes  \mathscr H }$ is independent of $\widehat \phi^\circ_p$. Similarly, if \[
\dim \Hom_{\dot{\mathsf G} (\Q_\ell)}(\chi_\ell \otimes \Pi_\ell \otimes \RD(\dot{\mathsf H}(\Q_\ell) \backslash \dot{\mathsf G}(\Q_\ell)),\C)\leq 1
\]
for all $\ell\neq \infty,\,p$ and all $\chi_\ell \in \mathcal X(\varepsilon_\ell)$, then 
 $\mathcal L_{\varepsilon \otimes  \mathscr H }$ is independent of the family 
 $\{\phi^\circ_\ell\}_{\ell\neq \infty,\,p}$. 
\end{prpl}

\begin{proof}
We only prove the first assertion. The proof of the second one is similar and will be omitted.  Without loss of generality we assume that $\sf E=\overline \Q$ so that $E=\C_p$. Suppose that $\chi = \otimes_\ell \chi_\ell\in \CX(\varepsilon)$ is a finite order character. Since all such characters are $\sf E^\times$-valued and span a dense space in  $\con(\mathsf Z,E)(\varepsilon)$, we only need to show that 
 $\mathcal L_{\varepsilon \otimes  \mathscr H }(\chi) $ is independent of $\widehat\phi_p^\circ$.

By definition, $\mathcal L_{\varepsilon \otimes  \mathscr H }(\chi)  = \widetilde{\mathscr M}_{\lambda_0}(\chi, \widetilde{\xi}(\widehat\phi^\circ))$, and by \eqref{mulonep}, there exists $c_\chi\in {\sf E}$ such that 
\[
\widetilde{\mathscr M}_{\lambda_0}\left(\chi, \widetilde{\xi}\left(\widehat\phi_\infty^\circ\otimes\varphi_p\otimes (\otimes_{\ell\neq \infty,p}\phi^\circ_\ell)\right)\right) = c_\chi\cdot \widehat\CP_p^\circ(\chi_p,\varphi_p)
\]
for all $\varphi_p \in \mathscr H_p \otimes \RD(\dot \g/\dot{\p})^\vee \otimes \RD(\dot H\backslash \dot G)$. Thus $\mathcal L_{\varepsilon \otimes  \mathscr H }(\chi)=c_\chi\cdot 
(\Omega_{\Pi_p'}(\varepsilon_p))^{-1}$ does not depend on the choice of $\widehat \phi^\circ_p$ satisfying that 
$\widehat{\mathcal P}_p^\circ(\,\cdot\,,  \widehat \phi_p^\circ)=(\Omega_{\Pi_p'}(\varepsilon_{p}))^{-1}$ on $\CX_p(\varepsilon_p)$. This proves the proposition. 
\end{proof}

In conclusion, as least when the conditions of Proposition \ref{defp} are satisfied, we have defined a $p$-adic L-function for $\varepsilon\otimes \mathscr H$ which only depends on $\lambda_0$ and $\widehat\phi_\infty^\circ$. 


Since $\dot G$ is $P$-spherical (namely \eqref{pspherical} holds), the map \eqref{nz} for $\ell=p$ naturally extends to a map (the stable normalized zeta integral) 
\be\label{nz2}
\begin{array}{rcl}
  {\mathcal P}_p^\circ : \mathcal X_p\times\left(\left(\widehat{\Pi_p}\right)_{\p\mathrm{-sm}} \otimes \mathrm M(\dot{H}\backslash\dot{ G})\right) &\rightarrow &\C,\\
  (\chi_p, \phi\otimes \tau)&\mapsto & {\mathcal P}_p^\circ(\chi_p, \phi_\fG\otimes \tau),
  \end{array}
\ee
where $\fG$ is a sufficiently small open compact subgroup of $G$ and $\phi_\fG$ denotes the image of $\phi$ under the projection map $\widehat{\Pi_p}\rightarrow \Pi_p^\fG$. 
The map \eqref{nz2} is still   algebraic in the first variable and linear in the second variable,  and satisfies the analog of \eqref{equip} for $g\in \dot P$. Moreover, its multipliction by  $\Omega_{\Pi_p}(\varepsilon_p)$ is still $\Aut(\C/\sf E)$-equivariant. 

At least when \eqref{mulonep} holds, for each $\chi_p\in \CX_p(\varepsilon_p)$, there is a unique constant $\Upsilon_{\Pi_p'}(\chi_p)$ such that 
\be\label{defmodp}
{\mathcal P}_p^\circ(\chi_p, \, \cdot\,)|_{ \mathscr H_p \otimes \RD(\dot \g/\dot{\p})^\vee\otimes \mathrm M(\dot H\backslash \dot G)}=\Upsilon_{\Pi_p'}(\chi_p)\cdot \widehat{\mathcal P}_p^\circ(\chi_p, \, \cdot\,).
\ee

\begin{dfnl} \label{df:MFp}
    The constants 
    \[
      \Upsilon_{\Pi_p'}(\chi_p)\in \C,\qquad \chi_p\in \CX_p(\varepsilon_p), 
    \]
    are called the modifying factors at $p$ for $\Pi_p'$.
\end{dfnl}

Applying the equality \eqref{defmodp} to the vector $\widehat \phi_p^\circ$, we know that $\Upsilon_{\Pi_p'}$ is algebraic as a function on $\CX_p(\varepsilon_p)$.         
We remark that the $p$-adic L-function and the modifying factors are defined even when  no $\sf w\in \mathcal X^{\mathrm{alg}}$ is $\sf V$-balanced. 
However, for all $\sf V$-balanced characters $\chi=\otimes_\ell \chi_\ell\in \mathcal X(\varepsilon)$, 
by applying the commutative 
diagram \eqref{mainthmcd} to the pair $(\sf w_\infty\cdot \chi,\widehat \phi^\circ)$ where ${\sf w}$ is the inverse of the weight of $\chi$, we get the equalities  (assuming  $\lambda_{\sf V, w}|_{V^\n}=\lambda_0$)
\begin{eqnarray}
 \label{pintp}   &&\mathcal L_{\varepsilon \otimes  \mathscr H }(\chi^\flat)\\
  \nonumber   &=&\widetilde{\mathscr M}_{\lambda_0}(\chi^\flat, \widetilde{\xi}(\widehat \phi^\circ))\\
 \nonumber   &=&
    {\mathscr M}_{\lambda_{\sf V, w}}(\sf w_\infty\cdot \chi,\widehat \phi^\circ)\qquad  \quad \textrm{by Theorem \ref{mainthmcd}}\\
  \nonumber  &=&\mathcal L_{\Pi}(\chi)\cdot \widehat{\mathcal P}_\infty^{\lambda_{\sf V, w}}(1,\widehat{\phi}_\infty^\circ)\cdot {\mathcal P}_p^\circ(\chi_p, \widehat{\phi}_p^\circ)\cdot \prod_{\ell\neq\infty, p}\CP^\circ_\ell(\chi_\ell, \phi_\ell^\circ)
  \qquad  \quad \textrm{by \eqref{cdm22}}\\
    \nonumber &=&\frac{\Upsilon_{\Pi_\infty'}(\chi_\infty)\cdot \Upsilon_{\Pi_p'}(\chi_p)\cdot\mathcal L_{\Pi}(\chi)}{(\prod_{\ell\neq \infty, p}\Omega_{\Pi_\ell}(\varepsilon_\ell))\cdot \Omega_{\Pi_p'}(\varepsilon_{p})\cdot\Omega_{\Pi}(\sf w_\infty\chi_\infty)}. 
\end{eqnarray}

\begin{dfnl}
    A character $\chi': {\sf Z}(\Q)\bs {\sf Z}(\A)\rightarrow \C_p^\times$ is an exceptional zero of the $p$-adic L-function $\mathcal L_{\varepsilon \otimes  \mathscr H }$ if $\chi'=\chi^\flat$ for some ${\sf V}$-balanced character $\chi=\otimes_\ell\chi_\ell\in \CX(\varepsilon)$ such that $\Upsilon_{\Pi_p'}(\chi_p)=0$. 
\end{dfnl}

It is clear from \eqref{pintp} that all exceptional zeros of $\mathcal L_{\varepsilon \otimes  \mathscr H }$ are its zeros.

 

\begin{leml}\label{lambda01}
Suppose that $P$ is defined over $\Q_p\cap \sf E$ and 
\be\label{homvn}
\dim \Hom_{E\otimes \dot \p}(V^\n,  E)=1, 
\ee
so that  $V^\n$ is defined over $\sf E$ and the one-dimensional space in \eqref{homvn} is also defined over $\sf E$. Assume that $\lambda_0$ is a generator of  the one-dimensional space in \eqref{homvn} and is defined over $\sf E$. 
  Then for every $\sf V$-balanced algebraic character $\sf w$ of $\mathsf Z_{\sf E}$, there exists a unique element  $\lambda_{\sf V,\sf w}\in \Hom_{\dot{\sf G}_{\sf E}}(\sf E_{\sf w}\otimes {\sf V}, {\sf E})$ such  that 
 $\lambda_{\sf V, w}|_{V^\n}=\lambda_0$. 
\end{leml}

\begin{proof}
This is implied by the transversal condition. 
\end{proof}

\subsection{Rankin-Selberg $p$-adic L-functions for $\GL_n \times\GL_{n-1}$}  \label{ssec:RSGL}

Let $\rk$ be a number field with adele ring $\A_\rk$.  Let $\Pi = \Pi_n\boxtimes\Pi_{n-1}$ ($n\geq 2$) be an irreducible automorphic representation
of $\GL_n(\A_\rk)\times \GL_{n-1}(\A_\rk)$ that is  regular algebraic in the sense of \cite{Cl90}. 
We assume that $\Pi_n$ is cuspidal, and  $\Pi_{n-1}$ is tamely isobaric in the sense of \cite[Section 6.2]{LLS24}. 

Denote ${\sf G}_m:=\Res_{\rk/\Q}\GL_m$ for every positive integer $m$. Here and henceforth $\Res$ indicates the Weil restriction of scalars. 
Suppose that the 5-tuple in \eqref{five} is 
 \[
  \begin{cases}
     {\sf G}:={\sf G}_n\times {\sf G}_{n-1}; \ 
\dot {\sf G}:={\sf G}_{n-1}; \ 
{\sf Z}:={\sf G}_1;&\\
\imath: \dot{\sf G} \to {\sf G},\quad g\mapsto \left(\begin{bmatrix} g & 0 \\ 0 & 1\end{bmatrix}, g\right)\,\textrm{ is the diagnal embedding}; &\smallskip\\
 \jmath: \dot{\sf G} \to {\sf Z}\,\textrm{ is the determinant homomorphism}. &\\
 \end{cases}
 \]
Then $\Pi$ is an irreducible automorphic representation of $\mathsf G(\A)$. Let $\sf P$ be a Borel subgroup of $\sf G$ as in \eqref{transP}, which is transversal to $\dot{\sf G}$ (in the sense that $\sf P\cdot \dot{\sf G}$ is open in $\sf G$). 
Suppose that $P={\sf P}(\Q_p)$. 

Suppose that $\Q(\Pi)\subset \sf E$ and $\Pi_p$  has a nearly ordinary refinement $\Pi_p'\subset \CB_P(\Pi_p)$ defined over $\sf E$. Here $\Q(\Pi)$ denotes the rationality field of $\Pi$, namely the subfield of $\C$ consisting of the elements that are fixed by all field automorphisms $\sigma$ of $\C$ such that $\Pi_\ell\otimes_{\C,\sigma} \C\cong \Pi_\ell$ for all $\ell\neq \infty$ (see \cite[3.1]{Cl90}). Similar notation for rationality fields applies to other automorphic representations.
The coefficient system of $\Pi$  is defined over ${\sf E}$ by \cite{Cl90} (see \cite[Theorem 2.14]{JST19}). 

The complex L-function in Section \ref{ssec:CLR} for this example is the Rankin-Selberg  L-function
$\CL_\Pi(\chi):=\oL(\frac{1}{2}, \Pi\times\chi)$, where $\chi: \rk^\times \bs\A_\rk^\times \to \C^\times$ is a Hecke character. In the previous work \cite{LLS24}, under a balanced condition on ${\sf V}$  we have proved the period relations for all the critical values of $\oL(s, \Pi\times \chi)$ when $\chi$ is of finite order. 
The same proof extends the period relations  
to the case that $\chi$ is an arbitrary ${\sf V}$-balanced Hecke character, and in particular the archimdean period relations give the modifying factors $\Upsilon_{\Pi_\infty'}(\chi_\infty)$ at $\infty$ in \eqref{MFinfrs}. This is the automorphic analog (due to Blasius \cite{Bl97}) of the celebrated conjecture of 
Deligne \cite{Del79}.

Suppose that $Z_0$ is trivial. 
Then  ${\sf Z}^{\flat,\{0\}}=\CC_\rk(p^\infty)$ is the idele class group of infinite level at $p$ defined by
\[
\CC_\rk(p^\infty):=\varprojlim_{m\in\BN\setminus\{0\}} \rk^\times \bs \A_\rk^\times / \left( (\rk_\infty^\times)^\circ \cdot \prod_{v\nmid \infty p} \CO_v^\times \cdot (1+p^m\CO_p)\right).
\]
Here and henceforth $\rk_v$ denotes the local field corresponding to a place $v$ of $\rk$, with ring of integers $\CO_v$ if $v$ is finite, and 
$\CO_p:=\CO_\rk\otimes_\Z \Z_p$ where $\CO_\rk$ is the ring of integers of $\rk$.
Denote by $\rk^{(\infty p)}$  the maximal abelian extension of $\rk$ that is  unramified outside $\infty p$. Then
$\CC_\rk(p^\infty)\cong   {\rm Gal}(\rk^{(\infty p)}/\rk)$ by class field theory.

 Applying the main theory of this paper as explained earlier, we will construct a $p$-adic L-function $\mathscr L_\Pi:=\mathcal L_{\varepsilon \otimes  \mathscr H }$, which is a continuous linear functional on $\con({\sf Z}, E)(\varepsilon)$. Here the ramification type $\varepsilon$ is assumed to be $\sf E^\times$-valued as before.   
In particular when $\varepsilon$ is trivial,  $\mathscr L_\Pi$ is an $E$-valued $p$-adic measure on $\CC_\rk(p^\infty)$. 
We emphasize that the construction of $\mathscr L_\Pi$ does not require the existence of  ${\sf V}$-balanced characters.


In Sections \ref{ssec:OOI}--\ref{ssec:SI} we will explicitly calculate the modifying   factor  $\Upsilon_{\Pi_p'}(\chi_p)$ at $p$  as given by \eqref{MFinfrs}. 
This is consistent with the conjecture  given by Coates and Perrin-Riou in \cite{CPR89, Co89}.

As the first application of Theorem \ref{thmcint},  we obtain the following main result on the $p$-adic L-function for this example, which shows that 
$\mathscr L_\Pi(\chi^\flat)$ interpolates $\oL(\frac{1}{2}, \Pi\times\chi)$ for all ${\sf V}$-balanced  Hecke characters 
$\chi\in \CX(\varepsilon)$  with explicit modifying factors at $p$, and the interpolation is consistent with the Principal Conjecture in \cite[Page 49]{CPR89}.

\begin{thml} [Theorem \ref{padicLrs}] \label{padicLrs0}
Let the notations and assumptions be as above. 
Then 
\[ 
\mathscr L_\Pi(\chi^\flat)  = \frac{\Upsilon_{\Pi_\infty'}(\chi_\infty)\cdot \Upsilon_{\Pi_p'}(\chi_p)\cdot \oL(\frac{1}{2}, \Pi\times \chi)}{ \mathscr G_\psi(\chi^{(p)})^{\frac{n(n-1)}{2}}\cdot\mathscr G_\psi(\chi_{\Pi_{n-1}}^{(p)}) 
\cdot  \Omega_{\Pi_p'}\cdot \Omega_{\Pi}({\sf w}_\infty \chi_\infty)   }
  \]
  for all ${\sf V}$-balanced Hecke characters $\chi=\otimes_\ell \chi_\ell \in \CX(\varepsilon)$, where  
  \begin{itemize}
  \item   ${\sf w}$ is the inverse of the weight of $\chi$;
\item $\Upsilon_{\Pi_\infty'}(\chi_\infty)$ is the modifying factor at $\infty$ in \eqref{MFinfrs};
\item $\Upsilon_{\Pi_p'}(\chi_p)$ is the modifying factor at $p$ in \eqref{MFprs};
\item $\Omega_{\Pi_p'}$ is in \eqref{omegapi'};
\item $\mathscr G_\psi(\chi^{(p)})$  and $\mathscr G_\psi(\chi_{\Pi_{n-1}}^{(p)})$ are   the Gauss sums outside $p$ as in \eqref{gaussp}, respectively of $\chi$ and the central character $\chi_{\Pi_{n-1}}$ of $\Pi_{n-1}$;
\item $\{\Omega_{\Pi}(\varepsilon_\infty)\in\C^\times\}_{\varepsilon_\infty \in \widehat{\rk_\infty^{\times, \natural}}}$ are the Whittaker periods of $\Pi$ in \eqref{whit-per}.  
  \end{itemize}
\end{thml}

\begin{remarkl}
The ${\sf V}$-balanced criterion is in \eqref{eq:bp}, and all ${\sf V}$-balanced Hecke characters are  critical for $\Pi$ (Definition \ref{critrs}). If there is a ${\sf V}$-balanced Hecke character in $\CX(\varepsilon)$, then $\mathscr L_\Pi$ is uniquely determined by the interpolation property
in Theorem \ref{padicLrs0}. If furthermore $\rk$ contains no CM subfield, then all critical Hecke characters for $\Pi$ are ${\sf V}$-balanced.
\end{remarkl}

 Here and henceforth,
 \begin{itemize}
     \item $\psi$ is the nontrivial additive character of $\rk\bs \A_\rk$  defined as the composition 
\be \label{psi}
\psi: \rk\bs \A_\rk \xrightarrow{{\rm Tr}_{\rk/\Q}} \Q\bs \A \to \Q\bs \A / \prod_{\ell\neq \infty}\Z_\ell = \R/\Z \xrightarrow{\psi_\R} \C^\times, 
\ee
where ${\rm Tr}_{\rk/\Q}$ is the trace map,  $\psi_\R(x):=e^{2\pi {\rm i} x}$, ${\rm i}:=\sqrt{-1}$;
\item 
 ``$\widehat{\phantom A}$" over an abelian topological group indicates the group of its characters;
 \item  notations similar to $\chi_{\Pi_{n-1}}$ for central characters will be used without explanation. 
\end{itemize}

Let us explain the modifying factors  and $\Omega_{\Pi_p'}$ in Theorem \ref{padicLrs0}, which are calculated  explicitly  based on \cite{LLSS23}.  Let  $\CE_\rk$ be the set of field embeddings $\iota:\rk\to\overline\Q$. Denote the highest weight of $\sf V^\vee$ by
\[
(\mu,\nu)= \left(\{\mu^\iota\}_{\iota\in\CE_\rk}, \{\nu^\iota\}_{\iota\in\CE_\rk}\right) \in (\Z^n)^{\CE_\rk}\times (\Z^{n-1})^{\CE_\rk},
\]
where every $\mu^\iota$ is a sequence of integers $\mu^\iota_1\geq \mu^\iota_2\geq\cdots \geq \mu^\iota_n$, and $\nu^\iota$ is similar. 
Throughout the paper, when ${\sf Z}={\sf G}_1$ we write ${\sf w} = \prod_{\iota\in \CE_\rk}\iota^{{\sf w}_\iota}$, where ${\sf w}_\iota\in\Z$. Then 
\be \label{MFinfrs}
\Upsilon_{\Pi_\infty'}(\chi_\infty) := {\rm i}^{-\sum_{\iota\in \CE_\rk}\sum^{n-1}_{i=1}(n-i)(\mu^\iota_i + \nu^\iota_i - {\sf w}_\iota)}
\cdot (-1)^{\sum_{\iota\in\CE_\rk}\sum_{i>j,\, i+j\leq n}(\mu^\iota_i+\nu^\iota_j-{\sf w}_\iota)}.
\ee

The refinement $\Pi_p'$ can be also viewed as a character of the torus $L=P/N$. Assume that conjugation of $P$ to the Borel subgroup of lower triangular matrices in $G$ takes $\Pi_p'$ to a character of the form 
\[ 
\kappa=(\kappa_{1}, \kappa_{2}, \dots, \kappa_{n}, \kappa_{1}',  \kappa_{2}',\dots, \kappa_{n-1}')
\]
interpreted in the usual way. Here $\kappa_i = \otimes_{\wp\mid p}\kappa_{i, \wp}$ and  $\kappa_{i, \wp}$ is a character of $\rk_\wp^\times$ ($i=1,2,\dots, n$), and similar notations will be used for $\kappa_j'$ ($j=1,2,\dots, n-1$) and $\chi_p$.  Write $\psi=\otimes_v\psi_v$ where $v$ runs over all places of $\rk$. 
Then 
\be \label{MFprs}
\begin{aligned}
\Upsilon_{\Pi_p'}(\chi_p):  =\, &  \prod_{i>j, \, i+j\leq n}(\kappa_{i} \kappa_{j}'\chi_p)(-1)\\
& \cdot \prod_{\wp\mid p} \frac{\prod_{i+j\leq n}\gamma\left(n+1-i-j, 
\kappa_{i,\wp}^{-1}\kappa_{j,\wp}'^{-1}\chi_{\wp}^{-1}, \psi_\wp^{-1}\right)}{\oL(\frac{1}{2}, \Pi_\wp\times\chi_\wp)}.
\end{aligned}
\ee
Here and henceforth, for every place $v$ of $\rk$,  $\gamma(s, \omega, \psi_v')$ denotes the Tate $\gamma$-factor 
for  a character $\omega$ of $\rk_v^\times$ and a nontrivial additive character $\psi_v'$ of $\rk_v$ (see \cite{T79, Ku03}). Note that $\Upsilon_{\Pi_p'}$ is an algebraic function on $\mathcal X_p$ by Proposition \ref{prp:st}.

For every finite place $v$ of $\rk$,  the conductor of $\psi_v$ is the inverse different of $\rk_v$ given by 
\[
\frak{d}_v^{-1}:=\{ x\in \rk_v \,:\, {\rm Tr}_{\rk_v/\Q_\ell}(x\CO_v)\subset \mathbb{Z}_\ell\},
\] 
where $\ell$ is the residue characteristic of $\rk_v$. Then the volume of $\CO_v$ under the self-dual Haar measure on $\rk_v$ with respect to $\psi_v$ is equal to
\be \label{cond}
c_v:=[\CO_v: \frak{d}_v]^{-1/2}.
\ee 
Write
\[
c_{p}: = \prod_{\wp\mid p}c_{\wp} = \prod_{\wp\mid p}[\CO_\wp: \frak{d}_\wp]^{-1/2},
\]
and $\omega_{\psi_p}(\kappa):=\prod_{\wp\mid p}\omega_{\psi_\wp}(\kappa_\wp)$, where
\be \label{gaussrs}
 \omega_{\psi_\wp}(\kappa_\wp): = \prod^{n-1}_{i=1} \mathscr{G}_{\psi_\wp}(\kappa_{i, \wp})^{i-n}\cdot \prod^{n-2}_{j=1}\mathscr{G}_{\psi_\wp}(\kappa_{j,\wp}')^{j-n+1}
\ee
and the Gauss sums are as in \eqref{df:gauss}. Define
\be \label{omegapi'}
\Omega_{\Pi_p'}:=c_p^{n(n-1)/2}\cdot \omega_{\psi_p}(\kappa).
\ee

Using Theorem \ref{padicLggp0} and the formula of $\Upsilon_{\Pi_p'}(\chi_p)$, we determine the exceptional zeros of $\mathscr{L}_\Pi$ explicitly in Proposition \ref{excep0rs}. The existence of exceptional zeros of $p$-adic L-functions is of utmost arithmetic significance.  For instance see the discussion in \cite{MTT86} for an elliptic curve $\mathscr{E}$ over $\Q$. In this case, assuming that the ramification type $\varepsilon$ in \eqref{ram} is trivial,  an exceptional zero occurs if and only if $\mathscr{E}$ has split multiplicative reduction at $p$, which corresponds to the Steinberg representation of $\GL_2(\Q_p)$. 
As a direct application of Proposition \ref{excep0rs},  we have the following generalization.

\begin{exl} \label{symrs}
Let  $\mathscr{E}$ and $\mathscr{E}'$ be elliptic curves over $\Q$ without complex multiplication. By the modularity of elliptic curves over $\Q$ \cite{W95, BCDT01} and the  recent groundbreaking result on symmetric power functoriality in \cite{NT21a, NT21b},  ${\rm Sym}^n \mathscr{E}$ and ${\rm Sym}^{n-1}\mathscr{E}'$ ($n\geq 1$) correspond to irreducible regular algebraic cuspidal automorphic representations 
$\Pi_{n+1}$ and $\Pi_n$ of $\GL_{n+1}(\A)$ and $\GL_{n}(\A)$ respectively. Then for $\Pi := \Pi_{n+1}\boxtimes \Pi_{n}$ we have that
\[
\oL(s, \Pi) = \oL\left(s+n-\frac{1}{2}, {\rm Sym}^{n} \mathscr{E}\times {\rm Sym}^{n-1} \mathscr{E}'\right).
\]

Assume that $\mathscr{E}$ and $\mathscr{E}'$ are  $p$-ordinary, so that they have good or multiplicative reductions at $p$. When $\mathscr{E}$ has  good ordinary reduction at $p$, write $\alpha_p$ for one of the two eigenvalues in $\overline\Q^\times$ that is a $p$-adic unit, of the $p$-th power Frobenius endomorphism on the Tate module $T_\ell(\widetilde {\mathscr{E}}_{\mathbb{F}_p})$ 
 (see \cite[V.2]{Sil09}). Here $\widetilde{\mathscr{E}}$ is the N\'eron model of $\mathscr{E}$ and $\ell\neq p$ is a prime.  Similarly when $\mathscr{E}'$ has good ordinary reduction at $p$, write $\alpha_p'$ for the corresponding $p$-adic unit.  

Given a ramification type $\varepsilon$ in \eqref{ram}, which in this case is a finite order character of $\prod_{\ell\neq\infty, p}\Z_\ell^\times$, let $\mathscr L_\Pi$ be the corresponding $p$-adic L-function. Note that in this example (and Examples \ref{example2} and \ref{example3}),  since the representation  $\sf V$ is trivial, all exceptional zeros  of $\mathscr L_\Pi$ are of finite order. 

We first observe that by Proposition \ref{excep0rs}, if $\chi^\flat$ is an exceptional zero of $\mathscr L_\Pi$, where  $\chi=\otimes_\ell \chi_\ell \in \CX(\varepsilon)$ is a finite order character of   $\Q^\times\bs\A^\times$, then $\chi_p$ is unramified and $\chi_p(p)$  is a root of unity. 
 We list the values   $\chi_p(p)$ for all the  zeros $\chi_p$ of $\Upsilon_{\Pi_p'}$ given by Proposition \ref{excep0rs} in Table 1 (depending on different reduction types and ramification types),  where ``good", ``s-mult" and ``ns-mult" indicate good, split multiplicative and nonsplit multiplicative reductions at $p$ respectively, and $1\leq i\leq n$. However, since $|\alpha_p| = |\alpha_p'|=\sqrt{p}$ by Weil's conjecture for curves and $\chi_p(p)$ is a root of unity, after ruling out impossible cases we arrive at Table 2, where $d_{\chi_p}$ denotes the order of $\chi_p$ as a zero of $\Upsilon_{\Pi_p'}$.  \hfill\qed
 
\begin{table} 
	\caption{$\chi_p(p)$ for zeros $\chi_p$ of $\Upsilon_{\Pi_p'}$}
	\centering
 \begin{tabular}{|c|c|c|c|}
  \hline
  \diagbox{$\mathscr{E}$}{$\chi_p(p)$}{$\mathscr{E}'$} & good &  s-mult & ns-mult \\
  \hline
  good &  $\alpha_p^{2i-n-2} \alpha_p'^{n+1-2i}$ & $\alpha_p^{2i-n-2}$ &   $- \alpha_p^{2i-n-2}$ \\
  \hline
  s-mult  & $\alpha_p'^{n+1-2i}$ & 1 &   $-1$\\   
  \hline
  ns-mult  & $-\alpha_p'^{n+1-2i}$ & $-1$ & 1\\
  \hline
  \end{tabular}
  \end{table}
  
\begin{table} 
	\caption{Exceptional zeros for $\Sym^n\mathscr{E}\times\Sym^{n-1}\mathscr{E}'$}
	\centering
 \begin{tabular}{|c|c|c|c|}
  \hline
  \diagbox{$\mathscr{E}$}{$\chi_p(p)$}{$\mathscr{E}'$} & good &  s-mult & ns-mult \\
  \hline
  good &  NA & $1 \, (n\text{ even, }d_{\chi_p}=1)$ &   $-1 \, (n\text{ even, }d_{\chi_p}=1)$ \\
  \hline
  s-mult  & $1 \, (n\text{ odd, }d_{\chi_p}=1)$ & $1  \, (d_{\chi_p}=n)$ &   $-1  \, (d_{\chi_p}=n)$\\   
  \hline
  ns-mult  & $-1 \, (n\text{ odd, }d_{\chi_p}=1)$ & $-1  \, (d_{\chi_p}=n)$ & $1  \, (d_{\chi_p}=n)$\\
  \hline
  \end{tabular}
  \end{table}
\end{exl}

\subsection{Rankin-Selberg $p$-adic L-functions for  $\RU_n\times \RU_{n-1}$} \label{ssec:RSU}

Thanks to the recent advances \cite{BPLZZ21, BPCZ22} of the global Gan-Gross-Prasad conjecture \cite{GGP12} and Ichino-Ikeda conjecture \cite{II10, Ha14} for unitary groups, our results can be also applied to produce certain anticyclotomic $p$-adic L-functions.  The recent work \cite{Liu23} (see also \cite{D23}) gives a different approach by using the local Birch lemma as in \cite{KMS00, Jan11} etc.

Let $\rk' /\rk$ be a quadratic extension of number fields. 
Let $\tilde\pi = \tilde\pi_n \boxtimes\tilde\pi_{n-1}$ be an irreducible automorphic representation of $\GL_n(\A_{\rk'})\times \GL_{n-1}(\A_{\rk'})$ that is hermitian isobaric in the sense of \cite[Definition 1.5]{BPLZZ21} such that $\oL(\frac{1}{2}, \tilde\pi)\neq 0$. By the Gan-Gross-Prasad conjecture (Theorem \ref{ggp}), $\tilde\pi\cong {\rm BC}(\pi)$ is the weak base-change of an irreducible cuspidal automorphic representation 
$\pi = \pi_n \boxtimes \pi_{n-1}$ of $\RG_0(\A_\rk)$  with certain nonvanishing global period. Here $\RG_0:=\RU_n\times \RU_{n-1}$ is the product of two relevant unitary groups over $\rk$, so that
$\RU_{n-1}$ embeds into $\RG_0$ diagonally. Assuming that $\pi$ is everywhere tempered, the Ichino-Ikeda conjecture (Theorem \ref{iic}) makes a further refinement by relating the global and local periods through the following L-function $\CL_\Pi(\chi)$.

Denote by $\pi^\vee$ the contragredient of $\pi$, which is realized as the space of all complex conjugations of the automorphic forms in $\pi$. Let $\Pi:=\pi\boxtimes \pi^\vee$. For an automorphic character $\chi:
\RU_1(\rk)\bs \RU_1(\A_\rk)\to \C^\times$, define a meromorphic function 
\be  \label{cLggp}
\CL(s, \pi\times \chi):= \frac{\oL(s, \tilde\pi\times\tilde\chi)}{\oL(s+\frac{1}{2}, \pi, \Ad)}\cdot \prod^n_{i=1}\oL(s+i-\frac{1}{2}, \eta_{\rk'/\rk}^i),
\ee
which is always holomorphic at $s=\frac{1}{2}$, and set 
\[
\CL_\Pi(\chi): = |S_{\tilde\pi}|^{-1}\cdot \CL(\frac{1}{2}, \pi\times \chi).
\]
Here $\eta_{\rk'/\rk}$ is the quadratic character of $\rk^\times\bs \A_{\rk}^\times$ associated to $\rk' /\rk$, 
$\tilde\chi={\rm BC}(\chi)$, $\oL(s, \pi, \Ad)$ is the adjoint L-function of $\pi$,  and $S_{\tilde\pi}$ is a certain component group 
measuring  the Arthur packet of $\pi$. Here and henceforth, for every finite set $S$, denote by $|S|$ its cardinality.

Suppose that the 5-tuple in \eqref{five} is 
 \[
  \begin{cases}
     {\sf G}:=\Res_{\rk/\Q}(\RG_0\times \RG_0); \ \dot{\sf G}:=\Res_{\rk/\Q}(\RU_{n-1}\times \RU_{n-1});\ {\sf Z}:=\Res_{\rk/\Q}\RU_1;&\\
\imath: \dot{\sf G}\to {\sf G}\,\textrm{is the product of 
the diagonal embeddings};&\\
\jmath: \dot{\sf G}\to {\sf Z}\textrm{ is the composition of 
$\dot{\sf G} \xrightarrow{\det \times \det^{-1}} {\sf Z} \times {\sf Z} \xrightarrow{\textrm{multiplication}} {\sf Z}$}. &\\
 \end{cases}
 \]
Then $\Pi$ is an automorphic representation of ${\sf G}(\A)$ as before.

For technical reasons, we assume that all places $v\mid \infty p$ of $\rk$ are split in $\rk'$. 
Write  
\[
{\sf E}_0:=\Q_p\cap {\sf E}\quad \textrm{ and }\quad \rk_{\sf E_0}:={\sf E}_0\otimes \rk.
\]
Suppose that
\begin{itemize}

\item  $\pi$ is regular algebraic,  $\Q(\pi)\subset {\sf E}$, and  the coefficient system of $\pi$ is defined over ${\sf E}$;

\item there is an isomorphism 
\be\label{splitrkp}
\rk_{\sf E_0}\otimes_\rk \rk'\cong \rk_{\sf E_0}\times \rk_{\sf E_0}\ \, \textrm{of  $\rk_{\sf E_0}$-algebras};
\ee
\item $\Pi_p$ has a nearly ordinary refinement $\Pi_p'\subset \CB_P(\Pi_p)$ defined over ${\sf E}$, where $P={\sf P}_{\sf E_0}(\Q_p)$ and ${\sf P}_{\sf E_0}$ is a Borel subgroup of ${\sf G}_{\sf E_0}$ as in \eqref{transP0} which is transversal to $\dot{\sf G}_{\sf E_0}$. 
\end{itemize}
Note that  $\Q(\pi)$  is a number field by \cite[1.4.2]{GL21}.

%
Suppose that $Z_0$ is trivial. Then
\[
{\sf Z}^{\flat, \{0\}}= \CC_{\rk'}^-(p^\infty):= \ker(\RN_{\rk' /\rk}: \CC_{\rk'}(p^\infty) \to \CC_{\rk}(p^\infty)),
\]
where $\RN_{\rk' /\rk}$ is the norm map.
Applying the main theory of this paper when $\varepsilon$ in \eqref{ram} is trivial, 
we will construct an $E$-valued $p$-adic measure 
${\mathscr L}_\Pi:=\mathcal L_{\varepsilon \otimes \mathscr H}$ on $\CC_{\rk'}^-(p^\infty)$.
 As the second application of Theorem \ref{thmcint}, we have the following main result for this example, which shows that 
$\mathscr L_\Pi(\chi^\flat)$ interpolates $\CL_\Pi(\chi)$ for all ${\sf V}$-balanced automorphic characters 
 $\chi$ (Definition \ref{critggp}) unramified outside $\infty p$.  The explicit modifying  factors at $p$ are again consistent with \cite{CPR89, Co89}.
 
 \begin{thml}[Theorem \ref{padicLggp}] \label{padicLggp0}
Let the notations and assumptions be as above. 
Then 
\[
\mathscr L_\Pi(\chi^\flat) = \frac{\Upsilon_{\Pi_p'}(\chi_p)\cdot \CL_\Pi(\chi)}{\Omega_{\Pi_p'}\cdot\Omega_\Pi({\sf w}_\infty \chi_\infty)}
\]
for all ${\sf V}$-balanced automorphic characters $\chi=\otimes_\ell \chi_\ell : \RU_1(\rk)\bs \RU_1(\BA_{\rk})\to\BC^\times$ unramified outside $\infty p$,  
where
\begin{itemize}
\item 
 ${\sf w}$ is the inverse of the weight of $\chi$; 
\item $\Upsilon_{\Pi_p'}(\chi_p)$ is the modifying factor at $p$ in \eqref{MFpggp};
\item $\Omega_{\Pi_p'}$ is in \eqref{omegapi'ggp};
\item $\{\Omega_\Pi(\varepsilon_\infty)\in\C^\times\}_{\varepsilon_\infty\in \widehat{\RU_1(\rk_\infty)^\natural}}$ are the Bessel periods of $\Pi$  in \eqref{bessel-per}.
\end{itemize}
\end{thml}

\begin{remarkl}
Theorem \ref{padicLggp0} can be easily extended to the case of a general ramification type  $\varepsilon$, by twisting $\pi$ with a character $\chi\in \CX(\varepsilon)$.
The ${\sf V}$-balanced criterion is also as in \eqref{eq:bp}, where the weight $(\mu,\nu)$ is given in Section \ref{ssec:AMSMggp}.  If there is a ${\sf V}$-balanced automorhic character, then $\mathscr L_\Pi$ is uniquely determined by the interpolation property in Theorem \ref{padicLggp0}. 

By Lemma \ref{lem:crit} (b),  all algebraic automorphic characters are critical for $\pi$ (Definition \ref{critggp}). If $\rk'$ contains no CM subfield, then algebraic automorhic characters are of finite order. 
\end{remarkl}
 

In this example, $\Upsilon_{\Pi_\infty'}(\chi_\infty)$ and $\Omega_{\Pi_\ell}(\varepsilon_\ell)$ ($\ell\neq\infty, p$)  in \eqref{pintp} are all equal to 1.
Let us explain $\Upsilon_{\Pi_p'}(\chi_p)$  and  $\Omega_{\Pi_p'}$ in Theorem \ref{padicLggp0}. 

Write $\rk_p:=\Q_p\otimes \rk$.  Fix an isomorphism \eqref{splitrkp} which indueces a $\rk_p$-algebra isomorphism 
\be\label{isokp}
\rk_p\otimes_\rk \rk' \cong \rk_p \times \rk_p.
\ee
By using the first factor of the product we have an identification 
\[
G =\GL_n(\rk_p)\times\GL_{n-1}(\rk_p)\times \GL_n(\rk_p)\times \GL_{n-1}(\rk_p) \quad (\textrm{with respect to certain bases}).
\]

The refinement $\Pi_p'$  is also viewed as a character of the torus  $L=P/N$. By MVW involution (see \cite[Chapter 4, II.1]{MVW87}),  the conjugation of $P$ to the Borel subgroup of
lower triangular matrices in $G$   
takes 
$\Pi_p'$ to a character of the form 
\be \label{MVW}
\kappa = (\kappa_{1},\dots, \kappa_{n}, \kappa_{1}', \dots, \kappa_{n-1}' ; \kappa_{n}^{-1}, \dots, \kappa_{1}^{-1}, \kappa_{n-1}'^{-1},\dots, \kappa_{1}'^{-1}),
\ee
where $\kappa_{i} = \otimes_{\wp\mid p}\kappa_{i,\wp}$ and 
$\kappa_{i,\wp}$ is a character of $\rk_\wp^\times$ ($i=1,2,\dots, n$), and similar notation applies for $\kappa_j'$ ($j=1,2,\dots, n-1$).  
Then 
\be\label{MFpggp}
\begin{aligned}
\Upsilon_{\Pi_p'}(\chi_p): = 
 \prod_{\wp\mid p}&\left( \frac{\chi_{\pi_{n-1,\wp}}(-1)^n}{\CL(\frac{1}{2}, \pi_\wp\times\chi_\wp)}\prod_{i+j\leq n} \gamma\left(n+1-i-j, \kappa_{i,\wp}^{-1}\kappa_{j,\wp}'^{-1} \chi_\wp^{-1}, \psi_\wp^{-1}\right)\right.\\
&\left.\cdot\prod_{i+j>n} \gamma\left(i+j-n,\kappa_{i,\wp}\kappa_{j, \wp}' \chi_\wp,
\psi_\wp\right) \right),
\end{aligned}
\ee
where $\pi_{n-1,\wp}$ is the component of $\pi_{n-1}$ at $\wp$,  
and  $\CL(\frac{1}{2}, \pi_\wp\times\chi_\wp)$ is the local factor of $\CL(\frac{1}{2}, \pi\times\chi)$ at $\wp$.  It is easily verified that \eqref{MFpggp} is independent of the isomorphism \eqref{isokp}. Note that  $\Upsilon_{\Pi_p'}$ is an algebraic function on $\mathcal X_p$ by Proposition \ref{ggp:st}. 

Define 
\be \label{omegapi'ggp}
\Omega_{\Pi_p'}:= \prod_{\wp\mid p}\omega_{\psi_\wp}(\kappa_\wp),
\ee
where
\[
\omega_{\psi_\wp}(\kappa_\wp):= \prod^{n-1}_{i=1}\left( \mathscr{G}_{\psi_\wp}(\kappa_{i,\wp}) \mathscr{G}_{\psi_\wp}(\kappa_{n+1-i,\wp}^{-1})\right)^{i-n}\cdot 
\prod^{n-2}_{j=1}\left( \mathscr{G}_{\psi_\wp}(\kappa_{j,\wp}') \mathscr{G}_{\psi_\wp}(\kappa_{n-j,\wp}'^{-1})\right)^{j-n+1}.
\]

The explicit calculation of  $\Upsilon_{\Pi_p'}(\chi_p)$  is again based on \cite{LLSS23}, together with a result in \cite{Zh14b} (Lemma \ref{lem:zhang}). 
In the special case  that 
$\pi_p$ is unramified and $\chi_p$ is ramified, the value $\Upsilon_{\Pi_p'}(\chi_p)$ agrees with \cite[Theorem 5.2]{Liu23} which is obtained by using the so-called local Birch lemma. 
Using the  formula of $\Upsilon_{\Pi_p'}(\chi_p)$, we also determine the exceptional zeros of $\mathscr L_\Pi$ (Proposition \ref{excep0ggp}). 

\begin{exl}\label{example2}
Let  $\mathscr{E}$ and $\mathscr{E}'$ be  elliptic curves over $\Q$ without complex multiplication,  and let $\rk'$ be a  quadratic number field. By \cite{AC89} and \cite{NT21a, NT21b}, $\Sym^n\mathscr{E}_{\rk'}$ and $\Sym^{n-1}\mathscr{E}'_{\rk'}$ ($n\geq 1$) are modular, which correspond to irreducible regular algebraic cuspidal automorphic representations $\tilde\pi_{n+1}$
and $\tilde\pi_n$ of $\GL_{n+1}(\A_{\rk'})$ and $\GL_n(\A_{\rk'})$ respectively.

Assume that $\mathscr{E}$ and $\mathscr{E}'$ are $p$-ordinary, $\rk'$ is real quadratic,  and $p$ splits in $\rk'$. 
If $\oL(\frac{1}{2}, \tilde\pi_{n+1}\times \tilde\pi_n)\neq 0$, then we have a $p$-adic measure  $\mathscr L_\Pi$  on $\CC^-_{\rk'}(p^\infty)$ associated 
to $\tilde\pi_{n+1}\boxtimes\tilde\pi_n$ as in Theorem \ref{padicLggp0}. By Proposition \ref{excep0ggp}, if $\chi^\flat$ is an exceptional zero of $\mathscr L_\Pi$, where $\chi$ is a finite order character of $\RU_1(\Q)\bs \RU_1(\A)$ unramified outside $\infty p$, then $\chi_p$ is unramified and the values of $\chi_p(p)$ (in $ \{\pm1\}$) are as in Table 2 of Example \ref{symrs}, with the order $d_{\chi_p}$ doubled.  \hfill\qed
\end{exl}

\subsection{Standard $p$-adic L-functions of symplectic type for $\GL_{2n}$} \label{ssec:stL}
Let $\pi$ be an irreducible regular algebraic cuspidal automorphic representation of $\GL_{2n}(\A_\rk)$, where $\rk$ is a number field and $n\geq 1$. We assume that 
$\pi$ is of symplectic type, which is equivalent to that $\pi$ has a nonzero $(\eta,\psi)$-Shalika integral (see Section \ref{ssec:FJI} for the precise statement). 
Here $\eta: \rk^\times\bs \A_\rk^\times\to\C^\times$ is a Hecke character, which is necessarily algebraic. 

Recall from Section \ref{ssec:RSGL} that ${\sf G}_m :=\Res_{\rk/\Q}\GL_m$ for every positive integer $m$. Denote by $1_m$ the $m\times m$ identity matrix. 
 Suppose that the 5-tuple in \eqref{five} is 
 \[
  \begin{cases}
     {\sf G}:=({\sf G}_{2n}\times {\sf G}_1)/{\sf G}_1',\, \textrm{ where }\,
{\sf G}'_1:=\{ (a\cdot 1_{2n}, a^n) \,:\, a\in {\sf G}_1\};& \\
\dot {\sf G}:=({\sf G}_n \times {\sf G}_n)/\dot {\sf G}_1',\, \textrm{ where }\, \dot {\sf G}_1':=\{ (a\cdot 1_n, a\cdot 1_n) \,:\,  a\in {\sf G}_1\};&\\
{\sf Z}:={\sf G}_1;&\\
\imath: \dot{\sf G} \to {\sf G},\quad (g_1, g_2)\dot{\sf G}_1'\mapsto \left(\begin{bmatrix} g_1 & 0\\ 0 & g_2 \end{bmatrix}, \det g_1\right){\sf G}_1';&\smallskip\\
 \jmath: \dot{\sf G} \to {\sf Z}, \quad (g_1, g_2)\dot{\sf G}_1'\mapsto \frac{\det g_1}{\det g_2}.&\\
 \end{cases}
 \]
   Then $\Pi:=\pi\boxtimes \eta^{-1}$ is an automorphic representation of ${\sf G}(\A)$.  
The complex L-function for this example is the standard L-function $\CL_\Pi(\chi):=\oL(\frac{1}{2}, \pi\otimes\chi)$ for $\chi$  a Hecke character of $\rk^\times\bs\A_\rk^\times$, which has been studied through Shalika models and Friedberg-Jacquet integrals in \cite{FJ93}.

Let $\mathsf P$ be a parabolic subgroup of $\sf G$ of type $(n,n)$ as in \eqref{elet:gamma}, so  that 
\[
\dot{\mathsf P} := \mathsf P\cap \dot{\mathsf G} = \{ (g, g)\dot{\sf G}_1' \,:\, g\in  \mathsf G_n\}.
\]
It is clear that $\jmath(\dot {\sf P})$ is trivial. Suppose that $P=\mathsf P(\Q_p)$ and $Z_0$ is trivial. Then ${\sf Z}^{\flat, \{0\}} = \CC_\rk(p^\infty)$.

Suppose that $\Q(\Pi)\subset{\sf E}$ and $\Pi_p$ has a nearly ordinary refinement $\Pi_p'\subset \CB_P(\Pi_p)$ defined over ${\sf E}$. Then the coefficient system of $\Pi$ is defined over ${\sf E}$.
The period relations of $\oL(\frac{1}{2},\pi\otimes\chi)$ for  arbitrary ${\sf V}$-balanced Hecke characters $\chi$ have been proved in full generality in \cite{JLST24}, which extends the earlier work \cite{JST19}.


As before suppose that the ramification type $\varepsilon$ is valued in $\sf E^\times$. Applying the main theory of this paper, we will construct a continuous linear functional $\mathscr L_\Pi:=\mathcal L_{\varepsilon \otimes \mathscr H}$ on 
$\con({\sf Z}, E)(\varepsilon)$,  
without assuming the existence of ${\sf V}$-balanced characters.
As the third application of Theorem \ref{thmcint}, we have the following main result  for this example, which shows that 
$\mathscr L_\Pi(\chi^\flat)$ interpolates  $\oL(\frac{1}{2}, \pi\otimes \chi)$ 
for all ${\sf V}$-balanced Hecke characters $\chi\in \CX(\varepsilon)$. The  explicit modifying  factors at $p$ are again consistent with \cite{CPR89, Co89}. 

\begin{thml}[Theorem \ref{padicLst}] \label{padicLst0}
Let the notations and assumptions be as above. 
Then
\[
\mathscr L_\Pi(\chi^\flat)  = \frac{\Upsilon_{\Pi_\infty'}(\chi_\infty) \cdot \Upsilon_{\Pi_p'}(\chi_p)\cdot \oL(\frac{1}{2}, \pi\otimes \chi)}{\mathscr G_\psi(\chi^{(p)})^n \cdot \Omega_{\Pi_p'}\cdot\Omega_\Pi({\sf w}_\infty\chi_\infty)}
\]
  for all ${\sf V}$-balanced Hecke characters $\chi=\otimes_\ell \chi_\ell  \in \CX(\varepsilon)$, where 
  \begin{itemize}
\item  ${\sf w}$ is the inverse of the weight of $\chi$; 
\item $\Upsilon_{\Pi_\infty'}(\chi_\infty)$ is the modifying factor at $\infty$ in \eqref{MFinfst}; 
\item $\Upsilon_{\Pi_p'}(\chi_p)$ is the modifying factor at $p$ in \eqref{MFpst};
\item $\Omega_{\Pi_p'}$ is in \eqref{omegapi'st};
\item $\{\Omega_\Pi(\varepsilon_\infty)\in \BC^\times\}_{\varepsilon_\infty\in \widehat{\rk_{\infty}^{\times,\natural}}}$  are the Shalika periods of $\Pi$
in \eqref{sha-per}. 
  \end{itemize} 
\end{thml}

\begin{remarkl}
The ${\sf V}$-balanced criterion is in \eqref{bal-Sha}, and all ${\sf V}$-balanced Hecke characters are critical for $\pi$ (Definition \ref{critst}). If there is a ${\sf V}$-balanced Hecke character in $\CX(\varepsilon)$, then $\mathscr L_\Pi$ is uniquely determined by the interpolation property
in Theorem \ref{padicLst0}. If  $\rk$ contains no CM subfield, then critical Hecke characters for $\pi$ exist and they are the same as the ${\sf V}$-balanced  Hecke characters.
\end{remarkl}

As mentioned in Theorem \ref{padicLrs0},  $\mathscr G_\psi(\chi^{(p)})$ denotes the Gauss sum of $\chi$ outside $p$. Let us explain the modifying factors  and $\Omega_{\Pi_p'}$ in Theorem \ref{padicLst0}. 
Denote the highest weight of $\sf V^\vee$ by $\mu =\{\mu^\iota\}_{\iota\in\CE_\rk}\in (\BZ^{2n})^{\CE_\rk}$, where 
every $\mu^\iota$ is a sequence of integers $\mu^\iota_1\geq \mu^\iota_2\geq\cdots \geq \mu^\iota_{2n}$.  Then 
\be \label{MFinfst}
\Upsilon_{\Pi_\infty'}(\chi_\infty):={\rm i}^{-\sum_{\iota\in\CE_\rk}\sum^n_{i=1}(\mu^\iota_i - {\sf w}_\iota)}.
\ee

The refinement $\Pi_p'$ is a representation of $L=P/N$. By Theorem \ref{thm:sha},  conjugation of $P$ to the lower triangular parabolic subgroup of $G$ of type $(n,n)$ takes $\Pi_p'$ to a representation of the form 
\[
\kappa = \kappa_1\otimes (\kappa_1^\vee \otimes \eta_p) \otimes \eta_p^{-1}\qquad(\eta=\otimes_\ell \eta_\ell), \]
where $\kappa_1 = \otimes_{\wp\mid p}\kappa_{1,\wp}$ and $\kappa_{1,\wp}$ is an irreducible admissible smooth representation of $\GL_n(\rk_\wp)$.
Then
\be \label{MFpst}
\Upsilon_{\Pi_p'}(\chi_p) :=  \prod_{\wp\mid p}\frac{\gamma(1, \kappa_{1,\wp}^\vee\otimes\chi_\wp^{-1},\psi_\wp^{-1})}{\oL(\frac{1}{2}, \pi_\wp\otimes\chi_\wp)},
\ee
where the local $\gamma$-factors are as in \cite{GJ72}.
Note that $\Upsilon_{\Pi_p'}$ is an algebraic function on $\mathcal X_p$ by Proposition \ref{prp:stSha}.

Recall that $c_p= \prod_{\wp\mid p}c_\wp$ as in Section \ref{ssec:RSGL}. 
Define
\be\label{omegapi'st}
\Omega_{\Pi_p'}:= c_p^{n^2}\cdot \prod_{\wp\mid p}\mathscr{G}_{\psi_\wp}(\chi_{\kappa_{1,\wp}})^{-1}.
\ee

 Using the explicit formula of  $\Upsilon_{\Pi_p'}(\chi_p)$, we also determine the exceptional zeros of 
$\mathscr L_\Pi$ (Proposition \ref{excep0st}).  

\begin{exl}\label{example3}
Let $\mathscr{E}$ be an elliptic curve over $\Q$ without complex multiplication. Then $\Sym^{2n-1}\mathscr{E}$ ($n\geq 1$) corresponds to 
an irreducible regular algebraic cuspidal automorphic representation $\pi$ of $\GL_{2n}(\A)$ such that 
\[
\oL(s, \pi) = \oL\left(s+n-\frac{1}{2}, \Sym^{2n-1}\mathscr{E}\right).
\]
Moreover, $\pi$ has a nonzero $(\eta,\psi)$-Shalika integral with $\eta=1$. 

Assume that $\mathscr{E}$ is $p$-ordinary. 
Given a ramification type 
$\varepsilon$ in \eqref{ram}, we have a $p$-adic L-function $\mathscr L_\Pi$ as in Theorem \ref{padicLst0}. By Proposition \ref{excep0st}, if $\chi^\flat$ is  an exceptional zero of $\mathscr L_\Pi$, where $\chi=\otimes_\ell \chi_\ell\in \CX(\varepsilon)$ is a finite order character of $\Q^\times \bs \A^\times$, then $\chi_p$ is unramified and $\chi_p(p)$ is a root of unity.  There are two cases as in what follows.

If $\mathscr{E}$ has good ordinary reduction at $p$ and $\alpha_p$ is the corresponding $p$-adic unit as in Example \ref{symrs}, then 
$\chi_p(p) = \alpha_p^{-1}$, which is impossible since $\alpha_p$ cannot be a root of unity. 
If $\mathscr{E}$ has multiplicative reduction at $p$, then $\chi_p(p)=1$ or $-1$ according to the reduction is split or nonsplit, in which cases
$\chi_p$ is a simple zero of $\Upsilon_{\Pi_p'}$. 

These results are also consistent with the expectations in terms of Galois representations as in \cite[Page 170]{Gr94}  
(however the construction of $\mathscr L_\Pi$ was not given there), which in particular assert that the trivial character is an exceptional zero of $\mathscr L_\Pi$ when $\varepsilon$ is trivial and $\mathscr{E}$ has split multiplicative reduction at $p$. 
\hfill\qed
\end{exl}

It is worth emphasizing that in contrast to earlier works \cite{AG94, G18, DJR20}, our evaluation of $\Upsilon_{\Pi_p'}(\chi_p)$ is again by the technique that compares the Friedberg-Jacquet integral with a new integral over the open orbit of the spherical subgroup $\dot G$ acting on 
the flag variety of $G$. This is given by Theorem \ref{thm:FJ-int}, whose proof is based on an 
 application of the Godement-Jacquet integrals \cite{GJ72}. 

\subsection{Historical remarks} \label{ssec:HR}

Assuming the existence of ${\sf V}$-balanced characters and rather restrictive ramification conditions, the $p$-adic L-functions for $\GL_n\times \GL_{n-1}$ and $\GL_{2n}$ have been constructed in the previous works such as \cite{KMS00, Jan11, Jan15, Jan16, Jan24, Sch93, Sch01} and \cite{AG94,DJR20} among many others, mainly using a different approach called the local Birch lemma. 
In these works the $p$-adic L-functions are evaluated for Hecke characters of the form $\chi_0\abs{\cdot}_{\A_\rk}^j$ where $\chi_0$ is of finite order and 
$j$ is a critical place. In the resulting modifying factors at $p$ one does not see the local L-factors as in our modifying  factor $\Upsilon_{\Pi_p'}(\chi_p)$, but only the local $\varepsilon$-factors which essentially account for the conductor and Gauss sum of $\chi_p$. Thus the connection  with 
the conjecture by Coates and Perrin-Riou is not clear in general.
In particular the phenomenon of exceptional zeros does not occur in these works.

For the case of $\GL_{2n}$, in \cite{G18} and more recent works \cite{BDW21, BDGJW22, Wi23}, under the condition that
  $\Pi_p$ is  unramified or parahoric spherical, the $p$-adic L-functions are constructed and the modifying factors are evaluated which are compatible with the general result in Theorem \ref{padicLst0}.

Our construction of $p$-adic L-functions $\mathscr L_\Pi$ in Theorems \ref{padicLrs0}, \ref{padicLggp0} and \ref{padicLst0} 
are different from the previous works in the following aspects. 
\begin{itemize}

    \item Bases on the recent works of the authors and their collaborators, the modifying factors $\Upsilon_{\Pi_\infty'}(\chi_\infty)$ at $\infty$ are explicitly calculated and are consistent with the conjectures of 
    Deligne and Blasius, which was not known in previous constructions of $p$-adic L-functions for higher rank groups.

    \item The modifying factors $\Upsilon_{\Pi_p'}(\chi_p)$ at $p$ are  defined in a general setting and are explicitly calculated, which are consistent with the conjecture of Coates and Perrin-Riou.
The explicit calculation  is based on the preparatory work in \cite{LLSS23} of the authors and their collaborators, and the idea of integrations over open orbits in {\it loc. cit.} 
 is applicable to other cases (for example, the third family of examples in this paper).

    \item To determine  the exceptional zeros is an important open problem in the theory of 
$p$-adic L-functions which has been long-standing. Since the modifying factors $\Upsilon_{\Pi_p'}(\chi_p)$ at $p$ are explicit, we are able to solve this problem 
for these cases  in Propositions \ref{excep0rs}, \ref{excep0ggp} and \ref{excep0st}.

\item No ramification conditions on $\Pi_p$ are required.

\item The method of constructing $p$-adic L-functions is completely general and is potentially applicable to many other cases in the framework of relative Langlands program (see \cite{BZSV24}). %
\end{itemize}    

The idea of studying modular symbols of formal vectors as in \eqref{stablems} and their $p$-adic interpolations (Theorem \ref{thmcint}) was first presented by the second named author in the conference on Gan-Gross-Prasad conjecture held in CNRS, Paris, in 2014. This idea is also our guiding principle in the calculations of the modifying factors using principal series representations. However, it took the authors and their collaborators quite a long time to explicitly calculate the modifying factors (both at $\infty$ and at $p$) in the three families of examples considered in this paper. This is the main reason for the delay of the appearance of the current paper.

\section{Relative  cohomologies and  relative completed cohomologies}\label{sec:RCC} 

Recall the notation from Section \ref{secnotation}.
In particular, $\mathsf G$ is an arbitrary linear algebraic group defined over $\Q$. Fix  a closed 
 subgroup  of $\mathsf G(\R)$ of the form 
 \[ 
 K_\infty := \mathsf A(\R)^\circ\cdot K_\infty',
 \]
 where  $\mathsf A$ is a split torus in $\mathsf G$ defined over $\Q$ that is central in the identity connected component of $\sf G$ modulo its  unipotent radical, and $K_\infty'$ is a compact subgroup that normalizes  $\mathsf A(\R)^\circ$. 
As in the Introduction, write  \[
 \mathsf G^\natural := K_\infty^\natural \times \mathsf G(\A^\infty)= \mathsf G^{\natural,p}\times G,
 \]
 where $\mathsf G^{\natural,p}: =K_\infty^\natural \times \mathsf G(\A^{\infty p})$ and $G :=\mathsf G(\Q_p)$. Set  
 \[
  \mathscr X :=\mathsf G(\A)/K_\infty^\circ =(\mathsf G(\R)/K_\infty^\circ )\times \mathsf G(\A^\infty), 
  \]
  which carries a left action of $\mathsf G(\Q)$ and a right action of $\mathsf G^\natural$ that commute with each other. 
  For each open compact subgroup $K$ of $\mathsf G^\natural$, we define a topological space
  \[
S_K^{\mathsf G}:=\mathsf G(\Q)\backslash \mathscr X/K,
\] 
which is known to be Hausdorff. 

Let $\mathbf G$ be an open subgroup of $\mathsf G^\natural$,  $R$ a commutative ring with identity, 
 and  $M$  an $R[\mathsf G(\Q)\times \BG]$-module.   Recall that  $\p$ is a  Lie subalgebra of the Lie algebra 
$\g$ of $G$. Let $\s \subset \p$ be a  subalgebra. 
In the rest of this paper,  unless otherwise specified, 
\be\label{drs}
\begin{cases}
    \textrm{$D$ denotes an open compact subgroup of $\mathsf G^{\natural, p}\cap \BG$};&\\
    \textrm{$\fG$ denotes an open compact subgroup of $G\cap \BG$};& \\
    \textrm{$\fP$ denotes a compact subgroup of $G\cap \BG$ with Lie algebra $\p$};& \\
    \textrm{$\fS$ denotes a compact subgroup of $G\cap \BG$ with Lie algebra $\s$}. & \\
\end{cases}
\ee


 \subsection{Neat elements}
Here we review some basic notions  concerning neat elements, following \cite{Pi89}.  For every field $\rk$ of characteristic $0$, and every element $x\in \mathsf G(\rk)$,  we define a finite subgroup $\mathrm{Unit}(x)$ of $\overline \Q^\times$ as in what follows. Fix a faithful finite-dimensional algebraic representation $\mathsf V_\rk$ of $\mathsf G_\rk$ over $\rk$, and a field embedding $\iota_\rk: \overline \Q\rightarrow \overline{\rk}$, where $\overline{\rk}$ is an algebraic closure of $\rk$. All the eigenvalues of the linear operator $x: \mathsf V_\rk\otimes \overline{\rk} \rightarrow \mathsf V_\rk\otimes \overline{\rk}$ generate a subgroup of $\overline{\rk}^\times$. We define  $\mathrm{Unit}(x)$ to be the inverse image under $\iota_\rk$ of the set of all torsion elements in this subgroup. This is a finite subgroup of $\overline \Q^\times$ which is independent of the representation $\mathsf V_\rk$ and the embedding $\iota_\rk$.  

For every 
\[
x^\infty=(x_\ell)_{\ell\neq 
\infty}\in \mathsf G(\A^\infty)
\]
  we define
\[
  \mathrm{Unit}(x^\infty):=\bigcap_{\ell\neq \infty} \mathrm{Unit}(x_\ell). 
\]
We say that $x^\infty$ is neat if the group $ \mathrm{Unit}(x^\infty)$ is trivial. 

 \begin{dfnl}
   (a)  A compact subgroup of $\mathsf G^\natural$ is said to be neat if it is contained in a group of the form $K_\infty^\natural \times K^\infty$, where $K^\infty$ is an open compact subgroup of $\mathsf G(\A^\infty)$ consisting of neat elements. It is said to be completely neat if it is neat and its projection to $K_\infty^\natural$ preserves a ${\mathsf G}(\R)$-invariant orientation  on ${\mathsf G}(\R)/ K_\infty^\circ$.

   \noindent (b) A compact subgroup of $C$ of $G$ is said to be $D$-neat if $D C$ is neat. 
 \end{dfnl} 
 
\subsection{The transfer map of cohomology groups}\label{phicon}

 We will consider cohomologies  with support conditions. For this purpose, let $\Phi$ be a family of closed subsets of $\mathsf G(\Q)\backslash \mathscr X$ with the following properties:
\begin{itemize}
  \item every closed subset of a set in $\Phi$ belongs to $\Phi$;
  \item the union of two sets in $\Phi$ belongs to $\Phi$;
  \item for every set $Z\in \Phi$ and every compact subset $X$ of $ \mathsf G^\natural$, the set $Z.X$ belongs to $\Phi$.
\end{itemize}

For each  open compact subgroup $K$ of $\BG$, we have a sheaf  $M_{[K]}$ of $R$-modules over the topological space
$
S_K^{\mathsf G}
$ 
such that for every open subset $U$ of  $\mathscr X$ that is left $\mathsf G(\Q)$-invariant and right $K$-invariant, $M_{[K]}(\mathsf G(\Q)\backslash U/K)$ equals 
\begin{equation}\label{sheafm}
\{ \textrm{$\mathsf G(\Q)\times K$-equivariant locally constant maps from $U$ to $M$}\}.
\end{equation}
When $K$ is neat, $S_K^{\mathsf G}$ is a topological manifold and $M_{[K]}$ is a locally constant sheaf. 

For simplicity, write 
\be\label{hkk}
  \oH_\Phi^i(K,M):= \oH_{\Phi/K}^i(S_K^{\mathsf G},M_{[K]}),\qquad (i\in \Z)
\ee
for the $i$-th cohomology of $S_K^{\mathsf G}$ with coefficient $M_{[K]}$ and support $\Phi/K$, 
where $\Phi/K$ denotes the family of closed subsets $Y$ of $S_K^{\mathsf G}$ such that the preimage of $Y$ under the quotient map $\mathsf G(\Q)\backslash \mathscr X\rightarrow S_K^{\mathsf G}$ belongs to $\Phi$. As usual, we drop the subscript $\Phi$ to indicate the cohomology group with arbitrary support, and replace $\Phi$ by ``$\mathrm c$" to indicate the cohomology group with compact support. For example, $\oH^i(K,M)= \oH^i(S_K^{\mathsf G},M_{[K]})$ and $\oH_{\mathrm c}^i(K,M)= \oH_{\mathrm c}^i(S_K^{\mathsf G},M_{[K]})$.  

Let $K'$ be another open compact subgroup of $\BG$. First assume that $K'\subset K$, so that we have the natural map of topological spaces
\[ 
f_{K', K}: S_{K'}^{\mathsf G}\rightarrow S_K^{\mathsf G}.
\]
We will use a subscript $*$ to indicate the push-forward of sheaves. For instance, $f_{K', K,*}$ denotes the push-forward of sheaves by the map $f_{K',K}$. 
Put 
\[
M':=\Ind^{{\sf G}(\Q)\times K}_{{\sf G}(\Q)\times K'}(M\vert_{{\sf G}(\Q)\times K'})=\Ind^{K}_{K'}(M\vert_{ K'}),
\]
regarded as an $R[\mathsf G(\Q)\times K]$-module.

\begin{lem} \label{lem:ind}
There is a canonical  isomorphism of sheaves $M'_{[K]}\cong f_{K',K, *}M_{[K']}$ on $S^{\mathsf G}_K$.
\end{lem}

\begin{proof}
We realize $M'$ as the space of maps $f: K\to M$ such that 
\[
f(k' k_0) = k'.f(k_0)\quad \textrm{for all } k'\in K', \ k_0\in K,
\] 
on which $K$ acts by right translations. 
Let $U$ be an open subset of $S^{\mathsf G}_K$, with preimage $\tilde U$ in $\mathscr X$. The set of sections $M'_{[K]}(U)$ consists of the $\mathsf{G}(\mathbb{Q})$-equivariant maps 
$s: \tilde{U}\to M'$ such that 
\[
s( x k^{-1})= k.s(x)\quad \textrm{for all }x\in \tilde U, \ k\in K,
\]
i.e., $s(xk^{-1})(k_0) = s(x)(k_0 k)$ for any $k_0\in K$. On the other hand, $(f_{K',K,*}M_{[K']})(U)$ consists of the $\mathsf{G}(\Q)$-equivariant maps $s': \tilde{U}\to M$ such that 
\[
s'(x k'^{-1}) = k'.s'(x)\quad \textrm{for all } x\in \tilde U, \ k'\in K'.
\]
It is easy to verify that we have mutually inverse maps 
\[
\begin{array}{rcl}
M'_{[K]}(U) &\to &(f_{K',K,*}M_{[K']})(U),\\
s&\mapsto& (s':\tilde U\rightarrow M,\ x\mapsto s(x)(1)),
\end{array}
\]
and
\[
\begin{array}{rcl}
 (f_{K',K,*}M_{[K']})(U)&\to& M'_{[K]}(U), \\
 s'&\mapsto &\left(s: \tilde U\rightarrow M',\ \left(x\mapsto (K\rightarrow M,\ k_0\mapsto s'(xk_0^{-1}))\right)\right).
 \end{array}
\]
Thus the sheaves $M'_{[K]}$ and $f_{K',K,*}M_{[K']}$ are canonically isomorphic. 
\end{proof}

Since the fibers of the map $f_{K',K}$ are finite, by Lemma \ref{lem:ind} and \cite[II. Theorem 11.1]{Br97}, we have a canonical isomorphism 
\be \label{eq:ind}
\oH^i_\Phi(K, M') \cong \oH^i_\Phi(K', M).
\ee
The $K$-homomorphism given by the ``orbital map", 
\be \label{eq:orbmap}
\phi: M\to M',\quad v\mapsto (K\to M, \ k\mapsto k.v), 
\ee
induces a homomorphism
\be \label{eq:orb}
\oH^i_\Phi(K, M) \to \oH^i_\Phi(K, M').
\ee
The composition of  \eqref{eq:orb} and \eqref{eq:ind}  defines the pull-back homomorphism
\[ 
  \rho_{K, K'}: \oH_\Phi^i(K,M)\rightarrow \oH_\Phi^i(K',M).
\]

On the other hand, the $K$-homomorphism given by the map 
\be \label{eq:avmap}
\phi': M'\to M, \quad f\mapsto \sum_{kK'\in K/K'} k.f(k^{-1})
\ee
induces a homomorphism 
\be \label{eq:av}
\oH^i_\Phi(K, M')\to \oH^i_\Phi(K, M).
\ee
The composition of \eqref{eq:av} and the inverse of \eqref{eq:ind} defines the push-forward homomorphism 
\[
  \rho_{K', K}: \oH_\Phi^i(K', M)\rightarrow \oH_\Phi^i(K, M).
\]

\begin{lem}\label{lem:pbpf}
Assume that $K'\subset K$ are open compact subgroups of $\mathbf G$. Then 
\[
\rho_{K',K}\circ \rho_{K,K'}: \oH^i_\Phi(K, M)\to \oH^i_\Phi(K, M)
\]
equals the multiplication by $[K:K']$.
\end{lem}

\begin{proof}
This follows immediately from the fact that the following composition of \eqref{eq:orbmap} and \eqref{eq:avmap},
\[
\phi'\circ\phi: M\to M
\]
is the multiplication by $[K:K']$.
\end{proof}

Now we drop the assumption that $K'\subset K$ and define the homomorphism 
\be\label{rhogg}
  \rho_{K, K'}: \oH_\Phi^i(K,M)\rightarrow \oH_\Phi^i(K',M)
\ee
to be the composition of 
\[
  \oH_\Phi^i(K,M)\xrightarrow{\rho_{K, K\cap K'}} \oH_\Phi^i(K\cap K',M) \xrightarrow{\rho_{ K\cap K', K'}}\oH_\Phi^i(K',M).
\]
We call the above map the transfer map. When the groups $K$ and $K'$ are understood, we also write $\rho$ for $\rho_{K,K'}$.

\begin{lem}\label{neatng}
Let $K_1, K_2, K_3$ be open compact subgroups of $\BG$ such that 
\[
K_2=(K_1\cap K_2)(K_2\cap K_3).
\]
Then the diagram 
\[
 \begin{CD}
        \oH_\Phi^i(K_1, M) @> >>   \oH_\Phi^i(K_2, M) \\
            @VVV          @VVV\\
          \oH_\Phi^i(K_3, M) @>[K_1\cap K_3: K_1\cap K_2\cap K_3 ] >>    \oH_\Phi^i(K_3 , M)\\
  \end{CD}
\]
commutes, where the bottom horizontal arrow is the multiplication map by $[K_1\cap K_3: K_1\cap K_2\cap K_3 ]$, and all the other arrows are the  transfer maps. 



\end{lem}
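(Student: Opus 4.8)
The statement is a compatibility between transfer maps, and the natural strategy is to reduce the general (non-nested) case to the nested case, where transfer is either a pullback along a covering or a push-forward (trace map). I would first unwind all five transfer maps appearing in the diagram into compositions of pullbacks $\rho_{K,K'}$ (for $K'\subset K$) and push-forwards $\rho_{K',K}$, using the hypothesis $K_2=(K_1\cap K_2)(K_2\cap K_3)$ to control the index computations. The hypothesis is exactly what makes the orbit space $S^{\mathsf G}_{K_1\cap K_2}\times_{S^{\mathsf G}_{K_2}} S^{\mathsf G}_{K_2\cap K_3}$ decompose as a disjoint union over cosets in a way that matches the fiber product, so this is the algebraic input that must be used at the right moment.

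**Key steps, in order.** First, I would record the two elementary facts for nested neat subgroups $K''\subset K'\subset K$: $\rho_{K,K''}=\rho_{K',K''}\circ\rho_{K,K'}$ for pullbacks, $\rho_{K'',K}=\rho_{K',K}\circ\rho_{K'',K'}$ for push-forwards, and the projection-formula-type identity $\rho_{K',K}\circ\rho_{K,K'}=[K:K']$ (multiplication by the index). Second, I would establish the \emph{base-change square}: if $K'\subset K$ and $\widetilde K$ is another neat subgroup of $\mathbf G$, then $\rho_{\widetilde K,\widetilde K\cap K'}\circ\rho_{K,\widetilde K}$ compared with $\rho_{K',\widetilde K\cap K'}\circ\rho_{K,K'}$ — here one uses that the relevant covering maps of manifolds form a cartesian-up-to-disjoint-union diagram, and the base-change theorem for push-forward along coverings (proper base change for the trace map) gives the identity, possibly with a multiplicity factor equal to a ratio of indices. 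Third, with these in hand I would write $\rho_{K_1,K_2}=\rho_{K_1\cap K_2,K_2}\circ\rho_{K_1,K_1\cap K_2}$, similarly for the other three sides, and chase the diagram: pull back from $K_1$ to $K_1\cap K_2\cap K_3$, then the hypothesis lets me rewrite $K_1\cap K_2\xrightarrow{\text{push}}K_2\xrightarrow{\text{pull}}K_2\cap K_3$ as $K_1\cap K_2\xrightarrow{\text{pull}}K_1\cap K_2\cap K_3\xrightarrow{\text{push}}K_2\cap K_3$ via the base-change square (this is where $K_2=(K_1\cap K_2)(K_2\cap K_3)$ enters, guaranteeing the fiber product over $S^{\mathsf G}_{K_2}$ is just $S^{\mathsf G}_{K_1\cap K_2\cap K_3}$ with the expected multiplicity). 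Fourth, bookkeeping of all index factors: collect the scalars from each application of the projection formula and base change, and check they multiply to exactly $[K_1\cap K_3:K_1\cap K_2\cap K_3]$, matching the bottom arrow.

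**Main obstacle.** The genuinely delicate point is the base-change square for push-forward (trace) maps along the covering maps $S^{\mathsf G}_{K'}\to S^{\mathsf G}_K$, together with getting the multiplicity constant exactly right. Covering-space trace maps do satisfy base change, but the covering $S^{\mathsf G}_{\widetilde K\cap K'}\to S^{\mathsf G}_{\widetilde K}$ is not literally the pullback of $S^{\mathsf G}_{K'}\to S^{\mathsf G}_K$ unless a product decomposition of double cosets holds; in general the fiber product is a disjoint union of several such coverings, and one must identify which summand corresponds to which double coset and verify the degrees. The hypothesis $K_2=(K_1\cap K_2)(K_2\cap K_3)$ is precisely the condition that collapses this disjoint union to a single copy (an instance of the general fact that $K_2=(K_1\cap K_2)(K_2\cap K_3)$ iff the natural map $(K_1\cap K_2)/(K_1\cap K_2\cap K_3)\to K_2/(K_2\cap K_3)$ is a bijection), so once that combinatorial lemma is isolated and proved, the rest is a careful but routine diagram chase. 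I would therefore isolate that combinatorial lemma about products of compact subgroups first, then feed it into the topological base-change statement, then do the chase.
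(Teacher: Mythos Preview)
Your plan is correct and is precisely the standard argument one would give; the paper itself omits the proof entirely, stating only ``This is an exercise in algebraic topology. We omit the details.'' Your identification of the key point---that the hypothesis $K_2=(K_1\cap K_2)(K_2\cap K_3)$ is exactly what makes the fiber product $S^{\mathsf G}_{K_1\cap K_2}\times_{S^{\mathsf G}_{K_2}} S^{\mathsf G}_{K_2\cap K_3}$ reduce to a single copy of $S^{\mathsf G}_{K_1\cap K_2\cap K_3}$, so that base change for the trace map goes through without extra summands---is the heart of the matter, and the remaining index bookkeeping is routine.
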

\begin{proof}
This is an exercise in algebraic topology, by using Lemma \ref{lem:pbpf}. We omit the details. 
\end{proof}

\subsection{Formal   cohomologies}

\begin{dfnl}
    For each compact subgroup $C$ of $\BG$, the formal  cohomology group is defined to be 
 \be\label{relao1111}
\oH_\Phi^i(C,M):= \varprojlim_{K}\oH_\Phi^i(K, M),
\ee
where $K$ runs over all open compact subgroups of $\BG$ containing $C$, and the implicit homormorphisms are the push-forward maps. 
\end{dfnl}

When $C=K$ is open in $\BG$, the formal cohomology group coincides with the previously defined module $\oH_\Phi^i(K,M)$. When $C$ is neat, in the definition of the formal cohomology \eqref{relao1111}, we may also let $K$ run over all neat open compact subgroups of $\BG$ containing $C$. 


Let $C$ and $ C'$ be two compact subgroups of $\BG$  that are commeasurable with each other, namely  their intersection is open in both of these two groups. 
Generalizing the transfer map \eqref{rhogg}, we define the transfer  map
\be \label{rhogg222}
  \rho_{C, C'}: \oH_\Phi^i(C,M)\rightarrow \oH_\Phi^i(C',M)
\ee
 as in the following proposition. 

\begin{prpl}\label{actrt2}
There exists a unique homomorphism 
\[ 
   \rho_{C,C'}:  \oH_\Phi^i(C, M)\rightarrow \oH_\Phi^i(C' , M)
\]
with the following property: the diagram
\[
 \begin{CD}
          \oH_\Phi^i(C, M) @>\rho_{C, C'} >>    \oH_\Phi^i(C' , M) \\
            @V\textrm{projection}VV          @VV\textrm{projection} V\\
          \oH_\Phi^i(K, M) @>\rho_{K, K'} >>    \oH_\Phi^i(K' , M)\\
  \end{CD}
\]
commutes for all open compact subgroups $K$ and $K'$ of $\BG$ such that $C\subset K$, $C'\subset K'$  and that the natural map 
\[
C/ (C\cap C')\rightarrow  K/(K\cap K')
\]
is bijective. 
\end{prpl}
\begin{proof}
Set 
\[
\mathscr G_{C, \BG}:=\{\textrm{open compact subgroups of $\BG$ containing $C$}\}
\]
and define $\mathscr G_{C', \BG}$ similarly. 
The uniqueness  follows from the following assertion: for every $K''\in \mathscr G_{C', \BG}$, there exist 
$K\in \mathscr G_{C,\BG}$ and $K'\in \mathscr G_{C',\BG}$ such that  $K'\subset K''$ and the natural map 
\be\label{biject1}
C/ (C\cap C')\rightarrow  K/(K\cap K')
\ee
 is bijective. For the proof of this assertion, take  $K'\in \mathscr G_{C',\BG}$ small enough so that 
 \[
 K'\subset K''\quad\textrm{and}\quad K'\cap C=C'\cap C.
 \]
 Note that such $K'$ exists because $C$ and $C'$ are commensurable. 
  Let $K_0$ be an open compact subgroup of $\BG$ that is normalized by $C$ and is small enough so that 
  \[
 K_0\subset K'\quad\textrm{and}\quad ((C\setminus C')K_0)\cap K'=\emptyset. 
 \]
 Take $K:=CK_0$. Then 
 \[
   K\cap K'=(C\cap C')K_0
   \]
   and the map \eqref{biject1} is bijective. This proves the assertion. 
   
    To prove the existence, it suffices to show that the diagram 
 \[
 \begin{CD}
      \oH_\Phi^i(K_1, M) @>\rho >>    \oH_\Phi^i(K'_1 , M)\\
                      @VV\rho V          @VV\rho V\\
                \oH_\Phi^i(K, M) @>\rho >>    \oH_\Phi^i(K' , M)\\
  \end{CD}
\]
commutes for all $K, K_1\in \mathscr G_{C,\BG}$ and all $K', K'_1\in {\mathscr G}_{C',\BG}$ such that 
 \[
   K_1\subset K,\quad K_1'\subset K'
 \]
and that the natural maps 
\[
C/ (C\cap C')\rightarrow  K/(K\cap K')
\quad \textrm{and}\quad 
C/ (C\cap C')\rightarrow  K_1/(K_1\cap K_1')
\]
 are bijective.
 Note that the above conditions imply that the natural map
 \[
   K_1/(K_1\cap K_1')\rightarrow K/(K\cap K')
 \]
 is bijective. Thus 
$ 
   K_1\cap K\cap K'=K_1\cap K_1',
 $
 or equivalently
 \[
   K_1\cap K'=K_1\cap K_1'\cap K'.
 \]
 The proposition then follows from Lemma \ref{neatng}. 
\end{proof}

We call the map \eqref{rhogg222} a  pull-back map or a push-forward map when $C\supset C'$ or $C\subset C'$ respectively. Note that the $R$-module $\oH_\Phi^i(C, M)$ is functorial in $M$, and and transfer map \eqref{rhogg222} is natural in  $M$, namely the diagram 
\[
 \begin{CD}
          \oH_\Phi^i(C, M) @>\rho_{C, C'} >>    \oH_\Phi^i(C' , M) \\
            @VV\phi V          @VV\phi V\\
          \oH_\Phi^i(C, M') @>\rho_{C, C'} >>    \oH_\Phi^i(C' , M')\\
  \end{CD}
\]
commutes for all $R[\mathsf G(\Q)\times \BG]$-modules $M'$ and all $R[\mathsf G(\Q)\times \BG]$-module homomorphisms $\phi:M\rightarrow M'$.

The following lemma is a generalization as well as a consequence of Lemma \ref{neatng}, whose proof is omitted. 
\begin{leml}\label{neatngc}
Let $C_1, C_2, C_3$ be  compact subgroups of $\BG$ that are pairwise commeasurable with each other. Assume that 
\[
C_2=(C_1\cap C_2)(C_2\cap C_3).
\]
Then the diagram 
\[
 \begin{CD}
        \oH_\Phi^i(C_1, M) @> >>   \oH_\Phi^i(C_2, M) \\
            @VVV          @VVV\\
          \oH_\Phi^i(C_3, M) @>[C_1\cap C_3: C_1\cap C_2\cap C_3 ] >>    \oH_\Phi^i(C_3 , M)\\
  \end{CD}
\]
commutes, where the bottom horizontal arrow is the multiplication map by $[C_1\cap C_3: C_1\cap C_2\cap C_3 ]$, and all the other arrows are the  transfer maps. 



\end{leml}

Recall the convention from \eqref{drs}. 

\begin{dfnl}
    The relative cohomology groups are defined to be 
\[ 
 \oH_\Phi^i(D\fS, M)^{\la \p\ra}:=\varinjlim_{\fP\supset \fS}  \oH_\Phi^i(D\fP, M)
\]
and
\be\label{hr0001}
 \oH_\Phi^i(\mathsf G, M)^{\la \p\supset \s\ra }:=\varinjlim_{D\fS}  \oH_\Phi^i(D\fS, M)^{\la \p\ra}.
\ee
\end{dfnl} 

Here the transition maps in \eqref{hr0001}  are the pull-back maps, namely the maps that make all the following diagrams 
\[
 \begin{CD}
        \oH_\Phi^i(D\fP, M) @> \textrm{pull-back}>>   \oH_\Phi^i(D'\fP', M) \\
            @VVV          @VVV\\
          \oH_\Phi^i(D\fS, M)^{\la \p\ra} @>\textrm{transition map} >>    \oH_\Phi^i(D'\fS', M)^{\la \p\ra}\\
  \end{CD}
\]
commutative. Here $D'$ is an open subgroup of $D$, $\fS'$ is an open subgroup of $\fS$, $\fP\supset \fS$, and $\fP'$ is an open subgroup of $\fP$ containing $\fS'$. In this paper, some obvious maps such as the two vertical arrows in the above diagram, will not be named. 

Note that the group \eqref{hr0001} is independent of $\s$ in the sense that the natural homomorphism
\be\label{isors}
  \oH_\Phi^i(\mathsf G, M)^{\la \p\supset \s\ra }\rightarrow \oH_\Phi^i(\mathsf G, M)^{\la \p\ra }:=\varinjlim_{D\fP}  \oH_\Phi^i(D\fP, M)
\ee
is an isomorphism. 
Specializing to the case that $\p=\g$, we define
\[
\oH_\Phi^i(\mathsf G, M):=\oH_\Phi^i(\mathsf G, M)^{\la \g\ra }=\varinjlim_{K}  \oH_\Phi^i(K, M),
\]
where $K$ runs over all open compact subgroups of $\BG$. 


\begin{dfnl}
    The relative completed cohomology groups are defined to be 
\be\label{cfc}
\widetilde 
{\oH}_{\Phi}^i(D\fS, M)^{\la\p\ra}:=\varprojlim_{k\in \BN} {\oH}_{\Phi}^i(D\fS, M/p^k)^{\la\p\ra},
\ee
and 
\be\label{cfc2}
\widetilde 
{\oH}_{\Phi}^i(\mathsf G, M)^{\la\p\supset \s\ra}:=\varinjlim_{D\fS} \widetilde {\oH}_{\Phi}^i(D\fS, M)^{\la\p\ra}. 
\ee
\end{dfnl}
Here the transition maps in \eqref{cfc2} are the pull-back maps. 
We have an obvious commutative diagram
\be\label{interprc}
 \begin{CD}
    {\oH}_{\Phi}^i(D\fS, M)^{\la\p\ra}@> >>   \widetilde 
{\oH}_{\Phi}^i(D\fS, M)^{\la\p\ra}\\
            @VVV          @VV V\\
 {\oH}_{\Phi}^i(\mathsf G, M)^{\la\p\ra}@> >> \widetilde 
{\oH}_{\Phi}^i(\mathsf G, M)^{\la\p\supset \s\ra}. 
            \end{CD}
\ee



Relative completed cohomology groups are most interesting when $M$ is $p$-smooth as an $R[G\cap \BG]$-module (Definition \ref{df:psmooth}). When this is the case,  the $R$-module \eqref{cfc}  (and hence \eqref{cfc2}) 
only depends on the $R[\mathsf G(\Q)\times D\fS]$-module structure on $M$. 

\begin{remarkl}\label{remarkp}
    By using the trivial actions of small open compact subgroups of $G$,  the assignments $\widetilde 
{\oH}_{\Phi}^i(D\fS, \,\cdot\,)^{\la\p\ra}$ and $\widetilde 
{\oH}_{\Phi}^i(\mathsf G, \,\cdot\,)^{\la\p\supset \s\ra}$ are both functors  from the category of $R[\mathsf G(\Q)\times D\fS]$-modules that are $p$-smooth as $R[\fS]$-modules to the category of $R$-modules. 
\end{remarkl}


\begin{exl}
When $\p=\g$, $\s=\{0\}$, and $M=\CO$ is the trivial module, the group \eqref{cfc2} agrees with the completed cohomology group introduced by Emerton in \cite{Em06a}. 
\end{exl}

For every compact subgroup $C$ of $\BG$, we define 
\[
\widetilde 
{\oH}_{\Phi}^i(C, M):=\varprojlim_{k\in \BN} {\oH}_{\Phi}^i(C, M/p^k).
\]
\begin{exl}We have that
   \[
   \widetilde 
{\oH}_{\Phi}^i(D\fP, M)^{\la\p\supset \p\ra}=\widetilde  {\oH}_{\Phi}^i(D\fP, M) 
\]
and so that 
\[
   \widetilde 
{\oH}_{\Phi}^i(\mathsf G, M)^{\la\p\supset \p\ra}=\varinjlim_{D\fP} \widetilde  {\oH}_{\Phi}^i(D\fP, M). 
\]
\end{exl}

\begin{exl}
The $\CO$-module $\widetilde 
{\oH}^0({\sf G}, \CO)^{\la\g\supset \s\ra}$ is identified with the $\CO$-module of all continuous functions $f:\mathsf G(\Q)\backslash \mathscr{X}\rightarrow \CO$ that are invariant under some groups of the form $D\fS$. 
\end{exl}

\subsection{Hecke maps}
Suppose that $H^+$ is a submonoid of $\mathsf G^\natural$. It is also viewed as a submonoid of $\mathsf G(\Q)\times \mathsf G^\natural$.


\begin{dfnl} 
A compatible action of $H^+$ on $M$ is an $R$-linear action 
 \be\label{comact00}
  H^+\times M\rightarrow M, \quad (t, x)\mapsto t \ast x
\ee
such that 
  the diagram 
   \be\label{hecom}
 \begin{CD}
           M@>g>> M \\
            @V{t \ast}VV           @VV t \ast V\\
           M@>tgt^{-1} >> M\\
  \end{CD}
\ee
commutes for all  $t\in H^+$ and $g\in \mathsf G(\Q)\times \BG$ with $tgt^{-1}\in \mathsf G(\Q)\times \BG$.
\end{dfnl}


Assume that we are given a compatible action \eqref{comact00}. 
Let $t\in H^+$. Let $K, K'$ be open compact subgroups  of $\BG$. Generalizing the definition of $\rho_{K, K'}$ in \eqref{rhogg},  we shall define in what follows a  homomorphism 
\be\label{rhogg02}
\rho^*_t:=  \rho^*_{t, K, K'}: \oH_\Phi^i(K,M)\rightarrow \oH_\Phi^i(K',M).
\ee
 We call this map the Hecke map of $t$.

It follows from the commutative diagram \eqref{hecom} that the map
\[
  \mathscr X \times M\rightarrow \mathscr X \times M,\quad (x,u)\mapsto (xt^{-1}, t\ast u)
\]
is equivariant with respect to the isomorphism 
\[
  \mathsf G(\Q)\times (\BG\cap {t^{-1}} \BG t)\rightarrow   \mathsf G(\Q)\times (t \BG t^{-1}\cap \BG),\quad g\mapsto t g t^{-1}.
\]
Thus it descends to a homomorphism
\be\label{homl2}
  M_{[K \cap t^{-1} K' t]}\rightarrow  M_{[t K t^{-1}\cap K']}
\ee
of sheaves over the homeomorphism  
\[
\begin{array}{rcl}
S_{K \cap t^{-1} K' t}^{\mathsf G} &\rightarrow &S_{t K t^{-1}\cap K'}^{\mathsf G},\\
  \textrm{the class of $x\in \mathscr X$} &\mapsto & \textrm{the class of $xt^{-1}$}.
  \end{array}
\]

Write
\[ 
  \rho^*_t: \oH_\Phi^i(K \cap t^{-1} K' t, M)\rightarrow  \oH_\Phi^i(t K t^{-1}\cap K', M),
\]
for the homomorphism induced by \eqref{homl2}.
We define the homomorphism $\rho^*_t:=\rho^*_{t, K, K'}$ in \eqref{rhogg02} to be the composition of 
\[
   \oH_\Phi^i(K, M) \rightarrow\oH_\Phi^i(K \cap t^{-1} K' t, M)
   \xrightarrow{\rho^*_t}  \oH_\Phi^i(t K t^{-1}\cap K', M) \rightarrow  \oH_\Phi^i(K', M),
\]
where the first arrow is the pull-back map and the third arrow is the push-forward map. 

Hecke maps are natural in the coefficient modules, as in the following lemma. 
\begin{lem}\label{hecken}

Let $M'$ be another $R[\mathsf G(\Q)\times \BG]$-module with a compatible action of $H^+$. Let $\phi: M\rightarrow M'$ be an $H^+$-equivariant homomorphism of $R[\mathsf G(\Q)\times \BG]$-modules. Then for all open compact subgroups $K, K'$ of $\BG$, and all $t\in H^+$, the diagram
\[
 \begin{CD}
       \oH_{\Phi}^{i}(K, M)@> \rho^*_t>>   \oH_{\Phi}^{i}(K', M)\\
            @VV\phi V          @VV \phi V\\
 \oH_\Phi^i(K, M')@> \rho^*_t >>     \oH_\Phi^i(K', M')\\
            \end{CD}
\]
commutes.
\end{lem}
\begin{proof}
This is elementary and we omit its proof. 
\end{proof}

\begin{prpl}\label{hecke001} 
Let $K_1, K_2, K_3$ be open compact subgroups of $\BG$ and let $s,t\in H^+$. If 
\[
  K_2=(s K_1 s^{-1}\cap K_2) (K_2\cap t^{-1}K_3 t),
\]
then  
\[
 \rho^*_{t, K_2, K_3} \circ  \rho^*_{s, K_1, K_2}=[s K_1 s^{-1}\cap t^{-1}K_3 t : s K_1 s^{-1}\cap K_2\cap  t^{-1}K_3 t] \, \rho^*_{ts, K_1, K_3} 
 \]
 as homomorphisms from $ \oH_\Phi^i(K_1, M)$ to $ \oH_\Phi^i(K_3, M)$.

\end{prpl}
\begin{proof}
For simplicity write 
\[
r:=ts \quad\textrm{and}\quad K_0:=s K_1 s^{-1}\cap K_2\cap t^{-1} K_3 t.
\]
 Then the proposition follows from considering the following  commutative diagram:
\[
  \xymatrix{
 \oH(K_1)\ar[r] \ar[rd]\ar[d]& \oH(K_1 \cap s^{-1} K_2 s)  \ar[r]^{\rho^*_s} \ar[d]&   \oH(s K_1 s^{-1}\cap K_2)  \ar[d] \ar[r] & \oH(K_2) \ar[d]  \\
\oH(K_1\cap r^{-1} K_3 r)\ar[r]\ar[dr]^{\rho^*_r} & \oH(s^{-1} K_0 s  )  \ar[r]^{\rho^*_s} \ar[rd]^{\rho^*_{r}} &   \oH(K_0) \ar[d]^{\rho^*_{t}}  \ar[r] & \oH(K_2 \cap t^{-1} K_3 t) \ar[d]^{\rho^*_{t}}  \\   
 & \oH( r K_1 r^{-1}\cap  K_3 )\ar[r]\ar[dr] &  \oH(tK_0 t^{-1} )   \ar[rd] \ar[r] &  \oH(t K_2 t^{-1}\cap K_3) \ar[d]  \\ 
  & & \oH(K_3)\ar[r]^{(*) } & \oH(K_3)\, .  \\   
   }
\]
Here we write   $\oH(K_1)$ for 
 $\oH_\Phi^i(K_1, M)$ and  similarly for other cohomology groups, $(*)$ is the multiplication map by $[s K_1 s^{-1}\cap t^{-1}K_3 t : K_0]$, and all the  unnamed arrows are the transfer maps as defined in Section \ref{phicon}.  
 \end{proof}

\subsection{The representation $\oH_\Phi^i(\mathsf G, M)$ and its formal completion}

Take $H^+:=\BG$. Then the $R[\mathsf G(\Q) \times \BG]$-module structure on $M$ restricts to an action of $H^+$ on $M$, which is in fact a compatible action. We drop the superscipt $*$ to indicate the Hecke maps attached to this compatible action:
\[
  \rho_t:=  \rho_{t, K, K'}: \oH_\Phi^i(K,M)\rightarrow \oH_\Phi^i(K',M).
\]
Recall that 
\[
  \oH_\Phi^i(\mathsf G, M)=\varinjlim_K  \oH_\Phi^i(K, M), 
\]
where $K$ runs over all open compact subgroups of $\BG$, and the implicit homomorphisms in the direct limit are the pull-back maps. 
By using Proposition \ref{hecke001}, it is easy to see that for every $t\in \BG$, there is a unique homomorphism 
\be\label{actt0}
  \rho_t:  \oH_\Phi^i(\mathsf G, M)\rightarrow  \oH_\Phi^i(\mathsf G, M), \quad \phi\mapsto t.\phi:=\rho_t(\phi)
\ee
such that the diagram 
\[
 \begin{CD}
       \oH_{\Phi}^{i}(K, M)@> \rho_t>>   \oH_{\Phi}^{i}(tK t^{-1}, M)\\
            @VVV          @VV V\\
 \oH_\Phi^i(\mathsf G, M)@> \rho_t >>     \oH_\Phi^i(\mathsf G, M)\\
            \end{CD}
\]
commutes for all open compact subgroups $K$ of $\BG$. 
Moreover, the maps in \eqref{actt0} for various $t$ yield a  representation of $ \BG$ on $ \oH_\Phi^i(\mathsf G, M)$. This representation is smooth in the sense that every element of $ \oH_\Phi^i(\mathsf G, M)$ is fixed by some open subgroup of $\BG$.  When $R$ is a $\Q$-algebra and $K$ is a neat open compact subgroup of $\sf G$,  \cite[II. Section 19.1, (47)]{Br97} implies that 
\be\label{kfix}
  \oH_\Phi^i(\mathsf G, M)^K=\oH_\Phi^i(K, M). 
\ee


In the rest of this section we assume that $R$ is  a $\Q$-algebra so that the formal completion 
\[
\widehat{\oH_\Phi^i({\mathsf G}, M)}=\varprojlim_{K} \oH_\Phi^i({\mathsf G}, M)^K
\]
is defined, where $K$ runs over all open compact subgroups of $\BG$, and the transition maps are the averaging projections. 
For every 
$\phi\in \widehat{\oH_\Phi^i({\mathsf G}, M)}$ and every open compact subgroups of $K$ of $\BG$, write $\phi_K$ for the projection of $\phi$ to $\widehat{\oH_\Phi^i({\mathsf G}, M)}^K$.

\begin{leml}\label{lemfcc}
    For every compact subgroup $C$ of $\BG$, the natural map 
    \be\label{isofc}
      \widehat{\oH_\Phi^i({\mathsf G}, M)}^C\rightarrow \varprojlim_K {\oH_\Phi^i({\mathsf G}, M)}^K
    \ee
    is an isomorphism, 
    where $K$ runs over all open compact subgroups of $\BG$ containing $C$,  and the transition maps are the averaging projections. 
\end{leml}
\begin{proof}
Note that 
   \be\label{equalfc} \widehat{\oH_\Phi^i({\mathsf G}, M)}=\varprojlim_{K'}{\oH_\Phi^i(K', M)},
   \ee
   where $K'$ runs over all open compact subgroups of $\BG$ normalized by $C$. 
    Let $\{\phi_K\}_K$ be an element in the codomain of the map \eqref{isofc}. For every open compact subgroup $K'$ of $\BG$ normalized by $C$, we define
    \[
    \varphi_{K'}:= \phi_{K'C}\in {\oH_\Phi^i({\mathsf G}, M)},
    \]
    Then for every open subgroup $K''$ of $K'$ normalized by $C$, we have that 
    \begin{eqnarray*}
       &&\frac{1}{[K':K'']} \sum_{g\in K'/K''}  g.\varphi_{K''}\\
       &=&\frac{[K'\cap K''C:K'']}{[K':K'']} \sum_{g\in K'C/K''C}  g.\phi_{K''C}\\
       &=&\frac{1}{[K':K'\cap K''C]} \sum_{g\in K'C/K''C}  g.\phi_{K''C}\\
       &=&\frac{1}{[K'C:K''C]} \sum_{g\in K'C/K''C}  g.\phi_{K''C}\\
       &=&\phi_{K'C}=\varphi_{K'}. 
    \end{eqnarray*}
    In view of the identification \eqref{equalfc}, now it is easily checked that the family $\{\varphi_{K'}\}_{K'}$ defines an element of $\widehat{\oH_\Phi^i({\mathsf G}, M)}^C$, and the map
    \[
   \{\phi_{K}\}_{K}\mapsto  \{\varphi_{K'}\}_{K'}
    \]
    is an inverse of the map \eqref{isofc}. 
   \end{proof}
   
Formal cohomologies are  related to  formal completions as in the following lemma. 
\begin{leml}\label{lemformco}
    Suppose that $C$ is a neat compact subgroup of $\BG$. Then
     \be\label{formccomp}
      \begin{array}{rcl}
         \widehat{\oH_\Phi^i({\mathsf G}, M)}^{C}\otimes \mathrm D(\mathsf G^\natural)  & \rightarrow& \oH_\Phi^i(C, M), \\
          \phi\otimes \mu &\mapsto & \{\mu(K)\cdot  \phi_K\}_{K }
      \end{array}
      \ee
      is a well-defined  $R$-module isomorphism, 
      where $K$ runs over all neat open compact subgroups of $\BG$ containing $C$, and $\phi_K$ denotes the image of $\phi$ under the projection map $\widehat{\oH_\Phi^i({\mathsf G}, M)}\rightarrow \oH_\Phi^i({\mathsf G}, M)^K=\oH_\Phi^i(K, M)$.
\end{leml}
\begin{proof}
    In view of \eqref{kfix}, this is a direct consequence of Lemma \ref{lemfcc}. 
\end{proof}

\begin{leml}\label{lemformco22}
    Suppose that $C$ is a neat compact subgroup of $\BG$ and $C'$ is an open subgroup of $C$. Then the diagram 
     \be\label{ff}
      \begin{CD}
          \widehat{\oH_\Phi^i({\mathsf G}, M)}^{C}\otimes \mathrm D(\mathsf G^\natural) \otimes \RD(C)^\vee @>\eqref{formccomp}\otimes\textrm{(evaluation at $\mu_C$)}>> \oH_\Phi^i(C, M)\\
          @V\textrm{inclusion}VV @VV\textrm{pull-back}V\\
          \widehat{\oH_\Phi^i({\mathsf G}, M)}^{C'}\otimes \mathrm D(\mathsf G^\natural)  \otimes \RD(C)^\vee @>\eqref{formccomp}\otimes\textrm{(evaluation at $\mu_{C,C'}$)}>> \oH_\Phi^i(C', M)\\
      \end{CD}
      \ee
      commutes. Here $\mu_C\in \RD(C)$  denotes the normalized Haar measure, and $\mu_{C,C'}\in \RD(C)$ denotes the Haar measure on $C$ whose restriction to $C'$  is the normalized Haar measure. 
      \end{leml}
      \begin{proof}
          This is routine to verify, by reducing to the case when $C'$ is a normal subgroup of $C$.
      \end{proof}
      
Recall from \eqref{psmooth} that 
\[
\widehat{\oH_\Phi^i({\mathsf G}, M)}_{{\p}-\mathrm{sm}}\subset \widehat{\oH_\Phi^i({\mathsf G}, M)} 
\]
is the submodule consisting of the elements that are fixed by some groups of the form $D\fP$. 
When $D\fP$ is neat, by Lemma \ref{lemformco}, we have an isomorphism 
\be\label{isof3}
  \begin{array}{rcl}
    \widehat{\oH_\Phi^i({\mathsf G}, M)}^{D\fP}\otimes \mathrm D(\g/\p)   & \rightarrow &\oH_{\Phi}^{i}(D\fP, M), \\
      \phi\otimes \mu\otimes \nu&\mapsto & \{ \mu(\fG) \cdot \la \nu, \mu_\fP\ra\cdot \phi_{D\fG}\}_{\fG}.  
  \end{array}
\ee
Here $\fG$ runs over all $D$-neat open compact subgroups of $G\cap \BG$ containing $\fP$, $\mu_\fP\in \RD(\p)=\mathrm D(\fP)$ is the measure with $\mu_\fP(\fP)=1$, $\phi_{D\fG}$ denotes the projection of $\phi$ to $\oH_\Phi^i({\mathsf G}, M)^{D\fG}=\oH_\Phi^i(D\fG, M)$, and as before we use the identification
\[
 \mathrm D(\g/\p)=\mathrm D(\g)\otimes \mathrm D(\p)^\vee=\mathrm D(G)\otimes \mathrm D(\fP)^\vee.
\]

The following lemma is a direct consequence of Lemma \ref{lemformco22}. 
\begin{leml}\label{isosm00}
    There is a unique isomorphism \be\label{identrsm}
\widehat{\oH_\Phi^i({\mathsf G}, M)}_{{\p}-\mathrm{sm}}\otimes \mathrm D(\g/\p)\rightarrow \oH_\Phi^i({\mathsf G}, M)^{\la \p\ra}
\ee
such that  
the diagram 
\[
 \begin{CD}
    \widehat{\oH_\Phi^i({\mathsf G}, M)}^{D\fP}\otimes \mathrm D(\g/\p)@>\eqref{isof3} >>   \oH_{\Phi}^{i}(D\fP, M)\\
            @VVV          @VV V\\
 \widehat{\oH_\Phi^i({\mathsf G}, M)}_{{\p}-\mathrm{sm}}\otimes \mathrm D(\g/\p)@> \eqref{identrsm} >>     \oH_\Phi^i({\mathsf G}, M)^{\la \p\ra}\\
            \end{CD}
\]
commutes for all $D$ and $\fP$ such that $D\fP$ is neat. 
\end{leml}

\subsection{Cup product}\label{seccupp}
Let $M'$ be another $R[\mathsf G(\Q)\times \BG]$-module so that $M'\otimes M:=M'\otimes_R M$ is also an $R[\mathsf G(\Q)\times \BG]$-module. For the main purpose of this paper, we only consider cup product by degree zero cohomologies. 

\begin{leml}\label{cupcum}
    Let $K$ be an open compact subgroup of $\BG$, and let $K_1, K_2$ be open subgroups of $K$. 
    Then for each $\eta\in \oH^0(K, M')$, the diagram 
    \[
 \begin{CD}
          \oH_\Phi^i(K_1, M) @>\eta_1  \smallsmile (\, \cdot\, )  >>  \oH_\Phi^{i}(K_1, M'\otimes M)  \\
            @V\rho VV          @VV \rho  V\\
          \oH_\Phi^i(K_2, M)  @>\eta_2  \smallsmile  (\, \cdot\, )>>    \oH_\Phi^{i}(K_2, M'\otimes_R M) \\
            \end{CD}
\]
commutes, where $\eta_i:=\rho_{K, K_i}(\eta)\in \oH^i(K_i, M')$ ($i=1,2$) and ``$\,  \smallsmile$" stands for the cup product. 
\end{leml}
\begin{proof}
This is an exercise in algebraic topology. We omit the details. 
\end{proof}

By lemma \ref{cupcum}, we have a unique map
\be\label{cup0}
  \smallsmile: \oH^0(\mathsf G, M')\times \oH_{\Phi}^i(\mathsf G, M)\rightarrow \oH_{\Phi}^{i}( \mathsf G, M'\otimes M)
  \ee
such that the diagram 
    \[
 \begin{CD}
          \oH^0(K, M')\times \oH_{\Phi}^i(K, M)  @>\textrm{cup product}>>  \oH_{\Phi}^{i}( K, M'\otimes M)\\
            @V VV          @VV  V\\
          \oH^0(\mathsf G, M')\times \oH_{\Phi}^i(\mathsf G, M)  @>  \smallsmile  >>  \oH_{\Phi}^{i}( \mathsf G, M'\otimes M)\\
            \end{CD}
\]
commutes for all open compact subgroups $K$ of 
$\BG$.

Lemma \ref{cupcum} also implies that for every compact subgroup $C$ of $\BG$, there is  a unique map
\be\label{cup00}
  \smallsmile: \left(\varinjlim_K\oH^0(K, M')\right)\times \oH_{\Phi}^i(C, M)\rightarrow \oH_{\Phi}^{i}( C, M'\otimes M),
  \ee
where $K$ runs over all open compact subgroups of $\BG$ containing $C$, such that for all such $K$ the diagram 
    \[
 \begin{CD}
          \oH^0(K, M')\times \oH_{\Phi}^i(C, M)  @>  \smallsmile  >>  \oH_{\Phi}^i(C, M'\otimes M)\\
            @V VV          @VV  V\\
         \oH^0(K, M')\times \oH_{\Phi}^i(K, M)  @>  \textrm{cup product}  >>  \oH_{\Phi}^i(K, M'\otimes M)  \\
            \end{CD}
\]
commutes. Similarly, for all $D$ and $\fS$, 
there is  a unique map
\be\label{cup00injl}
  \smallsmile: \left(\varinjlim_{\fG\supset \fS}\oH^0(D\fG, M')\right)\times \left(\varinjlim_{\fP\supset \fS} \oH_{\Phi}^i(D\fP, M)\right) \rightarrow \varinjlim_{\fP\supset \fS} \oH_{\Phi}^{i}( D\fP, M'\otimes M)
  \ee
such that the diagram 
    \[
 \begin{CD}
          \oH^0(D\fG, M')\times \oH_{\Phi}^i(D\fP, M) @>  \eqref{cup00}  >>   \oH_{\Phi}^{i}( D\fP, M'\otimes M)\\
            @V VV          @VV  V\\
        \left(\varinjlim_{\fG\supset \fS}\oH^0(D\fG, M')\right)\times \left(\varinjlim_{\fP\supset \fS} \oH_{\Phi}^i(D\fP, M)\right) @>\smallsmile>>\varinjlim_{\fP\supset \fS} \oH_{\Phi}^{i}( D\fP, M'\otimes M) \\
            \end{CD}
\]
commutes whenever $\fG\supset \fP\supset \fS$.

By using Lemma \ref{cupcum}, it is routine to check that there are three cup product maps as defined in what follows. The cup product map 
\[ 
  \smallsmile: \oH^0(\mathsf G, M')\times \oH_{\Phi}^i(\mathsf G, M)^{\la \p\ra}\rightarrow \oH_{\Phi}^{i}( \mathsf G, M'\otimes M)^{\la \p\ra}
  \]
  for relative cohomologies is defined by requiring 
that the diagram 
    \[
 \begin{CD}
          \oH^0(K, M')\times \oH_{\Phi}^i(D\fP, M)  @>\eqref{cup00} >>  \oH_{\Phi}^{i}( D\fP, M'\otimes M)\\
            @V VV          @VV   V\\
          \oH^0(\mathsf G, M')\times \oH_{\Phi}^i(\mathsf G, M)^{\la \p\ra}  @>  \smallsmile  >>  \oH_{\Phi}^{i}( \mathsf G, M'\otimes M)^{\la \p\ra}\\
            \end{CD}
\]
commutes for all $D$, $\fP$, and all open compact subgroups $K$ of 
$\BG$ containing $D\fP$.

The cup product map 
\[ 
  \smallsmile: \widetilde \oH^0(\mathsf G, M')^{\la \g\supset \s\ra}\times \widetilde \oH_{\Phi}^i(\mathsf G, M)^{\la \p\supset \s\ra}\rightarrow \widetilde \oH_{\Phi}^{i}( \mathsf G, M'\otimes M)^{\la \p\supset \s\ra}
  \]
  for relative completed cohomologies is defined by requiring 
that the diagram 
    \[
 \begin{CD}
         \widetilde \oH^0(D\fS, M')^{\la \g\ra}\times \widetilde \oH_{\Phi}^i(D\fS, M)^{\la \p\ra} @>    >> \widetilde \oH_{\Phi}^{i}( D\fS, M'\otimes M)^{\la \p\ra}\\
            @V VV          @VV   V\\
          \widetilde \oH^0(\mathsf G, M')^{\la \g\supset \s\ra}\times \widetilde \oH_{\Phi}^i(\mathsf G, M)^{\la \p\supset \s\ra} @>  \smallsmile  >> \widetilde \oH_{\Phi}^{i}( \mathsf G, M'\otimes M)^{\la \p\supset \s\ra}\\
            \end{CD}
\]
commutes for all $D$ and  $\fS$, where the top horizontal arrow is the inverse limit over $k\in \BN$ of the maps
\[
  \smallsmile: \left(\varinjlim_{\fG\supset \fS}\oH^0(D\fG, M'/p^k)\right)\times \left(\varinjlim_{\fP\supset \fS} \oH_{\Phi}^i(D\fP, M/p^k)\right) \rightarrow \varinjlim_{\fP\supset \fS} \oH_{\Phi}^{i}( D\fP, (M'\otimes M)/p^k)
  \]
  as in \eqref{cup00injl}.

When $R$ is a $\Q$-algebra, the cup product map 
\be\label{cup007}
  \smallsmile:  \oH^0(\mathsf G, M')\times \widehat{\oH_{\Phi}^i(\mathsf G, M)} \rightarrow \widehat{\oH_{\Phi}^i(\mathsf G, M'\otimes M)}
  \ee
  for the formal completion  is defined by requiring 
that the diagram 
    \[
 \begin{CD}
         \oH^0( K, M')\times \widehat{\oH_{\Phi}^i(\mathsf G, M)}@>  \smallsmile  >> \widehat{\oH_{\Phi}^i(\mathsf G, M'\otimes M)}\\
            @V VV          @VV   V\\
          \oH^0( K, M') \times {\oH_{\Phi}^i(\mathsf G,  M)}^K@>  \eqref{cup0} >> {\oH_{\Phi}^i(\mathsf G, M'\otimes M)}^K\\
            \end{CD}
\]
commutes for all open compact subgroups $K$ of $\BG$.
Note that the map \eqref{cup007} restricts to a map 
\[
  \smallsmile:  \oH^0(\mathsf G, M')\times \widehat{\oH_{\Phi}^i(\mathsf G, M)}_{\p-\mathrm{sm}} \rightarrow \widehat{\oH_{\Phi}^i(\mathsf G, M'\otimes M)}_{\p-\mathrm{sm}}.
 \]

It is  routine to check the following lemma by definitions. 
 
\begin{leml}\label{lemcup}
    The diagram
    \[
 \begin{CD}
      \oH^0(\mathsf G, M')\times \oH_{\Phi}^i(\mathsf G, M)^{\la \p\ra} @>  \smallsmile  >> \oH_{\Phi}^{i}( \mathsf G, M'\otimes M)^{\la \p\ra}\\
            @V VV          @VV   V\\
          \widetilde \oH^0(\mathsf G, M')^{\la \g\supset \s\ra}\times \widetilde \oH_{\Phi}^i(\mathsf G, M)^{\la \p\supset \s\ra} @>  \smallsmile  >> \widetilde \oH_{\Phi}^{i}( \mathsf G, M'\otimes M)^{\la \p\supset \s\ra}\\
            \end{CD}
\]
commutes, and when $R$ is a $\Q$-algebra, the diagrams 
\[
 \begin{CD}
      \oH^0(\mathsf G, M')\times \oH_{\Phi}^i(\mathsf G, M) @>  \smallsmile  >> \oH_{\Phi}^{i}( \mathsf G, M'\otimes M)\\
            @V VV          @VV   V\\
         \oH^0(\mathsf G, M')\times \widehat{\oH_{\Phi}^i(\mathsf G, M)} @>  \smallsmile  >> \widehat{ \oH_{\Phi}^{i}( \mathsf G, M'\otimes M)}\\
            \end{CD}
\]
and \[
 \begin{CD}
      \oH^0(\mathsf G, M')\times \widehat{\oH_{\Phi}^i(\mathsf G, M)}_{\p-\mathrm{sm}}\otimes \RD(\g/\p) @>  \smallsmile  >> \widehat{ \oH_{\Phi}^{i}( \mathsf G, M'\otimes M)}_{\p-\mathrm{sm}}\otimes \RD(\g/\p)\\
            @V \eqref{identrsm} VV          @VV  \eqref{identrsm} V\\
         \oH^0(\mathsf G, M')\times \oH_{\Phi}^i(\mathsf G, M)^{\la \p\ra} @>  \smallsmile  >>  \oH_{\Phi}^{i}( \mathsf G, M'\otimes M)^{\la \p\ra}\\
            \end{CD}
\]
commute. 
\end{leml}

\section{Pull-backs and integrations} \label{sec:PBI}

All unexplained notations in this section will be as in the last section. 

Recall the homomorphism $\imath: \dot{\mathsf G}\rightarrow \mathsf G$ from Section \ref{secnotation}.  Fix  a closed 
 subgroup of $\dot{\mathsf G}(\R)$ of the form $\dot K_\infty = \dot{\mathsf A}(\R)^\circ\cdot \dot K_\infty'$, where $\dot{\mathsf A}$ is a split torus in $\dot{\mathsf G}$ defined over $\Q$ that is central in the identity connected component of $\dot{\sf G}$ modulo its  unipotent radical, and $\dot K_\infty'$ is a compact subgroup that normalizes  $\dot{\mathsf  A}(\R)^\circ$. Assume that  $\imath(\dot K_\infty)\subset K_\infty$. 
Then we have a  map
\[
\imath: \dot{\mathscr X}\to \mathscr X, 
\]
where
\[
  \dot{\mathscr X}:=(\dot{\mathsf G}(\R)/\dot K_\infty^\circ)\times \dot{ \mathsf G}(\A^\infty).
\]
Put 
\[
\dot \BG:=\imath^{-1}(\BG)\subset \dot{\sf G}^\natural:=\dot K_\infty^\natural\times \dot{\sf G}(\A^\infty).
\]
Recall that $\dot G:=\dot{\mathsf G}(\Q_p)$ whose Lie algebra is denoted by $\dot \g$. Also recall that  $\dot \p:=\imath^{-1}(\p)\subset \dot \g$,  and let $\dot \s$ be a Lie subalgebra of $\imath^{-1}(\s)\subset \dot \g$.

Similar to \eqref{drs}, in the rest of this paper, 
\[ 
\begin{cases}
    \textrm{$\dot D$ denotes an open compact subgroup of $\dot{\mathsf G}^{\natural, p}\cap \dot \BG$};&\\
     \textrm{$\dot \fG$ denotes an open compact subgroup of $\dot G\cap \dot \BG$};& \\
    \textrm{$\dot \fP$ denotes a compact subgroup of $\dot G\cap \dot \BG$ with Lie algebra $\dot \p$};& \\
    \textrm{$\dot \fS$ denotes a compact subgroup of $\dot G\cap \dot \BG$ with Lie algebra $\dot \s$}. & \\
\end{cases}
\]

\subsection{Pull-back for formal cohomologies}

Let $C$ be a neat compact subgroup of $\BG$ and $\dot C$ a neat compact subgroup of $\dot \BG$ such that 
\begin{itemize}
    \item $\imath(\dot C)\subset C$,  and
    \item the map
$
\imath:  \dot{\sf G}^\natural/\dot C\rightarrow {\sf G}^\natural/C
  $
  is  a local homeomorphism.
  \end{itemize}

\begin{lem}\label{pb1}
For every open compact subgroup  $\dot K'$ of $\dot \BG$ containing $\dot C$, there exist a neat open compact subgroup $\dot K$ of $\dot \BG$ containing $\dot C$ and a neat open compact subgroup $ K$ of $\BG$ containing $C$  such that
\begin{itemize}
    \item $ \dot K\subset \dot K'$,  $\imath(\dot K)\subset K$, and
    \item the map
$
\imath:   \dot K/\dot C\rightarrow  K/C
$
is bijective.
\end{itemize}
  
\end{lem}

\begin{proof}
Without loss of generality, assume that $\dot K'$ is sufficiently small so that
the map
\[
  \imath: \dot K'/\dot C\rightarrow  {\sf G}^\natural/C
\]
is injective. Take a neat open compact subgroup $ K$ of $\BG$ containing $C$ that  is small enough so that
\[
  K/C\subset  \imath( \dot K'/\dot C).
 \]
Now  the lemma follows by taking 
$
  \dot K:=\dot K'\cap \imath^{-1}(K).
$
\end{proof}

 Let $\dot \Phi$ be a family of closed subsets of $\dot{\mathsf G}(\Q)\backslash \dot{\mathscr X}$ that satisfies the analogous conditions for $\Phi$ in Section \ref{phicon}.
Assume that under the map
 $
 \imath: \dot{\mathsf G}(\Q)\backslash \dot{\mathscr X}\rightarrow  {\mathsf G}(\Q) \backslash \mathscr X
 $, 
 \be\label{precompact}
\textrm{the preimage of every set in $\Phi$ 
 belongs to $\dot \Phi$.}
  \ee
We define the pull-back map
\[
   \imath^*: \oH_\Phi^i(C, M)\rightarrow \oH^{i}_{\dot \Phi}(\dot C, M)
   \]
as in the following lemma.

\begin{prpl}\label{pb2}
There  is a unique map
\[
   \imath^*: \oH_\Phi^i(C, M)\rightarrow \oH^{i}_{\dot \Phi}(\dot C, M)
   \]
   such that the diagram
\be\label{comcdc}
 \begin{CD}
          \oH_\Phi^i(C, M)@> \imath^* >>    \oH^{i}_{\dot \Phi}(\dot C, M)\\
            @V\textrm{the projection map} VV          @VV \textrm{the projection map} V\\
  \oH_\Phi^i(K, M)@> \imath^* >>    \oH^{i}_{\dot \Phi}(\dot K, M)\\
            \end{CD}
\ee
commutes for all  neat open compact subgroups $\dot K$ of $\dot \BG$ containing $\dot C$ and  all neat open compact subgroups $ K$ of $\BG$ containing $C$ such that 
\be\label{bijec1}
\textrm{ $\imath(\dot K)\subset K
\ \ $ and the map 
$\  \imath:  \dot K/\dot C\rightarrow  K/C  $ is bijective.}
\ee
 Here the bottom horizontal arrow $\imath^*$ of \eqref{comcdc} is the usual pull-back map for cohomology groups (similar notation will be used later on).

\end{prpl}

\begin{proof}
The uniqueness follows from Lemma \ref{pb1}. To prove the existence, it suffices to show that
the diagram
\be\label{cdiota}
 \begin{CD}
          \oH_\Phi^i(K', M)@> \imath^* >>    \oH^{i}_{\dot \Phi}(\dot K', M)\\
            @V\rho VV          @VV \rho V\\
  \oH_\Phi^i(K, M)@> \imath^* >>    \oH^{i}_{\dot \Phi}(\dot K, M)\\
            \end{CD}
\ee
commutes  all neat open compact subgroups $\dot K'$ of $\dot \BG$ containing $\dot C$ and  all neat open compact subgroups $ K'$ of $\BG$ containing $C$ such that 
\[
  K'\subset K, \quad \dot K'\subset \dot K, \quad\imath(\dot K')\subset K'
\]
and that the map
\be\label{bijec2}
  \imath:  \dot K'/\dot C\rightarrow K'/ C \ \textrm{ is bijective. }
\ee
It follows from \eqref{bijec1} and \eqref{bijec2} that  the map
\be\label{bijec22}
  \imath:  \dot K/\dot K'\rightarrow K/K' \ \textrm{ is bijective. }
\ee
Consider the commutative  diagram 
\[
 \begin{CD}
            S^{\mathsf G}_{K'}@<\imath<<  S^{\dot{ \mathsf G}}_{\dot K'}\\
            @VVV          @VVV\\
            S^{\mathsf G}_{K}@<\imath<<  S^{\dot{ \mathsf G}}_{\dot K}.\\
  \end{CD}
\]
The neatness condition implies that the left vertical arrow and the right  vertical arrow are finite folds covering maps of topological manifolds, with fibers $K/K'$ and $\dot K/\dot K'$ respectively. Then \eqref{bijec22} implies that the above commutative diagram   
is Cartesian. This implies that  the diagram \eqref{cdiota} commutes. 
\end{proof}

\begin{leml}\label{limitpc}
    Let $C'\subset C$  and $\dot C'\subset\dot C$ be open subgroups such that 
$\imath(\dot C')\subset C'$. Then  the diagram
\[
 \begin{CD}
          \oH_{\Phi}^i(C, M) @> \imath^* >>     \oH_{\dot \Phi}^i(\dot C, M) \\
            @VVV          @VVV\\
 \oH_{\Phi}^i(C', M) @> \imath^* >>     \oH_{\dot \Phi}^i(\dot C', M) \\
            \end{CD}
\]
commutes, where the vertical arrows are the pull-back maps.
\end{leml}
\begin{proof}
Pick  neat open compact subgroups $\dot K'$ and $\dot K$ of $\dot \BG$ containing $\dot C'$ and $\dot C$ respectively, such that 
$\dot K' \subset \dot K$ and that $\dot K'/\dot C' \to \dot K /\dot C$ is bijective. It follows from the proof of Proposition \ref{actrt2} that such groups $\dot K'$ form a neighborhood basis of $\dot C'$ in $\dot \BG$. Using Lemma \ref{pb1}, by shrinking $\dot K'$ and $\dot K$ if necessary we may assume that there exist neat open compact subgroups $K'$ and $K$ of $\BG$ such that $\imath(\dot K')\subset K'$, $\imath(\dot K)\subset K$ and that the maps $\imath: \dot K' /\dot C' \to K'/C'$ and $\imath: \dot K/\dot C \to K/C$ are bijective. It follows that 
\[
K' = \imath(\dot K')C' \subset \imath(\dot K)C = K.
\]
Then we have a commutative diagram
    \[
 \begin{CD}
          K/C @< \imath<<    \dot K/\dot C \\
            @AAA          @AAA\\
  K'/C' @< \imath<<    \dot K'/\dot C' .\\
            \end{CD}
\]
Since all arrows except the left vertical one are bijective,  the left vertical arrow is  bijective as well. Now the lemma follows by considering the diagram 
    \[
\xymatrix{
\oH_{\Phi}^i(K, M) \ar[rrr]\ar[ddd] &    &   & \oH_{\dot \Phi}^i(\dot K, M)\ar[ddd]\\
& \oH_{\Phi}^i(C, M)\ar[r]\ar[d]\ar[lu]&\oH_{\dot \Phi}^i(\dot C, M)\ar[d]\ar[ru]&\\
&\oH_{ \Phi}^i(C', M)\ar[r]\ar[ld] &\oH_{\dot \Phi}^i(\dot C', M)\ar[rd] & \\
\oH_{ \Phi}^i(K', M) \ar[rrr]&& & \oH_{\dot \Phi}^i(\dot K', M) . 
}
\]
\end{proof}


Recall that $\p$ is transversal to $\imath(\dot \g)$. 
The following two lemmas are direct consequences of  Lemma \ref{limitpc}. 
\begin{leml}\label{pullr}
    Assume that $D\fS$ and  $\dot D\dot \fS$ are neat, and $\imath(\dot D\dot \fS)\subset D\fS$. Then there is a unique homomorphism 
    \be\label{pull-backr00}
\imath^*: \oH_{\Phi}^i(D\fS, M)^{\la  \p\ra}\rightarrow \oH_{\dot \Phi}^i(\dot{D}\dot{\fS},M)^{\la \dot \p\ra}
\ee
such that the diagram
\[
 \begin{CD}
        \oH_\Phi^i(D\fP, M) @> \textrm{pull-back}>>   \oH_\Phi^i(\dot D\dot \fP, M) \\
            @VVV          @VVV\\
          \oH_\Phi^i(D\fS, M)^{\la \p\ra} @>\imath^* >>    \oH_\Phi^i(\dot D\dot \fS, M)^{\la \dot \p\ra}\\
  \end{CD}
\]
commutes for all $\fP$ and $\dot \fP$ such that 
\[
\fP\supset \fS,\quad 
\dot \fP\supset \dot \fS,\quad D\fP\textrm{ and } \dot D \dot \fP\textrm{ are neat, $\quad$ and }\imath(\dot \fP)\subset \fP.
\]

\end{leml}

\begin{leml}\label{pullr2}
    There is a unique homomorphism 
    \be\label{pull-backr00000}
\imath^*: \oH_{\Phi}^i(\mathsf G, M)^{\la  \p\supset \s\ra}\rightarrow \oH_{\dot \Phi}^i(\dot{\mathsf G},M)^{\la \dot \p\supset \dot \s\ra}
\ee
such that the diagram
\[
 \begin{CD}
        \oH_\Phi^i(D\fS, M)^{\la \p\ra} @>\eqref{pull-backr00} >>    \oH_\Phi^i(\dot D\dot \fS, M)^{\la \dot \p\ra}\\
            @VVV          @VVV\\
         \oH_{\Phi}^i(\mathsf G, M)^{\la  \p\supset \s\ra}@>\imath^* >>    \oH_{\dot \Phi}^i(\dot{\mathsf G},M)^{\la \dot \p\supset \dot \s\ra}\\
  \end{CD}
\]
commutes for all $D$, $\fS$, $\dot D$ and $\dot \fS$ as in Lemma \ref{pullr}.
\end{leml}

It is clear that the homomorphism \eqref{pull-backr00000} is independent of $\s$ and $\dot \s$ (see \eqref{isors}). We also call it the pull-back map and write it as   
\[
\imath^*: \oH_{\Phi}^i(\mathsf G, M)^{\la  \p\ra}\rightarrow \oH_{\dot \Phi}^i(\dot{\mathsf G},M)^{\la \dot \p\ra}.
\]
Specializing the above map to the case when $\p=\g$ and $\dot \p=\dot \g$, we obtain the pull-back map 
\[ 
\imath^*: \oH_{\Phi}^i({\sf G}, M)\rightarrow \oH_{\dot \Phi}^i(\dot{\sf G}, M). 
\]

In the setting of Lemma \ref{pullr}, the homomorphism 
    \eqref{pull-backr00} is natural in the coefficient module $M$. Thus it yields a homomorphism 
\[
\imath^*: \widetilde \oH_{\Phi}^i(D\fS, M)^{\la  \p\ra}\rightarrow \widetilde \oH_{\dot \Phi}^i(\dot{D}\dot{\fS},M)^{\la \dot \p\ra}.
\]
By taking the direct limits, this further induces a 
homomorphism 
\[
\imath^*: \widetilde \oH_{\Phi}^i(\mathsf G, M)^{\la  \p\supset \s\ra}\rightarrow \widetilde \oH_{\dot \Phi}^i(\dot{\mathsf G}, M)^{\la \dot \p\supset \dot \s\ra}.
\]
We have obvious commutative diagrams
\[
 \begin{CD}
    \oH_{\Phi}^i(D\fS, M)^{\la  \p\ra}@>\imath^*>>  \oH_{\dot \Phi}^i(\dot{D}\dot{\fS},M)^{\la \dot \p\ra}\\
            @VVV          @VV V\\
 \widetilde \oH_{\Phi}^i(D\fS, M)^{\la  \p\ra}@>\imath^*>> \widetilde \oH_{\dot \Phi}^i(\dot{D}\dot{\fS},M)^{\la \dot \p\ra} 
            \end{CD}
\]
and
\[
 \begin{CD}
    \oH_{\Phi}^i(\mathsf G, M)^{\la  \p\ra}@>\imath^*>>  \oH_{\dot \Phi}^i(\dot{\mathsf G},M)^{\la \dot \p\ra}\\
            @VVV          @VV V\\
 \widetilde \oH_{\Phi}^i(\mathsf G, M)^{\la  \p\supset \s\ra}@>\imath^*>> \widetilde \oH_{\dot \Phi}^i(\dot{\mathsf G},M)^{\la \dot \p\supset\dot \s\ra}.
            \end{CD}
\]

\subsection{Pull-back for formal completions}
In this subsection we assume that $R$ is a $\Q$-algebra. 
The proof of Proposition \ref{pb2} also proves the following proposition.

\begin{prpl}\label{pbf3} 
There  is a unique map
\be\label{pullfceq}
   \imath^*: \widehat{\oH_\Phi^i(\mathsf G, M)}^C\rightarrow \widehat{\oH^{i}_{\dot \Phi}(\dot{\mathsf G}, M)}^{\dot C}
   \ee
   such that the diagram
\[ 
 \begin{CD}
          \widehat{\oH_\Phi^i(\mathsf G, M)}^C @> \imath^* >>    \widehat{\oH^{i}_{\dot \Phi}(\dot{\mathsf G}, M)}^{\dot C}\\
            @V\textrm{the projection map} VV          @VV \textrm{the projection map} V\\
  \oH_\Phi^i(K, M)@> \imath^* >>    \oH^{i}_{\dot \Phi}(\dot K, M)\\
            \end{CD}
\]
commutes for all  neat open compact subgroups $\dot K$ of $\dot \BG$ containing $\dot C$ and  all neat open compact subgroups $ K$ of $\BG$ containing $C$ such that 
\[
\textrm{ $\imath(\dot K)\subset K
\ \ $ and the map 
$\  \imath:  \dot K/\dot C\rightarrow  K/C  $ is bijective.}
\]
 Here the identifications $ \oH_\Phi^i(K, M)=\widehat{\oH_\Phi^i(\mathsf G, M)}^K$ and $ \oH_{\dot \Phi}^i(\dot K, M)=\widehat{\oH_{\dot \Phi}^i(\dot{\mathsf G}, M)}^{\dot K}$ are used.

\end{prpl}

The proof of Lemma \ref{limitpc} also shows the following result. 
\begin{leml}\label{limitpcfc}
    Let $C'\subset C$  and $\dot C'\subset\dot C$ be open subgroups such that 
$\imath(\dot C')\subset C'$. Then  the diagram
\[
 \begin{CD}
          \widehat{\oH^{i}_{ \Phi}({\mathsf G}, M)}^{ C} @> \eqref{pullfceq} >>     \widehat{\oH^{i}_{\dot \Phi}(\dot{\mathsf G}, M)}^{\dot C} \\
            @V\subset VV          @VV\subset V\\
 \widehat{\oH^{i}_{\Phi}({\mathsf G}, M)}^{ C'} @> \eqref{pullfceq} >>     \widehat{\oH^{i}_{\dot \Phi}(\dot{\mathsf G}, M)}^{\dot C'} \\
            \end{CD}
\]
commutes.
\end{leml}

By Lemma \ref{limitpcfc}, there is a unique homomorphism
\[
   \imath^*: \widehat{\oH_\Phi^i(\mathsf G, M)}_{\p-\mathrm{sm}}\rightarrow \widehat{\oH^{i}_{\dot \Phi}(\dot{\mathsf G}, M)}_{\dot \p-\mathrm{sm}},
   \]
   to be called the pull-back map for formal completions, such that the diagram 
   \[
 \begin{CD}
        \widehat{\oH_\Phi^i(\mathsf G, M)}^{D\fP}@>\eqref{pullfceq} >>    \widehat{\oH_\Phi^i(\dot{\mathsf G}, M)}^{\dot D\dot \fP}\\
            @VVV          @VVV\\
         \widehat{\oH_\Phi^i(\mathsf G, M)}_{\p-\mathrm{sm}} @>\imath^* >>    \widehat{\oH^{i}_{\dot \Phi}(\dot{\mathsf G}, M)}_{\dot \p-\mathrm{sm}}\\
  \end{CD}
\]
commutes for $D$, $\dot D$, $\fP$ and  $\dot \fP$ such that 
\[
\textrm{$D\fP$ and $\dot D\dot \fP$ are neat, and  $\imath(\dot D\dot \fP)\subset D\fP$. 
}
\]

The transversality condition implies that $\g/\p=\dot \g/\dot \p$. 

\begin{leml}\label{pullbackeq}
    The diagram 
     \[
 \begin{CD}
        \widehat{\oH_\Phi^i(\mathsf G, M)}_{\p-\mathrm{sm}}\otimes \mathrm D(\g/\p) @>\imath^* >>    \widehat{\oH^{i}_{\dot \Phi}(\dot{\mathsf G}, M)}_{\dot \p-\mathrm{sm}}\otimes \mathrm D(\dot \g/\dot \p)\\
            @V \eqref{identrsm} VV          @VV \eqref{identrsm} V\\
         {\oH_\Phi^i(\mathsf G, M)}^{\la \p\ra} @>\imath^* >>    {\oH^{i}_{\dot \Phi}(\dot{\mathsf G}, M)}^{\la \dot \p\ra}\\
  \end{CD}
\]
commutes. 
\end{leml}
\begin{proof}
    It suffices to show that the diagram
     \[
 \begin{CD}
        \widehat{\oH_\Phi^i(\mathsf G, M)}^{D\fP}\otimes \mathrm D(\g/\p) @>\imath^* >>    \widehat{\oH^{i}_{\dot \Phi}(\dot{\mathsf G}, M)}^{\dot D\dot \fP}\otimes \mathrm D(\dot \g/\dot \p)\\
            @VVV          @VVV\\
         {\oH_\Phi^i({D\fP}, M)} @>\imath^* >>    {\oH^{i}_{\dot \Phi}(\dot D \dot \fP}, M)\\
  \end{CD}
\]
commutes when $D\fP$ and $\dot D \dot \fP$ are neat, and $\imath(\dot D \dot \fP)\subset D\fP$. This is routine to check and we omit the details. 
\end{proof}


\subsection{Integrations for formal cohomologies}

Fix a $\dot {\mathsf G}(\R)$-invariant orientation $\omega_{\dot{\mathsf G}}$ on $\dot{\mathsf G}(\R)/ \dot K_\infty^\circ$, which always exists and is unique up to sign. It yields a generator of $\mathrm O(\dot{\mathsf G}(\R)/\dot K_\infty^\circ)$, which is still denoted by $\omega_{\dot {\mathsf G}}$. 
It also induces  an orientation on the manifold
\[
  S^{\dot {\mathsf G}}_{ \dot K}:=\dot{\mathsf G}(\Q)\backslash \dot{ \mathscr X}/\dot K,
\]
for every completely neat open compact subgroup $\dot K$ of ${\dot{\mathsf G}^\natural}$. Using this orientation, the pairing against the fundamental class gives a map
  \be\label{intm0}
  \int_{\omega_{\dot {\mathsf G}}}:   \oH^{i}_{\mathrm c}( \dot K, R)\rightarrow R.
\ee
Here $R$ carries the trivial action of $\dot{\mathsf G}(\Q)\times  \dot{\mathsf G}^\natural$. 
By convention, the map \eqref{intm0} is identically zero unless $i=\dim ( \dot{\mathsf G}(\R)/\dot K^\circ_\infty)$. 
Note that for each open compact subgroup $\dot K'$ of $\dot K$, the diagrams
\be\label{lemintc2} 
 \begin{CD}
           \oH^{i}_{\mathrm c}(\dot{K'}, R)@>\int_{\dot\omega_{{\mathsf G}}}>>    R \\
            @V\textrm{the push-forward  map}VV          @|\\ 
            \oH^{i}_{\mathrm c}(\dot{K}, R)@> \int_{\omega_{\dot{\mathsf G}}}>>  R\\
            \end{CD}
\ee
and
\be\label{lemintc23} 
 \begin{CD}
           \oH^{i}_{\mathrm c}(\dot{K}, R)@>\int_{\omega_{\dot{\mathsf G}}}>>    R \\
            @V\textrm{the pull-back map}VV          @VV\textrm{multiplication by $[\dot K: \dot K']$}V \\ 
            \oH^{i}_{\mathrm c}(\dot{ K'}, R)@> \int_{\omega_{\dot{\mathsf G}}}>>  R\\
            \end{CD}
\ee
are commutative. 

For every completely neat compact subgroup $\dot C$ of $\dot{\mathsf G}^\natural$, we define the integration map \be\label{intm1}
  \int_{\omega_{\dot{\mathsf G}}}:   \oH^{i}_{\mathrm c}( \dot C, R)\rightarrow R
\ee 
to be the composition of 
\[ 
    \oH^{i}_{\mathrm c}( \dot C, R) \xrightarrow{\textrm{the projection}}\oH^{i}_{\mathrm c}(\dot K, R)\xrightarrow{\int_{\omega_{\dot{\mathsf G}}}} R,
\]
where $\dot K$ is a completely neat open compact subgroup of $\dot{\mathsf G}^\natural$ containing $\dot C$. 
By \eqref{lemintc2}, this map is independent of the choice of $\dot K$. 

\begin{leml}\label{lemintc233}
    Let $\dot C$ and $\dot C'$ be completely neat compact subgroups of $\dot{\mathsf G}^\natural$ that are commeasurable with each other. 
    Then the diagram
    \[
    \begin{CD}
           \oH^{i}_{\mathrm c}(\dot C, R)@>\int_{\omega_{\dot{\mathsf G}}}>>    R \\
            @V\textrm{the transfer map}VV          @VV \textrm{multiplication by $[\dot C:\dot C\cap \dot C']$} V \\ 
            \oH^{i}_{\mathrm c}({ \dot C'}, R)@> \int_{\omega_{\dot{\mathsf G}}}>>  R\\
            \end{CD}
                \]
                commutes. 
\end{leml}
\begin{proof}
    In view of Proposition \ref{actrt2}, this easily follows from the commutative diagrams \eqref{lemintc2} and \eqref{lemintc23}. 
\end{proof}

As in \eqref{dgp}, set 
\[ 
 \RD(\dot{\mathsf G}, \dot\p):= \RD(\dot{\mathsf G}^{\natural,p})\otimes \RO(\dot{\mathsf G}(\R)/\dot K_\infty^\circ)\otimes \RD( \dot \p).
\]
 When $\dot D\dot \fP$ is completely neat,
we define the integration map
\be\label{intdrr}
\begin{array}{rcl}
    \int: \oH_\mathrm c^i(\dot D\dot\fP, R)\otimes \RD(\dot{\mathsf G},\dot \p)&\rightarrow & \Q\otimes R, 
 \\
    \phi\otimes \mu\otimes \omega_{\dot{\sf G}}\otimes \mu' &\mapsto & \mu(\dot D)\cdot \mu'(\dot \fP)\otimes \int_{\omega_{\dot{\sf G}}} \phi.
\end{array}
\ee
Here we use the obvious identification $\RD(\dot \fP)=\RD(\dot \p)$ via the logarithmic map, and  similar identifications will be used without further explanation.

By Lemma \ref{lemintc233}, 
we have a unique homomorphism 
\be\label{intr}
\int: \oH_\mathrm c^i(\dot{\mathsf G}, R)^{\la \dot \p\ra}\otimes \RD(\dot{\mathsf G},\dot \p)\rightarrow \Q\otimes R 
\ee
such that the diagram
    \[
    \begin{CD}
           \oH^{i}_{\mathrm c}(\dot D\dot \fP, R)\otimes \RD(\dot{\mathsf G}, \dot \p)@>\int >>   \Q\otimes  R \\
            @VVV          @VV= V \\ \oH_\mathrm c^i(\dot{\mathsf G}, R)^{\la \dot \p\ra}\otimes \RD(\dot{\mathsf G}, \dot \p)@> \int>>  \Q\otimes R\\
            \end{CD}
                \]
                commutes for all $\dot D$ and $\dot \fP$ such that $\dot D\dot \fP$ is completely neat.  Specifying the homomorphism \eqref{intr} to the case when  $\dot \p=\dot \g$, we get a homomorphism
\be\label{intr2}
\int: \oH_\mathrm c^i(\dot{\mathsf G}, R)\otimes \RD(\dot{\mathsf G})\rightarrow R, 
\ee
where \[
  \mathrm{D}(\dot{\mathsf G}):=\mathrm D(\dot{\mathsf G}^{\natural})\otimes \mathrm{O}(\dot{\mathsf G}(\R)/ \dot K_\infty^\circ),
  \] 
  as in  \eqref{rmd}. 

  Set $\widehat R:=\varprojlim_{k\in \BN} R/p^k$. Since the integration map \eqref{intm1} is natural in  $R$, it yields a 
map 
\[
\int_{\omega_{\dot{\sf G}}}: \widetilde \oH_\mathrm c^i(\dot C, R)\rightarrow\widehat R. 
\]
  Now we define a map 
  \[ 
\int: \widetilde \oH_\mathrm c^i(\dot{\mathsf G}, R)^{\la\dot \p\supset \dot \p\ra}\otimes \RD(\dot{\mathsf G}, \dot \p)\rightarrow \Q\otimes \widehat R,
\]
to be called the integration map  for relative completed cohomologies, 
by the following lemma. 

\begin{leml}
 There is a unique homomorphism  
 \[
 \label{intr2rc22}
\int: \widetilde \oH_\mathrm c^i(\dot{\mathsf G}, R)^{\la\dot \p\supset \dot \p\ra}\otimes \RD(\dot{\mathsf G}, \dot \p)\rightarrow \Q\otimes \widehat R
\]
such that the diagram 
\[
 \begin{CD}
\widetilde \oH_{\mathrm c}^{i}(\dot D\dot \fP, R) \otimes  \RD(\dot{\mathsf G}, \dot \p)@>>>\Q\otimes \widehat R\\
            @V VV          @VV = V\\
\widetilde \oH_\mathrm c^i(\dot{\mathsf G}, R)^{\la\dot \p\supset \dot \p\ra}\otimes \RD(\dot{\mathsf G}, \dot \p)@>\int >> \Q\otimes \widehat R
            \end{CD}
\]
commutes for all $\dot D$ and $\dot \fP$ such that $\dot D\dot \fP$ is completely neat, where the top horizontal arrow is the map
\[
\phi\otimes \mu\otimes \omega_{\dot{\sf G}}\otimes \mu'\mapsto \mu(\dot D)\cdot \mu'(\dot \fP)\otimes \int_{\omega_{\dot{\sf G}}}\phi.
\]

\end{leml}
 \begin{proof}
     This is an easy consequence of Lemma \ref{lemintc233}. 
 \end{proof}
 
The following lemma is easily verified. 
\begin{leml}
    The diagram 
\[
 \begin{CD}
\oH_{\mathrm c}^{i}(\dot{\mathsf G}, R)^{\la \dot \p\ra} \otimes \mathrm D(\dot{\mathsf G},\dot \p)@>>>   \widetilde \oH_{\mathrm c}^{i}(\dot{\mathsf G}, R)^{\la \dot \p\supset \dot \p\ra} \otimes \mathrm D(\dot{\mathsf G},\dot \p)\\
            @V\int VV          @VV \int V\\
\Q\otimes R@>  >>     \Q\otimes \widehat R\\
            \end{CD}
\]
commutes.
\end{leml}


\subsection{Integrations for formal completions}
In this subsection we assume that $R$ is a $\Q$-algebra. We define the  integration map 
\[
\int: \widehat{\oH_\mathrm c^i(\dot{\mathsf G}, R)}\otimes \RD(\dot{\mathsf G})\rightarrow R
\]
as in the following lemma, whose easy proof is omitted.  

\begin{leml}
  There is a unique $R$-homomorphism    
  \be\label{intr222}
\int: \widehat{\oH_\mathrm c^i(\dot{\mathsf G}, R)}\otimes \RD(\dot{\mathsf G})\rightarrow R
\ee
such that the diagram 
\[
 \begin{CD}
    \widehat{\oH_\mathrm c^i(\dot{\mathsf G}, R)}\otimes \RD(\dot{\mathsf G})@> \int >>   R\\
            @V\textrm{projection}VV          @VV =V\\
 {\oH_{\mathrm{c}}^i(\dot{\mathsf G}, R)^{\dot K}}\otimes \mathrm D(\dot{\mathsf G})@> \int >>  R\\
            \end{CD}
\]
commutes for all completely neat open compact subgroups $\dot K$ of $\dot{\mathsf G}^{\natural}$, where the bottom horizontal arrow is the restriction of the map \eqref{intr2}. Moreover, the map \eqref{intr222} extends the map \eqref{intr2}.
\end{leml}

We also have 
the obvious identification 
\be\label{diso}
 \mathrm{D}(\dot{\mathsf G})=\mathrm D(\dot \g/\dot \p)\otimes  \mathrm{D}(\dot{\mathsf G}, \dot \p).
\ee
\begin{leml}\label{intint}
The diagram 
\[
 \begin{CD}
\widehat{\oH_{\mathrm c}^i(\dot{\mathsf G}, R)}_{{\dot \p}-\mathrm{sm}}\otimes \mathrm D(\dot{\mathsf G})@> \eqref{identrsm}\textrm{ and } \eqref{diso} >>   \oH_{\mathrm c}^{i}(\dot{\mathsf G}, R)^{\la \dot \p\ra} \otimes \mathrm D(\dot{\mathsf G},\dot \p)\\
            @V\int VV          @VV \int V\\
R@> = >>     R\\
            \end{CD}
\]
commutes.
\end{leml}
\begin{proof}
    Suppose that $\dot D\dot \fP$ is completely neat,  and let $\phi\in \widehat{\oH_\Phi^i(\dot{\mathsf G}, M)}^{\dot D\dot \fP}$. Write
    \[
    \eta:=\mu\otimes \nu_{\dot \fP}\otimes \mu_{\dot D}\otimes \mu_{\dot \fP}\otimes \omega_{\dot{\mathsf G}}\in \mathrm D(\dot{\mathsf G}), 
    \]
where $\mu\in \mathrm D(\dot \g)=\mathrm D(\dot G)$, $\nu_{\dot \fP}\in \mathrm D(\dot \p)^\vee=\mathrm D(\dot \fP)^\vee$ is the functional $\lambda\mapsto \lambda(\dot \fP)$, $\mu_{\dot D}\in \mathrm D(\dot{\mathsf G}^{\natural, p})$ is the distribution such that $\mu_{\dot D}(\dot D)=1$, and $\mu_{\dot \fP}\in \mathrm D(\dot \p)=\mathrm D(\dot \fP)$ is the distribution such that $\mu_{\dot \fP}(\dot \fP)=1$.  
The image of $\phi\otimes \eta$ under the top horizontal arrow is represented by 
\[
\{ \mu(\dot \fG) \cdot \phi_{\dot D\dot \fG}\}_{\dot \fG}\otimes \mu_{\dot D}\otimes \mu_{\dot \fP}\otimes \omega_{\dot{\mathsf G}}\in \oH_{\mathrm c}^{i}(\dot{\mathsf G}, M)^{\la \dot \p\ra} \otimes \mathrm D(\dot{\mathsf G},\dot \p) ,  
\]
where $\dot \fG$ runs over all  $\dot D$-neat 
open compact subgroups of $\dot G$ containing $\dot \fP$, and $\phi_{\dot D\dot \fG}$ denotes the projection of $\phi$ to $\widehat{\oH_\Phi^i(\dot{\mathsf G}, M)}^{\dot D\dot \fG}$. 
We have that
\begin{eqnarray*}
    &&\int \{ \mu(\dot \fG) \cdot \phi_{\dot D\dot \fG}\}_{\dot \fG}\otimes \mu_{\dot D}\otimes \mu_{\dot \fP}\otimes \omega_{\dot{\mathsf G}}\\
    &=& \mu(\dot \fG) \cdot \int_{\omega_{\dot{\mathsf G}} }\phi_{\dot D\dot \fG}
    \qquad (\textrm{see }  \eqref{intdrr})\\
    &=& \int \phi\otimes\eta.
    \end{eqnarray*}
    This proves the lemma. 
\end{proof}

\section{Modular symbols} \label{secpint}

Recall from  the Introduction that $\mathsf E$ is a subfield of $\overline \Q$ that is contained in a closed subfield $E$ of $\C_p$.  
In this section, let $\sf V$ be a representation of ${\sf G}(\Q)$ over $\sf E$, and $V$ a continuous finite-dimensional representation of $G$ over $E$.  Suppose that the representation $\sf V$ is identified with a $\mathsf G(\Q)$-stable $\sf E$-form of $V$. In particular, $V=E\otimes \mathsf V$ as  vector spaces. 

Throughout this section we assume that  under the map
 $
 \imath: \dot{\mathsf G}(\Q)\backslash \dot{\mathscr X}\rightarrow  {\mathsf G}(\Q) \backslash \mathscr X
 $, 
 \be\label{precompact2}
\textrm{the preimage of every set in $\Phi$ 
is compact.}
  \ee

\subsection{Some cohomology groups}

Recall that $\p\supset \s$ are Lie subalgebras of $\g$. Note that every finite-dimensional representation of $\s$ over $E$ integrates to continuous  representations of some compact subgroups of $G$ with Lie algebra $\s$.  This establishes an equivalence between the category of all finite-dimensional representations of $\s$ over $E$ with the category of all pairs $(\fS_1, V_1)$ where $\fS_1$ is a compact subgroup of $G$ with Lie algebra $\s$, and $V_1$ is a continuous finite-dimensional representation of $\fS_1$ over $E$. A morphism $ (\fS_1, V_1)\rightarrow (\fS_2, V_2)$ in the latter category is defined to be a linear map $V_1\rightarrow V_2$ that is equivariant under some open subgroups  of $\fS_1\cap \fS_2$. 

\begin{dfnl}\label{defintegral}
     Let $V_1$ be a finite-dimensional representation of $\s$ over $E$. Let $\fV_1$ be an $\CO$-lattice of $V_1$. Integrate the representation $V_1$ of $\s$ to a compact subgroup $\fS$ of $G$ with Lie algebra $\s$ such that $\fS$ stabilizes $\fV_1$.  View $\fV_1$ as  an $\CO[\mathsf G(\Q)\times {\mathsf G}^{\natural,p}\times \fS] $-module with the given action of $\fS$ and the trivial action of $\mathsf G(\Q)\times {\mathsf G}^{\natural,p}$. Define 
\[
\widetilde{\oH}_\Phi^i({\mathsf G},  V_1)^{\la \p\supset \s\ra, \circ}:=E\otimes \widetilde{\oH}_\Phi^i({\mathsf G},  \fV_1)^{\la \p\supset \s \ra}\qquad (\textrm{see Remark \ref{remarkp}})
\]
to be called the integral relative completed cohomology space. 
\end{dfnl}
Note that Definition  \ref{defintegral} is independent of the choices of $\fV_1$, $\fS$, and the representation of $\fS$ on $V_1$ that integrates the representation of $\s$. Moreover, $\widetilde{\oH}_\Phi^i({\mathsf G},  \,\cdot\, )^{\la \p\supset \s\ra, \circ}$ is a functor from the category of finite-dimensional representation of $\s$ over $E$ to the category of locally convex topological vector spaces over $E$. Here we write
\[
\widetilde{\oH}_\Phi^i({\mathsf G},  V_1)^{\la \p\supset \s\ra, \circ} =
\varinjlim_{D\fS_1} E\otimes \widetilde{\oH}_\Phi^i(D\fS_1,  \fV_1)^{\la \p \ra},
\]
($D$ runs over open compact subgroups of $\mathsf G^{\natural, p}$ and $\fS_1$ runs over open compact subgroups of $\fS$) and view it as a locally convex topological space under the direct limit topology, where the topology on $E\otimes \widetilde{\oH}_\Phi^i(D\fS_1,  \fV_1)^{\la \p \ra}$ is given by the seminorm associated to the image of the natural map 
\[
\widetilde{\oH}_\Phi^i(D\fS_1,  \fV_1)^{\la \p \ra}\rightarrow E\otimes \widetilde{\oH}_\Phi^i(D\fS,  \fV_1)^{\la \p \ra}.
\]


Set $\sf V_\C:=\C \otimes_{\sf E} \sf V $. View $\sf V$ and $\sf V_\C$ as  $\sf E[\mathsf G(\Q)\times {\mathsf G}^\natural] $-modules with the given action of $\mathsf G(\Q)$ and the trivial action of ${\mathsf G}^\natural$. View $V$ as  an $E[\mathsf G(\Q)\times {\mathsf G}^\natural] $-module with the given action of $G$ and the trivial action of $\mathsf G(\Q)\times {\mathsf G}^{\natural,p}$. 

\begin{dfnl}\label{defintegral2}
     Let $\fV$ be an $\CO$-lattice of $V$. Define 
\[
{\oH}_\Phi^i({\mathsf G},  V)^{\la \p\ra, \circ}:=E\otimes {\oH}_\Phi^i({\mathsf G},  \fV)^{\la \p \ra},
\]
to be called the integral relative  cohomology space. 

\end{dfnl}
Definition \ref{defintegral2}
is  independent of $\fV$. 

As an example of Definition  \ref{defintegral}, we have the integral relative completed cohomology space
\[\widetilde{\oH}_\Phi^i({\mathsf G},  V)^{\la \p\supset \s\ra, \circ}:=E\otimes \widetilde{\oH}_\Phi^i({\mathsf G},  \fV)^{\la \p\supset \s \ra},
\]
where $\fV$ is an $\CO$-lattice of $V$. The natural map
\[
 {\oH}_\Phi^i({\mathsf G},  \fV)^{\la \p \ra}\rightarrow  \widetilde{\oH}_\Phi^i({\mathsf G},  \fV)^{\la \p\supset \s \ra} \qquad (\textrm{see \eqref{interprc}})
\]
yields a linear map 
\be\label{interp5}
{\oH}_\Phi^i({\mathsf G},  V)^{\la \p \ra,\circ}\rightarrow \widetilde{\oH}_\Phi^i({\mathsf G},  V)^{\la \p\supset \s\ra, \circ}.
\ee
The inclusion map $\fV\rightarrow V$ yields a homomorphism 
\[
{\oH}_\Phi^i({\mathsf G},  \fV)^{\la \p \ra}\rightarrow  {\oH}_\Phi^i({\mathsf G},  V)^{\la \p \ra},
\]
which further induces a linear map
\be\label{interp6}
{\oH}_\Phi^i({\mathsf G},  V)^{\la \p \ra,\circ}\rightarrow  {\oH}_\Phi^i({\mathsf G},  V)^{\la \p \ra}.
\ee
Note that both linear maps  \eqref{interp5} and \eqref{interp6} are independent of $\fV$.

The map 
 \[
   \mathscr X\times \mathsf V  \rightarrow \mathscr X\times V, \quad (x,u)\mapsto (x, x_p^{-1}. u)
 \]
 is $\mathsf G(\Q)\times \mathsf G^\natural$-equivariant, where $x_p$ denotes the image of $x$ under the projection map $\mathscr X\rightarrow G$. Thus it induces a homomorphism 
 \[
    \mathsf V_{[K]}\rightarrow V_{[K]} 
 \]
 of sheaves over $S^{\mathsf G}_{K}$, for every open compact subgroup $K$ of $\mathsf G^\natural$. This further induces  homomorphisms of various cohomology spaces, to be denoted by 
 \be\label{homsfvv}
 \begin{cases}
     \iota_{\mathsf V}: \oH_\Phi^i(K, \mathsf V)\rightarrow \oH_\Phi^i(K, V);& \smallskip\\
     \iota_{\mathsf V}: {\oH_\Phi^i({\mathsf G}, \mathsf V)}\rightarrow {\oH_\Phi^i({\mathsf G},  V)};&\smallskip\\
     \iota_{\mathsf V}: \widehat{\oH_\Phi^i({\mathsf G}, \mathsf V)}_{{\p}-\mathrm{sm}}\rightarrow \widehat{\oH_\Phi^i({\mathsf G},  V)}_{{\p}-\mathrm{sm}}. 
 \end{cases}
 \ee
 
Let $V_0$ be an $E\otimes \s$-submodule of $V$. In this section, we will consider modular symbols on  the spaces appearing in  the following diagram: 

\be\label{thediag00220000}
\xymatrix{
&\widehat{\oH_\Phi^i({\mathsf G}, \sf V_\C)}_{{\p}-\mathrm{sm}}\otimes \RD(\g/\p)&\widehat{\oH_\Phi^i({\mathsf G}, \sf V)}_{{\p}-\mathrm{sm}}\otimes \RD(\g/\p)\ar[l]_{\C\otimes(\,\cdot\,)}\ar[d]^{\iota_{\sf V}}\\
\widetilde{\oH}_\Phi^i({\mathsf G},  V_0)^{\la \p\supset \s\ra,\circ}\ar[d]^{\textrm{the map induced by $V_0\subset V$}} &
&\widehat{\oH_\Phi^i({\mathsf G},  V)}_{{\p}-\mathrm{sm}}\otimes \RD(\g/\p)\ar[d]^{\eqref{identrsm}}_{\cong}\\
\widetilde{\oH}_\Phi^i({\mathsf G},  V)^{\la \p\supset \s\ra,\circ} &{\oH}_\Phi^i({\mathsf G},  V)^{\la \p\ra, \circ}\ar[l]_{\eqref{interp5}}\ar[r]^{\eqref{interp6}} &   {\oH_\Phi^i({\mathsf G},  V)}^{\la \p\ra}.\\
            }
\ee

\subsection{Modular symbols}

Recall that $\sf Z$ is an algebraic torus over $\Q$, and  $\jmath:  \dot{\sf G} \to {\sf Z}$ is an algebraic  homomorphism over $\Q$.  With the fixed closed subgroup of 
$\sf Z(\R)$ as in \eqref{maximalkz}, we define various cohomology groups attached to $\sf Z$ as in the previous sections.

Recall from Section \ref{secnotation} that $Z_0$ is an algebraic subtorus  of $Z:=\mathsf Z(\Q_p)$ whose Lie algebra is denoted by $\z_0$. The space 
$\widetilde{\oH}^0({\mathsf Z},  E)^{\la \z \supset \z_0\ra, \circ}$
 is identified with the space of all continuous $E$-valued functions on $\sf Z(\Q)\backslash  \sf Z^\natural$ that are invariant under some open compact subgroups of $D_{Z_0}^\mathrm{max}$. 


Recall from the Introduction that  
\[
\s\supset \imath(\dot \p)\quad\textrm{and}\quad \z_0\supset \jmath(\dot \p).
\]
Fix a functional $\lambda_0\in  \Hom_{E\otimes \dot \p}(V_0, E)$.

\begin{dfnl}\label{df:RCMS}
    The relative completed  modular symbol map
\be\label{RCMS}     
\widetilde{\mathscr M}_{\lambda_0}: \widetilde{\oH}^0({\mathsf Z},  E)^{\la \z \supset \z_0\ra, \circ}\times \left(\widetilde{\oH}_\Phi^i(\mathsf{G},  V_0)^{\la \p\supset \s \ra, \circ} \otimes \RD(\dot{\mathsf G},\dot \p)\right)\rightarrow E
\ee
is the bilinear map whose pull-back to 
\[
\widetilde{\oH}^0({\mathsf Z},  \CO)^{\la \z \supset \z_0\ra}\times \left(\widetilde{\oH}_\Phi^i(\mathsf{G},  \fV_0)^{\la \p\supset \s \ra}\otimes \RD(\dot{\mathsf G},\dot \p)\right)
\]
equals the composition of 
\begin{eqnarray*}
    &&\widetilde{\oH}^0({\mathsf Z},  \CO)^{\la \z \supset \z_0\ra}\times \left(\widetilde{\oH}_\Phi^i(\mathsf{G},  \fV_0)^{\la \p\supset \s \ra}\otimes \RD(\dot{\mathsf G},\dot \p)\right)\\
&\xrightarrow{\textrm{pull-back}}&\widetilde{\oH}^0(\dot{\mathsf G},  \CO)^{\la \dot \g \supset \dot \p \ra}\times \left(\widetilde{\oH}_{\mathrm c}^i(\dot{\mathsf{G}},  \fV_0)^{\la \dot \p\supset \dot \p \ra}\otimes  \RD(\dot{\mathsf G},\dot \p)\right)\\
 &\xrightarrow{\textrm{cup product}}& \widetilde{\oH}_{\mathrm c}^i(\dot{\mathsf{G}},  \fV_0)^{\la \dot \p\supset \dot \p \ra} \otimes \RD(\mathsf G,\dot \p)\\ 
&\xrightarrow{\lambda_0}& \widetilde{\oH}_{\mathrm c}^i(\dot{\mathsf{G}},  \CO)^{\la \dot \p\supset \dot \p \ra}\otimes \RD(\dot{\mathsf G},\dot \p)\\
      &\xrightarrow{\int}& E,
\end{eqnarray*}
where $\fV_0$ is an $\CO$-lattice of $V_0$ such that $\lambda_0(\fV_0)\subset  \CO$.
\end{dfnl}
It is clear that the bilinear map \eqref{RCMS} is well-defined, separately continuous under the natural topologies, and independent of $\fV_0$.


Let $\sf w$ be an algebraic character of $\sf Z_{\sf E}$ defined over $\sf E$. As in \eqref{defwp} we have a character 
\[
  \mathsf w_p: Z\subset \mathsf Z(E)\xrightarrow{\sf w} E^\times. 
\]
Write $E_{\sf w}:=E$, to be viewed as an
$E[\sf Z(\Q)\times \sf Z^\natural]$-module via the  action of $Z$ by $\mathsf w_p$ and the trivial action of ${\sf Z}(\Q)\times {\sf Z}^{\natural,p}$.  
Fix  a functional $\lambda_{V, \sf w}\in  \Hom_{E\otimes \dot \g}(E_{\mathsf w} \otimes V, E)$. Similar to Definition \ref{df:RCMS} (and Definition \ref{df:IRMS} below), we have the relative completed  modular symbol map
\[    
\widetilde{\mathscr M}_{\lambda_{V, \sf w}}: \widetilde{\oH}^0({\mathsf Z},  E_\mathsf w)^{\la \z \supset \z_0\ra, \circ}\times \left(\widetilde{\oH}_\Phi^i(\mathsf{G},  V)^{\la \p\supset \s \ra, \circ} \otimes \RD(\dot{\mathsf G},\dot \p)\right)\rightarrow E.
\]

Write $\CO_\mathsf w:=\CO$, to be viewed as an $\CO[\sf Z(\Q)\times \sf Z^\natural]$-submodule of $E_\mathsf w$.  
Similar to Definitions \ref{df:RCMS}, 
we make the following three definitions. 

\begin{dfnl}\label{df:IRMS}
    The integral relative modular  symbol map
\be\label{IRMS}
{\mathscr M}_{\lambda_{V, \sf w}}^\circ: {\oH}^0({\mathsf Z},  E_{\sf w})\times \left({\oH}_\Phi^i(\mathsf{G},  V)^{\la \p\ra, \circ} \otimes \RD(\dot{\mathsf G},\dot \p)\right)\rightarrow E
\ee
is the bilinear map whose pull-back to 
\[
{\oH}^0({\mathsf Z},  \CO_{\sf w})\times \left({\oH}_\Phi^i(\mathsf{G},  \fV)^{\la \p \ra}\otimes \RD(\dot{\mathsf G},\dot \p)\right)
\]
equals the composition of 
\begin{eqnarray*}
    &&{\oH}^0({\mathsf Z},  \CO_{\sf w})\times \left({\oH}_\Phi^i(\mathsf{G},  \fV)^{\la \p \ra}\otimes \RD(\dot{\mathsf G},\dot \p)\right)\\
     &\xrightarrow{\textrm{pull-back}}&{\oH}^0(\dot{\mathsf G},  \CO_{\mathsf w})\times \left({\oH}_{\mathrm c}^i(\dot{\mathsf{G}},  \fV)^{\la \dot \p \ra}\otimes  \RD(\dot{\mathsf G},\dot \p)\right)\\
     &\xrightarrow{\textrm{cup product}}& {\oH}_{\mathrm c}^i(\dot{\mathsf{G}},  \CO_{\mathsf w}\otimes \fV)^{\la \dot \p \ra} \otimes \RD(\mathsf G,\dot \p)\\
           &\xrightarrow{\lambda_{V, \sf w}}& {\oH}_{\mathrm c}^i(\dot{\mathsf{G}},  \CO)^{\la  \dot \p \ra}\otimes \RD(\dot{\mathsf G},\dot \p)\\
      &\xrightarrow{\int}& E,
\end{eqnarray*}
where $\fV$ is an $\CO$-lattice of $V$ such that $\lambda_{V, \sf w}(\CO_\mathsf w\otimes \fV)\subset \CO$. 
\end{dfnl}

The above definition is independent of the choice of $\fV$. 

\begin{dfnl}\label{df:RMS}
    The  relative modular  symbol map
\be\label{RMS}
{\mathscr M}_{\lambda_{V, \sf w}}: {\oH}^0({\mathsf Z},  E_{\sf w})\times \left({\oH}_\Phi^i(\mathsf{G},  V)^{\la \p\ra} \otimes \RD(\dot{\mathsf G},\dot \p)\right)\rightarrow E
\ee
is the composition of 
\begin{eqnarray*}
    &&{\oH}^0({\mathsf Z},  E_{\sf w})\times \left({\oH}_\Phi^i(\mathsf{G},  V)^{\la \p \ra}\otimes \RD(\dot{\mathsf G},\dot \p)\right)\\
     &\xrightarrow{\textrm{pull-back}}&{\oH}^0(\dot{\mathsf G},  E_{\sf w})\times \left({\oH}_{\mathrm c}^i(\dot{\mathsf{G}},  V)^{\la \dot \p \ra}\otimes  \RD(\dot{\mathsf G},\dot \p)\right)\\
     &\xrightarrow{\textrm{cup product}}& {\oH}_{\mathrm c}^i(\dot{\mathsf{G}},  E_{\sf w}\otimes V)^{\la \dot \p \ra} \otimes \RD(\mathsf G,\dot \p)\\    &\xrightarrow{\lambda_{V, \sf w}}& {\oH}_{\mathrm c}^i(\dot{\mathsf{G}},  E)^{\la \dot \p\supset \dot \p \ra}\otimes \RD(\dot{\mathsf G},\dot \p)\\
      &\xrightarrow{\int}& E.
\end{eqnarray*}
\end{dfnl}

\begin{dfnl}
    The stable modular symbol map
\[
\widehat{\mathscr M}_{\lambda_{V, \sf w}} : \oH^0({\sf Z}, { E}_{\sf w})\times \left(\widehat{\oH_\Phi^i(\mathsf{G},   V)}_{\p-\mathrm{sm}}\otimes \RD(\dot{\mathsf G})\right)\rightarrow  E
\]
is the composition of 
\begin{eqnarray*}
    &&\oH^0({\sf Z}, { E}_{\sf w})\times \left(\widehat{\oH_\Phi^i(\mathsf{G},   V)}_{\p-\mathrm{sm}}\otimes\RD(\dot{\mathsf G})\right)\\
     &\xrightarrow{\textrm{pull-back}}&{\oH}^0(\dot{\mathsf G},  { E}_{\sf w})\times \left(\widehat{\oH_{\mathrm c}^i(\dot{\mathsf{G}},   V)}_{\dot \p-\mathrm{sm}}\otimes \RD(\dot{\mathsf G})\right)\\
     &\xrightarrow{\textrm{cup product}}& \widehat{\oH_{\mathrm c}^i(\dot{\mathsf{G}},   E_{\sf w}\otimes V)}_{\dot \p-\mathrm{sm}}\otimes  \RD(\dot{\mathsf G})\\
            &\xrightarrow{\lambda_{V, \sf w}}& \widehat{\oH_{\mathrm c}^i(\dot{\mathsf{G}},  E)}_{\dot{\p}-\mathrm{sm}}\otimes \RD(\dot{\mathsf G})\\
      &\xrightarrow{\int}& E.
\end{eqnarray*}
\end{dfnl}

Write $\mathsf E_{\sf w}:=\mathsf E$, to be viewed as an
$\sf E[\sf Z(\Q)\times \sf Z^\natural]$-module via the  action of $\mathsf Z(\Q)\subset \mathsf Z_{\mathsf E}(\sf E)$ by $\sf w$ and the trivial action of $\sf Z^\natural$. 
Fix a functional $\lambda_{\sf V, w}\in \Hom_{\dot{\sf G}(\Q)}(\sf E_{\sf w}\otimes\sf V, \sf E)$.
\begin{dfnl}\label{df:SMS}
    The stable modular symbol map
\[
\widehat{\mathscr M}_{\lambda_{\sf V, w}} : \oH^0({\sf Z}, {\sf E}_{\sf w})\times \left(\widehat{\oH_\Phi^i(\mathsf{G},  \mathsf V)}_{\p-\mathrm{sm}}\otimes \RD(\dot{\mathsf G})\right)\rightarrow \sf E
\]
is the composition of 
\begin{eqnarray*}
    &&\oH^0({\sf Z}, {\sf E}_{\sf w})\times \left(\widehat{\oH_\Phi^i(\mathsf{G},  \mathsf V)}_{\p-\mathrm{sm}}\otimes\RD(\dot{\mathsf G})\right)\\
     &\xrightarrow{\textrm{pull-back}}&{\oH}^0(\dot{\mathsf G},  {\sf E}_{\sf w})\times \left(\widehat{\oH_{\mathrm c}^i(\dot{\mathsf{G}},  \mathsf V)}_{\dot \p-\mathrm{sm}}\otimes \RD(\dot{\mathsf G})\right)\\
     &\xrightarrow{\textrm{cup product}}& \widehat{\oH_{\mathrm c}^i(\dot{\mathsf{G}},  \sf E_{\sf w}\otimes\mathsf V)}_{\dot \p-\mathrm{sm}}\otimes  \RD(\dot{\mathsf G})\\
            &\xrightarrow{\lambda_{\sf V, w}}& \widehat{\oH_{\mathrm c}^i(\dot{\mathsf{G}},  \sf E)}_{\dot{\p}-\mathrm{sm}}\otimes \RD(\dot{\mathsf G})\\
      &\xrightarrow{\int}& \sf E.
\end{eqnarray*}

\end{dfnl}

Write $\C_\mathsf w:=\C\otimes \mathsf E_\mathsf w$, which is a 
$\C[\sf Z(\Q)\times \sf Z^\natural]$-module. Similar to Definition \ref{df:SMS}, we have the stable modular symbol map
\[
\widehat{\mathscr M}_{\lambda_{\sf V, w}} : \oH^0({\sf Z}, {\C}_{\sf w})\times \left(\widehat{\oH_\Phi^i(\mathsf{G},  \mathsf V_\C)}_{\p-\mathrm{sm}}\otimes \RD(\dot{\mathsf G})\right)\rightarrow \sf \C. 
\]

\subsection{Compatibility of the modular symbols}

In this subsection, we show that the seven versions of the modular symbols defined in the last subsection are compatible with respect to the six arrows in the diagram  \eqref{thediag00220000}.

\begin{prpl}\label{lem56}
    The diagram
\[
 \begin{CD}
 \oH^0({\sf Z}, {\sf E}_{\sf w})\times \left(\widehat{\oH_\Phi^i(\mathsf{G},  \mathsf V)}_{\p-\mathrm{sm}}\otimes \RD(\dot{\mathsf G})\right)@> \widehat{\mathscr M}_{\lambda_{\sf V, w}} >>   \mathsf E\\
            @V \C\otimes(\,\cdot\,) VV          @VV \subset V\\
  \oH^0({\sf Z}, {\C}_{\sf w})\times \left(\widehat{\oH_\Phi^i(\mathsf{G},  \mathsf V_\C)}_{\p-\mathrm{sm}}\otimes \RD(\dot{\mathsf G})\right)@>\widehat{\mathscr M}_{\lambda_{\sf V, w}} >>   \C\\
            \end{CD}
\]
commutes.
\end{prpl}

\begin{proof}
    This follows from the fact that all the relevant operations are natural in   the coefficient modules. 
\end{proof}

Note that 
\[
\oH^0(\mathsf Z, E_{\mathsf w})=\oH^0(\mathsf Z,  E_{\mathsf w})^{\la \z \ra, \circ}, 
\]
which is identified with the space of all functions $f:\mathsf Z(\Q)\backslash \mathsf Z^\natural\rightarrow E$ such that for some open compact subgroups $D_{\sf Z}$ of $\mathsf Z(\A^{\infty p})$ and $\fZ$ of $Z:={\sf Z}(\Q_p)$, 
\[
 f(xg_0 g_1^{-1})=\mathsf w_p(g_1) \cdot f(x)\ \ \textrm{ for all $x\in \sf Z(\Q)\backslash \sf Z^\natural$, $g_0\in D_\mathsf Z$, and $g_1\in \fZ$}. 
\]
As an example of the second map in \eqref{homsfvv}, we have  the map  
\be\label{embpheck23}
  \oH^0({\mathsf Z}, {\sf E}_{\mathsf w})\rightarrow \oH^0({\mathsf Z}, { E}_{\mathsf w}), \qquad f\mapsto f\cdot \mathsf w_p^{-1}. 
\ee


\begin{prpl} 
Suppose that $\lambda_{V, \sf w}$ extends $\lambda_{\sf V, w}$. 
   Then the diagram
\[
 \begin{CD}
\oH^0(\mathsf Z, \mathsf E_{\mathsf w})\times \left(\widehat{\oH_\Phi^i({\mathsf G}, \sf V)}_{{\p}-\mathrm{sm}}\otimes \RD(\dot{\mathsf G})\right) @> \widehat{\mathscr M}_{\lambda_{\sf V, w}}  >>     \sf E\\
            @V \eqref{embpheck23}  \times \iota_{\mathsf V} VV          @VV \subset V\\
 {\oH}^0({\mathsf Z},  E_{\sf w})\times \left(\widehat{\oH_\Phi^i({\mathsf G},  V)}_{{\p}-\mathrm{sm}}\otimes \RD(\dot{\mathsf G})\right) @> \widehat{\mathscr M}_{\lambda_{V, \sf w}}  >>     E\\
            \end{CD}
\]
commutes.
\end{prpl}

\begin{proof}
It is routine to check that the following diagram is commutative: 
\[
 \begin{CD}
\oH^0(\mathsf Z, \mathsf E_{\mathsf w})\times \left(\widehat{\oH_\Phi^i({\mathsf G}, \sf V)}_{{\p}-\mathrm{sm}}\otimes \RD(\dot{\mathsf G})\right) @> \eqref{embpheck23}  \times \iota_{\mathsf V} >>  {\oH}^0({\mathsf Z},  E_{\sf w})\times \left(\widehat{\oH_\Phi^i({\mathsf G},  V)}_{{\p}-\mathrm{sm}}\otimes \RD(\dot{\mathsf G})\right)  \\
            @V \textrm{pull-back} VV          @VV \textrm{pull-back} V\\
 {\oH}^0(\dot{\mathsf G},  {\mathsf E}_{\sf w})\times \left(\widehat{\oH_{\mathrm c}^i(\dot{\mathsf{G}},   \mathsf V)}_{\dot \p-\mathrm{sm}}\otimes \RD(\dot{\mathsf G})\right) @>\eqref{embpheck23}  \times \iota_{\mathsf V} >>    {\oH}^0(\dot{\mathsf G},  {E}_{\sf w})\times \left(\widehat{\oH_{\mathrm c}^i(\dot{\mathsf{G}},   V)}_{\dot \p-\mathrm{sm}}\otimes \RD(\dot{\mathsf G})\right)\\
  @V \textrm{cup product} VV          @VV \textrm{cup product} V\\
 \widehat{\oH_{\mathrm c}^i(\dot{\mathsf{G}},   \mathsf E_\mathsf w\otimes \mathsf V)}_{\dot \p-\mathrm{sm}}\otimes \RD(\dot{\mathsf G}) @> \iota_{\mathsf V} >>    \widehat{\oH_{\mathrm c}^i(\dot{\mathsf{G}},   E_\mathsf w\otimes V)}_{\dot \p-\mathrm{sm}}\otimes \RD(\dot{\mathsf G})\\
 @V \lambda_{\sf V, w}  VV          @VV {\lambda_{V, \sf w}} V\\
 \widehat{\oH_{\mathrm c}^i(\dot{\mathsf{G}},   \mathsf E)}_{\dot \p-\mathrm{sm}}\otimes \RD(\dot{\mathsf G}) @> \mathsf E\subset E >>    \widehat{\oH_{\mathrm c}^i(\dot{\mathsf{G}},   E)}_{\dot \p-\mathrm{sm}}\otimes \RD(\dot{\mathsf G})\\
 @V \int  VV          @VV \int V\\
 \mathsf E @> \subset  >>    E.\\
            \end{CD}
\]
This proves the proposition. 
\end{proof}

\begin{prpl}
    The diagram
\[
 \begin{CD}
\oH^0(\mathsf Z,  E_{\mathsf w})\times \left(\widehat{\oH_\Phi^i({\mathsf G},  V)}_{{\p}-\mathrm{sm}}\otimes \RD(\dot{\mathsf G})\right) @> \widehat{\mathscr M}_{\lambda_{V, \sf w}}  >>     E\\
            @V \eqref{identrsm}\textrm{ and }\eqref{diso} VV          @VV = V\\
 {\oH}^0({\mathsf Z},  E_{\sf w})\times \left({\oH}_\Phi^i(\mathsf{G},  V)^{\la \p \ra}\otimes \RD(\dot{\mathsf G},\dot \p)\right) @> {\mathscr M}_{\lambda_{V, \sf w}}  >>     E\\
            \end{CD}
\]
commutes.
\end{prpl}

\begin{proof}
This follows from Lemmas \ref{pullbackeq}, \ref{lemcup}, and \ref{intint}. 
\end{proof}

\begin{prpl}
    The diagram
\[
 \begin{CD}
 \widetilde{\oH}^0({\mathsf Z},  E_\mathsf w)^{\la \z \supset \z_0\ra, \circ}\times \left(\widetilde{\oH}_\Phi^i(\mathsf{G},  V)^{\la \p\supset \s \ra, \circ} \otimes \RD(\dot{\mathsf G},\dot \p)\right) @> {\widetilde{\mathscr M}}_{\lambda_{V, \sf w}} >> E\\
            @A  \eqref{interp5} AA          @AA = A\\
 {\oH}^0({\mathsf Z},  E_{\sf w})\times \left({\oH}_\Phi^i(\mathsf{G},  V)^{\la \p \ra,\circ}\otimes \RD(\dot{\mathsf G},\dot \p)\right) @> {\mathscr M}_{\lambda_{V, \sf w}}^\circ  >>     E\\
@V \eqref{interp6} VV @VV = V \\
 {\oH}^0({\mathsf Z},  E_{\sf w})\times \left({\oH}_\Phi^i(\mathsf{G},  V)^{\la \p \ra}\otimes \RD(\dot{\mathsf G},\dot \p)\right) @> {\mathscr M}_{\lambda_{V, \sf w}}>> E
            \end{CD}
\]
commutes.
\end{prpl}

\begin{proof}
    Similar to Proposition \ref{lem56}, this follows from the fact that all the relevant operations are natural in   the coefficient modules. 
\end{proof}


Obviously identify $E_\mathsf w\otimes V$ with $V$ as vector spaces. Assume that  $\lambda_0=(\lambda_{V, \sf w})|_{V_0}$.  The following proposition also follows  from the fact that all the relevant operations are natural in  the coefficient modules. 
\begin{prpl}\label{lem510}
  Suppose that  ${\sf w}_p$ is trivial on $Z_0$ so that  
  \[
 \widetilde{\oH}^0({\mathsf Z},  E_\mathsf w)^{\la \z \supset \z_0\ra, \circ}=\widetilde{\oH}^0({\mathsf Z},  E)^{\la \z \supset \z_0\ra, \circ}.
  \]
  Then the diagram
\[
 \begin{CD}
 \widetilde{\oH}^0({\mathsf Z},  E)^{\la \z \supset \z_0\ra, \circ}\times \left(\widetilde{\oH}_\Phi^i(\mathsf{G},  V_0)^{\la \p\supset \s \ra, \circ} \otimes \RD(\dot{\mathsf G},\dot \p)\right) @> {\widetilde{\mathscr M}}_{\lambda_0} >> E\\
@V^{\textrm{the map induced by $V_0\subset V$}}VV @VV = V \\
 \widetilde{\oH}^0({\mathsf Z},  E_\mathsf w)^{\la \z \supset \z_0\ra, \circ}\times \left(\widetilde{\oH}_\Phi^i(\mathsf{G},  V)^{\la \p\supset \s \ra, \circ} \otimes \RD(\dot{\mathsf G},\dot \p)\right) @> {\widetilde{\mathscr M}}_{\lambda_{V, \sf w}} >> E\\
            \end{CD}
\]
commutes.
\end{prpl}


Suppose that we are given  an $\sf E$-vector space $\mathscr H$ that fits into a commutative diagram 
\be\label{thediag0022000}
\xymatrix{
&\widehat{\oH_\Phi^i({\mathsf G}, \sf V_\C)}_{{\p}-\mathrm{sm}}\otimes \RD(\g/\p)&\widehat{\oH_\Phi^i({\mathsf G}, \sf V)}_{{\p}-\mathrm{sm}}\otimes \RD(\g/\p)\ar[l]_{\C\otimes(\,\cdot\,)}\ar[d]^{\iota_{\sf V}}\\
\widetilde{\oH}_\Phi^i({\mathsf G},  V_0)^{\la \p\supset \s\ra,\circ}\ar[d]^{\textrm{the map induced by $V_0\subset V$}} &\mathscr H\ar[l]_{\widetilde \xi}\ar[d]^{\xi^\circ}\ar[ru]^{\widehat \xi}\ar[u]^{\widehat \xi}\ar[r]^{\widehat \xi}&\widehat{\oH_\Phi^i({\mathsf G},  V)}_{{\p}-\mathrm{sm}}\otimes \RD(\g/\p)\ar[d]^{\eqref{identrsm}}_{\cong}\\
\widetilde{\oH}_\Phi^i({\mathsf G},  V)^{\la \p\supset \s\ra,\circ} &{\oH}_\Phi^i({\mathsf G},  V)^{\la \p\ra, \circ}\ar[l]_{\eqref{interp5}}\ar[r]^{\eqref{interp6}} &   {\oH_\Phi^i({\mathsf G},  V)}^{\la \p\ra}.\\
            }
\ee
We will construct such a space $\mathscr H$ in the next two  sections.

Note that $\dot{\mathsf G}(\Q)$ is dense in an open subgroup of $\dot G$. Hence as in \eqref{emblambda} we have natural inclusions
\[
\Hom_{\dot{\mathsf G}(\Q)}(\mathsf E_{\mathsf w}\otimes\mathsf V, \mathsf E)\subset \Hom_{E\otimes \dot \g}(E_{\mathsf w}\otimes V,  E)\subset \Hom_{E\otimes \dot \p}( V,  E),
\]
where $\sf w$ is an algebraic character of $\sf Z_{\sf E}$ defined over $\sf E$ such that  ${\sf w}_p$ is trivial on $Z_0$.  
In particular, $\lambda_{\sf V, w}$ is also viewed as an element of $\Hom_{E\otimes \dot \p}( V,  E)$. 

The following theorem asserts that relative completed modular symbols interpolate the stable modular symbols on $\mathscr H$, for various weights $\sf w$.  
\begin{thml}\label{thmc}
 Let  $\lambda_0\in  \Hom_{E\otimes \dot \p}(V_0, E)$. Then for all algebraic characters $\sf w$ of $\sf Z_{\sf E}$ defined over $\sf E$ such that  ${\sf w}_p$ is trivial on $Z_0$, and all $\lambda_{\sf V, w}\in \Hom_{\dot{\mathsf G}(\Q)}(\mathsf E_\mathsf w\otimes \mathsf V, \mathsf E)$ such that  $(\lambda_{\sf V, w})|_{V_0}=\lambda_0$,
 the diagram
\[
 \begin{CD}
 {\oH}^0({\mathsf Z},  \C_{\sf w})\times \left(\widehat{\oH_\Phi^i({\mathsf G}, \sf V_\C)}_{{\p}-\mathrm{sm}}\otimes \RD(\dot{\mathsf G})\right)@> \widehat{\mathscr M}_{\lambda_{\sf V, w}} >>   \C\\
 @A\widehat \xi AA @AA\subset A \\
 \oH^0(\mathsf Z, \mathsf E_{\mathsf w})\times \left(\mathscr H\otimes \RD(\dot{\mathsf G},\dot \p)\right)@> \widehat{\mathscr M}_{\lambda_{\sf V, w}} \circ \widehat \xi >>  \mathsf E\\
            @V \eqref{embpheck00}\textrm{ and }\widetilde{\xi} VV          @VV \subset V\\
\widetilde{\oH}^0({\mathsf Z},  E)^{\la \z \supset \z_0\ra, \circ}\times \left(\widetilde{\oH}_\Phi^i(\mathsf{G},  V_0)^{\la \p\supset \s \ra, \circ} \otimes \RD(\dot{\mathsf G},\dot \p)\right) @> \widetilde{\mathscr M}_{\lambda_0}  >>     E\\
            \end{CD}
\]
commutes. Here the middle horizontal arrow is the composition of 
\[
\oH^0(\mathsf Z, \mathsf E_{\mathsf w})\times \left(\mathscr H\otimes \RD(\dot{\mathsf G},\dot \p)\right)\xrightarrow{\widehat{\xi}}\oH^0(\mathsf Z, \mathsf E_{\mathsf w})\times \left(\widehat{\oH_\Phi^i({\mathsf G}, \sf V)}_{{\p}-\mathrm{sm}}\otimes \RD(\dot{\mathsf G})\right)\xrightarrow{\widehat{\mathscr M}_{\lambda_{\sf V, w}} } E.
\]

\end{thml}
\begin{proof}
    This follows by combining Propositions \ref{lem56}--\ref{lem510}. 
\end{proof}

\section{Parabolic cohomologies and the $t$-stable part} \label{sec:PC}

In this section, we define parabolic cohomology groups which include the space $\CB_P(\oH_{\Phi}^{i}({\mathsf G}, \mathsf V)) \otimes \RD(\g/\p)$ in the Introduction as an example. These can be viewed as variants of the Jacquet modules studied in \cite{Em06b}. We will also define their ``$t$-stable parts", which are related to the nearly ordinary part.  

\subsection{Parabolic pairs}

In view of \cite[Section 5.2]{Be87} we make the following definition.
\begin{dfnl}
    A parabolic pair in $\g$ is  a pair $\p\supset \n$  of  its Lie subalgebras such that some $t\in G$ defines $\p\supset \n$ in the following sense:      \[
    \begin{cases}
    \textrm{ $t$ normalizes both $\p$ and $\n$;}&\\ 
         \textrm{all eigenvalues (in $\C_p^\times $) of $\Ad_t: \n\rightarrow \n$ have $p$-adic norms $>1$;}&\\
        \textrm{all eigenvalues of $\Ad_t: \p/\n\rightarrow\p/ \n$ have $p$-adic norm $1$;} & \\
         \textrm{all eigenvalues  of $\Ad_t: \g/\p\rightarrow\g/ \p$ have $p$-adic norms $< 1$.}  & 
    \end{cases} 
    \]
    Such an element $t$  is called a defining element of the parabolic pair. 
\end{dfnl}
Here and henceforth $\Ad$ indicates various maps induced by the conjugations. It is clear that every element of $G$  defines a unique parabolic pair in $\g$.

In what follows we suppose that $\p\supset \n$ is a parabolic pair in $\g$.  

\begin{dfnl}
    An element of $G$ is said to be split if it is the image of $p$ under some (unique) algebraic homomorphism $\Q_p^\times \rightarrow G$. 
\end{dfnl}

Put   
\[
\mathrm{Df}(\p,\n):=\{\textrm{split defining elements $t\in G$ of the parabolic pair $\p\supset \n$}\}.
\]

\begin{leml}
    The set $\mathrm{Df}(\p,\n)$ is nonempty. 
\end{leml}
\begin{proof}
    The lemma follows by the following observation: If we replace $t\in G$ by the semi-simple part of its Jordan decomposition,  or its positive power, or its multiplication by an element in a compact subgroup of $G$ that commutes with $t$, then the defined parabolic pair remains unchanged. 
\end{proof}

Let $P$ denote the normalizer in $G$ of the pair $\p\supset \n$. Note that the Lie algebra of $P$ is equal to $\p$. 

 Let $t\in \mathrm{Df}(\p,\n)$. Note that all eigenvalues of $\Ad_t:\n\rightarrow \n$ are negative powers of $p$. We have a unique $\Ad_t$-stable decomposition 
 \[
   \g=\bar{\n}_t\oplus \l_t\oplus \n
 \]
 such that all eigenvalues of $\Ad_t: \bar{\n}_t\rightarrow \bar{\n}_t$ are positive powers of $p$, and  $\l_t$ equals the 
 invariant space of $t$ in $\g$. 
 Denote by $L_t$ the normalizer of $\l_t$ in $P$. Note that the Lie algebra of $L_t$ is equal to $\l_t$. 

 \begin{leml}\label{algsubgp}
   The subgroups $P$ and $L_t$ of $G$ are both algebraic, and there are unipotent algebraic subgroups $N$ and $\bar N_t$ of $G$ whose Lie algebras are respectively equal to $\n$ and $\bar \n_t$. 

\end{leml}
\begin{proof}
By realizing $G$ as an algebraic subgroup of a general linear group, it is easy to see that there are connected algebraic subgroups $P_0$, $L_{t,0}$, $N$ and $\bar N_t$ of $G$ whose Lie algebras are respectively equal to $\p$, $\l_t$, $\n$ and $\bar \n_t$. Moreover, $P\supset P_0$, $L_t\supset L_{t,0}$, and both $N$ and $\bar N_t$ are unipotent.  This implies the lemma.  
\end{proof}

Let $N$ and $\bar N_t$ be as in Lemma \ref{algsubgp}. 
It is clear that the multiplication map \[
\bar N_t\times P\rightarrow G
\]
is an open embedding.

Set 
$L:=P/N$. Denote by  $P_0\subset P$ and $L_0\subset L$ the identity connected components under the Zariski topologies. 

\begin{leml}\label{lemc}
    For all $t_1, t_2\in \mathrm{Df}(\p,\n)$, there exists $g\in N$ such that $g t_1 g^{-1}$ commutes with $t_2$.
\end{leml}
\begin{proof}
In view of Lemma \ref{algsubgp} we assume without loss of generality that $G=P$ and $P$ is connected as an algebraic group. Then $L_{t_i}$ is connected as an algebraic group, $P=L_{t_i}\ltimes N$, and $t_i$ is a central element of $L_{t_i}$ ($i=1,2$).
    Write 
    \[
    L_{t_i}=L_i\ltimes N_i\quad 
    \]
    for a Levi decomposition of $L_{t_i}$ where $N_i$ is the unipotent radical. Then $L_i$ is also a Levi factor of $P$,  and $N_i\ltimes N$ equals the unipotent radical of $P$. By uniqueness of Levi factors in characteristic zero, we have  elements $g\in N$ and $g'\in  N_1$ such that 
    \[
    (gg') L_1 (gg')^{-1}=L_2.
    \]
    Then we have that
    \[
      g t_1 g^{-1}=(gg') t_1 (gg')^{-1}\in L_2\subset L_{t_2}.
    \]
    This proves the lemma. 
\end{proof}

\begin{leml}\label{lemc2}
    For all $t_1, t_2\in \mathrm{Df}(\p,\n)$, 
    \[
    t_1 t_2 =t_2 t_1\Longleftrightarrow \l_{t_1}=\l_{t_2}\Longleftrightarrow L_{t_1}=L_{t_2}.
    \]
\end{leml}
\begin{proof}
  First assume that $t_1 t_2 =t_2 t_1$.  Note that ${t_2}$ stabilizes $\l_{t_1}$. Hence it centralizes $\l_{t_1}$, by considering the eigenvalues of the operator $\Ad_{t_2}: \p\rightarrow \p$. Thus $\l_{t_1}\subset \l_{t_2}$. Similarly $\l_{t_2}\subset \l_{t_1}$ and hence $\l_{t_1}=\l_{t_2}$. All other implications are obvious.  
\end{proof}

Note that by Lemmas \ref{lemc} and \ref{lemc2}, all the groups 
\be\label{conj}
\textrm{$\{L_t \,:\, t\in \mathrm{Df}(\p,\n)\}$ are conjugate to each other by $N$.}
\ee

\begin{leml}
   The equality $P=L_{t}\ltimes N$ holds.
         \end{leml}
         
\begin{proof}
Let $g\in P$. By Lemma \ref{lemc}, there is an element $g'\in N$ such that $g'gt(g'g)^{-1}$  commutes with $t$. Then by Lemma \ref{lemc2}, we have that 
\[
\l_t=\l_{g'gt(g'g)^{-1}}=\Ad_{g'g} (\l_t)
\]
and hence $g'g\in L_t$. This implies the lemma. 
\end{proof}


\subsection{A partial order}
\begin{dfnl} For any two open compact subgroups $\fG$ and $\fG'$ of $G$, define
\be\label{partial00}
  \fG\preceq_P \fG'\quad \textrm{if and only if }\quad  \fP\subset \fP'\textrm{ and } \fG'\subset \fG\fP',
\ee
where $\fP:= \fG\cap P$ and $\fP':= \fG'\cap P$.
\end{dfnl}

It is easily checked that $\preceq_P$ is a  partial order on the set of all open compact subgroups of $G$.
 Note that the conditions in \eqref{partial00} imply that
\be\label{fgprime}
  \fG'=(\fG\cap \fG')\fP' \quad\textrm{and}\quad P\cap(\fG\fG')=\fP'.
\ee
The following lemma will be useful.
\begin{lem}\label{ordp1}
Let $\fG_1\preceq_P \fG_2\preceq_P \fG_3$ be three open compact subgroups of $G$. Then
\[
  \fG_2=(\fG_1\cap \fG_2)(\fG_2\cap \fG_3)\quad\textrm{and}\quad \fG_1\cap \fG_3\subset \fG_2.
\]
\end{lem}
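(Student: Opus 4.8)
The plan is to prove the two assertions separately, both by elementary bookkeeping with the defining conditions of $\preceq_P$. Throughout I would write $\fP_j := \fG_j \cap P$, so that the hypotheses $\fG_1 \preceq_P \fG_2 \preceq_P \fG_3$ unpack to: $\fP_1 \subseteq \fP_2 \subseteq \fP_3$, $\fG_2 \subseteq \fG_1 \fP_2$, and $\fG_3 \subseteq \fG_2 \fP_3$. I would also note at the outset that $\fP_2 \subseteq \fG_2 \cap \fG_3$, since $\fP_2 \subseteq \fG_2$ and $\fP_2 \subseteq \fP_3 = \fG_3 \cap P \subseteq \fG_3$, and that each $\fP_j$ is a subgroup (hence closed under inversion), which is the only mild point one must keep track of.

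For the first identity, the inclusion $\supseteq$ is immediate since $\fG_1 \cap \fG_2$ and $\fG_2 \cap \fG_3$ both lie in the group $\fG_2$. For $\subseteq$, take $g \in \fG_2$; using $\fG_2 \subseteq \fG_1 \fP_2$ write $g = hp$ with $h \in \fG_1$ and $p \in \fP_2$. Since $p \in \fP_2 \subseteq \fG_2$ and $g \in \fG_2$, we get $h = g p^{-1} \in \fG_1 \cap \fG_2$; and $p \in \fP_2 \subseteq \fG_2 \cap \fG_3$ by the observation above. Hence $g \in (\fG_1 \cap \fG_2)(\fG_2 \cap \fG_3)$.

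For the second assertion, take $g \in \fG_1 \cap \fG_3$. Using $\fG_3 \subseteq \fG_2 \fP_3$, write $g = hp$ with $h \in \fG_2$ and $p \in \fP_3$; the goal is to upgrade ``$p \in \fP_3$'' to ``$p \in \fP_2$'', which then gives $g = hp \in \fG_2 \fG_2 = \fG_2$. To do this, apply $\fG_2 \subseteq \fG_1 \fP_2$ to $h$: write $h = h' q$ with $h' \in \fG_1$ and $q \in \fP_2$. Then $h'^{-1} g = q p$. The left side lies in $\fG_1$ (as $h', g \in \fG_1$), while the right side lies in $P$ (as $q \in \fP_2 \subseteq P$ and $p \in \fP_3 \subseteq P$), so $q p \in \fG_1 \cap P = \fP_1 \subseteq \fP_2$. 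Since $\fP_2$ is a group and $q \in \fP_2$, we conclude $p = q^{-1}(q p) \in \fP_2$, finishing the proof.

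I do not anticipate any real obstacle: both arguments are short manipulations with products of subgroups, and the structure is dictated entirely by the two ``$\fG \subseteq \fG' \fP'$''-type inclusions. Alternatively, one could invoke the derived relations $\fG' = (\fG \cap \fG')\fP'$ and $P \cap (\fG \fG') = \fP'$ recorded just after \eqref{partial00}, which shortens the second part slightly (directly, $p^{-1} = g^{-1} h \in P \cap (\fG_1 \fG_2) = \fP_2$), but the self-contained version above is essentially as brief and keeps the argument transparent.
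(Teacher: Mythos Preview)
Your proof is correct and follows essentially the same route as the paper. The paper compresses both steps by invoking the derived relations \eqref{fgprime}: for the first assertion it cites $\fG_2=(\fG_1\cap\fG_2)\fP_2$ (which is exactly your decomposition $g=hp$), and for the second it writes $\fG_1\cap\fG_3=\fG_1\cap((\fG_2\cap\fG_3)\fP_3)\subset\fG_1\cap(\fG_2 P)\subset\fG_2$, where the last inclusion is precisely your alternative $p^{-1}=g^{-1}h\in P\cap(\fG_1\fG_2)=\fP_2$.
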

\begin{proof}
The first assertion is obvious in view of the first equality in \eqref{fgprime}. For the second assertion, we have that
\begin{eqnarray*}
  &&\fG_1\cap \fG_3\\
  &=& \fG_1\cap ((\fG_2\cap \fG_3)\fP_3)\qquad (\fP_3:=\fG_3\cap P)\\
  &\subset & \fG_1\cap (\fG_2 P)\\
 &\subset &  \fG_2.
\end{eqnarray*}
Here the last inclusion follows from the second  equality in \eqref{fgprime}.
\end{proof}

The following lemma is easily checked. 
\begin{leml}\label{bijggpp}
   For all open compact subgroups $\fG, \fG'$ of $G$ with $\fG\preceq_P \fG'$, the natural map
    \[
        (\fG'\cap P)/(\fG\cap \fG'\cap P)\rightarrow \fG'/(\fG\cap\fG')
        \]
        is bijective.
\end{leml}


 Let $\fL$ be an open compact subgroup of $L$. 
 Let $\fL_t$ denote the open compact subgroup of $L_t$ that corresponds to $\fL$ under the  obviously isomorphism  $L\cong L_t$. Recall that  $\BG$ is an open subgroup of $\mathsf G^{\natural}$. Set $\mathbf P:=P\cap \BG$, 
 \[
{\mathscr P}_{\fL,\mathbf P}:=\{\textrm{open compact subgroups $\fP$ of $\mathbf P$ such that $\fP/(\fP\cap N)=\fL$}\}, 
\]
and \[
 {\mathscr G}_{\fL, \BG}:=\{\textrm{open compact subgroups $\fG$ of $G\cap \BG$ such that $\fG\cap P\in {\mathscr P}_{\fL,\mathbf P}$}\}.
 \] 
Then
\[ 
{\mathscr P}_{\fL,\mathbf P} \neq \emptyset \ \Longleftrightarrow \  {\mathscr G}_{\fL,\BG}\neq \emptyset\  \Longleftrightarrow\  \fL\subset \mathbf P/(N\cap \mathbf P),
\]
 in which case ${\mathscr P}_{\fL,\mathbf P}$ is a directed set under the inclusion relation, and ${\mathscr G}_{\fL, \BG}$ is a directed set under the partial order $\preceq_P$.

\begin{leml}\label{lemp}
    Let $\fP$ be an open compact subgroup of $P_0$. Then 
    \begin{eqnarray*}
        &&\fP\subset t^k\fP t^{-k}\textrm{ for some positive integer $k$}\\
       &\Longleftrightarrow& \fP\subset t^k\fP t^{-k}\textrm{ for all sufficiently large  positive integer $k$}\\
        &\Longleftrightarrow& 
        \fP=(\fP\cap L_t)(\fP\cap N).     
            \end{eqnarray*} 
    \end{leml}
    
   \begin{proof}
     Suppose that $\fP\subset t^k\fP t^{-k}$, where $k$ is a positive integer.   Let  $g=g_1 g_2\in \fP$ with $g_1\in L_t$ and $g_2\in N$. Then
       \[
     g_1=\lim_{r\rightarrow \infty} g_1 t^{-rk} g_2 t^{rk}=\lim_{r\rightarrow \infty} t^{-rk} g t^{rk}\in \fP.
       \]
       This proves the equality $\fP=(\fP\cap L_t)(\fP\cap N)$. The rest of the lemma is obvious.  
   \end{proof} 

Recall that $D$ denotes an open compact subgroup of $\mathsf G^{\natural,p}\cap \BG$. 
 We say that $\fL$ is $D$-neat if so is $\fL_t$. This is independent of $t\in \mathrm{Df}(\p,\n)$.  
Write
 \[
 {\mathscr G}_{\fL, \BG}^{D,t}:=\{\textrm{$D$-neat open compact subgroups $\fG$ of $G\cap \BG$ such that $\fL_t\subset \fG\subset \bar N_t \fL_t N$} \}.   
 \]
The  following  lemma is obvious. 

\begin{lem}\label{upbd}
   Suppose that $\fL\subset L_0$.  The set ${\mathscr G}_{\fL, \BG}^{D,t}$ is nonempty if and only if $\fL_t$ is  $D$-neat and contained in $\mathbf P$. When this is the case ${\mathscr G}_{\fL, \BG}^{D,t}$ is cofinal in the directed set ${\mathscr G}_{\fL, \BG}$. 
\end{lem}

When $\fL\subset L_0$, for each  $\fG\in {\mathscr G}_{\fL, \BG}^{D,t}$ we set
    \[
   \BN_\fG:= \BN_{\fG,t}:=\{k\in \BN \,:\, \fG\preceq_P t^k \fG t^{-k}\}.
    \]
  This is  a  submonoid of the additive  monoid $\BN$ which contains all but finitely many elements of $\BN$. 

   \subsection{Parobolic   cohomology}\label{sec3.2}



\begin{lem}\label{lemfg0123}
Let  $\fG_1, \fG_2, \fG_3$ be three open compact subgroups of $G\cap \BG$ such that $\fG_1\preceq_P \fG_2\preceq_P \fG_3$. Then the diagram
 \[
  \xymatrix{
 \oH^i_\Phi(D \fG_3, M) \ar[rr]^{ \rho_{D \fG_3, D \fG_2}}  \ar[rrd]_(0.4){ \rho_{D \fG_3, D \fG_1}}&& \ \oH^i_\Phi(D \fG_2, M)  \ar[d]^{ \rho_{D \fG_2, D \fG_1}} \\
& &  \oH^i_\Phi(D \fG_1, M)  \\
    }
\]
commutes. 
\end{lem}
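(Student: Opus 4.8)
The plan is to deduce this directly from Lemma \ref{neatng} (the case $s=t=1$, $\phi_i=\mathrm{id}$ of Proposition \ref{hecken}), applied to the three neat open compact subgroups
\[
  K_1:=D\fG_3,\qquad K_2:=D\fG_2,\qquad K_3:=D\fG_1
\]
of $\mathsf G(\A)^\natural$; they are indeed neat open compact subgroups because each $\fG_j$ is $D$-neat. With this labelling the diagram of Lemma \ref{neatng} reads
\[
  \rho_{K_2,K_3}\circ\rho_{K_1,K_2}=\big(\text{mult.\ by }[K_1\cap K_3:K_1\cap K_2\cap K_3]\big)\circ\rho_{K_1,K_3},
\]
i.e.\ precisely the triangle of the lemma once we show the index on the right equals $1$. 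So there are exactly two things to check: the hypothesis $K_2=(K_1\cap K_2)(K_2\cap K_3)$, and that $[K_1\cap K_3:K_1\cap K_2\cap K_3]=1$.

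For both I would first record the elementary bookkeeping fact that, since $D\subset\mathsf G(\A^p)^\natural$ commutes with everything in $G=\mathsf G(\Q_p)$ (they sit in complementary factors of $\mathsf G(\A^\infty)^\natural$), one has $D\fG\cap D\fG'=D(\fG\cap\fG')$ and $(D\fG)(D\fG')=D(\fG\fG')$ for open compact subgroups $\fG,\fG'$ of $G$, and $DD=D$. Hence
\[
  (K_1\cap K_2)(K_2\cap K_3)=D(\fG_2\cap\fG_3)\cdot D(\fG_1\cap\fG_2)=D\,(\fG_2\cap\fG_3)(\fG_1\cap\fG_2),
\]
and $(\fG_2\cap\fG_3)(\fG_1\cap\fG_2)=\fG_2$ by the first assertion of Lemma \ref{ordp1} (using that $\fG_2$ is a group, so the two possible orderings of the product both give $\fG_2$). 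Therefore $(K_1\cap K_2)(K_2\cap K_3)=D\fG_2=K_2$, which verifies the hypothesis of Lemma \ref{neatng}.

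Next, $K_1\cap K_3=D(\fG_1\cap\fG_3)$ and $K_1\cap K_2\cap K_3=D(\fG_1\cap\fG_2\cap\fG_3)$; by the second assertion of Lemma \ref{ordp1}, $\fG_1\cap\fG_3\subset\fG_2$, so $\fG_1\cap\fG_3=\fG_1\cap\fG_2\cap\fG_3$ and hence $K_1\cap K_3=K_1\cap K_2\cap K_3$, giving index $1$. Plugging this into Lemma \ref{neatng} turns its square into exactly the commuting triangle asserted here. The only real content — and the step I would flag as the crux — is choosing the labelling $K_1=D\fG_3$, $K_2=D\fG_2$, $K_3=D\fG_1$ so that the ``defect index'' of Lemma \ref{neatng} is forced to be trivial; everything else is the routine verification, via Lemma \ref{ordp1} and the fact that the $D$-factor commutes with $G$, that the hypotheses of Lemma \ref{neatng} hold. $\qed$
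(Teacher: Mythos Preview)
Your proposal is correct and follows exactly the paper's approach: the paper's proof is the one-liner ``This follows from Lemma \ref{neatng} and Lemma \ref{ordp1}'', and you have simply unpacked that citation, with the right labelling $K_1=D\fG_3$, $K_2=D\fG_2$, $K_3=D\fG_1$ and the routine reduction through the $D$-factor.
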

\begin{proof}
This follows from Lemmas \ref{neatng} and  \ref{ordp1}. 
\end{proof}


Suppose that $\fL$ is contained in $\mathbf P/(N\cap \mathbf P)$ so that both ${\mathscr P}_{\fL,\mathbf P}$ and ${\mathscr G}_{\fL,\BG}$ are nonempty directed sets. 

\begin{dfnl}\label{defparaboliccoh}
    In view of 
    Lemma \ref{lemfg0123}, we  define the parabolic   cohomology  group 
\[
  \oH^{i}_{\Phi,\mathbf P}(D\fL, M):= \varprojlim_{\fG\in {\mathscr G}_{\fL, \BG}} \oH^i_\Phi(D \fG, M). 
\]
\end{dfnl}



It is clear that the assignment $ \oH^{i}_{\Phi,\mathbf P}(D\fL, \,\cdot\,)$ is a functor from the category of $R[\mathsf G(\Q)\times \BG]$-modules to the category of $R$-modules. We remark that the parabolic cohomology group only depends on $\mathbf P$ is the following sense:  if $\BG'$ is an open subgroup of $\BG$ containing $D$ such that $\BG'\cap P=\BG\cap P$, then  ${\mathscr G}_{\fL, \BG'}$ is a cofinal subset of ${\mathscr G}_{\fL, \BG}$ and hence 
\[
 \varprojlim_{\fG\in {\mathscr G}_{\fL, \BG}} \oH^i_\Phi(D \fG, M)=\varprojlim_{\fG\in {\mathscr G}_{\fL, \BG'}} \oH^i_\Phi(D \fG, M). 
\]

We have an identification
\be\label{isopara}
  \oH^{i}_{\Phi,\mathbf P}(D\fL, M)=\varprojlim_{\fP\in {\mathscr P}_{\fL,\mathbf P}} \oH^{i}_{\Phi}(D\fP, M)
\ee
as in the following proposition, where the transition maps are the pull-back maps. 
\begin{prpl}\label{isopara2}
The projections yield an isomorphism
\be\label{isopc}
  \oH^{i}_{\Phi,\mathbf P}(D\fL, M)\rightarrow \varprojlim_{\fP\in {\mathscr P}_{\fL,\mathbf P}} \oH^{i}_{\Phi}(D\fP, M)
\ee
 whose inverse is the unique homomorphism  
   \be\label{isojacquet} \varprojlim_{\fP\in {\mathscr P}_{\fL, \mathbf P}}\oH^{i}_{\Phi}(D\fP, M)\rightarrow \oH^{i}_{\Phi,\mathbf P}(D\fL, M)
   \ee
   such that  the diagram
\[
 \begin{CD}
      \varprojlim_{\fP\in {\mathscr P}_{\fL,\mathbf P}} \oH^{i}_{\Phi}(D\fP, M)@> >>   \oH^{i}_{\Phi,\mathbf P}(D\fL, M)\\
            @VVV          @VV V\\
\oH_{\Phi}^{i}(D (\fG\cap P), M)@> >>    \oH_{\Phi}^{i}(D\fG, M)\\
            \end{CD}
\]
commutes for all $\fG\in {\mathscr G}_{\fL, \BG}$.  

\end{prpl}
\begin{proof}
    The uniqueness of \eqref{isojacquet} obvious. Lemma \ref{bijggpp} implies that  the diagram  
\[
    \begin{CD}
      \oH^{i}_{\Phi}(D(\fG'\cap P), M)@> >>  \oH^{i}_{\Phi}(D\fG', M) )\\
            @VVV          @VV V\\
 \oH^{i}_{\Phi}(D(\fG\cap P), M)@> >>  \oH^{i}_{\Phi}(D\fG, M)\\
    \end{CD}
\]
commutes, for all $\fG, \fG'\in {\mathscr G}_{\fL,\BG}$ with $\fG\preceq_P \fG'$. This proves  the existence of the homomorphism \eqref{isojacquet}. 

Similarly the diagram 
\[
  \xymatrix{
\oH^{i}_{\Phi,\mathbf P}(D\fL, M) \ar[rr]^{\textrm{projection}}  \ar[rrd]_(0.4){\textrm{projection}}&& \oH^i_\Phi(D \fP', M)\ar[d]^{\textrm{pull-back}} \\
& &  \oH^i_\Phi(D \fP, M)  \\
    }
\]
commutes for all $\fP, \fP'\in {\mathscr P}_{\fL,\mathbf P}$ with $\fP\subset \fP'$. Hence we have  a natural map
\[
   \oH^{i}_{\Phi,\mathbf P}(D\fL, M)\rightarrow \varprojlim_{\fP\in {\mathscr P}_{\fL,\mathbf P}}\oH^{i}_{\Phi}(D\fP, M). 
\]
It is routine to check that this is the inverse of the map \eqref{isojacquet}.
\end{proof}


   Let $D'$ be an open subgroup of $D$ and let $\fL'$ be an open subgroup of $\fL$. We define the pull-back map 
   \[
\rho_{D\fL, D' \fL'}: \oH_{\Phi, \mathbf P}^{i}(D \fL, M)\rightarrow \oH_{\Phi, \mathbf P}^{i}(D' \fL', M)
\]
as in the following lemma. 

\begin{lem}\label{pull-backord}
There is a unique homomorphism 
\[
\rho_{D\fL, D' \fL'}: \oH_{\Phi, \mathbf P}^{i}(D \fL, M)\rightarrow \oH_{\Phi, \mathbf P}^{i}(D' \fL', M)
\]
 such that the diagram
\[
 \begin{CD}
       \oH_{\Phi, \mathbf P}^{i}(D \fL, M)@> \rho_{D \fL, D' \fL'} >>   \oH_{\Phi, \mathbf P}^{i}(D' \fL', M)\\
            @VVV          @VV V\\
\oH_{\Phi}^{i}(D \fG, M)@>\rho_{D\fG, D' \fG'} >>    \oH_{\Phi}^{i}(D'\fG', M)\\
            \end{CD}
\]
commutes for all $\fG\in {\mathscr G}_{\fL, \BG}$ and  $\fG'\in {\mathscr G}_{\fL',\BG}$  with $\fG'\preceq_P \fG$.  
\end{lem}
\begin{proof}
It is easy to see that for every $\fG'\in {\mathscr G}_{\fL',\BG}$, there is a group  $\fG\in {\mathscr G}_{\fL,\BG}$ with $\fG'\preceq_P \fG$. This implies the uniqueness assertion of the lemma.
Let $\fG_1\in {\mathscr G}_{\fL, \BG}$ and $\fG_1'\in {\mathscr G}_{\fL',\BG}$ with
\[
  \fG \preceq_P \fG_1,\quad \fG' \preceq_P \fG_1', \quad \fG_1'\preceq_P \fG_1.
\]
 Lemma \ref{neatng} and  Lemma \ref{ordp1} imply that the diagram 
\[
 \begin{CD}
     \oH_{\Phi}^{i}(D \fG_1, M)@>\rho>>    \oH_{\Phi}^{i}(D'\fG_1', M)\\
            @V\rho VV          @VV \rho V\\
\oH_{\Phi}^{i}(D \fG, M)@>\rho >>    \oH_{\Phi}^{i}(D'\fG', M)\\
            \end{CD}
\]
commutes. This implies the existence assertion of the lemma. 
\end{proof}

\begin{lem}\label{pull-backord22}
Let $D_1\supset D_2\supset D_3$ be open compact subgroups of $\mathsf G^{\natural, p}\cap \BG$, and let $\fL_1\supset \fL_2\supset \fL_3$ be open compact subgroups of $L\cap \BG$. Then  the diagram 
 \[
  \xymatrix{
 \oH_{\Phi, \mathbf P}^{i}(D_1 \fL_1, M) \ar[rr]^{}  \ar[rrd]^{}&& \  \oH_{\Phi, \mathbf P}^{i}(D_2 \fL_2, M) \ar[d]^{} \\
& &   \oH_{\Phi, \mathbf P}^{i}(D_3 \fL_3, M)  \\
    }
\]
commutes, where the three arrows are the pull-back maps. 
\end{lem}
\begin{proof}
This also follows from Lemma \ref{neatng} and  Lemma \ref{ordp1}. 
\end{proof}

It is clear that  the pull-back  maps are natural in the coefficient modules. The following lemma is routine to check.

\begin{lem}\label{pull-backordcom}
The diagram
\[
 \begin{CD}
       \oH_{\Phi, \mathbf P}^{i}(D \fL, M)@> \textrm{pull-back} >>   \oH_{\Phi, \mathbf P}^{i}(D' \fL', M)\\
            @V\textrm{projection}VV          @VV \textrm{projection} V\\
\oH_{\Phi}^{i}(D \fP, M)@>\textrm{pull-back} >>    \oH_{\Phi}^{i}(D'\fP', M)\\
            \end{CD}
\]
commutes for all $\fP\in {\mathscr P}_{\fL, \mathbf P}$ and  $\fP'\in {\mathscr P}_{\fL',\mathbf P}$  with $\fP'\subset  \fP$.  
\end{lem}
\begin{proof}
    This is implied by  Lemma \ref{bijggpp}. 
\end{proof}
In view of Lemma \ref{pull-backord22}, we define 
\be
\oH_{\Phi, \mathbf P}^{i}(\mathsf G,M):=\varinjlim_{D\fL}\oH_{\Phi, \mathbf P}^{i}(D \fL, M), 
\ee
where $D$ runs over open compact subgroups of $\mathsf G^{\natural,p}\cap \BG$ and $\fL$ runs over open compact subgroups of $\mathbf P/(N\cap \mathbf P)$.  
Define a homomorphism
\be\label{paratorela}
  \oH_{\Phi, \mathbf P}^{i}(D \fL, M)\rightarrow \oH_{\Phi}^{i}(\mathsf G, M)^{\la \p\ra}
\ee
as the composition of 
\[
 \oH_{\Phi, \mathbf P}^{i}(D \fL, M)\xrightarrow{\textrm{projection}} \oH_{\Phi}^{i}(D \fP, M)\rightarrow\oH_{\Phi}^{i}(\mathsf G, M)^{\la \p\ra},
\]
where $\fP\in {\mathscr P}_{\fL, \BG}$. This homomorphism is obviously independent of $\fP$. 
It follows from Lemma \ref{pull-backordcom} that the diagram 
\[
  \xymatrix{
 \oH_{\Phi, \mathbf P}^{i}(D \fL, M)\ar[rr]^{\textrm{pull-back}}  \ar[rrd]_(0.4){\eqref{paratorela}} && \   \oH_{\Phi, \mathbf P}^{i}(D' \fL', M)\ar[d]^{\eqref{paratorela}} \\
& &  \oH_{\Phi}^{i}(\mathsf G, M)^{\la \p\ra}  \\
    }
\]
commutes. Thus the homomorphisms for various $D$ and $\fL$ yield a homomorphism 
\be\label{paratorela2}
  \oH_{\Phi, \mathbf P}^{i}(\mathsf G, M)\rightarrow \oH_{\Phi}^{i}(\mathsf G, M)^{\la \p\ra}.
\ee

Note that if $\fL$ is $D$-neat and $\fL\subset L_0$, then  every compact subgroup of $\fL_t N$ is $D$-neat. In the rest of this subsection, assume that $R$ is a $\Q$-algebra, $\fL$ is $D$-neat, and $\fL\subset L_0$. 

\begin{prpl}\label{prpparaf}
The identification \eqref{isopara} and the horizontal arrows in \eqref{ff} yield an isomorphism 
\[ 
\widehat { \oH^{i}_{\Phi}(\mathsf G, M)}^{D((\fL_t N)\cap \mathbf P)}\otimes \RD(\g/\p)\xrightarrow{\cong}  \oH^{i}_{\Phi,\mathbf P}(D\fL, M).
\]
Moreover, the diagram 
     \[
      \begin{CD}
          \widehat { \oH^{i}_{\Phi}(\mathsf G, M)}^{D((\fL_t N)\cap \mathbf P)}\otimes \RD(\g/\p) @>\cong >> \oH^{i}_{\Phi,\mathbf P}(D\fL, M)\\
          @V\textrm{inclusion}VV @VV\textrm{pull-back}V\\
          \widehat { \oH^{i}_{\Phi}(\mathsf G, M)}^{D'((\fL'_t N)\cap \mathbf P)}\otimes \RD(\g/\p)  @>\cong >> \oH^{i}_{\Phi,\mathbf P}(D'\fL', M)\\
      \end{CD}
      \]
      commutes for all open subgroups $D'$ and $\fL'$ of $D$ and $\fL$ respectively.
\end{prpl}
\begin{proof}
   This easily follows from Lemmas \ref{lemformco22} and \ref{pull-backordcom}. 
\end{proof}

Proposition \ref{prpparaf} implies that 
\be\label{paracomp}
\oH_{\Phi, \mathbf P}^{i}(\mathsf G,M)=\left(\widehat { \oH^{i}_{\Phi}(\mathsf G, M)}_{\p\mathrm{-sm}}\right)^{N\cap \mathbf P}\otimes \RD(\g/\p),
\ee
and the natural map $\oH_{\Phi, \mathbf P}^{i}(D\fL,M)\rightarrow \oH_{\Phi, \mathbf P}^{i}(\mathsf G,M)$ is injective so that $\oH_{\Phi, \mathbf P}^{i}(D\fL,M)$ is considered as a subspace of $\oH_{\Phi, \mathbf P}^{i}(\mathsf G,M)$. It is clear that both sides of \eqref{paracomp} carry natural representations of the group
\[
\mathbf P/(N\cap \mathbf P),
\]
and the identification \eqref{paracomp} respects these representations. Moreover,
\be\label{identifydl}
\oH_{\Phi, \mathbf P}^{i}(D\fL,M)=\oH_{\Phi, \mathbf P}^{i}(\mathsf G,M)^{D\fL}.
\ee
\subsection{The $t$-stable part}\label{sectstable}

In the rest of  this section we assume that $\fL$ is $D$-neat, $\fL\subset L_0$,  and   $\fL_t\subset  \mathbf P$. Then   ${\mathscr G}_{\fL, \BG}^{D,t}$ is a nonempty cofinal subset of ${\mathscr G}_{\fL, \BG}$.  In particular, 
\[
 \oH^{i}_{\Phi,\mathbf P}(D\fL, M) = \varprojlim_{\fG\in {\mathscr G}_{\fL, \BG}^{D,t}} \oH^i_\Phi(D \fG, M). 
\]
Let $\la t\ra$ denote the submonoid of $G$ generated by $t$. Suppose that we are given a  compatible action of $\la t\ra$ on $M$:
\[
\la t\ra\times M\rightarrow M,\quad  (t^k,u)\mapsto t^k*u. 
\]
This action yields Hecke maps  as in \eqref{rhogg02}, which are 
homomorphisms to be denoted by 
\[ 
\rho_{t^k}^*:=  \rho^*_{t^k, K, K'}: \oH_\Phi^i(K,M)\rightarrow \oH_\Phi^i(K',M),
\]
 where $k\in \BN$, and $K$ and $K'$ are open compact subgroups of $\BG$. 

 Let $\fG\in {\mathscr G}_{\fL, \BG}^{D,t}$. 
\begin{leml}\label{hecket}
The map
\[
  k\mapsto  \rho^*_{t^k, D\fG, D\fG}
\]
defines a representation of the monoid $\BN_\fG$ on $\oH_\Phi^i(D\fG, M)$. 

\end{leml}
  \begin{proof}
This follows from  Proposition \ref{hecke001} and Lemma \ref{ordp1}.
  \end{proof}

We call the representation in Lemma \ref{hecket} the Hecke representation. 
Set
\[
\oH^{i,t}_\Phi(D \fG, M):=\bigcap_{k\in \BN_\fG} \rho^*_{t^k}(\oH^i_\Phi(D \fG, M)).
\]

\begin{leml}\label{lemprest}
    Suppose that $\fG, \fG'\in {\mathscr G}_{\fL, \BG}^{D,t}$ with $\fG\preceq_P \fG'$. Then the transfer map
    \be\label{thansfer12}
    \rho: \oH^i_\Phi(D \fG', M)\rightarrow \oH^i_\Phi(D \fG, M)
    \ee
    sends $\oH^{i,t}_\Phi(D \fG', M)$ into $\oH^{i,t}_\Phi(D \fG, M)$.
\end{leml}
\begin{proof}
     It follows from Proposition
 \ref{hecke001} that the transfer map $\rho$ in \eqref{thansfer12} is equivariant under the Hecke representations of the monoid
 $\{t^k \,:\, k\in \BN_{\fG}\cap \BN_{\fG'}\}$. 
 Thus 
 \begin{eqnarray*}
  \rho\left(\oH^{i,t}_\Phi(D \fG', M)\right)&=&\rho\left(\bigcap_{k\in \BN_\fG\cap \BN_{\fG'}} \rho^*_{t^k}(\oH^i_\Phi(D \fG', M))\right)\\
 &\subset &\bigcap_{k\in \BN_\fG\cap \BN_{\fG'}} \rho^*_{t^k}(\oH^i_\Phi(D \fG, M))\\
 &=&\oH^{i,t}_\Phi(D \fG, M).
    \end{eqnarray*}
\end{proof}

\begin{dfnl}
    In view of 
    Lemma \ref{lemprest}, we  define the $t$-stable part of the parabolic   cohomology  group $\oH^{i}_{\Phi,\mathbf P}(D\fL, M)$ to be 
\[
  \oH^{i,t}_{\Phi,\mathbf P}(D\fL, M):= \varprojlim_{\fG\in {\mathscr G}_{\fL, \BG}^{D,t}} \oH^{i,t}_\Phi(D \fG, M)\subset \oH^{i}_{\Phi,\mathbf P}(D\fL, M). 
\]
\end{dfnl}
Similar to the case of   parabolic   cohomology  groups, the $t$-stable parts also only depend on $\mathbf P$.  
\begin{leml}\label{lemprest2}
     Let $D'$ be an open subgroup of $D$, and $\fL'$ an open subgroup of $\fL$. Then  the pull-back map
    \be\label{thansfer1234}
    \rho: \oH^i_{\Phi,\mathbf P}(D \fL, M)\rightarrow \oH^i_{\Phi,\mathbf P}(D' \fL', M)
    \ee
    sends $\oH^{i,t}_{\Phi,\mathbf P}(D \fL, M)$ into $\oH^{i,t}_{\Phi,\mathbf P}(D' \fL', M)$.
\end{leml}
\begin{proof}
This is similar to the proof of Lemma \ref{lemprest}. 
\end{proof}

\begin{dfnl}
 In view of Lemma \ref{lemprest2}, we define the $t$-stable part 
\be
\oH_{\Phi, \mathbf P}^{i,t}(\mathsf G,M):=\varinjlim_{D\fL}\oH_{\Phi, \mathbf P}^{i,t}(D \fL, M)\subset \oH_{\Phi, \mathbf P}^{i}(\mathsf G,M), 
\ee
where $D$ runs over open compact subgroups of $\mathsf G^{\natural,p}\cap \BG$ and $\fL$ runs over $D$-neat open compact subgroups of $(P_0\cap \mathbf G)/(N\cap \mathbf P)$. 
\end{dfnl}

\subsection{A condition on $\Phi$}

Recall the following well-known result, which is more or less a consequence of the Borel-Serre compactification (\cf \cite{BS73}, \cite{Pi89}, \cite[Theorem 17.10]{Bo19} and \cite[Page 32]{Em06a}). 

\begin{lem}\label{homotopy}
Suppose that $\sf G$ is connected and reductive. Then for every neat open compact subgroup $K$  of $\mathsf G^\natural$,
 $\mathsf G(\Q)\backslash {\mathscr X}/K$ is a topological manifold that is homotopic to a finite simplicial complex.
\end{lem}

 The above lemma has the following consequence. 
\begin{prpl}\label{prop:cohfin}
Suppose that $\sf G$ is connected and reductive, and  
 $\Phi$ is the family of all closed subsets or the family of all compact subsets of $\mathsf G(\Q)\backslash \mathscr X$. Then  for all Noetherian commutative rings $R_0$ with identity, all neat open compact subgroups $K$ of $\sf G^\natural$, all local systems  $\mathcal L_0$ over $S^{\mathsf G}_{K}$ of
 finitely generated $R_0$-modules, and all $i\in \Z$, 
\be\label{phi1}
 \textrm{$\oH_\Phi^i(S^{\mathsf G}_{K}, \mathcal L_0)$ is finitely generated as an $R_0$-module.}
\ee
 Moreover, the  
 natural map
 \be\label{phi2}
 \oH_\Phi^i(S^{\mathsf G}_{K}, \mathcal L_0)\otimes_{R_0} M_0 \rightarrow \oH_\Phi^i(S^{\mathsf G}_{K}, \CL_0\otimes_{R_0} M_0)\ \  \textrm{is an isomorphism}
 \ee
  for every flat $R_0$-module $M_0$.
\end{prpl}

\begin{proof}
If $\Phi$ is the family of all compact subsets of $\mathsf G(\Q)\backslash\mathscr X$, the second assertion of the proposition  follows from the universal coefficient theorem for compactly supported cohomology (see \cite[II. Theorem 15.3]{Br97} for example). If $\Phi$ is the family of all closed subsets of $\mathsf G(\Q)\backslash\mathscr X$, it follows from the same theorem in view of Lemma \ref{homotopy}. 

If $\Phi$ is the family of all closed subsets of $\mathsf G(\Q)\backslash\mathscr X$, then the first assertion of  the proposition  is also implied by Lemma \ref{homotopy}. If $\Phi$ is the family of all compact subsets of $\mathsf G(\Q)\backslash\mathscr X$, then it is still implied by  Lemma \ref{homotopy}, by using Poincar\'e duality. 
\end{proof}

From now on, we assume that 
\be \label{conphi}
\textrm{the support set $\Phi$ satisfies the conditions \eqref{phi1} and \eqref{phi2}.} 
\ee


 \subsection{Some properties of the $t$-stable part}\label{secmoduledl}
 In this subsection we further assume that $R$ is Noetherian and $M$ is finitely generated as an $R$-module. 
  We need the following elementary lemma. 
\begin{lem}\label{fsuriso}
   Let $J$ be a finitely generated $R$-module and let $\phi: J\rightarrow J$ be an $R$-module endomorphism. Then $\phi$ induces an automorphism on 
    \[
      \bigcap_{k\in \BN} \phi^k(J). 
    \]
\end{lem}
\begin{proof}
   Since $J$ is a Noetherian $R$-module, the sequence 
   \[
   \ker \phi^0\subset \ker \phi^1\subset \ker \phi^2 \subset \dots 
   \]
   is stable. In particular, there is a non-negative  integer $r$ such that 
   \be\label{equalker}
   \ker \phi^r=\ker \phi^{r+1}. 
\ee
   Let $x\in \bigcap_{k\in \BN} \phi^k(J)$. Then 
   \[
   x=\phi^{r+1}(y)
   \]
   for some $y\in J$.
   
   For every $k\in \BN$, there is an element $z_k\in J$ such that
   \[
   x=\phi^{r+k+1}(z_k).
   \]
Then 
\[
\phi^{r+1}(y)=\phi^{r+k+1}(z_k),
\]
  and \eqref{equalker} implies that 
 \[
\phi^{r}(y)=\phi^{r+k}(z_k).
\] 
This proves that
\[
  \phi^{r}(y)\in \bigcap_{k\in \BN} \phi^{r+k}(J)=\bigcap_{k\in \BN} \phi^k(J).
\]
In conclusion, we have proved that   $\phi$ restricts to a surjective endomorphism of $\bigcap_{k\in \BN} \phi^k(J)$. It is thus an automorphism since all surjective endomorphisms of Noetherian modules are injective.  
\end{proof}


The following result is a consequence of Lemma \ref{fsuriso}. 
\begin{leml}\label{fsuriso2}
For all $\fG\in {\mathscr G}_{\fL,\BG}^{D,t}$ and $k\in \BN_\fG$,  the Hecke map 
\[
  \rho^*_{t^k}: \oH_{\Phi}^{i,t}(D\fG, M)\rightarrow \oH_\Phi^{i,t}(D\fG, M)
\]
is a well-defined  automorphism. 
\end{leml}

  \begin{prpl}\label{lemfg0123450}
For all $\fG, \fG'\in {\mathscr G}_{\fL,\BG}^{t,D}$ such that $\fG\preceq_P\fG'$, the transfer map
 \[
  \rho: \oH_{\Phi}^i(D \fG', M)\rightarrow \oH_{\Phi}^i(D \fG, M)
 \]
 induces an isomorphism
  \[
   \rho: \oH_{\Phi}^{i,t}(D \fG', M)\rightarrow \oH_{\Phi}^{i,t}(D \fG, M).
    \]
\end{prpl}

  \begin{proof}
  
Take a sufficiently large $k\in \BN$ so that $k\in \BN_\fG\cap \BN_{\fG'}$ and $\fG' \preceq_P t^k \fG t^{-k}$.  It follows from Proposition
 \ref{hecke001} that the  diagrams 
  \[
  \xymatrix{
 \oH_{\Phi}^i(D\fG', M)\ar[rr]^{\rho}  \ar[rrd]_(0.4){\rho^*_{t^k}}&& \ \oH_{\Phi}^i(D\fG,M)  \ar[d]^{\rho^*_{t^k}} \\
& &  \oH_{\Phi}^i(D\fG',M)  
    }
\]
and
  \[
  \xymatrix{
 \oH_{\Phi}^i(D\fG,M)\ar[rr]^{\rho^*_{t^k}}  \ar[rrd]_(0.4){\rho^*_{t^k}}&& \ \oH_{\Phi}^i(D\fG',M)  \ar[d]^{\rho} \\
& &  \oH_{\Phi}^i(D\fG,M)  \
    }
\]
are commutative, and all the six arrows are $\la t^k\ra$-equivariant. Thus these two commutative diagrams induce commutative diagrams 
 \[
  \xymatrix{
 \oH_{\Phi}^{i,t}(D\fG',M)\ar[rr]^{\rho}  \ar[rrd]_(0.4){\rho^*_{t^k}}&& \ \oH_{\Phi}^{i,t}(D\fG,M)  \ar[d]^{\rho^*_{t^k}} \\
& &  \oH_{\Phi}^{i,t}(D\fG',M)
    }
\]
and
  \[
  \xymatrix{
 \oH_{\Phi}^{i,t}(D\fG,M)
 \ar[rr]^{\rho^*_{t^k}}  \ar[rrd]_(0.4){\rho^*_{t^k}}&& \ \oH_{\Phi}^{i,t}(D\fG',M)  \ar[d]^{\rho} \\
& &  \oH_{\Phi}^{i,t}(D\fG,M).
    }
\]
Therefore the proposition follows, in view of Lemma \ref{fsuriso2}. 
  \end{proof}

By Proposition \ref{lemfg0123450}, we have that 
\[
\oH_{\Phi,\mathbf P}^{i,t}(D\fL,M) =\oH_{\Phi}^{i,t}(D\fG,M) 
\]
for all $\fG\in {\mathscr G}_{\fL,\BG}^{D,t}$. In particular, $\oH_{\Phi,\mathbf P}^{i,t}(D\fL,M)$ is independent of $\mathbf G$ in the following sense: if $\mathbf G'$ is an open subgroup of $\mathbf G$ containing $D\fL_t$, then 
\be\label{indp}
\oH_{\Phi,\mathbf P}^{i,t}(D\fL,M)=\oH_{\Phi,\mathbf P'}^{i,t}(D\fL,M), \quad\textrm{where }\, \mathbf P':=P\cap \mathbf G'. 
\ee





\begin{prpl}\label{ordm}
 Assume that $\mathbf P\supset N$ and $\la t\ra$ acts on $M$ by automorphisms. Then for all $\fG\in {\mathscr G}_{\fL, \BG}^{D, t}$, the projection map
 \[
   \oH^i_{\Phi,\mathbf P}(D \fL, M)\rightarrow \oH^i_{\Phi}(D \fG, M)
 \]
 induces an isomorphism 
 \[
 \oH^i_{\Phi,\mathbf P}(D \fL, M)\rightarrow \oH^{i,t}_{\Phi}(D \fG, M).
 \]
 Consequently, 
 \[
 \oH^i_{\Phi,\mathbf P}(D \fL, M)=\oH^{i,t}_{\Phi,\mathbf P}(D \fL, M). 
 \]
  
\end{prpl}

\begin{proof}
For all sufficiently large positive integers $k$ we have that $t^k\fG t^{-k}\in {\mathscr G}_{\fL, \BG}^{D, t}$ and $\fG\preceq_P t^k\fG t^{-k}$.  It follows from Proposition
 \ref{hecke001} that the  diagram
 \[
  \xymatrix{
 \oH_{\Phi}^i(D\fG,M)\ar[rr]^{\rho^*_{t^k}}  \ar[rrd]_(0.4){\rho^*_{t^k}}&& \ \oH_{\Phi}^i(D t^k\fG t^{-k},M)  \ar[d]^\rho \\
& &  \oH_{\Phi}^i(D\fG,M)  \\
    }
\]
commutes. 
The top horizontal arrow is an isomorphism since $\la t\ra$ acts on $M$ by automorphisms. 
Thus the image of the projection map
\be\label{pr0}
  \oH^i_{\Phi,\mathbf P}(D \fL, M)\rightarrow \oH^i_{\Phi}(D \fG, M)
\ee
is contained in the image of 
\[
\rho^*_{t^k}: \oH_{\Phi}^i(D\fG,M)\rightarrow \oH_{\Phi}^i(D\fG,M).
\]
Therefore the image of \eqref{pr0} is contained in $\oH^{i,t}_{\Phi}(D\fG, M)$. The proposition then  follows from Proposition \ref{lemfg0123450}.
\end{proof}


  
\section{The nearly ordinary part} \label{sec:NOP}

In this section we study the nearly ordinary part of the automorphic cohomology. The nearly ordinary part in the setting of representation theory of $p$-adic groups has been studied in \cite{Em10}. 

Let $V$ be a nonzero continuous finite-dimensional representation of $G$ over $E$. Assume that $V$ is definable over a closed subfield  of $E$ that is a finite extension of $\Q_p$.  In other words, there is a continuous representation of $G$ on a finite-dimensional vector space $V'$ over $E'$ such that $V=E\otimes V'$ as representations of $G$, where $E'$ is a closed subfield of $E$ that is a finite extension of $\Q_p$. 
View $V$  as an $E[\mathsf G(\Q)\times {\mathsf G}^\natural] $-module with the given action of $G$ and the trivial action of $\mathsf G(\Q)\times {\mathsf G}^{\natural,p}$.

Recall that $t\in \mathrm{Df}(\p,\n)$. Denote by $V^t$ the unique $t$-stable nonzero  subspace of $V$ 
with the following properties: 
\begin{itemize}
\item there is a constant  $r_V\in \Q$ such that  $\abs{a_0}_p=p^{r_V}$ for all eigenvalues $a_0\in \C_p^\times $ of the operator $t: V^t\rightarrow V^t$,
\item  $\abs{a_1}_p < p^{r_V}$ for all  eigenvalues $a_1\in \C_p^\times$ of the operator $t: V/V^t\rightarrow V/V^t$.
\end{itemize}
Here and henceforth, when no confusion is possible, we still use $t$ to denote various maps attached to $t$. Note that $V^t$ is contained in $V^N$, is $P_0$-stable, and is defined over $E'$ in the following sense:   
\[
V^t=E\otimes {V'}^t, \qquad \textrm{where }\ {V'}^t:=V^t\cap V'.
\]

\subsection{The nearly ordinary part and Hida's inequality}\label{sechida}

As a special case of  \eqref{paracomp}, we have an identification (for $\BG=\mathsf G^\natural$ and $M=V$)
\be\label{paracomp27}
\oH_{\Phi,P}^{i}(\mathsf G,V)=\left(\widehat { \oH^{i}_{\Phi}(\mathsf G, V)}_{\p\mathrm{-sm}}\right)^{N}\otimes \RD(\g/\p).
\ee
This identification respects the natural actions of $\mathsf G^{\natural, p}\times P$ on the both sides. These actions are still denoted by $g\mapsto \rho_g$.

Unless otherwise specified, throughout this section 
$D$ denotes an open compact subgroup of $\mathsf G^{\natural,p}$ and   $\fL$ denotes a $D$-neat  open compact subgroup of $L_0$. 
Note that 
\[ 
  \oH^{i}_{\Phi,P}(D\fL, V)=\oH^{i}_{\Phi,P}(\mathsf G, V)^{D\fL}\subset \oH^{i}_{\Phi,P}(\mathsf G, V)\qquad  (\textrm{see \eqref{identifydl}})
\]
is a $\rho_t$-stable subspace and $\oH^{i}_{\Phi,P}(\mathsf G, V)$ is the union of all such subspaces. Proposition \ref{ordm} implies that $\oH^{i}_{\Phi,P}(D\fL, V)$ is finite-dimensional. In conclusion $\oH_{\Phi,P}^i({\mathsf G},  V)$ is an admissible smooth representation of $\mathsf G^{\natural,p}\times L$.

Consequently,  we have a decomposition
\[
\C_p\otimes \oH_{\Phi,P}^i({\mathsf G},  V)=\bigoplus_{\nu\in \C_p^\times} \left(\C_p\otimes \oH_{\Phi,P}^i({\mathsf G},V)\right)_\nu,
\]
where $\left(\C_p\otimes \oH_{\Phi,P}^i({\mathsf G},V)\right)_\nu$ is the generalized eigenspace of $\rho_t$ with eigenvalue $\nu$. 
In view of Hida's inequality  (Proposition \ref{prpcontrol}), we define the nearly ordinary part 
\[
 \oH_{\Phi,P}^{i, \mathrm{ord}}({\mathsf G},  V):=\oH_{\Phi,P}^{i, t\mathrm{-ord}}({\mathsf G},  V)\subset  \oH_{\Phi,P}^i({\mathsf G},  V)
\]
to be the subspace such that
\[
\C_p\otimes \oH_{\Phi,P}^{i, \mathrm{ord}}({\mathsf G},  V):=\bigoplus_{\nu\in \C_p^\times,\abs{\nu}_p=p^{r_V}} \left(\C_p\otimes\oH_{\Phi,P}^i({\mathsf G},V)\right)_\nu. 
\]
\begin{leml}\label{confg000}
     For all $\fG\in {\mathscr G}_{\fL, \mathsf G^\natural}$ and all $k\in \BN$ such that $\fG\preceq t^{k}\fG t^{-k}$, 
the diagram
 \[ 
 \begin{CD}
     \oH^{i}_{\Phi,P}(D\fL, V)@>\textrm{projection}>>\oH^{i}_{\Phi}(D\fG, V)\\
     @V \rho_{t^k}VV @VV\rho_{t^k} V \\
     \oH^{i}_{\Phi,P}(D\fL, V)@>\textrm{projection}>>\oH^{i}_{\Phi}(D\fG, V)
 \end{CD}
 \]
 commutes. 
\end{leml}

\begin{proof}
This follows by considering the commutative diagram  
 \[
  \xymatrix{
 \oH^{i}_{\Phi,P}(D\fL, V)
 \ar[rr]^{\textrm{projection}}  \ar[d]^{\rho_{t^k}}&& \  \oH^{i}_{\Phi}(D\fG, V)  \ar[d]_{\rho_{t^k}} \ar[rdd]^{\rho_{t^k}}&\\
  \oH^{i}_{\Phi,P}(D\fL, V)
 \ar[rr]^{\textrm{projection}}  \ar[rrrd]_{\textrm{projection}}&& \  \oH^{i}_{\Phi}(D t^{k}\fG t^{-k}, V) \ar[rd]^{\rho} & \\
 & & &\ \oH^{i}_{\Phi}(D\fG, V).  \\
    }
\]
Here the commutativity of the upper right triangle follows from Proposition \ref{hecke001}, and the commutativity of the square and the lower left triangle follows by the definitions.
\end{proof}

Write $d_V>0$ for the denominator of $r_V$ as an irreducible fraction.  Set $t':=t^{d_V}$. We define the star action of the monoid $\la t'\ra $ on $V$ by 
\be\label{starac} 
  \la t'\ra\times V\rightarrow V, \quad (t'^{k}, u)\mapsto t'^k*u:= p^{k\cdot d_V \cdot r_V}\cdot (t'^{k}.u).
\ee
As before, the associated  Hecke maps are written as 
\[
  \rho^*_{t'^k}: \oH^{i}_{\Phi}(K, V)\rightarrow \oH^{i}_{\Phi}(K, V),
\]
where $K$ is an open compact subgroup of $\mathsf G^\natural$. 

The following lemma is obvious. 
\begin{leml}\label{stareq}
For all open compact subgroups $\fG$ of $G$, the equality
\[
  \rho^*_{t'}=p^{ d_V \cdot r_V}\cdot \rho_{t'} : \oH^{i}_{\Phi}(D\fG, V)\rightarrow \oH^{i}_{\Phi}(D\fG, V)
\]
holds. 
\end{leml}

Write $V_t:=V/V^t$. Recall that $\CO$ denotes the ring of integers in $E$. Similarly let $\CO'$ denote the ring of integers in $E'$. 
Take a pair $(\fV, \fG_\fV)$ where $\fV$ is a lattice of $V$ and $\fG_\fV$ is an open compact subgroup of $G$ such that 
\be\label{fvfg}
\begin{cases}
     \fV=\CO\otimes \fV', \qquad \textrm{where }\ \fV':=\fV\cap V';
    &\\
        t'*\fV\subset\fV \ 
   \textrm{and}\  t'*\fV^t=\fV^t,\ 
\textrm{ where $\fV^t:=\fV\cap V^t$}; &\\ 
 \textrm{$\fG_\fV$  stabilizes  $\fV$.} 
   \end{cases}
\ee
It is easy to see that such a pair  exists. 

In the rest of this section suppose that   
\[
\BG=\mathsf G^{\natural,p}\times \fG_\fV.
\]
 The star action \eqref{starac} induces  a monoid  action  
\be\label{starfv}
  \la t'\ra\times \fV\rightarrow \fV,
\ee
which is still called a star action. 
The associated Hecke maps are still written as 
\be\label{hechestar}
  \rho^*_{t'^k}: \oH^{i}_{\Phi}(D\fG, \fV)\rightarrow \oH^{i}_{\Phi}(D\fG, \fV),
\ee
where $\fG$ is an open compact subgroup of $\fG_\fV$. 

As mentioned in the Introduction, the following result  is essentially  due to Hida.  We will sketch a proof in what follows. 
 \begin{prpl}\label{prpcontrol}
    For every eigenvalue $a\in \C_p^\times$ of the operator $\rho_t$ on the space \eqref{paracomp27}, the inequality
    \[
      \abs{a}_p\leq p^{r_V} 
    \]
    holds. 
 \end{prpl}
 \begin{proof}
   Suppose that $\fL_t\subset \fG_\fV$. It suffices to show that  every eigenvalue of the operator 
   \[
   \rho_t: \oH^{i}_{\Phi,P}(D\fL, V) \rightarrow 
   \oH^{i}_{\Phi,P}(D\fL, V) 
   \]
   has $p$-adic norm $\leq 
   p^{r_V}$.

Pick a group $\fG\in {\mathscr G}_{\fL, \BG}^{D,t}$ such that $\fG\preceq_P t\fG t^{-1}$. Proposition \ref{ordm} implies  that the projection map 
\[
\oH^{i}_{\Phi,P}(D\fL, V)\rightarrow \oH^{i}_{\Phi}(D\fG, V)
\]
is injective. In view of Lemma \ref{confg000}, it remains to show that all eigenvalues of the operator 
   \[
   \rho_t: \oH^{i}_{\Phi}(D\fG, V) \rightarrow 
   \oH^{i}_{\Phi}(D\fG, V) 
   \]
   have $p$-adic norms $\leq 
   p^{r_V}$. By Lemmas \ref{hecket} and \ref{stareq}, this is equivalent to saying that all eigenvalues of the operator 
   \[
   \rho^*_{t'}: \oH^{i}_{\Phi}(D\fG, V) \rightarrow 
   \oH^{i}_{\Phi}(D\fG, V) 
   \]
   have $p$-adic norms $\leq 1$. This is clear by considering the commutative diagram 
   \[
 \begin{CD}
     \oH^{i}_{\Phi}(D\fG, \fV)@>>>\oH^{i}_{\Phi}(D\fG, V)\\
     @V \rho^*_{t'}VV @VV\rho^*_{t'} V \\
     \oH^{i}_{\Phi}(D\fG, \fV)@>>>\oH^{i}_{\Phi}(D\fG, V).
 \end{CD}
 \]
 \end{proof}

\subsection{The $t'$-stable part and the nearly ordinary part}

Every finitely generated $\CO$-module is viewed as a topological module under the $p$-adic topology. 

 \begin{leml}\label{prpord22}
Let $J$ be an $\CO$-module and let $\phi:J\rightarrow J$ be an $\CO$-module homomorphism. Assume that there is a finitely generated $\CO'$-submodule $J'$ of $J$ such that $J=\CO\otimes J'$ and $\phi(J')\subset J'$. Then 
\be\label{decomj}
  J=J_{\mathrm{sta}}\oplus J_{\mathrm{nil}}
\ee
and $\phi$ induces an isomorphism on $J_{\mathrm{sta}}$, where 
\[
J_{\mathrm{sta}}:=\bigcap_{k\in \BN} \phi^k(J)
\quad \textrm{
and}\quad 
J_{\mathrm{nil}}:=\{x\in J \,:\, \lim_{k\rightarrow \infty}\phi^k(x)=0\}.
\]
Moreover, the decomposition \eqref{decomj} is defined over $\CO'$ in the sense that 
\[
J_{\mathrm{sta}}=\CO\otimes (J_{\mathrm{sta}}\cap J')\quad \textrm{
and}\quad J_{\mathrm{nil}}=\CO\otimes (J_{\mathrm{nil}}\cap J'). 
\]
\end{leml}

\begin{proof}
The assumption of the lemma implies that
\[
  J=\varprojlim_{k\in \BN} J/p^k.
\]
In view of Lemma \ref{fsuriso}, the lemma is easily reduced to the case when $\CO=\CO'$ and $J$ has finite cardinality. Suppose this is the case. Then it is clear that $J_{\mathrm{sta}}\cap J_{\mathrm{nil}}=\{0\}$.


Since $J$ has finite cardinality, there is a positive integer $k$ such that 
\[
\phi^k(J)=\phi^{k+1}(J)=\phi^{k+2}(J)=\cdots.
\]
Let $x\in J$. Then
$\phi^k(x)= \phi^{2k}(y)$ for some $y\in J$. Write
\[
x = \phi^k(y) + (x - \phi^k(y)).
\]
The lemma follows by noting  that 
\[
\phi^k(y)\in \phi^k(J)=J_{\mathrm{sta}},
\]
and 
\[
x-\phi^k(y) \in J_{\rm nil}
\]
since $\phi^k(x-\phi^k(y))=0$. 
\end{proof}



In the rest of this section we further assume that $\fL_t\subset  \fG_\fV$. As before write 
\[
 \mathbf P:=P\cap \BG=P\cap \fG_\fV. 
 \] 
Define 
\[
  \oH^{i,\mathrm{ord}}_{\Phi,P}(D\fL, V):=\oH^{i}_{\Phi,P}(D\fL, V)\cap \oH^{i,\mathrm{ord}}_{\Phi,P}(\mathsf G, V). 
\]
By using the Hecke maps associated to the star action, we define the $t'$-stable part  $\oH^{i,t'}_{\Phi,\mathbf P}(D\fG, \fV)$ and so on as in Section \ref{sectstable}. 

\begin{prpl}
Let $\fG\in {\mathscr G}_{\fL,\BG}^{D,t}$. 
    Then the image of $\oH^{i,\mathrm{ord}}_{\Phi,P}(D\fL, V)$ under the projection map
    \[
\oH^{i}_{\Phi,P}(D\fL, V)\rightarrow 
\oH^{i}_{\Phi}(D\fG, V)
\]
equals  $E\otimes \oH^{i,t'}_{\Phi}(D\fG, \fV)$. Consequently, there are  identifications
\be\label{pcoh}
\oH^{i,\mathrm{ord}}_{\Phi,P}(D\fL, V)=E\otimes \oH^{i,t'}_{\Phi,\mathbf P}(D\fL, \fV)
\ee
and 
\[
\oH^{i,\mathrm{ord}}_{\Phi,P}(\mathsf G, V)=E\otimes \oH^{i,t'}_{\Phi,\mathbf P}(\mathsf G, \fV).
\]
\end{prpl}

\begin{proof}
    Pick a positive integer $k$ such that 
    $\fG\preceq_P t'^k \fG t'^{-k}$. Write $J:=\oH^{i}_{\Phi}(D\fG, \fV)$ and 
    \[
    \phi:=\rho_{t'^k}^*: J\rightarrow J.
    \]
   Then we have a decomposition $  J=J_{\mathrm{sta}}\oplus J_{\mathrm{nil}}$ as in \eqref{decomj}, with $J_{\mathrm{sta}}=\oH^{i,t'}_{\Phi}(D\fG, \fV)$. Note that all eigenvalues of  
   \[ \phi: E\otimes J_{\mathrm{sta}}\rightarrow E\otimes  J_{\mathrm{sta}}\]
   have $p$-adic norm $1$, and all 
   eigenvalues of  
   \[ \phi: E\otimes J_{\mathrm{nil}}\rightarrow E\otimes  J_{\mathrm{nil}}
   \]
   have $p$-adic norms $<1$. This implies the first assertion of proposition. The second assertion then follows by Proposition \ref{ordm}. 
\end{proof}

By the identification \eqref{pcoh}, we have an inclusion 
\[
\oH^{i,\mathrm{ord}}_{\Phi,P}(D\fL, V)\subset E\otimes \oH^{i}_{\Phi,\mathbf P}(D\fL, \fV).
\]


\subsection{The $t$-stable part and relative completed cohomologies}
Put 
\[
{\mathscr G}_{\fL,\BG,k}^{D,t}:=\{\fG_k\in {\mathscr G}_{\fL,\BG}^{D,t} \,:\, \fG_k\textrm{ stabilizes $\fV^t/p^k\subset \fV/p^k$}\}\qquad (k\in \BN). 
\]
Since $N$ acts trivially on $\fV^t/p^k$, this set is cofinal in ${\mathscr G}_{\fL,\BG}^{D,t}$.

The star action \eqref{starfv} induces compatible actions 
 \[
  \la t'\ra\times \fV/p^k\rightarrow \fV/p^k
\]
and 
 \[
  \la t'\ra\times \fV^t/p^k\rightarrow \fV^t/p^k.
\]
As before, associated to these compatible actions we define the Hecke maps and the $t'$-stable parts 
\[
\begin{cases}
    \oH^{i,t'}_{\Phi}(D\fG, \fV/p^k)\subset \oH^{i}_{\Phi}(D\fG, \fV/p^k);&\smallskip\\
    \oH^{i,t'}_{\Phi}(D\fG_k, \fV^t/p^k)\subset \oH^{i}_{\Phi}(D\fG_k, \fV^t/p^k);&\smallskip\\
    \oH^{i,t'}_{\Phi, \mathbf P}(D\fL, \fV/p^k):= \varprojlim_{\fG\in {\mathscr G}_{\fL,\BG}^{D,t}}\oH^{i,t'}_{\Phi}(D\fG, \fV/p^k)\subset \oH^{i}_{\Phi, \mathbf P}(D\fL, \fV/p^k);&\smallskip\\
        \oH^{i,t'}_{\Phi, \mathbf P}(D\fL, \fV^t/p^k):= \varprojlim_{\fG_k\in {\mathscr G}_{\fL,\BG,k}^{D,t}}\oH^{i,t'}_{\Phi}(D\fG_k, \fV^t/p^k)\subset \oH^{i}_{\Phi, \mathbf P}(D\fL, \fV^t/p^k),&\\
    \end{cases}
\]
where $\fG\in {\mathscr G}_{\fL,\BG}^{D,t}$ and $\fG_k\in {\mathscr G}_{\fL,\BG,k}^{D,t}$. 

\begin{prpl}\label{eqv0}
   Let $\fG_k\in {\mathscr G}_{\fL,\BG,k}^{D,t}$.  The inclusion map $\fV^t/p^k\to \fV/p^k$ induces  an isomorphism
   \[
   \oH^{i,t'}_{\Phi}(D\fG_k, \fV^t/p^k)\cong \oH^{i,t'}_{\Phi}(D\fG_k, \fV/p^k).
   \]
\end{prpl}

\begin{proof}
Let $\fV_t\subset V_t$ denote the image of $\fV$ under the quotient map $V\rightarrow  V_t$.     It is clear that the $t'$-stable part 
    \[
    \oH^{j,t'}_{\Phi}(D\fG_k, \fV_t/p^k)=\{0\}\qquad (j\in \Z).
    \]
     In view of Lemma \ref{prpord22}, the proposition then follows by considering the long exact sequence 
  \begin{eqnarray*}
&\cdots&\rightarrow   \oH_\Phi^{i-1}(D\fG_k, \fV_t/p^k)\rightarrow \oH_\Phi^i(D\fG_k, \fV^t/p^k)\\
  && \rightarrow \oH_\Phi^i(D\fG_k, \fV/p^k )\rightarrow \oH_\Phi^i(D\fG_k, \fV_t/p^k)\rightarrow \cdots . 
\end{eqnarray*}
\end{proof}

By Proposition \ref{eqv0}, we have an identification
   \[
   \oH^{i,t'}_{\Phi, \mathbf P}(D\fL, \fV^t/p^k)\cong \oH^{i,t'}_{\Phi, \mathbf P}(D\fL, \fV/p^k).
   \]
   For each $\fP\in {\mathscr P}_{\fL,\mathbf P}$, we have a commutative diagram 
 \be\label{deftxi}
  \xymatrix{
 &\oH^{i,t'}_{\Phi, \mathbf P}(D\fL, \fV) \ar[rd]\ar[ld]& \\
 \varprojlim_{k\in \BN} \oH^{i,t'}_{\Phi, \mathbf P}(D\fL, \fV^t/p^k)
\ar[d]\ar[rr]^\cong  && \  \varprojlim_{k\in \BN}\oH^{i,t'}_{\Phi, \mathbf P}(D\fL, \fV/p^k) \ar[d]\\
\varprojlim_{k\in \BN}  \oH^{i}_{\Phi}(D\fP, \fV^t/p^k)
\ar[rr]&& \  \varprojlim_{k\in \BN}\oH^{i}_{\Phi}(D\fP, \fV/p^k) \\
      }
\ee
where the two vertical arrows are the inverse limits of the projections with respect to the isomorphism \eqref{isopc},  the down left arrow is the map that makes the diagram commute, and all the other arrows are the coefficient change maps. 
By compositions in the above diagram we get a commutative diagram
\[
  \xymatrix{
 &\oH^{i,t'}_{\Phi, \mathbf P}(D\fL, \fV) \ar[rd]\ar[ld]& \\
 \varprojlim_{k\in \BN}  \oH^{i}_{\Phi}(D\fP, \fV^t/p^k)
\ar[rr]&& \  \varprojlim_{k\in \BN}\oH^{i}_{\Phi}(D\fP, \fV/p^k). 
      }
\]
By using Lemma \ref{pull-backordcom} and taking the direct limits, we further get a commutative diagram (see \eqref{cfc2})
\[
  \xymatrix{
 &\oH^{i,t'}_{\Phi,  \mathbf P}(\mathsf G, \fV) \ar[rd]\ar[ld]& \\
   \widetilde{\oH}^{i}_{\Phi}(\mathsf G, \fV^t)^{\la \p\supset \p\ra}
\ar[rr]&& \  \widetilde{\oH}^{i}_{\Phi}(\mathsf G, \fV)^{\la \p\supset \p\ra}. \\ 
      }
\]
Finally, taking tensor product with $E$ we get a commutative diagram 
\be\label{defxi}
  \xymatrix{
 &\oH^{i,\mathrm{ord}}_{\Phi}(\mathsf G, V) \ar[rd]\ar[ld]& \\
   \widetilde{\oH}^{i}_{\Phi}(\mathsf G, V^t)^{\la \p\supset \p\ra,\circ}
\ar[rr]&& \  \widetilde{\oH}^{i}_{\Phi}(\mathsf G, V)^{\la \p\supset \p\ra, \circ}. \\
      }
\ee

   \subsection{A commutative diagram} 

We  now explain the following  diagram of linear maps (see also  \eqref{thediag00}): 
\be\label{thediag0011}
 \begin{CD}
\widetilde{\oH}_\Phi^i({\mathsf G},  V^t)^{\la \p\supset \p\ra,\circ} @<\widetilde \xi<<\oH_{\Phi,P}^{i,\mathrm{ord}}({\mathsf G},  V)@> \widehat{\xi} >>   \widehat{\oH_\Phi^i({\mathsf G}, V)}_{{\p}\mathrm{-sm}}\otimes \RD(\g/\p)\\
      @VVV      @V V\xi    V          @VV V\\
\widetilde{\oH}_\Phi^i({\mathsf G},  V)^{\la \p\supset \p\ra,\circ} @<<<{\oH}_\Phi^i({\mathsf G},  V)^{\la \p\ra, \circ}@>  >>     {\oH_\Phi^i({\mathsf G},  V)}^{\la \p\ra}.\\
            \end{CD}
\ee
\begin{itemize}
    \item The map $\widehat \xi$ is the inclusion map with respect to the identification \eqref{paracomp27}.
    \item The right vertical arrow is the isomorphism given in Lemma \ref{isosm00}.
    \item The map $\widetilde \xi$ and the left vertical arrow are given in \eqref{defxi}. 
    \item Recall that ${\oH}_\Phi^i({\mathsf G},  V)^{\la \p\ra, \circ}:=E\otimes {\oH}_\Phi^i({\mathsf G},  \fV)^{\la \p\ra}$. The right bottom horizontal arrow is the linear map induced by the inclusion map $\fV\rightarrow V$, and the left bottom horizontal arrow is the linear map induced by the maps $\fV\rightarrow \fV/p^k$ ($k\in \BN$). 
    
    \item The map $\xi$ is defined by requiring that the diagram 
     \be\label{defxi9}
 \begin{CD}
     \oH^{i,t'}_{\Phi,\mathbf P}(D\fL, \fV)@>\eqref{pcoh}  >> {\oH}_{\Phi,P}^{i,\mathrm{ord}}({\mathsf G},  V) \\
     @V \textrm{projection} VV @VV \xi V \\
     \oH^{i}_{\Phi}(D\fP, \fV) @> >>{\oH}_\Phi^i({\mathsf G},  V)^{\la \p\ra,\circ}
 \end{CD}
 \ee
  commutes for every  open compact subgroup $D$ of $\mathsf G^{\natural,p}$, every $D$-neat open compact subgroup $\fL$ of $L_0$ such that $\fL_t\subset \fG_\fV$, and every $\fP\in {\mathscr P}_{\fL, \mathbf P}$. The existence of $\xi$ is guaranteed  by Lemma \ref{pull-backordcom}.  
\end{itemize} 


\begin{thml}
The two squares in \eqref{thediag0011} are commutative. 
\end{thml}
\begin{proof}
As before, suppose that $D$ is an open compact subgroup of $\mathsf G^{\natural,p}$, $\fL$ is a $D$-neat open compact subgroup of $L_0$ such that $\fL_t\subset \fG_\fV$, and $\fP\in {\mathscr P}_{\fL, \mathbf P}$. The commutativity of the left square easily follows by considering the commutative diagram 
\[
  \xymatrix{
  \varprojlim_{k\in \BN}  \oH^{i}_{\Phi}(D\fP, \fV^t/p^k)  \ar[d]& &\varprojlim_{k\in \BN}  \oH^{i,t'}_{\Phi,\mathbf P}(D\fL, \fV^t/p^k)\ar[ll]_{\textrm{projection}}
  \ar[d]^{\cong}& \oH^{i,t'}_{\Phi,\mathbf P}(D\fL, \fV) \ar[ld] \ar[l] \\
  \varprojlim_{k\in \BN} \oH^{i}_{\Phi}(D\fP,\fV/p^k) & &\varprojlim_{k\in \BN}  \oH^{i,t'}_{\Phi,\mathbf P}(D\fL, \fV/p^k)\ar[ll]_{\textrm{projection}}.  & 
        }
        \]
        
The   commutativity of the right square follows by considering the commutative diagram 
\[
  \xymatrix{
  \oH^{i,t'}_{\Phi,\mathbf P}(D\fL, \fV)\ar[r]^{\eqref{pcoh}}  \ar[d]^{\textrm{projection}}& \oH^{i,\mathrm{ord}}_{\Phi,P}(D\fL, V)  \ar[r]
  \ar[d]^{\textrm{projection}}& \widehat{\oH_\Phi^i({\mathsf G}, V)}^{D(\fL_t N)}\otimes \RD(\g/\p)\ar[ld] \\
  \oH^{i}_{\Phi}(D\fP,\fV)\ar[r] & \oH^{i}_{\Phi}(D\fP, V).  & 
        }
        \]
\end{proof}

Consider the diagram 
\[ 
\xymatrix{&\oH_{\Phi,P}^{i}(\mathsf G,V)\ar[r]&\widehat{\oH_\Phi^i({\mathsf G}, V)}_{{\p}\mathrm{-sm}}\otimes \RD(\g/\p)\ar[d]\\
\widetilde{\oH}_\Phi^i({\mathsf G},  V)^{\la \p\supset \p\ra,\circ} &{\oH}_\Phi^i({\mathsf G},  V)^{\la \p\ra, \circ}\ar[l]\ar[r] &   {\oH_\Phi^i({\mathsf G},  V)}^{\la \p\ra},\\
            }
\]
where the top horizontal arrow is the inclusion map with respect to the identification \eqref{paracomp27}, and the other three arrows are as in the diagram  \eqref{thediag0011}. 
All the five spaces in the above diagram carry naturally representations of $\mathsf G^{\natural, p}\times P$. We still use $g\mapsto \rho_g$ to denote these representations. It is routine to check that the two bottom horizontal arrows are independent of the choice of the lattice $\fV$, and all the four arrows above are $\mathsf G^{\natural, p}\times P$-equivariant. 

\subsection{Independence of $t$}

\begin{leml}
    If  $V'$ is irreducible as a representation of $\g$, then the space $V^t$ is independent of $t\in \mathrm{Df}(\p,\n)$. 
\end{leml}
\begin{proof}
    Assume without loss of generality that $E=E'$ so that $V$ is irreducible as an $E\otimes \g$-module. 

Note that $V^t$ is contained in $V^\n$ and is $\l_t$-stable.   Suppose that $t_1\in \mathrm{Df}(\p,\n)$. First we assume that $t_1$ commutes with $t$.  Then $\l_t=\l_{t_1}$ by Lemma \ref{lemc2}, and hence 
    \[
    V= \mathrm U(\g). V^t=\mathrm U(\bar \n_{t_1}).V^t. 
    \]
    Here $\mathrm U$ indicates the universal enveloping algebra. 
    This implies that $V^{t_1}\subset V^t$. Similarly $V^{t}\subset V^{t_1}$, and hence $V^{t}=V^{t_1}$. The lemma in general then follows by Lemma \ref{lemc}. 
\end{proof}

If $V^t$  is independent of $t\in \mathrm{Df}(\p,\n)$, then the space $\widetilde{\oH}_\Phi^i({\mathsf G},  V^t)^{\la \p\supset \p\ra,\circ}$ also carries a representation of $\mathsf G^{\natural, p}\times P$, which is still denoted by $g\mapsto \rho_g$. It is easy to see that the left vertical arrow in \eqref{thediag0011} is independent of the choice of the lattice $\fV$, and is $\mathsf G^{\natural, p}\times P$-equivariant. 

The main purpose of this subsection is to prove the following result.
\begin{thml}\label{thmindpt}
    Assume that the subspace $V^t$ of $V$ is independent of $t\in \mathrm{Df}(\p,\n)$. Then the followings hold ture.

      \noindent  (a) The subspace $\oH_{\Phi,P}^{i,\mathrm{ord}}(\mathsf G,V)$ of $\oH_{\Phi,P}^{i}(\mathsf G,V)$ is  $\mathsf G^{\natural,p}\times L$-stable and is independent of $t\in \mathrm{Df}(\p,\n)$.

   \noindent  (b) The maps $\xi$ and $ \widetilde{\xi}$ in \eqref{thediag0011} are both  independent of $t\in \mathrm{Df}(\p,\n)$ and the pair $(\fV,\fG_\fV)$  in \eqref{fvfg}.

       \noindent (c) All the arrows in \eqref{thediag0011} are $\mathsf G^{\natural,p}\times P$-equivariant. 
\end{thml}
In the rest of this subsection we assume that the subspace $V^t$ of $V$ is independent of $t\in \mathrm{Df}(\p,\n)$. Then  $V^t$ is $P$-stable, and the representation of $P$ on it descends to a representation of $L$.

We say that an element $g_1\in P$ is a  canonical lift of an element $g\in L$ if it belongs to $L_{t_1}$ for some $t_1\in \mathrm{Df}(\p,\n)$ and the quotient map $P\rightarrow L$ sends it to $g$. By \eqref{conj}, all canonical lifts of an element $g\in L$ form an $N$-conjugacy class. 

Write $T_L$ for the largest central torus in $L_0$. Denote by $T_L^+$ the subset of all $t_1\in T_L$ such that for some (and hence all) canonical lifts $t_2\in P$ of $t_1$,  all eigenvalues of $\Ad_{{t}_2}: \n\rightarrow \n $
have $p$-adic norms $\geq 1$, and all eigenvalues of $\Ad_{{t}_2}: \g/\p\rightarrow \g/\p $
have $p$-adic norms $\leq 1$.

\begin{leml}\label{betav}
There is a unique locally constant homomorphism $\beta_V: T_L\rightarrow \Q$ such that for every $t_1\in T_L$, all eigenvalues of $t_1:V^t\rightarrow V^t$ have $p$-adic norm $p^{\beta_V(t_1)}$. Moreover, if $t_1\in T_L^+$, then all eigenvalues of $t_1:V/V^t\rightarrow V/V^t$ have $p$-adic norms $\leq p^{\beta_V(t_1)}$. 
\end{leml}

\begin{proof}
Let $t_1\in T_L$. For the first assertion, it suffices to show that  all eigenvalues of $t_1:V^t\rightarrow V^t$ have the same $p$-adic norm. Since $t_1$ is the product of a split element with an element in the maximal compact subgroup of $T_L$, without loss of generality we assume that $t_1$ is split.   
   Write $t_1'\in L_t$  for the  element corresponding to $t_1$ under the isomorphism $L\cong L_t$. Pick a sufficiently large positive integer $k$ so that $t_1' t^k\in \mathrm{Df}(\p,\n)$. By assumption, all eigenvalues of $t_1' t^k: V^t\rightarrow V^t$ have the same $p$-adic norm, say $p^c$ $(c\in \Q)$. Then all  eigenvalues of $t_1' :  V^t\rightarrow V^t$ have the same $p$-adic  norm $p^{c-k\cdot  r_V}$. This proves the first assertion of lemma. 

   Now suppose that $t_1\in T_L^+$. As before we assume without loss of generality that $t_1$ is split.  Then for all $k\in \BN$, all eigenvalues of $t_1^k t : V/V^t\rightarrow V/V^t$ have $p$-adic norms   $<p^{\beta_V(t_1^k t)}$. Hence all eigenvalues of $t_1: V/V^t\rightarrow V/V^t$ have $p$-adic norms 
   \[
    <p^{(k\cdot \beta_V(t_1) -c')/k}, 
   \]
   where $c'\in \Q$  is independent of $k$. This proves the second assertion. 
\end{proof}

Let $\beta_V: T_L\rightarrow \Q$ be as in Lemma \ref{betav}. 
Recall that $\oH_{\Phi,P}^{i}(\mathsf G,V)$ is an admissible smooth representation of $\mathsf G^{\natural,p}\times L$. Hence it is a union of finite-dimensional $T_L$-subrepresentations. 

\begin{prpl}\label{hida3}
    For every character $\chi: T_L\rightarrow \C_p^\times $ that occurs in $\C_p\otimes \oH_{\Phi,P}^{i}(\mathsf G,V)$, 
    \be\label{hida222}
      \abs{\chi(t_+)}_p\leq p^{\beta_V(t_+)}\quad \textrm{for all } t_+\in T_L^+.
    \ee
\end{prpl}
\begin{proof}
 Without loss of generality assume that $ t_+\in T_L^+$ is split. Write $t_+'\in L_t$ for the element corresponding to $t_+$ under the isomorphism $L\cong L_t$.   Define a character 
   \be\label{definebeta}
    \beta_\chi: T_L\rightarrow \R^\times, \quad t_1 \mapsto \abs{\chi(t_1)}_p \cdot p^{-\beta_V(t_1)}.
   \ee
Let $k\in \BN$. Since  $t (t_+')^k $ belong to $\mathrm{Df}(\p,\n)$, by  Proposition \ref{prpcontrol}  we have that 
 \[
   \beta_\chi([t] t_+^k )\leq 1,
   \]
   where $[t]$ denotes the image of $t$ under the quotient map $P\rightarrow L$. 
   Since $k$ is arbitrary, this implies that $\beta_\chi(t_+)\leq 1$, and hence the proposition follows. 
\end{proof}

\begin{leml}\label{lemordchi}
    Let $\chi: T_L\rightarrow \C_p^\times$ be a character such that \eqref{hida222} is satisfied. Then 
    \[
    \abs{\chi(t_1)}_p= p^{\beta_V(t_1)} \ \textrm{for all} \ t_1\in T_L \ \Longleftrightarrow \ \abs{\chi([t])}_p= p^{r_V},
    \]
    where $[t]$ denotes the image of $t$ under the quotient map $P\rightarrow L$. 
\end{leml}

\begin{proof}
   Since $[t]\in T_L$, we only need to prove the implication ``$\Longleftarrow$". Suppose that $\abs{\chi([t])}_p= p^{r_V}$.

    Let $\beta_\chi$ be as in \eqref{definebeta}. Then $\beta_\chi([t])=1$. For every $t_1\in T_L$, we have that 
    \[
   \beta_\chi(t_1)= \beta_\chi(t_1 [t]^k)\leq 1, 
    \]
    where $k$ is a sufficiently large integer so that $t_1 [t]^k\in T_L^+$. This proves  the lemma. 
\end{proof}

We prove part (a) of Theorem \ref{thmindpt} in the following lemma. 
\begin{leml}
    The subspace $\oH_{\Phi,P}^{i,\mathrm{ord}}({\mathsf G},  V)$ of $\oH_{\Phi,P}^{i}({\mathsf G},  V)$ is  independent of $t\in \mathrm{Df}(\p,\n)$ and is $\mathsf G^{\natural, p}\times L$-stable.
\end{leml}

\begin{proof}
 It follows from  Proposition \ref{hida3} and Lemma \ref{lemordchi} that the subspace $\oH_{\Phi,P}^{i,\mathrm{ord}}({\mathsf G},  V)$ is  independent of $t\in \mathrm{Df}(\p,\n)$. It is $\mathsf G^{\natural, p}\times L$-stable since $\beta_V$ is $L$-invariant.   
\end{proof}



\begin{leml}
    The maps $\xi$ and $ \widetilde{\xi}$ in \eqref{thediag0011} are both  independent of the pair $(\fV,\fG_\fV)$ as in \eqref{fvfg}.
\end{leml}
\begin{proof}
    The independence of $\fG$ follows from the observation \eqref{indp}. The independence  of $\fV$ follows from the fact that all the  arrows in  \eqref{deftxi} and the left vertical arrow in \eqref{defxi9} are natural in  the coefficient system $\fV$. 
\end{proof}

\begin{leml}\label{imagexi}
    The images of $\xi$ and $\widetilde \xi$ are pointwise fixed by  $N$. 
\end{leml}
\begin{proof}
    Let $\fN$ be an open compact subgroup of $N$. We choose the pair $(\fV, \fG_\fV)$ appropriately so that $\fG_\fV\supset \fN$. Then it is easy to see that the images of $\xi$ and $\widetilde \xi$ are pointwise fixed by  $\fN$. This implies the lemma. 
\end{proof}

Write $\xi_{t}:=\xi$ and $\widetilde \xi_{t}:=\widetilde \xi$ to indicate the dependence on $t\in \mathrm{Df}(\p,\n)$. 

\begin{leml}\label{lemint0}
  For every $g\in N$, $\xi_{t}=\xi_{gtg^{-1}}$ and  $\widetilde \xi_{t}=\widetilde \xi_{gtg^{-1}}$. 
\end{leml}
\begin{proof}

For every $g\in P$, it is routine to check that the diagram
\[
 \begin{CD}
\widetilde{\oH}_\Phi^i({\mathsf G},  V^t)^{\la \p\supset \p\ra,\circ} @<\widetilde \xi_{t}<<\oH_{\Phi,P}^{i,\mathrm{ord}}({\mathsf G},  V)\\
      @V\rho_g VV      @V V\rho_g    V         \\
\widetilde{\oH}_\Phi^i({\mathsf G},  V^t)^{\la \p\supset \p\ra,\circ} @<\widetilde \xi_{gtg^{-1}}<<\oH_{\Phi,P}^{i,\mathrm{ord}}({\sf G}, V)
            \end{CD}
\]
commutes. If $g\in N$, then the right vertical arrow is the identity map, and it follows from Lemma \ref{imagexi} that $\widetilde \xi_{t}=\widetilde \xi_{gtg^{-1}}$. The equality $\xi_{t}= \xi_{gtg^{-1}}$ is similarly proved. 
\end{proof}

\begin{leml}\label{lemint}
  For every $t_1\in \mathrm{Df}(\p,\n)$ that commutes with $t$, $\xi_{t_1}=\xi_{t}$ and  $\widetilde \xi_{t_1}=\widetilde \xi_{t}$. 
\end{leml}
\begin{proof}
     Let $T_{L}^{+,\Z}$ denote the monoid of all elements of $ T_{L}^{+}$ whose image under $\beta_V$ is an integer. 
     Let $T_{L_t}^{+,\Z}$ denote the submonoid of $L_t$ that corresponds to $T_{L}^{+,\Z}$ under the isomorphism 
     $L\cong L_t$. Define the star action of $T_{L_t}^{+,\Z}$ on $V$ by
     \[
     t_+* u:= p^{\beta_V([t_+])}\cdot t_+.u,
     \]
     where $[t_+]$ denotes the image of $t_+$ under the isomorphism $L_t\to L$. 
     
By using Lemma \ref{betav}, we choose the pair $(\fV, \fG_\fV)$ appropriately so that besides \eqref{fvfg} the following condition is also satisfied: 
     \[
      t_+*\fV\subset\fV \ 
   \textrm{ and }\  t_+*\fV^t=\fV^t\qquad \textrm{for all } t_+\in T_{L_t}^{+,\Z}. 
     \]
  Similar to \eqref{hechestar} we have the Hecke maps \be\label{heckestar2}
  \rho^*_{t_+}: \oH^{i}_{\Phi}(D\fG, \fV)\rightarrow \oH^{i}_{\Phi}(D\fG, \fV),\qquad t_+\in T_{L_t}^{+,\Z},
\ee
where $D$ is an open compact subgroup of $\mathsf G^{\natural, p}$ and $\fG$ is an open compact subgroup of $\fG_\fV$.  Similar to Lemma \ref{hecket}, the map \eqref{heckestar2} yields an action of the monoid 
\[
  T_{\fG}:=\{t_+\in T_{L_t}^{+,\Z} \,:\,  \fG \preceq_P  t_+ \fG t_+^{-1} \}
\]
on $\oH^{i}_{\Phi}(D\fG, \fV)$.

Note that $\bar N_{t_1}=\bar N_t$ for every $t_1\in \mathrm{Df}(\p,\n)$ that commutes with $t$. Let $\fL$ be an open compact subgroup of $L_0$, and recall the corresponding group $\fL_t\subset L_t$. Suppose  $\fL_t\subset \fG\subset \bar N_t \fL_t N$ so that $t'^k\in T_{\fG}$ for all sufficiently large $k\in \BN$. Note that $\la t'\ra \cap T_{\fG}$ is cofinal in $T_\fG$ in the following sense: 
for every $t_1\in T_{\fG}$, there is an element $t_2\in T_\fG$ such that $t_1 t_2\in \la t'\ra \cap T_{\fG}$. Therefore  
\be\label{tstaint}
  \bigcap_{t_+\in T_\fG} \rho^*_{t_+}\left(\oH^{i}_{\Phi}(D\fG, \fV)\right)=\bigcap_{k\in \BN,\, \fG\preceq_P {t'}^k \fG {t'}^{-k}} \rho^*_{{t'}^k}\left(\oH^{i}_{\Phi}(D\fG, \fV)\right).
\ee
This easily implies that $\xi_t$ only depends on $L_t$ and hence $\xi_{t_1}=\xi_t$. The equality $\widetilde \xi_{t_1}=\widetilde \xi_t$ is similarly proved. 
\end{proof}


In view of Lemma \ref{lemc}, part (b) of Theorem \ref{thmindpt} now follows by Lemmas \ref{lemint0} and \ref{lemint}. To finish the proof of Theorem  \ref{thmindpt}, it remains to prove the following result. 
\begin{leml}
    The maps $\xi$ and $\widetilde \xi$ are $\mathsf G^{\natural,p}\times P$-equivariant. 
\end{leml}
\begin{proof}
    It is clear that the maps $\xi$ and $\widetilde \xi$ are $\mathsf G^{\natural,p}$-equivariant. By Lemma \ref{imagexi} they are  $N$-equivariant. It remains to show that they are also $L_t$-equivariant. For every $g\in L_t$, in the notation of the proof of Lemma \ref{lemint}, we have a commutative diagram 
    \[
 \begin{CD}
    \varprojlim_{\fG} \bigcap_{t_+\in T_\fG} \rho^*_{t_+}\left(\oH^{i}_{\Phi}(D\fG, \fV)\right)@>\rho_g  >> \varprojlim_{\fG} \bigcap_{t_+\in T_{\fG} }\rho^*_{gt_+ g^{-1}}\left(\oH^{i}_{\Phi}(D(g\fG g^{-1}), g.\fV)\right)  \\
     @V \textrm{projection} VV @VV \textrm{projection} V \\
     \oH^{i}_{\Phi}(D\fP, \fV) @> \rho_g >>\oH^{i}_{\Phi}(D(g\fP g^{-1}), g.\fV),
 \end{CD}
 \]
 where $\fG$ runs over ${\mathscr G}^{D,t}_{\fL, \BG}$, $\fL$ is a $D$-neat open compact subgroup of $L_0$, and $\fP\in {\mathscr P}_{\fL, \mathbf P}$. In view of the equality \eqref{tstaint}, this  implies that $\xi$ is $L_t$-equivariant. Similar argument shows that $\widetilde \xi$ is also $L_t$-equivariant. This finishes the proof of the lemma. \end{proof}

\subsection{The commutative diagram \eqref{thediag0022000}}

As in Section \ref{secpint}, suppose that $V$ has a $\mathsf G(\Q)$-stable $\sf E$-form  $\sf V$. Let $\mathscr H$ be an $\sf E$-vector space that fits into a commutative diagram  
\[
 \begin{CD}
\mathscr H @>>>\oH_{\Phi,P}^{i,\mathrm{ord}}({\mathsf G},  V)\\
      @VVV      @V V \subset    V         \\
\widehat{{\oH}_\Phi^i({\mathsf G},  \mathsf V)}_{ \p\textrm{-sm}} \otimes \RD(\g/\p)@>\iota_{\mathsf V}  >>     \widehat{{\oH}_\Phi^i({\mathsf G}, V)}_{ \p\textrm{-sm}}\otimes \RD(\g/\p).\\
            \end{CD}
\]
Then by using the commutative diagrams \eqref{thediag0011} and 
\[
 \begin{CD}
\widetilde{\oH}_\Phi^i({\mathsf G},  V^t)^{\la \p\supset \s\ra,\circ} @<\textrm{pull-back}<<\widetilde{\oH}_\Phi^i({\mathsf G},  V^t)^{\la \p\supset \p\ra,\circ} \\
      @VVV         @VV V\\
\widetilde{\oH}_\Phi^i({\mathsf G},  V)^{\la \p\supset \s\ra,\circ} @<\textrm{pull-back}<<\widetilde{\oH}_\Phi^i({\mathsf G},  V)^{\la \p\supset \p\ra,\circ},\\
            \end{CD}
\] 
$\mathscr H$ obviously fits into the commutative diagram \eqref{thediag0022000}.

\section{Application I: Rankin-Selberg 
L-functions for $\GL_n\times \GL_{n-1}$} \label{sec:rs}

In this section we retain the setup in Section \ref{ssec:RSGL} and construct the $p$-adic L-function $\mathscr L_\Pi$ in Theorem \ref{padicLrs0} following the general formalism in Sections \ref{ssec:CLR}--\ref{ssec:PL}. Then we determine the exceptional zeros of $\mathscr L_\Pi$.

\subsection{Rankin-Selberg integrals} \label{ssec:RSI}
Let $\Pi=\Pi_n\boxtimes \Pi_{n-1}$ be an irreducible representation of ${\sf G}(\A) = \GL_n(\A_\rk)\times \GL_{n-1}(\A_\rk)$ ($n\geq 2$) that is realized as a space of smooth automorphic forms on $\mathsf G(\Q)\backslash \mathsf G(\A)$. 
Assume that $\Pi_n$ is cuspidal. 
Let $\chi: \rk^\times\bs\A_\rk^\times\to \C^\times$ be a Hecke character. We have the global Rankin-Selberg integral 
\[
\CP_\chi: \Pi\otimes \RM(\dot{\sf G}(\Q)\bs \dot{\sf G}(\A))\to\chi^{-1}, \quad f\otimes \tau\mapsto \int_{\dot{\sf G}(\Q)\bs \dot{\sf G}(\A)}\chi(\det g)f(g)\od\!\tau(g),
\]
which converges absolutely. In this example we take ${\sf H}:={\sf U}$ to be the upper triangular maximal unipotent subgroup of ${\sf G}$, and define 
\[
\psi_{\sf U}: {\sf U}(\Q)\bs {\sf U}(\A) \to\C^\times, \quad
\left([u_{i,j}], [u'_{k,l}]\right) \mapsto \psi\left(\sum^{n-1}_{i=1} u_{i, i+1}- \sum^{n-2}_{k=1}u'_{k, k+1}\right),
\]
where $\psi$ is as in \eqref{psi}. Then \eqref{assh} holds. 
Assume that $\Pi$ is globally generic, namely the integrals in \eqref{whit} yield a nonzero functional 
\[
\lambda_{\sf U}\in \Hom_{{\sf U}(\A)}(\Pi,\psi_{\sf U}).
\]
The latter space is known to be at most one-dimensional, and the functional $\lambda_{\sf U}$ is called the Whittaker period. 

At this point, it will be more familiar to switch the notation and work over the number field $\rk$. Let $\RU$ be the upper triangular maximal unipotent subgroup of  
\[
\RG:=\GL_n\times \GL_{n-1} \quad (\text{so that ${\sf G}=\Res_{\rk/\Q}\RG$ and ${\sf U}=\Res_{\rk/\Q}\RU$}). 
\]
We also have $\dot\RG:=\GL_{n-1}$ diagonally embedded into $\RG$. For every place $v$ of $\rk$, write $\RU_v:=\RU(\rk_v)$, $\RG_v:=\RG(\rk_v)$,  $\dot \RG_v:=\dot \RG(\rk_v)$, etc. Accordingly we have the decompositions 
\[
\Pi = \widehat\otimes'_v \Pi_v:=\left(\widehat\otimes_{v\mid\infty} \Pi_v\right)\otimes \left(\otimes'_{v\nmid \infty} \Pi_v\right), \quad \psi_{\sf U} = \otimes_v \psi_{\RU_v}, \quad \textrm{and}\quad \lambda_{\sf U}= \otimes_v \lambda_{\RU_v}, 
\]
where $\widehat\otimes$ stands for the completed projective tensor product. Similar notations will be used without explanation. 

Likewise  let $\CX_v$ be the group of complex continuous characters of $\rk_v^\times$ for every place $v$ of $\rk$, and  for a locally constant character $\varepsilon_v: \CO_v^\times\to \overline{\Q}^\times$ when $v$ is finite, let $\CX_v(\varepsilon_v)\subset \CX_v$ be the subset of characters whose restriction to $\CO_v^\times$ equals $\varepsilon_v$.

The normalized Rankin-Selberg integral 
\[
\begin{aligned}
\CP^\circ_v: \CX_v\times \left(\Pi_v\otimes \RM(\dot\RU_v\bs \dot\RG_v)\right) & \to \C, \\
(\chi_v', f\otimes \tau)& \mapsto \frac{1}{\oL(\frac{1}{2}, \Pi_v\times\chi_v')}\int_{\dot\RU_v\bs \dot\RG_v}
\chi_v'(\det g)\langle \lambda_{\RU_v}, g.f\rangle\od\!\tau(g)
\end{aligned}
\]
is defined by holomorphic continuation. Write $\chi =\otimes_v\chi_v$.  By \cite{JPSS83, J09} and \eqref{IDmeas} we have that 
\[
\CP_\chi = \oL(\frac{1}{2}, \Pi\times \chi) \cdot \otimes_v \CP^\circ_v(\chi_v,\cdot).
\]
 

In the rest of this section, assume that $\Pi$ is regular algebraic,  $\Pi_{n-1}$ is tamely isobaric as in \cite{LLS24}, and $\chi$ is algebraic.  Let $v$ be a finite place of $\rk$. Let $\ell$ be the residue characteristic of $\rk_v$, and let $\mu_{\ell^\infty}\subset \C^\times$ be the subgroup of $\ell$-power roots of unity. The cyclotomic character at $\ell$ is given by 
\[
\Aut(\C)\xrightarrow{{\rm restriction}} \Aut(\Q(\mu_{\ell^\infty})/\Q) \to \Z_\ell^\times,\quad \sigma\mapsto t_{\sigma, \ell},
\]
such that $\sigma(\zeta)= \zeta^{t_{\sigma, \ell}}$ for all $\zeta\in \mu_{\ell^\infty}$. 
Put 
\be  \label{tl}
{\bf t}_{\sigma, \ell} :=(t_{n, \sigma, \ell}, t_{n-1, \sigma, \ell})\in \RG_v,
\ee
where 
\[
t_{m, \sigma, \ell}: = \diag(t_{\sigma, \ell}^{-(m-1)}, \ldots, t_{\sigma, \ell}^{-1}, 1)\in \GL_m(\rk_v)\qquad (m=n,n-1).
\]
Define an action of $\Aut(\C)$ on  $\Ind^{\RG_v}_{\RU_v} \psi_{\RU_v}$ (the smooth induction) by
\be \label{autc}
{}^\sigma \varphi(g) := \sigma(\varphi({\bf t}_{\sigma,\ell} \cdot g)),\quad \varphi\in \Ind^{\RG_v}_{\RU_v} \psi_{\RU_v}, \  g\in \RG_v.
\ee
This action gives  the $\Q(\Pi_v)$-form  of $\Pi_v \hookrightarrow \Ind^{\RG_v}_{\RU_v} \psi_{\RU_v}$ (see \cite[Page 594]{Mah05}). 
The following result is a consequence of \cite[Proposition 5.1]{LLS24}.

\begin{prpl} \label{napr}
For every finite place $v$ of $\rk$, the linear functional
\[
{\mathscr G}_{\psi_v}(\chi_v)^{\frac{n(n-1)}{2}}\cdot {\mathscr G}_{\psi_v}(\chi_{\Pi_{n-1,v}})\cdot \CP_v^\circ(\chi_v,\cdot): \Pi_v\otimes \RM(\dot\RU_v\bs \dot\RG_v)\rightarrow \C
\]
is defined over $\Q(\Pi_v, \chi_v)$, where  ${\mathscr G}_{\psi_v}$ denotes the Gauss sum of a character as in \eqref{df:gauss}.
\end{prpl}

Here $\Q(\Pi_v,\chi_v)$ denotes the composition of the rationality fields $\Q(\Pi_v)$ and $\Q(\chi_v)$ which are number fields (see \cite[3.1]{Cl90}), and similar notation will be used without explanation. Given a character $\omega_v: \rk_v^\times\rightarrow \C^\times$, the usual definition of Gauss sum (see \cite{LLS24}) implicitly depends on a generator $y_v$ of the fractional ideal $\frak{d}_v^{-1} \cdot \c(\omega_v)^{-1}$ of $\CO_v$, where $\c(\omega_v)$ is the conductor of $\omega_v$. We make a slight modification and put
\be \label{df:gauss}
{\mathscr G}_{\psi_v}(\omega_v) := \int_{\CO_v^\times} \omega_v^{-1}(y_v x_v)\cdot \psi_v(y_v x_v) \od\! x_v,
\ee
where $\od\!x_v$ is the normalized Haar measure such that $\CO_v^\times$ has total volume 1. Then it is independent of the choice of $y_v$ and satisfies the property that
\be \label{gauss}
\sigma({\mathscr G}_{\psi_v}(\omega_v)) = {}^\sigma\omega_v(t_{\sigma,\ell})\cdot {\mathscr G}_{\psi_v}({}^\sigma\omega_v),\quad \sigma \in \Aut(\C).
\ee

As before suppose that $\Q(\Pi)\subset {\sf E}$. The $\Q(\Pi_v)$-form  of $\Pi_v$ induces an $\sf E$-form  of $\Pi_v$, to be denoted by $\Pi_v(\sf E)$.  Put
\[
\Omega_{\Pi_v}(\varepsilon_v):= {\mathscr G}_{\psi_v}(\chi_v')^{\frac{n(n-1)}{2}}\cdot {\mathscr G}_{\psi_v}(\chi_{\Pi_{n-1,v}})
\]
for an arbitrary $\chi_v'\in \CX(\varepsilon_v)$, which is clearly well-defined. 
By Proposition \ref{napr} and the theory of Rankin-Selberg integrals \cite{JPSS83}, there is a family
\be \label{familyrs}
\{\phi_v^\circ \in \Pi_v\otimes \RD(\dot\RU_v\bs \dot\RG_v)\}_{v\nmid \infty}
\ee
such that 
\begin{itemize}
    \item for all $v \nmid \infty$, $\phi^\circ_v\in \Pi_v({\sf E})\otimes \RD(\dot\RU_v \bs \dot\RG_v)$ and $\CP^\circ_v(\cdot, \phi^\circ_v)$ takes the nonzero constant value $(\Omega_{\Pi_v}(\varepsilon_v))^{-1}$ on $\CX(\varepsilon_v)$;
    \item for all but finitely many $v\nmid\infty$, $\Omega_{\Pi_v}(\varepsilon_v)=1$ and $\phi_v^\circ$ is the fixed spherical vector used in the restricted tensor product $\Pi\otimes \RM(\dot\RU(\A_\rk)\bs \dot\RG(\A_\rk)) = \widehat\otimes'_v(\Pi_v\otimes\RM(\dot\RU_v\bs \dot\RG_v))$.
\end{itemize}

For a Hecke character $\omega=\otimes_v\omega_v: \rk^\times \bs \A_\rk^\times \to \C^\times$, define its Gauss sum outside $p$ by
\be \label{gaussp}
\mathscr G_\psi(\omega^{(p)}):  = \prod_{v\nmid \infty p} \mathscr G_{\psi_v}(\omega_v).
\ee

\subsection{Archimedean modular symbols} \label{ssec:AMSM}

Take $K_\infty=\mathsf A(\R)\cdot K_\infty'\subset \mathsf G(\R)$, where $\mathsf A=\GL(1)/_{\Q}\times \GL(1)/_{\Q}$ is the largest central split  torus in $\mathsf G$ and $K_\infty'$ is the standard maximal compact subgroup (which is a product of orthogonal groups and unitary groups). Take $\dot K_\infty':=K^\infty\cap \dot{\mathsf G}(\R)$, which is the standard maximal compact subgroup of  $\dot{\sf G}(\R)$. Recall that $\Q(\Pi)\subset {\sf E}$. Then by \cite{Cl90} there is a geometrically irreducible algebraic representation ${\sf F}_\mu\boxtimes {\sf F}_\nu$ of ${\sf G}_{\sf E}$ 
such that the total relative Lie algebra cohomology 
\[
\oH^\bullet(\g_\C, K_\infty^\circ; ({\sf F}_\mu^\vee\boxtimes {\sf F}_\nu^\vee)\otimes \Pi_\infty)\neq \{0\},
\]
where $(\mu, \nu)\in (\Z^n)^{\CE_\rk}\times (\Z^{n-1})^{\CE_\rk}$ is as in Section \ref{ssec:RSGL}.  See \cite{LLS24} for more details. 
Suppose that  ${\sf V}={\sf F}_\mu^\vee\boxtimes {\sf F}_\nu^\vee$ (which is also viewed as  a representation of ${\sf G}(\Q)$), and that $V=E\otimes({\sf F}_\mu^\vee\boxtimes {\sf F}_\nu^\vee)$ as a representation of $G={\sf G}(\Q_p)\subset \mathsf G_\mathsf E(E)$.

Recall the ${\sf G}^\natural$-homomorphism \eqref{embcoh}, where $\Phi$ is now the family of closed subsets of ${\sf G}(\Q)\bs \mathscr X_{{\sf G}, K_\infty}$ whose obvious projections to ${\sf G}_n(\Q) \bs \mathscr X_{{\sf G}_n, K_{n,\infty}}$ are relatively compact ($K_{n,\infty}$ is the projection of $K_\infty$ to ${\sf G}_n(\R)$). Then it satisfies \eqref{precompact2}
and \eqref{conphi} (see Proposition \ref{prop:cohfin} and  the K\"unneth formula \cite[IV. Theorem 7.6]{Br97}).
 The bottom degree component  $\Pi_\infty'$ of $\oH^\bullet(\g_\C, K_\infty^\circ; {\sf V}\otimes \Pi_\infty)$ and its ${\sf E}$-form are as in \eqref{Piinf'}, with $i_0 =  \dim (\dot{\sf G}(\R)/\dot K_\infty^\circ)$.
As a representation of the component group $\dot K_\infty^\natural =  \rk_\infty^{\times, \natural} $, there is a multiplicity-free decomposition 
\[
\Pi_\infty' \cong  \bigoplus_{\varepsilon_\infty \in \widehat{\rk_\infty^{\times, \natural}}}\varepsilon_\infty.
\]

 \begin{dfnl} \label{critrs}
(a)  A character $\chi_\infty$ of $\rk_\infty^\times$ is said to be critical for $\Pi$ if it is algebraic and $s=\frac{1}{2}$ is a pole of neither $\oL(s, \Pi_\infty\times\chi_\infty)$ nor $\oL(1-s,  \Pi_\infty^\vee \times \chi_\infty^{-1})$.

\noindent (b) A  Hecke character $\chi$ of $\rk^\times\bs\BA_\rk^\times$ is said to be critical for $\Pi$ if so is its archimedean component $\chi_\infty$.
\end{dfnl}

It is known that all  $\sf V$-balanced characters of $\rk_\infty^\times$ are critical for $\Pi$. 
Assume that $\chi_\infty$ is algebraic of weight ${\sf w}^{-1}$.
 By the  well-known branching rule for algebraic representations of general linear groups, 
 ${\sf w}=\prod_{\iota\in\CE_\rk}\iota^{{\sf w}_\iota}$ (hence $\chi_\infty$) is $\sf V$-balanced if and only if   (\cf \cite{KS13, Rag16})
\be \label{eq:bp}
 \mu^\iota_{i+1}+\nu^\iota_{n-i} \leq  {\sf w}_\iota \leq \mu^\iota_{i} +\nu^\iota_{n-i} \quad \text{for all $i=1,2,\dots, n-1$ and $\iota\in \CE_\rk$}.
\ee


Following \cite[Section 1.3]{LLSS23}, define a family  $\{z_m\in \GL_m(\BZ)\}_{m\in \BN}$ of matrices
	inductively by
	\[
	z_0:=\emptyset\ \  (\textrm{the unique element of $\GL_0(\BZ)$}), \quad  z_1:=[1]\quad \text{and}
	\]
	\[
		z_m :=
		\begin{bmatrix}
			w_{m-1}& 0 \\
			0 & 1 
		\end{bmatrix}
		\begin{bmatrix}
			z_{m-2}^{-1}& 0 \\
			0 & 1_2 
		\end{bmatrix}
		\begin{bmatrix}
			{}^tz_{m-1}  w_{m-1} z_{m-1}& {}^t e_{m-1}\\
			0 & 1 \\
		\end{bmatrix}, \quad m\geq 2,
	\]
	where $w_m : = \left[\begin{smallmatrix}  &  & 1 \\  & \begin{sideways} $\ddots$ \end{sideways} & \\  1 & & \end{smallmatrix}\right]$ denotes  the 
    $m\times m$ anti-diagonal permutation matrix, $e_{m-1}:=[0 \ \cdots \  0 \ 1]\in \BZ^{1\times(m-1)}$ is a row vector, and ${}^t g$ denotes the transpose of a matrix $g$. Put
\be \label{eletz}
z:=(z_n, z_{n-1})\in \GL_n(\Z)\times \GL_{n-1}(\Z) \hookrightarrow {\sf G}(\Q).
\ee
Let $\overline{\sf B}$ be the Borel subgroup of lower triangular matrices in ${\sf G}$, with unipotent radical $\overline{\sf U}$. By \cite[Lemma 1.1]{LLSS23}, $\overline{\sf B} z \dot{\sf G}$ is Zariski open in $\dot{\sf G}$, and in fact 
\be \label{transP}
{\sf P}\cap \dot{\sf G}=\{1\},\quad \textrm{where}\quad {\sf P}:=z^{-1} \overline{\sf B} z.
\ee
Let ${\sf B}$ be the Borel subgroup of upper  triangular matrices in ${\sf G}$. By using algebraic induction from ${\sf B}$ as in \cite{LLS24}, we realize ${\sf V}$ as a space of algebraic functions on ${\sf G}_{\sf E}$. Let 
${\sf v} \in {\sf V^{\overline{\sf U}}}$
be the unique  algebraic function in ${\sf V}$ which equals 1 on $\overline{\sf U}$.

Suppose that $P ={\sf P}(\Q_p)$ so that $N$ is its unipotent radical, and that $\lambda_0 \in  \Hom_E(V^\n, E)$ is the unique generator defined over ${\sf E}$ such that $\langle \lambda_0, z^{-1}.{\sf v} \rangle =1$. Recall that  ${\sf Z}={\sf G}_1$. 
For every   $\sf V$-balanced algebraic character $\sf w$ of $\mathsf Z_{\sf E}$, let $\lambda_{\sf V, w}\in \Hom_{\dot{\sf G}(\Q)}({\sf E_w}\otimes{\sf V}, {\sf E})$ be as in Lemma \ref{lambda01}.  
Then  for all algebraic characters $\chi_\infty$ of $\rk_\infty^\times$ of weight ${\sf w}^{-1}$, 
we have the archimedean modular symbol map
\[
\widehat{\mathcal P}_\infty^{\lambda_{\sf V, w}} : \oH^0(\z_\C,K_{\mathsf Z,\infty}^\circ; \C_{\mathsf w_\infty}\otimes \chi_\infty) \times\left(\Pi_\infty' \otimes \mathrm M 
 (\dot{\mathsf U}(\R))^\vee\otimes \mathrm{O}(\dot{\mathsf G}(\R)/\dot{K}_\infty^\circ)\right) \to \C
\]
defined as in \eqref{AMS}.

Let $\Pi_{0,\infty}$ be the irreducible 
tempered Casselman-Wallach  representation 
of ${\sf G}(\R)$ whose infinitesimal character equals that of the trivial representation and 
whose central character equals that of 
$({\sf F}_\mu^\vee\boxtimes {\sf F}_\nu^\vee)\otimes \Pi_\infty$. Define the cohomology group $\Pi_{0,\infty}'$ as in \eqref{Piinf'}. Let $\lambda_0'\in \Hom_{\sf U}({\sf V}, {\sf E})$ be the generator such that $\langle\lambda_0', {\sf v}\rangle =1$. Following 
\cite{LLS24}, with the fixed Whittaker functionals and $\lambda_0'$ we have the translation map 
\[
 \jmath_{\mu}\otimes \jmath_{\nu}: \Pi_{0,\infty}' \to \Pi_{\infty}'
\]
which is a $\rk_\infty^{\times,\natural}$-equivariant isomorphism. 
As a specialization of \eqref{AMS}, we have a map 
\[
 \widehat\CP_\infty: \oH^0(\z_\C,K_{\mathsf Z,\infty}^\circ; \varepsilon_\infty) \times\left(\Pi_{0,\infty}' \otimes \mathrm M 
 (\dot{\mathsf U}(\R))^\vee\otimes \mathrm{O}(\dot{\mathsf G}(\R)/\dot{K}_\infty^\circ)\right) \to \C
\]
for every $\varepsilon_\infty \in \widehat{\rk_\infty^{\times,\natural}}$. Define  $\widehat{\CP}^\circ_\infty$ in \eqref{NAMS} to be the map such that for every $\varepsilon_\infty \in \widehat{\rk_\infty^{\times,\natural}}$, $\widehat{\CP}^\circ_\infty(\varepsilon_\infty, \,\cdot\,)$ equals  the composition of 
\begin{eqnarray*}
&&  \Pi_{\infty}' \otimes\mathrm M 
 (\dot{\mathsf U}(\R))^\vee\otimes \mathrm{O}(\dot{\mathsf G}(\R)/\dot{K}_\infty^\circ) \\
& \xrightarrow{(1, \, (\jmath_\mu\otimes \jmath_\nu)^{-1}\otimes {\rm id}\otimes {\rm id})} & 
\oH^0(\z_\C,K_{\mathsf Z,\infty}^\circ; \varepsilon_\infty) \times \left(\Pi_{0, \infty}' \otimes\mathrm M (\dot{\mathsf U}(\R))^\vee\otimes \mathrm{O}(\dot{\mathsf G}(\R)/\dot{K}_\infty^\circ)\right)  \\
 & \xrightarrow{\widehat\CP_\infty}& \C.
 \end{eqnarray*}
 Here and as usual, $ \rm{id}$ denotes the identity map.

We have the following archimedean nonvanishing hypothesis and period relations, which are proved in \cite{Sun17, LLS24} for the essentially tempered case, and extended to all the 
generic cohomological cases in \cite{JLS24} after a suggestion of Michael Harris.

\begin{thml}\label{thm:sun} 
 (a) There is an element \[ \widehat \phi^\circ_\infty\in \Pi_\infty'({\sf E})\otimes  \RD  
 (\dot{\mathsf U}(\R))^\vee\otimes \mathrm{O}(\dot{\mathsf G}(\R)/\dot{K}_\infty^\circ)
\]
such that $\widehat\CP_\infty^\circ(\varepsilon_\infty, \widehat\phi^\circ_\infty)\neq 0$ for all $\varepsilon_\infty\in \widehat{\rk_\infty^{\times,\natural}}$. 

\noindent (b)  Let $\lambda_{\sf V, w}\in \Hom_{\dot{\sf G}(\Q)}({\sf E_w}\otimes{\sf V}, {\sf E})$ be as in Lemma \ref{lambda01}, where $\sf w$ is a  $\sf V$-balanced algebraic character  of $\mathsf Z_{\sf E}$. Then
\[\widehat\CP^{\lambda_{\sf V, w}}_\infty(1, \,\cdot\,) = \Upsilon_{\Pi_\infty'}(\chi_\infty)\cdot \widehat\CP_\infty^\circ({\sf w}_\infty \chi_\infty, \,\cdot\,)
\]
for all algebraic characters $\chi_\infty$ of $\rk_\infty^\times$ of weight ${\sf w}^{-1}$, 
 where $\Upsilon_{\Pi_\infty'}(\chi_\infty)$ is in \eqref{MFinfrs}.
\end{thml}

Following Definition \ref{df:period}, define the Whittaker periods
\be\label{whit-per}
\Omega_{\Pi}(\varepsilon_\infty):=\left(\widehat{\mathcal P}_\infty^\circ(\varepsilon_\infty,  \widehat \phi_\infty^\circ)\right)^{-1},\quad \varepsilon_\infty\in \widehat{\rk_\infty^{\times,\natural}}.
\ee


\subsection{Open orbit integrals and normalized refined period} \label{ssec:OOI}

Assume that $\Pi_p'\subset\CB_P(\Pi_p)$ is a nearly ordinary refinement of $\Pi_p$ defined over ${\sf E}$, which 
is also viewed as a character of $P$ that descends to a character of the torus $L=P/N$.

For every locally compact Hausdorff topological group $\mathcal G$, write $\delta_\CG:\CG\rightarrow \C^\times$ for its modular  character.  We use $\Ind$ to denote the normalized smooth induction, and still use a superscript $\,^\vee$ to indicate the contragredient  representations  of admissible smooth representations of totally disconnected groups and 
Casselman-Wallach representations of real reductive groups.

\begin{leml} \label{fdim1}
One has that
$
  \dim \Hom_L( \Pi_p', \CB_P(\Pi_p))= 1.
$
\end{leml}

\begin{proof}
We have that 
\begin{eqnarray*}
 \Hom_L( \Pi_p', \CB_P(\Pi_p))&=& \Hom_P(\Pi_p^\vee, \Pi_p'^\vee)\\
 &=&\Hom_G(\Pi_p^\vee, \mathrm{Ind}^G_P \,(\Pi_p'^\vee \otimes \delta_P^{-1/2}))\\
 &=&\Hom_G( \mathrm{Ind}^G_P \,(\Pi_p' \otimes \delta_P^{1/2}),\Pi_p).
\end{eqnarray*}
This is at most one-dimensional by the uniqueness of the Whittaker functionals on the principal series representations. 
\end{proof}

By the proof of Lemma \ref{fdim1},  $\Pi_p$ is isomorphic to a quotient representation of 
$
\textrm{Ind}^G_P \,(\Pi_p'\cdot\delta_P^{1/2}).
$
Note that $P = z^{-1}\overline{B}z$, where $\overline{B} := 
\overline{\sf B}(\Q_p)$. Define a character
\[
\kappa:= \Pi_p' \circ {\rm Ad}(z^{-1}): \overline{B}\to {\sf E}^\times,
\]
so that $\Pi_p$ is also isomorphic to a quotient representation of 
\[
I(\tilde\kappa):=\Ind^G_{\overline{B}}(\tilde\kappa),\quad \text{where}\quad \tilde\kappa:=\kappa\otimes\delta_{\overline B}^{1/2}.
\]

The most technical input for the evaluation of the modifying factors at $p$ and $\infty$ is an application of the preparatory result in \cite{LLSS23} to be recalled below. Let $v$ be a finite place of $\rk$. The result in {\it loc. cit.}  compares the normalized Rankin-Selberg integral $\CP_v^\circ$ with the integral over  the open $\dot \RG_v$-orbit in the flag variety $ \overline{\RB}_v \bs \RG_v$. The same result for the archimedean places has been used in \cite{LLS24, JLS24} to evaluate $\Upsilon_{\Pi_\infty'}(\chi_\infty)$ and prove the archimedean period relations in Theorem \ref{thm:sun}.

Write a continuous character $\varrho: \overline{\RB}_v \to \C^\times$ as
\[ 
\varrho:=(\varrho_1,\ldots, \varrho_n, \varrho_1',\ldots, \varrho_{n-1}') \in (\widehat{\rk_v^\times})^{2n-1}.
\]
Let $I(\varrho):=\Ind^{\RG_v}_{\overline{\RB}_v}\,\varrho$.
By \cite[Theorem 15.4.1]{Wa92}, there is a unique Whittaker functional 
$
\lambda_{\RU_v}' \in \Hom_{\RU_v}(I(\varrho), \psi_{\RU_v})
$
such that 
\be \label{jacint}
\langle \lambda_{\RU_v}', f\rangle  = \int_{\RU_v} f(u) \psi_{\RU_v}^{-1}(u)\od\! u
\ee
for all $f\in I(\varrho)$ such that $f\vert_{\RU_v}\in \CS(\RU_v)$, where we fix the Haar measure $\od\!u$ on $\RU_v$ to be the product of the self-dual Haar measures on $\rk_v$ with respect to $\psi_v$. Here
and henceforth, $\CS(X)$ denotes 
the space of compactly supported locally constant complex functions on $X$ when $X$ is a totally disconnected topological space.

We fix the similar Haar measure on $\dot\RU_v$. For $\tau\in \RM(\dot\RG_v)$, denote by $\bar\tau\in \RM(\dot\RU_v\bs \dot\RG_v)$  the quotient of $\tau$ by the fixed measure on $\dot\RU_v$. We have the unnormalized Rankin-Selberg integral map 
\be \label{Lambdaint1}
\begin{array}{rcl}
\CP_{v}: \CX_v\times \left(I(\varrho)\otimes \RM(\dot\RU_v\bs \dot\RG_v)\right) & \rightarrow &\C\cup\{\infty\},\\
  (\chi_v', f\otimes  \bar\tau) &\mapsto & \int_{ \dot\RU_v\bs \dot\RG_v}\chi_v'(\det g) \langle \lambda_{\RU_v}', g.f\rangle \od\!\bar\tau(g)
\end{array}
\ee
defined by meromorphic continuation of absolutely convergent integrals.

Following \cite{LLSS23}, we 
also have the open orbit  integral map 
\be \label{Lambdaint2}
\begin{array}{rcl}
\Lambda_{v}: \CX_v\times \left(I(\varrho)\otimes \RM(\dot\RG_v)\right)& \rightarrow &\C\cup\{\infty\},\\
   (\chi_v', f  \otimes  \tau) &\mapsto & \int_{ \dot\RG_v} \chi_v'(\det g) f(z g)\od\!\tau(g),
\end{array}
\ee
defined by meromorphic continuation (in variables $\varrho$ and $\chi_v'$) of absolutely convergent integrals. 


Define a meromorphic function on $\CX_v$ by
\[
\Gamma_{\varrho, \psi_v}(\chi_v'): =\prod_{i>j,\, i+j\leq n}(\varrho_i\cdot\varrho'_j\cdot\chi_v')(-1) \cdot \prod_{i+j\leq n}\gamma\left(\frac{1}{2}, \varrho_i\cdot \varrho_j\cdot\chi_v', \psi_v\right).
\]
Here and henceforth
\[
\gamma(s, \omega, \psi_v):= \varepsilon(s, \omega, \psi_v) \cdot \frac{\oL(1-s, \omega^{-1})}{\oL(s, \omega)}
\]
denotes the Tate $\gamma$-factor of a character $\omega\in \widehat{\rk_v^\times}$ defined in \cite{T79, J79, Ku03} using the self-dual Haar measure on 
$\rk_v$ with respect to $\psi_v$. 
The following result is implied by \cite[Theorem 1.6 (b)]{LLSS23} and \cite[Corollary 4.3]{LLS24}. 

\begin{thml}  \label{thm:llss}
For every $\widehat f\otimes \tau\in \widehat{I(\varrho)}_{\p_v\mathrm{-sm}} \otimes \RM( \dot\RG_v)$ the equality 
\[
\Lambda_{v}(\chi_v', \widehat f\otimes  \tau) = \Gamma_{\varrho, \psi_v}(\chi_v')\cdot \CP_{v}(\chi_v', \widehat f\otimes \bar\tau)
\]
holds as meromorhic functions of $\chi_v'\in \CX_v$.
\end{thml}

Write $\tilde\kappa = \otimes_{\wp\mid p}\tilde\kappa_\wp$. Then we have that $I(\tilde\kappa) = \otimes_{\wp\mid p}I(\tilde\kappa_\wp)$. By tensor product  over $\wp\mid p$, the maps in \eqref{Lambdaint1} yield a map 
 \[
\CP_{p}: \CX_p\times \left(I(\tilde \kappa)\otimes \RM(\dot U\bs \dot G)\right) \rightarrow \C\cup\{\infty\}.
\]
Similar to \eqref{nz2}, it naturally extends to a map 
 \[  
\CP_{p}: \CX_p\times \left(\widehat{I(\tilde \kappa)}_{\p\mathrm{-sm}}\otimes \RM(\dot U\bs \dot G)\right) \rightarrow \C\cup\{\infty\}.
\]
Similarly, the maps in \eqref{Lambdaint2} yield a map 
 \[ 
\Lambda_{p}: \CX_p\times \left(\widehat{I(\tilde \kappa)}_{\p\mathrm{-sm}}\otimes \RM( \dot G)\right) \rightarrow \C\cup\{\infty\}. 
\]

Recall the Whittaker functional $\lambda_{\RU_\wp}$ on $\Pi_\wp$.
Write 
\be\label{surjhom0}
\xi_p: I(\tilde\kappa)\rightarrow \Pi_p=\otimes_{\wp\mid p} \Pi_\wp
\ee
for the $G$-homomorphism 
whose composition with $\otimes_{\wp\mid p}\lambda_{\RU_\wp}$ equals $\otimes_{\wp\mid p}\lambda_{\RU_\wp}'$. It is surjective and  naturally extends to a surjective $G$-homomorphism 
\be\label{surjhom}
\xi_p: \widehat{I(\tilde\kappa)}\twoheadrightarrow \widehat{\Pi_p}. 
\ee
Note that $\widehat{I(\tilde\kappa)}$ is naturally identified with  a space of generalized functions on $G$. Denote by $\widehat{I(\tilde\kappa)}_z$ its subspace of the generalized functions supported in  $\overline{ B} z$, where where $z$ is as in \eqref{eletz}. It is easy to see that $\widehat{I(\tilde\kappa)}_z$  is one-dimensional and the map $\xi_p$ in \eqref{surjhom} restricts to a $P$-isomorphism 
\[
  \xi_p: \widehat{I(\tilde\kappa)}_z\xrightarrow{\sim}\Pi_p'. 
\]
Moreover, for every generator $\widehat f\otimes \tau$ of $\widehat{I(\tilde\kappa)}_z\otimes \RM( \dot G)$,  $\Lambda_p(\,\cdot\, ,\widehat f\otimes \tau)$ is a constant function on $\CX_p$ with values in $\C^\times$. 

 Fix the  Haar measure on $\dot U$ to be the product of the measures on $\dot\RU_\wp$ for all $\wp\mid p$, and for 
$\tau\in \RM(\dot G)$ denote by $\bar\tau\in \RM(\dot U \bs \dot G)$ the quotient of $\tau$ by the fixed measure on $\dot U$. 
Now we define the normalized refined period map to be the composition 
\begin{eqnarray*}
   \widehat\CP_p^\circ&:& \CX_p \times \left(\Pi_p'\otimes \RM(\dot U \bs \dot G)\right)\\
   &\xrightarrow{\xi_p^{-1}}& \CX_p \times \left(\widehat{I(\tilde\kappa)}_z\otimes \RM(\dot U \bs \dot G)\right)\\
    &\xrightarrow{\bar \tau\mapsto \tau}& \CX_p \times \left(\widehat{I(\tilde\kappa)}_z\otimes \RM( \dot G)\right)\\
    &\xrightarrow{\Lambda_p}&\C.
\end{eqnarray*}

 \subsection{Rational test vectors and modifying factors at $p$}  \label{ssec:SI} 

In what follows we define a rational test vector $\widehat\phi_p^\circ\in \Pi_p'({\sf E})\otimes \RD(\dot U \bs \dot G)$. Recall the $\Q(\Pi_\wp)$-form  of $\Pi_{\wp}$  determined by \eqref{autc}.   Define an  $\Aut(\C/ \Q(\kappa_\wp))$-action on $I(\tilde\kappa_\wp)$ by 
\[
{}^\sigma f(g) := \kappa_\wp({\bf t}_{\sigma, p})\cdot \sigma(f(g)),\quad \sigma\in \Aut(\C/\Q(\kappa_\wp)), \  f\in I(\tilde\kappa_\wp), \ g\in \RG_\wp. 
\]
 By
\eqref{gauss} we have that
\[
\kappa_\wp({\bf t}_{\sigma, p})= \frac{\sigma(\omega_{\psi_\wp}(\kappa_\wp))}{\omega_{\psi_\wp}(\kappa_\wp)},
\]
where $\omega_{\psi_\wp}(\kappa_\wp)$ is in  \eqref{gaussrs}. 
Hence 
\[
I(\tilde\kappa_\wp)^{\Aut(\C/\Q(\kappa_\wp))} =\{
f\in I(\tilde\kappa_\wp) \,:\, \text{$\omega_{\psi_\wp}(\kappa_\wp)\cdot f$ is $\Q(\kappa_\wp)$-valued}\},
\]
which is a $\Q(\kappa_\wp)$-form  of $I(\tilde\kappa_\wp)$.   By tensor products, we have a $\Q(\Pi_p)$-form  of $\Pi_{p}$ as well as a $\Q(\kappa)$-form  on $I(\tilde\kappa)$. 
 
Recall that $c_p = \prod_{\wp\mid p}c_\wp$, where $c_\wp$ as in \eqref{cond} is the volume of $\CO_\wp$ under the self-dual Haar measure on $\rk_\wp$ with respect to $\psi_\wp$. 


\begin{prpl}   \label{prat}
One has that  $\Q(\Pi_p)  \subset \Q(\kappa)$ and the surjective homomorphism $c_p^{-(n-1)^2}\xi_p: I(\tilde\kappa)\twoheadrightarrow \Pi_p$ is $\Q(\kappa)$-rational, where $\xi_p$ is in \eqref{surjhom0}.
\end{prpl}
\begin{proof} 
From  \eqref{autc}, it suffices to show that for every $\sigma\in \Aut(\C/\Q(\kappa_\wp))$
and $f\in I(\tilde\kappa_\wp)$ it holds that 
\[
 c_\wp^{-(n-1)^2} \cdot \langle\lambda_{\RU_\wp}', {}^\sigma f\rangle = \sigma( c_\wp^{-(n-1)^2} \cdot\langle  \lambda_{\RU_\wp}', {\bf t}_{\sigma, p}.f\rangle).
\]
We may assume that $f|_{\RU_\wp}\in \CS(\RU_\wp)$. Then the above equality can be verified directly using the 
 Jacquet integral \eqref{jacint}.
\end{proof}

Let $\widehat\phi_p$ denote the generator  of $\Pi_p'\otimes \RM(\dot U \bs \dot G)$
such that  $\widehat\CP_p^\circ(\,\cdot\, ,\widehat\phi_p)$ equals the constant function $1$ on $\CX_p$. Write $\widehat\phi_p = \phi_p' \otimes \bar \tau$ such that $\phi_p'\in \Pi_p'$
and $\tau\in \RD(\dot G)$. 
Proposition \ref{prat} implies that
\[
c_p^{-(n-1)^2}\omega_{\psi_p}(\kappa)^{-1}\cdot \phi_p' \in \Pi_p'({\sf E}). 
\]
It is clear that 
$c_p^{(n-1)(n-2)/2}\cdot \bar\tau\in \RD(\dot U\bs \dot G)$. Define
\be \label{ptestrs}
\begin{aligned}
\widehat\phi_p^\circ: \, & = (c_p^{-(n-1)^2}\omega_{\psi_p}(\kappa)^{-1}\cdot\phi_p' )\otimes (c_p^{(n-1)(n-2)/2}\cdot \bar\tau) \\
& = \Omega_{\Pi_p'}^{-1}\cdot \widehat\phi_p \in \Pi_p'({\sf E})\otimes \RD(\dot U\bs \dot G),
\end{aligned}
\ee
where $\Omega_{\Pi_p'}$ is in \eqref{omegapi'}. Then 
\[
\widehat\CP_p^\circ(\chi_p', \widehat\phi_p^\circ) = \Omega_{\Pi_p'}^{-1}\quad \text{for all }\chi_p'\in \CX_p.
\]
From \eqref{MFprs} it is clear that
\[
\Upsilon_{\Pi_p'}(\chi_p') =  \frac{1}{ \prod_{\wp\mid p}\left(\oL(\frac{1}{2}, \Pi_\wp\times\chi_\wp') \cdot\Gamma_{\tilde\kappa_\wp, \psi_\wp}(\chi_\wp')\right)},\quad \chi_p' = \otimes_{\wp\mid p}\chi_\wp'\in\CX_p.
\]
Then the following is a direct consequence of Theorem \ref{thm:llss}.

\begin{prpl} \label{prp:st}
It holds that 
\[
\CP^\circ_p(\chi_p', \widehat\phi_p^\circ)= \Upsilon_{\Pi_p'}(\chi_p')\cdot \Omega_{\Pi_p'}^{-1}
\quad \textrm{ for all }\chi_p'\in \CX_p.
\]
Consequently 
$\Upsilon_{\Pi_p'}$ is algebraic on $\mathcal X_p$.
\end{prpl}

By Proposition \ref{prp:st}, $\Upsilon_{\Pi_p'}(\chi_p)$ is the modifying factor at $p$ as in Definition \ref{df:MFp}. It is consistent with the conjecture given by Coates and Perrin-Riou  in \cite{CPR89, Co89}. 

\subsection{$p$-adic L-functions and exceptional zeros} \label{ssec:PLEZ} 
Now we  construct Rankin-Selberg $p$-adic L-function $\mathscr L_\Pi$ by combining the previous results. Suppose that $Z_0$ is trivial and recall $\varepsilon = \otimes_{v\nmid\infty p}\varepsilon_v$ in \eqref{ram}.

\begin{itemize}
    \item For  $v\nmid \infty p$, let  
$\phi^\circ_v\in \Pi_v({\sf E})\otimes \RD(\dot\RU_v \bs \dot\RG_v)$ be as in Section \ref{ssec:RSI}   such that 
\[
\CP^\circ_v(\chi_v', \phi^\circ_v) = \frac{1}{\mathscr G_{\psi_v}(\chi_{\Pi_{n-1,v}})\cdot \mathscr G_{\psi_v}(\chi_v')^{\frac{n(n-1)}{2}}},\quad \chi_v'\in \CX(\varepsilon_v).
\]
\item  Let $\lambda_0 \in  \Hom_E(V^\n, E)$ and $\widehat \phi^\circ_\infty\in \Pi_\infty'({\sf E})\otimes  \RD  
 (\dot{\mathsf U}(\R))^\vee\otimes \mathrm{O}(\dot{\mathsf G}(\R)/\dot{K}_\infty^\circ)$ be as in Section \ref{ssec:AMSM} such that 
 \[
 \widehat\CP_\infty^{\lambda_{\sf V, w}}(1,\widehat\phi^\circ_\infty)  = \Upsilon_{\Pi_\infty'}(\chi_\infty) \cdot \widehat\CP_\infty^\circ({\sf w}_\infty\chi_\infty, \widehat\phi^\circ_\infty)
  = \frac{\Upsilon_{\Pi_\infty'}(\chi_\infty)}{\Omega_\Pi({\sf w}_\infty\chi_\infty)}
 \]
 for all ${\sf V}$-balanced characters ${\sf w}$ and all algebraic characters $\chi_\infty$ of weight ${\sf w}^{-1}$, where  $\lambda_{\sf V, w}\in \Hom_{\dot{\sf G}_{\sf E}}(\sf E_{\sf w}\otimes {\sf V}, {\sf E})$ is the generator such that $\lambda_{\sf V, w}|_{V^\n} = \lambda_0$.
 
 \item Let $\widehat\phi_p^\circ\in \Pi_p'({\sf E})\otimes \RD(\dot U \bs \dot G)$ be as in \eqref{ptestrs}.
\end{itemize}
Using all the local test vectors, let
\[
\widehat\phi^\circ:= \widehat\phi_\infty^\circ \otimes \widehat\phi_p^\circ \otimes (\otimes_{v\nmid\infty p} \phi_v^\circ)
\in \mathscr{H}\otimes \RD(\dot{\sf G}, \dot\p),
\]
and following Definition \ref{df:padicL} let
\[
\mathscr{L}_\Pi:= \mathcal L_{\varepsilon \otimes  \mathscr H } := (\mathcal L_{\lambda_0\otimes \widehat \phi^\circ})|_{\con({\sf Z}, E)(\varepsilon)}.
\]

We restate  Theorem \ref{padicLrs0} below, which is now an immediate consequence of the above results and \eqref{pintp}.

\begin{thml} \label{padicLrs}
Let the notations and assumptions be as above.  
Then
\[
\mathscr L_\Pi(\chi^\flat)  = \frac{\Upsilon_{\Pi_\infty'}(\chi_\infty)\cdot \Upsilon_{\Pi_p'}(\chi_p)\cdot \oL(\frac{1}{2}, \Pi\times \chi)}{\mathscr G_\psi(\chi^{(p)})^{\frac{n(n-1)}{2}}\cdot  \mathscr G_\psi(\chi_{\Pi_{n-1}}^{(p)}) 
\cdot \Omega_{\Pi_p'}\cdot \Omega_{\Pi}({\sf w}_\infty \chi_\infty)   }
  \]
  for all ${\sf V}$-balanced Hecke characters $\chi=\otimes_\ell \chi_\ell \in \CX(\varepsilon)$, where ${\sf w}$ is the inverse of the weight of $\chi$.
\end{thml}

As mentioned earlier, the existence of exceptional zeros of $p$-adic L-functions is of great importance in arithmetic applications. Let us give the application of Theorem \ref{padicLrs}.


\begin{prpl} \label{excep0rs}
Under the assumptions in   Theorem \ref{padicLrs}, $\chi^\flat$ is an exceptional zero of $\mathscr L_\Pi$  if and only if there is a place $\wp\mid p$ such that one of the followings holds:
\begin{itemize}
\item $\oL(s, \Pi_\wp\times\chi_\wp)$ has a pole at $s=\frac{1}{2}$;
    \item there exists $1\leq i<n$ such that 
\[ 
\chi_\wp = \kappa_{i, \wp}^{-1} \cdot \kappa_{n-i, \wp}'^{-1} \cdot \abs{\,\cdot\,}_\wp,
\]
where $\kappa_{i,\wp}$'s and $\kappa_{j,\wp}'$'s are as in \eqref{MFprs}.
\end{itemize}
\end{prpl}

The rest of this section is devoted to the proof of Proposition \ref{excep0rs}. 
Let us  introduce the notion of $p$-exponent  at a place $\wp\mid p$. Denote by $\CE_{\rk_\wp}$ the set of continuous field embeddings 
$\rk_\wp \hookrightarrow \C_p$, so that $\CE_\rk$ can be  identified with the disjoint union of $\{\CE_{\rk_\wp}\}_{\wp |p}$. Let
$d_\wp:=|\CE_{\rk_\wp} |= [\rk_\wp: \Q_p]$.

\begin{dfnl}
The $p$-exponent of a character $\omega: \rk_\wp^\times \to \C_p^\times$ is the number ${\rm ex}_p(\omega)\in \Q$ such that
\[
\abs{\omega(a)}_p = \abs{a}_\wp^{{\rm ex}_p(\omega)} = \abs{a}_p^{d_\wp\cdot {\rm ex}_p(\omega)},\quad a\in \rk_\wp^\times.
\]
\end{dfnl}
Here and henceforth, $\abs{\,\cdot\,}_v$ denotes the normalized absolute value on $\rk_v$ for every place $v$ of $\rk$. 

\begin{exl}
 (a) The normalized absolute value $\abs{\,\cdot\,}_\wp: \rk_\wp^\times \to \Q^\times \hookrightarrow \C_p^\times$ has $p$-exponent $-1$.
 
    \noindent (b) The inclusion $\rk_\wp^\times \hookrightarrow \C_p^\times$ has $p$-exponent $d_\wp^{-1}$. 
\end{exl}

Similarly we define the $p$-exponent of $\tilde\kappa_\wp$ as an element of $\Q^{2n-1}$. 
Note that $\RG_\wp \subset G$ acts diagonally on 
$
V_\wp:=  \otimes_{\iota\in \CE_{\rk_\wp}}(F_{\mu^\iota}^\vee\boxtimes F_{\nu^\iota}^\vee),
$
where $F_{\mu^\iota}$ and $F_{\nu^\iota}$  are of highest weights $\mu^\iota$ and  $\nu^\iota$ respectively. 
Let
\[
\mu^\wp:=\frac{1}{d_\wp}\sum_{\iota\in \CE_{\rk_\wp}}\mu^\iota \in  \Q^n, \quad \nu^\wp:=\frac{1}{d_\wp}\sum_{\iota\in \CE_{\rk_\wp}}\nu^\iota \in  \Q^{n-1}.
\]

Recall that $L$ acts on $V^\n$ through a character $\alpha_V: L\to E^\times$. Then the component of $\alpha_V\circ {\rm Ad}(z^{-1})$ at $\wp$
has $p$-exponent $(-\mu^\wp, -\nu^\wp)\in \Q^{2n-1}$. The   character $\delta_{\overline{\RB}_\wp}$ has $p$-exponent $2(\rho_n, \rho_{n-1})$, where
\[
\rho_m:= \left(\frac{m-1}{2}, \frac{m-3}{2}, \ldots,  -\frac{m-1}{2}\right)\in \Q^m\quad (m=n,n-1).
\]
Since  $\Pi_p'\otimes \delta_P$ and $\alpha_V$ have the same $p$-adic norm by Definition \ref{df:NVO}, we have the following result.

\begin{lem} \label{lem:exp}
For $\wp\mid p$, the character $\tilde\kappa_\wp$ has $p$-exponent
\[
(-\mu^\wp-\rho_n, -\nu^\wp-\rho_{n-1}) \in \Q^{2n-1}.
\]
\end{lem}

\begin{lem} \label{lem:llss}
If $\chi =\otimes_v\chi_v$ is $\sf V$-balanced, then for each $\wp\mid p$ one has that
\[
\frac{1}{2}+{\rm ex}_p(\tilde\kappa_{i, \wp}\cdot \tilde\kappa'_{j, \wp}\cdot \chi_\wp)\textrm{ is }
\begin{cases}
\leq 0, & \textrm{if }i+j\leq n, \\
\geq 1, & \textrm{if }i+j > n.
\end{cases}
\]
\end{lem}

\begin{proof}
Let ${\sf w}^{-1}$ be the weight of $\chi$. By Lemma \ref{lem:exp}, we find that
\[
\frac{1}{2}+{\rm ex}_p(\tilde\kappa_{i,\wp}\cdot \tilde\kappa'_{j,\wp}\cdot \chi_\wp) = {\sf w}_\wp - \mu^\wp_{i}- \nu^\wp_{j} + i+j-n,
\]
where ${\sf w}_\wp : = d_\wp^{-1}\sum_{\iota\in\CE_{\rk_\wp}}{\sf w}_\iota$. 
The assertion follows easily from \eqref{eq:bp}.
\end{proof}

By \eqref{MFprs}, $\chi^\flat$ is an exceptional zero of $\mathscr L_\Pi$ if and only if 
\[
\prod_{\wp\mid p}\frac{\prod_{i+j\leq n}\gamma\left(\frac{1}{2}, \tilde\kappa_{i, \wp}^{-1} \cdot \tilde\kappa_{j,\wp}'^{-1} \cdot \chi_\wp^{-1}, \psi_\wp^{-1}\right)}{\oL(\frac{1}{2}, \Pi_\wp\times\chi_\wp)} = 0.
\]
Proposition \ref{excep0rs} follows easily from Lemma \ref{lem:llss} and its proof.

\section{Application II: Rankin-Selberg 
L-functions for  $\RU_n\times \RU_{n-1}$}

In this section we retain the setup  in Section \ref{ssec:RSU} and construct the $E$-valued $p$-adic measure $\mathscr L_\Pi$  
 in Theorem \ref{padicLggp0} following Sections \ref{ssec:CLR}--\ref{ssec:PL}, for which we assume that the character $\varepsilon$ in \eqref{ram} is trivial. Then we determine the exceptional zeros of $\mathscr L_\Pi$.

\subsection{Refined Gan-Gross-Prasad conjecture} \label{ssec:ggp} 
We first review the Gan-Gross-Prasad conjecture and its refinement for the Bessel periods of unitary groups. 
Let $\tilde\pi_m$, $m=n, n-1$ ($n\geq 2$), be an irreducible isobaric automorphic representation of $\GL_m(\A_{\rk'})$ that is hermitian in the sense of \cite[Definition 1.5]{BPLZZ21}. Put $\tilde\pi:=\tilde\pi_n \boxtimes\tilde\pi_{n-1}$, 
which is an automorphic representation of $\GL_n(\A_{\rk'})\times \GL_{n-1}(\A_{\rk'})$. 

Denote by $\rk'\rightarrow \rk', \, a \mapsto \bar a$ the Galois involution for $\rk'/\rk$, which induces an $\A_\rk$-algebra involution $\A_{\rk'}\rightarrow \A_{\rk'}, \, a \mapsto \bar a$.

\begin{thml} [Gan-Gross-Prasad conjecture] \label{ggp}  The following  two statements are equivalent.

\noindent (a) $\oL(\frac{1}{2}, \tilde\pi)\neq 0$.

\noindent (b) There exists a quadruple $(V_n, V_{n-1}, \pi_n, \pi_{n-1})$ such that 

\begin{itemize}
\item $V_{n-1}$ is a hermitian spaces over $\rk'$ of rank $n-1$ and  $V_n$ is a Hermitian space over $\rk'$ such that $V_n=V_{n-1} \oplus \rk'$ is an orthogonal direct sum, where $\rk'$ is viewed as a hermitian space under the form $(a,b)\mapsto a \bar b$, $a, b\in \rk'$;  
\item  for $m=n,n-1$, $\pi_m$ is an irreducible cuspidal automorphic representations of $\RU_m(\A_{\rk})$ such that  $\RB\RC(\pi_m)\cong \tilde\pi_m$, where $\RU_m:= \RU(V_m)$ is the unitary group attached to $V_m$, and $\RB\RC(\pi_m)$ denotes the weak base-change of $\pi_m$ to $\GL_m(\A_{\rk'})$;
\item
there are cusp forms $\varphi_m\in \pi_m$ ($m=n, n-1$) such that 
\[
\int_{\RU_{n-1}(\rk)\bs \RU_{n-1}(\A_{\rk})}\varphi_{n}(g) \varphi_{n-1}(g)\od\! g\neq 0, 
\]
where $\od\!g$ is the Tamagawa measure.
\end{itemize}


\end{thml}

Suppose that the equivalent statements in Theorem \ref{ggp} hold, and we keep the notation in Theorem \ref{ggp}. Then the pair $(V_n, V_{n-1})$ is unique up to isomorphism,
and the pair $(\pi_n, \pi_{n-1})$  is uniquely determined, by the local Gan-Gross-Prasad conjecture \cite{BP16, BP20, Xue23} and  Arthur's multiplicity formula for unitary groups \cite{KMSW14}.
Suppose that  $\pi:=\pi_n \boxtimes \pi_{n-1}$ is everywhere tempered.  
Recall the L-function $\CL(s, \pi\times \chi)$ in \eqref{cLggp}, where $\chi: \RU_1(\rk)\backslash \RU_1(\A_\rk)\rightarrow \C^\times $ is an automorphic character with base-change
\[
\tilde\chi={\rm BC}(\chi): \rk'^\times\bs \A_{\rk'}^\times \to \BC^\times, \quad a\mapsto \chi(a/\bar a).
\]
Note that 
\[
\oL(s, \pi, \Ad) = \oL(s, \tilde\pi_n, {\rm As}^{(-1)^n}) \oL(s, \tilde\pi_{n-1}, {\rm As}^{(-1)^{n-1}})
\]
is a product of Asai L-functions.

Recall the embedding 
\[
\imath: \dot{\sf G}= \Res_{\rk/\Q}(\RU_{n-1} \times \RU_{n-1})\hookrightarrow {\sf G}= \Res_{\rk/\Q}(\RG_0\times\RG_0),
\]
where $\RG_0 = \RU_n\times \RU_{n-1}$, and the homomorphism $\jmath: \dot{\sf G}\to {\sf Z}=\Res_{\rk/\Q}\RU_1$. Let 
\[
{\sf H}:=\Res_{\rk/\Q}\RG_0 \xrightarrow{\rm diag} {\sf G}
\]
diagonally embedded as a subgroup 
of ${\sf G}$, and let $\psi_{\sf H}$ be the trivial character of ${\sf H}(\A)$. Then it is clear that \eqref{assh} holds. 
At this point it is again more familiar to work with $\rk$ and introduce
\[
\dot\RG :=\RU_{n-1}\times \RU_{n-1} \xrightarrow{\imath} \RG:= \RG_0 \times \RG_0 \quad \text{and}  \quad \RH := \RG_0 \xrightarrow{\diag} \RG.
\]
Fix factorizations 
$\pi = \widehat\otimes'_v \pi_v $ and $ \pi^\vee = \widehat\otimes'_v \pi_v^\vee$, where $v$ runs over all places of $\rk$,
such that there is a decomposition of $\RH(\A_\rk)$-invariant map
\[
\la \cdot, \cdot \ra = \otimes_v \la \cdot, \cdot \ra_v: \Pi=\pi\boxtimes \pi^\vee\to\C,
\]
where $\la \cdot, \cdot \ra$ is the pairing between $\pi$ and $\pi^\vee$ given by the Petersson inner product on cusp forms with respect to the Tamagawa measure of 
$\RH(\A_\rk)$, and $\la\cdot, \cdot\ra_v$ is the natural pairing between $\pi_v$ and $\pi_v^\vee$. 

Following Theorem \ref{ggp} (b), define the global period integral map
\[
\begin{array}{rcl}
   \mathcal P_\chi:  \Pi \otimes \mathrm M(\dot\RG(\rk)\backslash\dot \RG(\A_\rk))&\rightarrow  & \chi^{-1},\\
      \varphi\otimes \varphi^\vee \otimes \tau &\mapsto &\int_{\dot\RG(\rk) \backslash\dot\RG(\A_\rk)}\chi(\jmath(g)) \cdot (\varphi\otimes \varphi^\vee)(g)\od\!\tau(g).
\end{array}
\]


Let $\CX_v$ be the group of complex continuous characters of $\RU_1(\rk_v)$ for every place $v$ of $\rk$, and let $\CX_v^{\rm un}\subset \CX_v$ be the subgroup of unramified characters. In particular $\CX_v^{\rm un}$ consists of the trivial character if $v$ is nonsplit in $\rk$. 
Define the $\dot\RG_v$-invariant normalized zeta integral
\[
\begin{aligned}
\CP^\circ_v: \CX_v\times \left(\Pi_v\otimes \RM(\dot\RH_v\bs \dot\RG_v)\right) & \to \C, \\
(\chi_v', \varphi_v\otimes \varphi_v^\vee\otimes\tau_v)& \mapsto \frac{\int_{\RU_{n-1}(\rk_v)}\chi_v'(\det g) \la \pi_v(g)\varphi_v, \varphi_v^\vee\ra_v \od\! \tau_v(g)}{\CL(\frac{1}{2}, \pi_v\times\chi_v')} ,
\end{aligned}
\]
where  $\RU_{n-1}$ embeds into $\RG_0$ diagonally as before, $\varphi_v\otimes \varphi_v^\vee\in \Pi_v = \pi_v\boxtimes \pi_v^\vee$ and $\tau_v\in \RM(\dot\RH_v\bs \dot\RG_v) =\RM(\RU_{n-1}(\rk_v))$.  Here and henceforth $\CL(\frac{1}{2}, \pi_v\times\chi_v')$ denotes the local factor of $\CL(\frac{1}{2}, \pi\times \chi')$ at $v$. By the temperedness assumption,  the integral converges absolutely and $\CL(\frac{1}{2},\pi_v\times\chi_v')\in \C^\times$  when $\chi_v'$ is unitary. By Lemma \ref{lem:zhang} below, $\CP_v^\circ$ has a holomorphic continuation in $\chi_v'\in \CX_v$ when $v$ is split in $\rk$. Moreover if $\chi=\otimes\chi_v$, $\varphi \otimes \varphi^\vee = \otimes_v (\varphi_v \otimes \varphi_v^\vee)$ and 
$\tau =\otimes_v\tau_v$ under the identification \eqref{IDmeas}, then  $\CP_v^\circ(\chi_v, \varphi_v\otimes \varphi_v^\vee\otimes\tau_v)=1$ for  all but finitely many  $v$ by \cite[Theorem 2.12]{Ha14}.

The following Ichino-Ikeda conjecture refines the Gan-Gross-Prasad conjecture.

\begin{thml} [Ichino-Ikeda conjecture] \label{iic}
Let the assumptions be as above. Then for all $\varphi \otimes\varphi^\vee = \otimes_v (\varphi_v \otimes \varphi_v^\vee) \in \Pi$, 
$\tau=\otimes_v\tau_v\in \RM(\dot\RG(\rk)\bs \dot\RG(\A_\rk))$, and all automorphic characters $\chi = \otimes_v\chi_v : \RU_1(\rk)\backslash \RU_1(\A_\rk)\rightarrow \C^\times $,  it holds that
\[
\CP_\chi( \varphi\otimes \varphi^\vee\otimes\tau)  = \abs{S_{\tilde\pi}}^{-1} \CL(\frac{1}{2}, \pi\times\chi) \prod_v \CP_v^\circ(\chi_v,\varphi_v \otimes \varphi_v^\vee\otimes \tau_v).
\] 
\end{thml}

In the above, $S_{\tilde\pi}  =S_{\tilde\pi_n} \times S_{\tilde\pi_{n-1}}$ and  $S_{\tilde\pi_m}$ ($m=n,n-1$) is an elementary abelian 2-group whose rank equals the number of cuspidal summands of the isobaric sum $\tilde\pi_m$. Thanks to the work 
of \cite{Mok15, KMSW14, AGI+24},  it is known that $|S_{\tilde\pi_m}|$ is the size of the global Arthur packet of $\pi_m$. 

Theorem \ref{ggp} and Theorem \ref{iic} are proved in \cite{BPLZZ21} for $\tilde\pi$ cuspidal (in which case $|S_{\tilde\pi}|=4$), and in \cite{BPCZ22} for $\tilde\pi$ hermitian isobaric, improving the previous results
in \cite{Zh14a, Zh14b, Xue19, BP21a, BP21b}. They have been further extended to some Eisenstein series very recently in \cite{BPC23}.

\subsection{Rational forms and integrals of matrix coefficients} \label{ssec:IMC} 
In the rest of this section,  assume that $\pi$ is regular algebraic,  $\Q(\pi)\subset {\sf E}$ and the coefficient system  of $\pi$ is defined over ${\sf E}$, and that $\chi$ is algebraic.

\begin{leml} \label{lem:ratggp}
For every finite place $v$ of $\rk$, there is a unique $\Q(\pi_v)$-form  on $\Pi_v$ such that the natural pairing 
\[
\langle\cdot, \cdot\rangle_v: \Pi_v= \pi_v\boxtimes \pi_v^\vee\to\C
\]
is defined over $\Q(\pi_v)$. 
\end{leml}

\begin{proof}
We sketch the proof, which is similar to that of \cite[Proposition 3.6]{JST19}. We have the $\RG_v$-equivariant embedding
\[
\Pi_v \to C^\infty(\RG_{0,v}),\quad \varphi\otimes \varphi^\vee \mapsto ( f_{\varphi, \varphi^\vee}(g):= \langle \pi_v(g)\varphi, \varphi^\vee\rangle_v,\quad g\in \RG_{0,v}),
\]
where $C^\infty(\RG_{0,v})$ denotes the space of locally constant complex functions on $\RG_{0,v}$, on which $\RG_v=\RG_{0,v}\times \RG_{0,v}$ acts by translations. Let $\Aut(\C)$ act on $C^\infty(\RG_{0,v})$ by $(\sigma.f)(g):=\sigma(f(g))$, 
where $\sigma\in\Aut(\C)$, $f\in C^\infty(\RG_{0,v})$. Since $\Hom_{\RG_v}(\Pi_v, C^\infty(\RG_{0,v}))$ is one-dimensional, 
 $\Aut(\C/\Q(\pi_v))$ stabilizes the image of $\Pi_v$ in $C^\infty(\RG_{0,v})$, which induces an action of $\Aut(\C/\Q(\pi_v))$ on $\Pi_v$.

By \cite[Thm. A.2.4]{GS17}, $\Pi_v$ has a rational form  $\Pi_v^\circ$ defined over a finite extension of $\Q(\pi_v)$ such that $\langle\cdot,\cdot\rangle|_{\Pi_v^\circ}$
is $\overline{\Q}$-valued. By choosing $\varphi\otimes\varphi^\vee\in \Pi_v^\circ$ such that $\langle \varphi,\varphi^\vee\rangle_v\neq 0$ and using the above multiplicity one result, it is easy to show that some open subgroups of $\Aut(\C/ \Q(\pi_v))$  fix $\Pi_v^\circ$ pointwise, where $\Aut(\C/\Q(\pi_v))$ is equipped with the coarsest topology such that the natural map $\Aut(\C/\Q(\pi_v))\to \Aut(\overline\Q / \Q(\pi_v))$ is continuous.  It follows that 
$
\Pi_v^{\Aut(\C/\overline{\Q})} = \Pi_v^\circ\otimes \overline{\Q}.
$

Hence by \cite[Proposition 11.1.6]{Sp09},
\[
\Pi_v^{\Aut(\C/\Q(\pi_v))} = \left(\Pi_v^{\Aut(\C/\overline{\Q})}\right)^{\Aut(\overline{\Q}/\Q(\pi_v))}
\]
is a $\Q(\pi_v)$-form of $\Pi_v$, which is the unique $\Q(\pi_v)$-form  satisfying the lemma. 
\end{proof}

For $v\nmid\infty$, define the $\Q(\pi_v)$-form  of $\Pi_v$ as in Lemma \ref{lem:ratggp}, which induces an ${\sf E}$-form  of $\Pi_v$ to be denoted by  $\Pi_v({\sf E})$. 
Recall the normalized zeta integral $\CP_v^\circ$. 

\begin{lem}\label{lem:autggp}
For every finite place $v$ of $\rk$, the linear functional $\CP^\circ_v(\chi_v, \cdot)$ is defined over $\Q(\Pi_v,\chi_v)$.
\end{lem}

\begin{proof}
By the definitions, this follows easily from the $\Aut(\C)$-equivariance of the local L-factors in $\CL(\frac{1}{2}, \pi_v\times\chi_v)$, which is
proved in \cite[Lemma 2.5]{Liu23}.
\end{proof}

We  need the following result for the split case. Assume that  $v$ is a place of $\rk$ that splits in $\rk'$ and fix an isomorphism $\rk_v\otimes_\rk \rk' \cong  \rk_v\times \rk_v$. Via the first factor $\rk_v$ we have an identification $\RG_{0,v} = \GL_n(\rk_v)\times \GL_{n-1}(\rk_v)$  (with respect to certain bases) and a diagonal embedding of $\GL_{n-1}(\rk_v)$ into $\RG_{0,v}$.

Similar to Section \ref{ssec:RSI}, let $\RU_v$ be the upper triangular maximal unipotent subgroup of $\RG_{0,v}$, and let $\psi_{\RU_v}$ be the generic character of $\RU_v$ defined using the character $\psi_v$ of $\rk_v$ which is trivial on $\dot\RU_v:=\RU_v\cap \GL_{n-1}(\rk_v)$. By the temperedness assumption,  $\pi_v$ is generic unitary. Following \cite[(3.2)]{Zh14b}, we fix the Whittaker models 
\[
\pi_v \subset \Ind^{\RG_{0,v}}_{\RU_v}\psi_{\RU_v},\quad \pi_v^\vee \subset \Ind^{\RG_{0, v}}_{\RU_v} \psi_{\RU_v}^{-1}
\]
such that the pairing $\la \cdot, \cdot \ra_v$ on $\pi_v\boxtimes \pi_v^\vee$ is given by
\be \label{zhpairing}
\la \varphi, \varphi^\vee \ra_v = \int_{\RU_v\bs \RQ_v}\varphi(g) \varphi^\vee(g)\od\!g.
\ee
Here $\RQ_v$ is a mirabolic subgroup of $\RG_{0,v}$ containing $\RU_v$,  and $\od\!g$ is a fixed Borel measure on $\RU_v\bs \RQ_v$ that is rational if $v$ is finite. Note that the integral \eqref{zhpairing} converges absolutely. 

Fix the similar Haar measure on $\dot\RU_v$ as in Section \ref{ssec:OOI}, and for $\tau\in \RM(\GL_{n-1}(\rk_v))$ denote by $\bar\tau
\in \RM(\dot\RU_v\bs \GL_{n-1}(\rk_v))$ the quotient of $\tau$ by the fixed measure on $\dot\RU_v$. 
The following is \cite[Proposition 4.10]{Zh14b} (stated as \cite[Lemma 2.10]{Liu23}).

\begin{lem} \label{lem:zhang}
Let  $v$ be a place of $\rk$ that splits in $\rk'$, $\chi_v'\in \CX_v$ a unitary character, and $\tau\in \RM(\GL_{n-1}(\rk_v))$. Then there is a  constant $c_{\tau}$ only depending on $\tau$ such that 
\[
\begin{aligned}
& \int_{\GL_{n-1}(\rk_v)} \chi_v'(\det g) \la\pi_v(g)\varphi, \varphi^\vee\ra_v \od\!\tau(g) \\
=\, & c_{\tau} \int_{\dot\RU_v\bs \GL_{n-1}(\rk_v)}\chi_v'(\det g)\varphi(g)  \od\!\bar\tau(g)   \int_{\dot\RU_v\bs \GL_{n-1}(\rk_v)}\chi_v'^{-1}(\det g)\varphi^\vee(g) \od\!\bar\tau(g)
\end{aligned}
\]
for all $\varphi\otimes \varphi^\vee\in \Pi_v$, 
where both sides converge absolutely. Moreover if $v$ is finite and 
$\tau_v\in \RD(\GL_{n-1}(\rk_v))$, then $c_{\tau}$ is rational. 
\end{lem}

\begin{prpl}  \label{lem:unr}
There is a family
\be \label{familyggp}
\{\phi_v^\circ \in \Pi_v\otimes \RD(\dot\RH_v\bs \dot\RG_v)\}_{v\nmid \infty}
\ee
such that 
\begin{itemize}
    \item for all $v \nmid \infty$, $\phi^\circ_v\in \Pi_v({\sf E})\otimes \RD(\dot\RH_v \bs \dot\RG_v)$ and $\CP^\circ_v(\,\cdot\,, \phi^\circ_v)=1$ on $\CX_v^{\rm un}$;
    
    \item for all but finitely many $v\nmid\infty$, $\phi_v^\circ$ is the fixed spherical vector used in the restricted tensor product $\Pi\otimes \RM(\dot\RH(\A_\rk)\bs \dot\RG(\A_\rk)) = \widehat\otimes'_v(\Pi_v\otimes\RM(\dot\RH_v\bs \dot\RG_v))$.
\end{itemize}
\end{prpl}

\begin{proof}
This follows from \cite[Theorem 2.12]{Ha14}, \cite[Proposition 2.11]{Liu23} and Lemmas \ref{lem:autggp} and \ref{lem:zhang}. 
\end{proof}

\subsection{Archimedean modular symbols}  \label{ssec:AMSMggp}
Take $K_\infty$ to be a maximal compact subgroup of $\mathsf G(\R)$ such that the corresponding Cartan involution preserves $\dot{\mathsf G}(\R)$. Then  $\dot K_\infty:=K_\infty\cap \dot{\mathsf G}(\R)$ is a maximal compact subgroup of  $\dot{\sf G}(\R)$.   There is a geometrically irreducible algebraic representation $ {\sf V}_\pi\boxtimes {\sf V}_\pi^\vee$ of ${\sf G}_{\sf E}$ such that the total relative Lie algebra cohomology 
\[
\oH^\bullet(\g_\C, K_\infty^\circ; ({\sf V}_\pi\boxtimes {\sf V}_\pi^\vee)\otimes \Pi_\infty)\neq \{0\},
\]
where ${\sf V}_\pi={\sf F}_\mu^\vee\boxtimes {\sf F}_\nu^\vee$ and $(\mu, \nu)\in (\Z^n)^{\CE_\rk}\times (\Z^{n-1})^{\CE_\rk}$ is as in Section \ref{ssec:RSGL}. 
Suppose that  ${\sf V}={\sf V}_\pi\boxtimes {\sf V}_\pi^\vee$ (which is also viewed as  a representation of ${\sf G}(\Q)$), so that $V=E\otimes({\sf V}_\pi\boxtimes {\sf V}_\pi^\vee)$ as a representation of $G={\sf G}(\Q_p)
\subset {\sf G_E}(E)$.

\begin{dfnl} \label{critggp}
 (a) A character $\chi_\infty$ of  $\RU_1(\rk_\infty)$ is said to be critical for $\pi$ if 
it is algebraic and $s=\frac{1}{2}$ is a pole of neither $\CL(s, \pi_\infty\times \chi_\infty)$ nor $\CL(1-s, \pi_\infty^\vee\times \chi_\infty^{-1})$.

\noindent (b) An automorphic character $\chi:\RU_1(\rk)\bs\RU_1(\A_\rk)\to \C^\times$ is said to be critical for $\pi$  if so is its archimedean component $\chi_\infty$.
\end{dfnl}

\begin{leml}\label{lem:crit}
(a) All ${\sf V}$-balanced characters  $\chi_\infty$ of $\RU_1(\rk_\infty)$
are critical for $\pi$.

\noindent (b) All algebraic automorphic characters $\chi: \RU_1(\rk)\bs\RU_1(\A_\rk)\to \C^\times$ are critical for $\pi$.
\end{leml}

\begin{proof}
 By \cite{Mok15, KMSW14, Ram18} (\cf \cite[Remark 1.1.4.1]{BPCZ22}), a weak base-change is automatically a strong base-change, hence $\tilde\pi_\infty$ is also regular algebraic. 
It follows easily from the classical branching rule that $\chi_\infty$ is $\sf V$-balanced if and only if $\tilde\chi_\infty:={\rm BC}(\chi_\infty)$ is ${\sf V}_{\tilde\pi}$-balanced in the sense of Section \ref{sec:rs}, where ${\sf V}_{\tilde\pi}^\vee$ is the rational form of the coefficient system of $\tilde\pi$. Part (a)  follows  from  this and the fact that $\chi_\infty$ is critical for $\pi$ if and only if $\tilde\chi_\infty$ is critical for $\tilde\pi$.

Part (b) follows from the fact that $\tilde\pi_\infty$ is tempered and $\chi$ is unitary. 
\end{proof}

Recall the ${\sf G}^\natural$-homomorphism \eqref{embcoh} whose image is defined over ${\sf E}$ (see \cite[1.4.2]{GL21}), where $\Phi$ is the family of all compact subsets of ${\sf G}(\Q)\bs \mathscr X$. The  bottom degree component of $\oH^\bullet(\g_\C, K_\infty^\circ; {\sf V}\otimes \Pi_\infty)$ 
is 
\[
\Pi_\infty' = \oH^{i_0}(\g_\C, K_\infty^\circ; {\sf V}\otimes \Pi_\infty),\quad \textrm{where}\quad i_0= \dim (\dot{\sf G}(\R)/\dot{K}_\infty^\circ).
\]
Similarly let $\pi_\infty'$ and ${\pi_\infty^\vee}'$  be the bottom degree relative Lie algebra cohomologies
 of ${\sf V}_{\pi}\otimes\pi_\infty$ and ${\sf V}_\pi^\vee\otimes\pi_\infty^\vee$ respectively, so that 
 \be \label{piinf'}
 \Pi_\infty' = \pi_\infty' \boxtimes {\pi_\infty^\vee}' \cong \bigoplus_{\varepsilon_\infty, \varepsilon_\infty' \in \widehat{\RU_1(\rk_\infty)^\natural}}
 \varepsilon_\infty\boxtimes \varepsilon_\infty'
 \ee
as representations of $\dot K_\infty^\natural = \RU_1(\rk_\infty)^\natural\times  \RU_1(\rk_\infty)^\natural$.
In view of Lemma \ref{lem:ratggp}, we have the ${\sf E}$-form  $\Pi_\infty'({\sf E})$ as in \eqref{Piinf'}.

Recall that ${\sf E}_0:=\Q_p\cap {\sf E}$ and an isomorphism $\rk_{\sf E_0}\otimes_\rk \rk' \cong \rk_{\sf E_0}\times \rk_{\sf E_0}$ is fixed. By using the first factor of the product, we have an identification 
\[
{\sf G}_{\sf E_0} =\Res_{\rk_{\sf E_0}/{\sf E}_0} (\GL_{n}\times \GL_{n-1}\times \GL_n \times \GL_{n-1})\quad \text{(with respect to certain bases)}.
\]
 Let $\overline{\sf B}_{\sf E_0}$ be the Borel subgroup of lower triangular matrices in ${\sf G}_{\sf E_0}$, and let 
 \be \label{transP0}
{\sf P}_{\sf E_0} := z'^{-1} \overline{\sf B}_{\sf E_0} z', 
 \ee
where $z':= (z, z)$ and $z$ is as in \eqref{eletz}. Let ${\sf B}_{\sf E}$ be the Borel subgroup of upper triangular matrices in ${\sf G}_{\sf E}$. Similar to \cite{LLS24}, by using algebraic induction from ${\sf B}_{\sf E}$ we realize ${\sf V}$ as a space of algebraic functions on ${\sf G}_{\sf E}$. Let 
${\sf v}\in {\sf V}^{\overline{\sf U}_{\sf E}}$ be the unique algebraic function in ${\sf V}$ which equals 1 
on $\overline{\sf U}_{\sf E}$, where $\overline{\sf U}_{\sf E}$ is the lower triangular maximal unipotent subgroup of ${\sf G}_{\sf E}$.

Suppose that $P={\sf P}_{\sf E_0}(\Q_p)$ so that $N$ is its unipotent radical, and that $\lambda_0 \in \Hom_E(V^\n, E)$ is the unique generator defined over ${\sf E}$ such that $\langle \lambda_0, z'^{-1}{\sf v}\rangle =1$. 
For every  $\sf V$-balanced algebraic character $\sf w$ of $\mathsf Z_{\sf E}$,
 let $\lambda_{\sf V, w}\in \Hom_{\dot{\sf G}(\Q)}({\sf E_w}\otimes{\sf V}, {\sf E})$ be as in Lemma \ref{lambda01}.
Then for all algebraic characters  $\chi_\infty$ 
of $\RU_1(\rk_\infty)$ of weight ${\sf w}^{-1}$, we have the archimedean modular symbol map 
\[
\widehat{\mathcal P}_\infty^{\lambda_{\sf V, w}} : \oH^0(\z_\C,K_{\mathsf Z,\infty}^\circ; \C_{\sf w_\infty}\otimes\chi_\infty) \times\left(\Pi_\infty' \otimes \mathrm M 
 (\dot{\mathsf H}(\R))^\vee\otimes \mathrm{O}(\dot{\mathsf G}(\R)/\dot{K}_\infty^\circ)\right) \to \C
\]
defined as in \eqref{AMS}. The commutative diagram \eqref{cdm} is then a consequence of the Ichino-Ikeda conjecture (Theorem \ref{iic}).

Let $\Pi_{0,\infty} = \pi_{0,\infty}\boxtimes \pi_{0,\infty}$ be the irreducible 
tempered Casselman-Wallach  representation 
of ${\sf G}(\R)$ whose infinitesimal character equals that of the trivial representation and 
whose central character equals that of 
${\sf V}\otimes \Pi_\infty$. Define the cohomology group $\Pi_{0,\infty}'$ as in \eqref{Piinf'}, so that $\Pi_{0,\infty}' = \pi_{0,\infty}'\boxtimes \pi_{0,\infty}'$ as in \eqref{piinf'}. 
Let $\lambda_0' \in \Hom_{\sf U_E}({\sf V}, {\sf E})$ be the unique generator such that $\langle \lambda_0', {\sf v}\rangle =1$. Following 
\cite{LLS24}, with the fixed Whittaker functionals and $\lambda_0'$ we have the translation map 
\[
 \jmath_{\sf V}= (\jmath_\mu\otimes \jmath_\nu)\otimes (\jmath_{\mu^\vee}\otimes \jmath_{\nu^\vee}): \Pi_{0,\infty}' \to \Pi_{\infty}',
\]
where $\mu^\vee$ and $\nu^\vee$ denote the highest weights of ${\sf F}_\mu^\vee$ and 
${\sf F}_\nu^\vee$ respectively. 
As a specialization of \eqref{AMS}, we have a map 
\[
 \widehat\CP_\infty: \oH^0(\z_\C,K_{\mathsf Z,\infty}^\circ; \varepsilon_\infty) \times\left(\Pi_{0,\infty}' \otimes \mathrm M 
 (\dot{\mathsf H}(\R))^\vee\otimes \mathrm{O}(\dot{\mathsf G}(\R)/\dot{K}_\infty^\circ)\right) \to \C
\]
for every $\varepsilon_\infty \in \widehat{\RU_1(\rk_\infty)^\natural}$.

Define  $\widehat{\CP}^\circ_\infty$ in \eqref{NAMS} to be the map such that for every $\varepsilon_\infty \in \widehat{\RU_1(\rk_\infty)^\natural}$, $\widehat{\CP}^\circ_\infty(\varepsilon_\infty, \,\cdot\,)$ equals  the composition of 
\begin{eqnarray*}
&&  \Pi_{\infty}' \otimes\mathrm M 
 (\dot{\mathsf H}(\R))^\vee\otimes \mathrm{O}(\dot{\mathsf G}(\R)/\dot{K}_\infty^\circ) \\
& \xrightarrow{(1, \, \jmath_{\sf V}^{-1}\otimes {\rm id}\otimes {\rm id})} & 
\oH^0(\z_\C,K_{\mathsf Z,\infty}^\circ; \varepsilon_\infty) \times \left(\Pi_{0, \infty}' \otimes\mathrm M (\dot{\mathsf H}(\R))^\vee\otimes \mathrm{O}(\dot{\mathsf G}(\R)/\dot{K}_\infty^\circ)\right)  \\
 & \xrightarrow{\widehat\CP_\infty} & \C.
 \end{eqnarray*}

We have the following archimedean nonvanishing hypothesis and period relations.  

\begin{thml}  \label{nonvanggp}
(a) There is an element 
\[
 \widehat \phi^\circ_\infty\in \Pi_\infty'({\sf E})\otimes  \RD  
 (\dot{\mathsf H}(\R))^\vee\otimes \mathrm{O}(\dot{\mathsf G}(\R)/\dot{K}_\infty^\circ)
\]
such that $\widehat\CP_\infty^\circ(\varepsilon_\infty, \widehat\phi^\circ_\infty)\neq 0$ for all $\varepsilon_\infty\in \widehat{\rk_\infty^{\times,\natural}}$. 

\noindent (b)
Let $\lambda_{\sf V, w}\in \Hom_{\dot{\sf G}(\Q)}({\sf E_w}\otimes{\sf V}, {\sf E})$ be as in Lemma \ref{lambda01}, where $\sf w$ is a  $\sf V$-balanced algebraic character of $\mathsf Z_{\sf E}$. Then
\[
\widehat\CP^{\lambda_{\sf V, w}}_\infty(1, \,\cdot\,) = 
\widehat\CP_\infty^\circ({\sf w}_\infty \chi_\infty, \,\cdot\,)
\]
for all algebraic characters $\chi_\infty$ of $\RU_1(\rk_\infty)$ of weight ${\sf w}^{-1}$. 
\end{thml}

\begin{proof}
Define $\Upsilon_{\pi_\infty'}(\chi_\infty)$ 
as in \eqref{MFinfrs}. Define $\Upsilon_{{\pi_\infty^\vee}'}(\chi_\infty^{-1})$ similarly, but with ${\rm i}$ replaced by $-{\rm i}$ (see \cite[Theorem 1.6]{JLS24}).
 By Theorem \ref{thm:sun} and Lemma \ref{lem:zhang}, it suffices to show that 
\be \label{sgneq}
\Upsilon_{\pi_\infty'}(\chi_\infty)\cdot \Upsilon_{{\pi_\infty^\vee}'}(\chi_\infty^{-1}) = 1. 
\ee
It is easy to verify that 
\[
\begin{aligned}
\Upsilon_{\pi_\infty'}(\chi_\infty)\cdot \Upsilon_{{\pi_\infty^\vee}'}(\chi_\infty^{-1})=\ & {\rm i}^{-\sum_{\iota\in \CE_\rk}\sum^{n-1}_{i=1}(n-i)(\mu^\iota_i +\mu^{\bar\iota}_{n+1-i} + \nu^\iota_i + \nu^{\bar\iota}_{n-i})}\\
& \cdot (-1)^{\sum_{\iota\in\CE_\rk}\sum_{i>j,\, i+j\leq n}(\mu^\iota_i -\mu^{\bar\iota}_{n+1-i}+\nu^\iota_j-\nu^{\bar\iota}_{n-j})},
\end{aligned}
\]
where for every $\iota\in \CE_\rk$, $\bar\iota$ denotes the composition of
\[
\rk\xrightarrow{\iota}\overline\Q\xrightarrow{\text{complex conjugation}}\overline\Q.
\]
Note that $\mu$ and $\nu$ are pure of weight $0$ in the sense of \cite{Cl90}, namely $\mu^\iota_i + \mu^{\bar\iota}_{n+1-i}=0$, $i=1,2,\dots, n$ and 
$\nu^\iota_j + \nu^{\bar\iota}_{n-j}=0$, $j=1,2,\dots, n-1$
for all $\iota\in\CE_\rk$. Then \eqref{sgneq} follows immediately.
\end{proof}

Following Definition \ref{df:period}, 
define the Bessel periods 
\be \label{bessel-per}
\Omega_{\Pi}(\varepsilon_\infty):= \left(\widehat\CP_\infty^\circ(\varepsilon_\infty, \widehat\phi_\infty^\circ)\right)^{-1},\quad \varepsilon_\infty \in \widehat{\RU_1(\rk_\infty)^\natural}.
\ee

\subsection{Open orbit integrals and normalized refined period} 
 Assume that $\Pi_p'  \subset\CB_P(\Pi_p)$ is a nearly ordinary refinement of $\Pi_p$ defined over ${\sf E}$, which is also viewed a character of $P$ that descends to a character of $L=P/N$. Note that
 $P = z'^{-1} \overline{B} z'$, where  $\overline B$ is the  Borel subgroup of lower triangular matrices in $G$. Define a character
\[
\kappa = \Pi_p' \circ {\rm Ad}(z'^{-1}): \overline B \to {\sf E}^\times,
\]
so that $\Pi_p = \pi_p\boxtimes \pi_p^\vee$ is isomorphic to  a quotient representation of 
\[
I(\tilde\kappa):=\Ind^{G}_{\overline B}(\tilde\kappa),\quad \text{where}\quad \tilde\kappa:=\kappa\otimes \delta_{\overline B}^{1/2}.
\]
Recall that by MVW involution,  $\kappa$ is of the form \eqref{MVW}.

Similar to \eqref{Lambdaint2}, we have the open orbit integral map 
\[
\begin{array}{rcl}
\Lambda_{p}: \CX_p\times \left(I(\tilde\kappa)\otimes \RM(\dot G)\right)& \rightarrow &\C\cup\{\infty\},\\
   (\chi_p', f  \otimes  \tau') &\mapsto & \int_{ \dot G} \chi_p'(\jmath(g)) f(z' g)\od\!\tau'(g),
\end{array}
\]
defined by meromorphic continuation of absolutely convergent integrals. It naturally extends to a map
\[
\Lambda_p: \CX_p\times \left(\widehat{I(\tilde\kappa)}_{\p\mathrm{-sm}}\otimes \RM(\dot G)\right)\to \BC\cup\{\infty\}.
\]

For all $\wp\mid p$, let $\lambda_\wp'\in \Hom_{\RU_\wp\times\RU_\wp}(I(\tilde\kappa_\wp), \psi_{\RU_\wp}\boxtimes \psi_{\RU_\wp}^{-1})$ be given by the Jacquet integrals as in \eqref{jacint}. Fix the surjective $G$-homomorphism
\be \label{surjhomggp}
\xi_p: I(\tilde\kappa) \twoheadrightarrow \Pi_p, \quad f\mapsto (g\mapsto \langle \otimes_{\wp\mid p}\lambda_\wp', g.f\rangle, \quad g\in G),
\ee
which naturally extends to a surjective $G$-homomorphism 
\be \label{surjhom0ggp}
\xi_p: \widehat{I(\tilde\kappa)} \twoheadrightarrow \widehat{\Pi_p}.
\ee
The subspace $\widehat{I(\tilde\kappa)}_{z'}$ of the generalized functions supported in $\overline B z'$ is one-dimensional and the map
\eqref{surjhom0ggp} restricts to a $P$-isomorphism 
\[
\xi_p: \widehat{I(\tilde\kappa)}_{z'} \xrightarrow{\sim} \Pi_p'.
\]
Moreover, for every generator $\widehat f\otimes \tau'$ of $\widehat{I(\tilde\kappa)}_{z'} \otimes \RM(\dot G)$, $\Lambda_p(\,\cdot\, , \widehat f\otimes \tau')$ is a constant function on $\CX_p$ with values in $\C^\times$. 

Define the normalized refined period map to be the composition 
\begin{eqnarray*}
\widehat\CP_p^\circ &:& \CX_p \times \left(\Pi_p' \otimes \RM(\dot H \bs \dot G)\right)  \\
& \xrightarrow{\xi_p^{-1}} & \CX_p \times \left( \widehat{I(\tilde\kappa)}_{z'}\otimes \RM(\dot H\bs \dot G)\right) \\ 
& \xrightarrow{\tau\mapsto \, c_\tau \cdot \tau\otimes\tau} & \CX_p \times 
\left( \widehat{I(\tilde\kappa)}_{z'}\otimes \RM(\dot G)\right) \\
& \xrightarrow{\Lambda_p} & \C,
\end{eqnarray*}
where in the second arrow we use the identifications 
\[
\RM(\dot H\bs \dot G)=\RM(\GL_{n-1}(\rk_p))\quad\textrm{and}\quad \RM(\dot G)=\RM(\GL_{n-1}(\rk_p))\otimes \RM(\GL_{n-1}(\rk_p)),
\]
and $c_\tau$ is given by Lemma \ref{lem:zhang}. More precisely, if $\tau = \otimes_{\wp\mid p}\tau_\wp$ where $\tau_\wp\in \RM(\dot\RH_\wp \bs \dot \RG_\wp)$, then $c_\tau=\prod_{\wp\mid p}c_{\tau_\wp}$ for the constants $c_{\tau_\wp}$ as in Lemma \ref{lem:zhang}.

\subsection{Rational test vectors and modifying factors at $p$}
In what follows we define a rational test vector $\widehat\phi_p^\circ\in \Pi_p'({\sf E})\otimes \RD(\dot H \bs \dot G)$. 

\begin{leml} \label{lem:ggprat}
For every finite place $v$ of $\rk$ that splits in $\rk'$, the pairing \eqref{zhpairing} on $\pi_v \boxtimes \pi_v^\vee$ is rational with respect to the rational forms of the Whittaker models $\pi_v\subset \Ind^{\RG_{0,v}}_{\RU_v}\psi_{\RU_v}$ and $\pi_v^\vee\subset \Ind^{\RG_{0,v}}_{\RU_v}\psi_{\RU_v}^{-1}$ given by \eqref{autc}. 
\end{leml}

\begin{proof}
 Take $\varphi_v\in \pi_v$, $\varphi_v^\vee\in \pi_v^\vee$ and $\sigma\in \Aut(\C)$. Recall the element ${\bf t}_{\sigma, \ell}\in \RG_0(\rk_v)$ as in \eqref{tl}, where $\ell$ is the residue characteristic of $\rk_v$.  In fact 
 ${\bf t}_{\sigma, \ell}\in \RQ_v$. From \eqref{autc} we find that 
\[
\begin{aligned}
\la {}^\sigma\varphi_v, {}^\sigma \varphi_v^\vee\ra_v & = \int_{\RU_v\bs \RQ_v}\sigma(\varphi_v({\bf t}_{\sigma, \ell}\cdot g_v) \varphi_v^\vee({\bf t}_{\sigma, \ell}\cdot g_v)) \od\!g_v \\
& = \sigma(\la \varphi_v, \varphi_v^\vee\ra_v),
\end{aligned}
\]
which finishes the proof.
\end{proof}

For every $\wp\mid p$, define an action of $\Aut(\C/\Q(\kappa_\wp))$ on $I(\tilde\kappa_\wp)$ by
\[
{}^\sigma f(g) :=\kappa_\wp({\bf t}_{\sigma, p}, {\bf t}_{\sigma, p})\cdot\sigma(f(g)),\quad \sigma\in \Aut(\C/\Q(\kappa_\wp)),\ f\in I(\tilde\kappa_\wp), \ g\in \RG_\wp.
\]
Then by \eqref{gauss} we have that
\[
I(\tilde\kappa_\wp)^{\Aut(\C/\Q(\kappa_\wp))}=\{
f\in I(\tilde\kappa_\wp) \,:\, \text{$\omega_{\psi_\wp}(\kappa_\wp)\cdot f$ is $\Q(\kappa_\wp)$-valued}\},
\]
which is a $\Q(\kappa_\wp)$-form  of $I(\tilde\kappa_\wp)$, where  $\omega_{\psi_\wp}(\kappa_\wp)$ is in \eqref{omegapi'ggp}. By tensor products, we have a $\Q(\pi_p)$-form  of $\Pi_{p}$ as well as a $\Q(\kappa)$-form of  $I(\tilde\kappa)$. 

\begin{prpl} \label{pratggp}
One has that $\Q(\pi_p)\subset \Q(\kappa)$ and the map $\xi_p$ in \eqref{surjhomggp} is $\Q(\kappa)$-rational. 
\end{prpl}

\begin{proof}
    This follows from  Lemma \ref{lem:ggprat} and the proof of Proposition \ref{prat}.
\end{proof}

Let $\widehat\phi_p$ denote the generator of $\Pi_p' \otimes \RM(\dot H\bs \dot G)$ such that $\widehat\CP_p^\circ(\,\cdot\, ,\widehat\phi_p)$ equals the constant function $1$ on $\CX_p$. Write $\widehat\phi_p = \phi_p' \otimes \tau$ such that $\phi_p'\in \Pi_p'$
and $\tau\in \RD(\dot H\bs \dot G)$. 
Proposition \ref{pratggp} implies that
\[
\Omega_{\Pi_p'}^{-1}\cdot\phi_p' = \prod_{\wp\mid p}\omega_{\psi_p}(\kappa_\wp)^{-1}\cdot \phi_p'   \in \Pi_p'({\sf E}),
\]
where $\Omega_{\Pi_p'}$ is in \eqref{omegapi'ggp}. Define
\be \label{ptestggp}
\widehat\phi_p^\circ:= \Omega_{\Pi_p'}\cdot \widehat\phi_p =\Omega_{\Pi_p'}^{-1}\cdot\phi_p' \otimes \tau\in \Pi_p'({\sf E})\otimes \RD(\dot H\bs \dot G).
\ee
Then 
\[
\widehat\CP_p^\circ(\chi_p', \widehat\phi_p^\circ) = \Omega_{\Pi_p'}^{-1} \quad \text{for all }\chi_p'\in \CX_p. 
\]

Recall the modifying factor $\Upsilon_{\Pi_p'}(\chi_p)$ at $p$ in \eqref{MFpggp}. 

\begin{prpl} \label{ggp:st}
It holds that 
\[
\CP_p^\circ(\chi_p', \widehat\phi_p^\circ) =\Upsilon_{\Pi_p'}(\chi_p')\cdot \Omega_{\Pi_p'}^{-1}\quad \text{for all }\chi_p'\in \CX_p.
\]
Consequently $\Upsilon_{\Pi_p'}$ is algebraic on $\CX_p$. 
\end{prpl}

\begin{proof}
By Theorem \ref{thm:llss} and Lemma \ref{lem:zhang}, it is easy to see that the proposition follows from the equality
\[
 \prod_{i>j, \, i+j\leq n}(\kappa_{i,\wp}\kappa_{j,\wp}'\chi_\wp)(-1) \cdot \prod_{i>j, \, i+j\leq n} (\kappa_{n+1-i,\wp}^{-1} \kappa_{n-j,\wp}'^{-1}\chi_\wp^{-1})(-1) = \chi_{\pi_{n-1,\wp}}(-1)^n.
\]
By elementary calculations, the left hand side equals $\prod^{n-1}_{j=1}\kappa_{j,\wp}'(-1)^n$, which is clearly $\chi_{\pi_{n-1,\wp}}(-1)^n$.
\end{proof}

\subsection{$p$-adic L-functions and exceptional zeros}  Now we define an $E$-valued $p$-adic measure $\mathscr L_\Pi$ on $\CC_{\rk'}^-(p^\infty)$. 
Suppose that $Z_0$ and   $\varepsilon = \otimes_{v\nmid\infty p}\varepsilon_v$ in \eqref{ram} are both trivial.

\begin{itemize}
\item For  $v\nmid\infty p$, let
$
\phi_v^\circ\in \Pi_v({\sf E})\otimes\RD(\dot\RH_v\bs\dot\RG_v)
$
be as in Proposition \ref{lem:unr} such that
\[
\CP_v^\circ(\,\cdot\,, \phi_v^\circ)=1 \quad \textrm{on }\CX_v^{\rm un}.
\]

\item  Let $\lambda_0 \in  \Hom_E(V^\n, E)$ and $\widehat \phi^\circ_\infty\in \Pi_\infty'({\sf E})\otimes  \RD  
 (\dot{\mathsf H}(\R))^\vee\otimes \mathrm{O}(\dot{\mathsf G}(\R)/\dot{K}_\infty^\circ)$ be as in Section \ref{ssec:AMSMggp} such that 
 \[
 \widehat\CP_\infty^{\lambda_{\sf V, w}}(1,\widehat\phi^\circ_\infty)  = \Upsilon_{\Pi_\infty'}\cdot \widehat\CP_\infty^\circ({\sf w}_\infty\chi_\infty, \widehat\phi^\circ_\infty)
  = \frac{\Upsilon_{\Pi_\infty'}}{\Omega_\Pi({\sf w}_\infty\chi_\infty)}
 \]
 for all ${\sf V}$-balanced characters ${\sf w}$ and all algebraic characters $\chi_\infty$ of weight ${\sf w}^{-1}$, where  $\lambda_{\sf V, w}\in \Hom_{\dot{\sf G}_{\sf E}}(\sf E_{\sf w}\otimes {\sf V}, {\sf E})$ is the generator such that $\lambda_{\sf V, w}|_{V^\n} = \lambda_0$.

\item Let $ \widehat\phi_p^\circ \in \Pi_p'({\sf E}) \otimes \RD(\dot H\bs \dot G)$ be as \eqref{ptestggp}.
\end{itemize}
Using all the local test vectors, let
\[
\widehat\phi^\circ:= \widehat\phi_\infty^\circ \otimes \widehat\phi_p^\circ \otimes (\otimes_{v\nmid\infty p} \phi_v^\circ)
\in \mathscr{H}\otimes \RD(\dot{\sf G}, \dot\p),
\]
and following Definition \ref{df:padicL} let
\[
\mathscr{L}_\Pi:=   \mathcal L_{\varepsilon \otimes  \mathscr H } :=(\mathcal L_{\lambda_0\otimes \widehat \phi^\circ})|_{\con({\sf Z}^{\flat,\{0\}}, E)}.
\]

We restate  Theorem \ref{padicLggp0} below, which is now an immediate consequence of the above results and \eqref{pintp}.

\begin{thml} \label{padicLggp}
Let the notations and assumptions be as above. 
Then 
\[
\mathscr L_\Pi(\chi^\flat) = \frac{\Upsilon_{\Pi_\infty'}\cdot \Upsilon_{\Pi_p'}(\chi_p)\cdot \CL_\Pi(\chi)}{\Omega_{\Pi_p'}\cdot\Omega_\Pi({\sf w}_\infty \chi_\infty)}
\]
for all ${\sf V}$-balanced  automorphic characters $\chi=\otimes_\ell \chi_\ell : \RU_1(\rk)\bs \RU_1(\BA_{\rk})\to\BC^\times$ unramified outside $\infty p$, where ${\sf w}$ is the inverse of the weight of $\chi$.
\end{thml}

Similar to Proposition \ref{excep0rs}, we can also determine the exceptional zeros of $\mathscr L_\Pi$. 

\begin{prpl} \label{excep0ggp}
Under the assumptions in Theorem \ref{padicLggp}, $\chi^\flat$  is an exceptional zero of $\mathscr L_\Pi$ if and only if there exist  $\wp\mid p$ and $1\leq i<n$ such that 
\[
\chi_\wp = \kappa_{i,\wp}^{-1} \cdot \kappa_{n-i,\wp}'^{-1} \cdot\abs{\,\cdot\,}_\wp \quad \textrm{or} \quad  \kappa_{i+1,\wp}^{-1}\cdot \kappa_{n-i,\wp}'^{-1} \cdot \abs{\,\cdot\,}_\wp^{-1},
\]
where $\kappa_{i,\wp}$'s and $\kappa_{j,\wp}'$'s are as in \eqref{MFpggp}.
\end{prpl}

\begin{proof}
Recall that $\CL(\frac{1}{2}, \pi_\wp\times \chi_\wp)\in\C^\times$  because $\pi_\wp$ is tempered. Thus all the exceptional zeroes arise from the local $\gamma$-factors in \eqref{MFpggp}. The proposition follows easily
in view of Lemma \ref{lem:llss} and its proof.
\end{proof}

\section{Application III: Standard  L-functions  of symplectic type for $\GL_{2n}$}\label{sec:rs3}

In this section we retain the setup in Section \ref{ssec:stL} and construct the $p$-adic L-function $\mathscr L_\Pi$ in Theorem \ref{padicLst0} following 
Sections \ref{ssec:CLR}--\ref{ssec:PL}.
Then we determine the exceptional zeros of $\mathscr L_\Pi$.

\subsection{Friedberg-Jacquet integrals}  \label{ssec:FJI}
Let $\pi$ be an irreducible cuspidal automorphic representation of $\GL_{2n}(\A_\rk)$, and let $\eta: \rk^\times\bs \A_\rk^\times\to \C^\times$ be a Hecke character. Combining the 
works \cite{JS88, AS06, HS09}, it is known that the following 
statements are equivalent:
\begin{enumerate}
\item[(a)] the twisted exterior square L-function $\oL(s,  \pi, \wedge^2\otimes\eta^{-1})$ has a pole at $s=1$;
\item[(b)] $\pi$ is of symplectic type, namely it is the functorial transfer of an irreducible generic cuspidal
automorphic representation of ${\rm GSpin}_{2n+1}(\A_\rk)$ with central character $\eta$;
\item[(c)] $\pi$ has a nonzero $(\eta, \psi)$-Shalika integral. 
\end{enumerate}
If these equivalent statements hold, then $\pi^\vee\cong \pi\otimes \eta^{-1}$. 

Let us explain the statement (c). The Shalika subgroup of $\GL_{2n}$ is defined by
\[
\RS : = \Set{ \begin{bmatrix} g & gx \\  0 & g \end{bmatrix}  |  g \in \GL_n, \, x\in \RM_n },
\]
where $\RM_n$ denotes the space of $n\times n$ matrices. 
Recall the nontrivial additive character $\psi: \rk\bs \A_\rk \to \C^\times$ in \eqref{psi}. Let $\psi_\RS$ be the character 
\[
\psi_\RS: \RS(\rk) \bs \RS(\A_\rk) \to \C^\times,\quad \begin{bmatrix} g & gx \\  0 & g \end{bmatrix} \mapsto \eta(\det g) \psi( \tr\, x).
\]
Then we say that $\pi$ has a nonzero $(\eta, \psi)$-Shalika integral if $\eta^n$ equals the central character of $\pi$ and there exists $\varphi\in \pi$ such that
\[
\langle\lambda_\RS,\varphi\rangle: = \int_{(\RS(\rk) \A_\rk^\times)\bs \RS(\A_\rk)} \varphi(h)\cdot \psi_\RS^{-1}(h) \od\!h\neq 0,
\]
in which  case 
$
\lambda_\RS \in \Hom_{\RS(\A_\rk)}(\pi, \psi_\RS)
$
is a nonzero $\psi_\RS$-Shalika functional on $\pi$.

Assume that the above equivalent statements hold. Recall the algebraic group  ${\sf G} = ({\sf G}_{2n}\times {\sf G}_1)/{\sf G}_1'$ and the automorphic representation 
$\Pi=\pi\boxtimes \eta^{-1}$ of ${\sf G}(\A)$. We reformulate the Shalika functional as follows in order to fit the general theory in Section \ref{ssec:CLR}. Let $\RA_\RS\cong \GL_1$ be the center of $\RS$ and 
${\sf H}:={\sf S}/{\sf A_S}$, where ${\sf S}:=\Res_{\rk/\Q}\RS$ and ${\sf A_S} :=\Res_{\rk/\Q}\RA_\RS$. We identify ${\sf H}$
as a subgroup of ${\sf G}$ via 
\[
{\sf H}\to {\sf G},\quad \begin{bmatrix} g & gx\\ 0 & g\end{bmatrix}{\sf A_S}\mapsto \left(\begin{bmatrix} g & gx \\ 0 & g\end{bmatrix}, \det g\right){\sf G}_1'.
\]
Define the character 
\[
\psi_{\sf H} = \otimes_v \psi_{\RH_v}: {\sf H}(\Q)\bs {\sf H}(\A)\to\C^\times, \quad \begin{bmatrix} g & gx \\ 0 & g\end{bmatrix}{\sf A_S} \to \psi(\tr \, x),
\]
where $v$ runs over places of $\rk$, $\RH_v:=\RH(\rk_v)$, and $\RH:=\RS/\RA_\RS$.  Similar notations  such as $\RG_v$ and 
$\dot\RG_v$ will be used without explanation. 
Then we have the functional $\lambda_{\sf H}\in \Hom_{{\sf H}(\A)}(\Pi, \psi_{\sf H})$ as in Section \ref{ssec:CLR}. By the uniqueness of  twisted Shalika models \cite[Theorem A]{CS20} we have a factorization  
\[
\lambda_{\sf H}=\otimes_v \lambda_{\RH_v},  \quad\textrm{where}\ \ \lambda_{\RH_v}\in \Hom_{\RH_v}(\Pi_v,\psi_{\RH_v}). 
\]

Let  $\chi= \otimes_v\chi_v:\rk^\times\bs\A_\rk^\times\rightarrow \C^\times$ be a Hecke character. Following \cite{FJ93} we have the global Friedberg-Jacquet integral 
\[
\CP_\chi:  \Pi\otimes \RM(\dot{\sf G}(\Q)\bs \dot{\sf G}(\A))\to\chi^{-1}, \quad  f\otimes \tau\mapsto \int_{\dot{\sf G}(\Q)\bs \dot{\sf G}(\A)}\chi(\jmath(g))f(g)\od\!\tau(g),
\]
which converges absolutely. 

Let $\CX_v(\varepsilon_v)\subset \CX_v$ be as in Section \ref{ssec:RSI}, and define the normalized Friedberg-Jacquet integral
\[
\begin{aligned}
\CP^\circ_v: \CX_v\times \left(\Pi_v\otimes \RM(\dot\RH_v\bs \dot\RG_v)\right) & \to \C, \\
(\chi_v', f\otimes \tau)& \mapsto \frac{1}{\oL(\frac{1}{2}, \pi_v\otimes\chi_v')}\int_{\dot\RH_v\bs \dot\RG_v}
\chi_v'(\jmath(g))\langle \lambda_{\RH_v}, g.f\rangle\od\!\tau(g)
\end{aligned}
\]
by holomorphic continuation.  By \cite[Proposition 2.3]{FJ93},
\[
\CP_\chi = \oL(\frac{1}{2}, \pi\otimes\chi) \cdot \otimes_v\CP^\circ_v(\chi_v,\,\cdot\,).
\]

In the rest of this section, assume that $\pi$ is regular algebraic and $\chi$ is algebraic.  Let $v$ be a finite place of $\rk$ with residue characteristic $\ell$. Recall the cyclotomic character 
$\Aut(\C)\to \Z_\ell^\times$, $\sigma\mapsto t_{\sigma,\ell}$.  Put
\[
{\bf s}_{\sigma, \ell}: =\left( \begin{bmatrix} t^{-1}_{\sigma, \ell}\cdot 1_n & 0 \\ 0 &  1_n\end{bmatrix}, 1\right)\RG_{1,v}' \in \RG_v.
\]

Write $\eta=\otimes_v \eta_v
$ as usual. Then we have an action of $\Aut(\C/ \Q(\eta_v))$ on $\Ind^{\RG_v}_{\RH_v}\psi_{\RH_v}$ given by 
\be \label{autcSha}
{}^\sigma\varphi(g) := \sigma(\varphi({\bf s}_{\sigma, \ell}\cdot g)),\quad \varphi\in \Ind^{\RG_v}_{\RH_v}\psi_{\RH_v}, \  g\in \RG_v.
\ee
This defines the $\Q(\Pi_v)$-form of $\Pi_v \hookrightarrow \Ind^{\RG_v}_{\RH_v}\psi_{\RH_v}$ (see \cite{JST19}). The following result
is given by \cite[Theorem 3.1]{JST19}.

\begin{prpl} \label{napr:sha}
For every finite place $v$ of $\rk$, the linear functional
\[
\mathscr G_{\psi_v}(\chi_v)^n \cdot \CP_v^\circ(\chi_v, \,\cdot\,)
\]
is  defined over $\Q(\Pi_v, \chi_v)$. 
\end{prpl}

As before suppose that $ \Q(\Pi)\subset{\sf E}$. The $\Q(\Pi_v)$-form  of $\Pi_v$ induces an $\sf E$-form  of $\Pi_v$, to be denoted by $\Pi_v(\sf E)$. By \cite{FJ93} and Proposition \ref{napr:sha}, there is a family 
\[
\{\phi_v^\circ \in \Pi_v\otimes \RD(\dot\RH_v\bs \dot\RG_v)\}_{v\nmid \infty}
\]
such that 
\begin{itemize}
    \item for all $v\nmid \infty$, $\phi^\circ_v\in \Pi_v({\sf E})\otimes \RD(\dot\RU_v \bs \dot\RG_v)$ and $\CP^\circ_v(\cdot, \phi^\circ_v)$ takes a nonzero constant value $(\Omega_{\Pi_v}(\varepsilon_v))^{-1}$ on $\CX(\varepsilon_v)$, where $\Omega_{\Pi_v}(\varepsilon_v):=\mathscr G_{\psi_v}(\chi_v')^n$ for an arbitrary $\chi_v'\in \CX_v(\varepsilon_v)$;
    \item for all but finitely many $v\nmid\infty$, $\Omega_{\Pi_v}(\varepsilon_v)=1$ and $\phi_v^\circ$ is the fixed spherical vector used in the restricted tensor product $\Pi\otimes \RM(\dot\RH(\A_\rk)\bs \dot\RG(\A_\rk)) = \widehat\otimes'_v(\Pi_v\otimes\RM(\dot\RH_v\bs \dot\RG_v))$.
\end{itemize}

\subsection{Archimedean modular symbols} \label{ssec:AMSha}  Take $K_\infty=\mathsf A(\R)\cdot K_\infty'\subset \mathsf G(\R)$, where $\mathsf A$ is the largest central split  torus in $\mathsf G$ and $K_\infty'$ is the standard maximal compact subgroup. Take $\dot K_\infty':=K_\infty'\cap \dot{\mathsf G}(\R)$, which is the standard maximal compact subgroup of  $\dot{\sf G}(\R)$.  Recall that $\Q(\Pi)\subset{\sf E}$.  Then there is a geometrically irreducible algebraic representation ${\sf F}_\mu\boxtimes {\sf w}_\eta$ of ${\sf G}_{\sf E}$ 
such that the total relative Lie algebra cohomology 
\[
\oH^\bullet(\g_\C, K_\infty^\circ; ({\sf F}_\mu^\vee\boxtimes {\sf w}_\eta^{-1})\otimes \Pi_\infty)\neq \{0\},
\]
where  ${\sf w}_\eta$ is the weight of $\eta^{-1}$ and $\mu\in (\Z^{2n})^{\CE_\rk}$ is as in Section \ref{ssec:RSGL}. Suppose that ${\sf V} ={\sf F}_\mu^\vee\boxtimes {\sf w}_\eta^{-1}$ (which is also viewed as  a representation of ${\sf G}(\Q)$), so that $V= E\otimes ({\sf F}_\mu^\vee\boxtimes {\sf w}_\eta^{-1})$ as a representation of $G={\sf G}(\Q_p)\subset {\sf G_E}(E)$.
Recall the ${\sf G}^\natural$-homomorphism \eqref{embcoh}, where  $\Phi$  is the family of all compact subsets of ${\sf G}(\Q)\bs \mathscr X$. We have the degree $i_0$ component $\Pi_\infty'$ of $\oH^\bullet(\g_\C, K_\infty^\circ; {\sf V}\otimes \Pi_\infty)$
and its ${\sf E}$-form $\Pi_\infty'({\sf E})$ in \eqref{Piinf'}, where $i_0 = \dim(\dot{\sf G}(\R)/\dot K_\infty^\circ)$. 

\begin{dfnl} \label{critst}
 (a) A character $\chi_\infty$ of $\rk_\infty^\times$ is said to be critical for $\pi$ if 
it is algebraic and $s=\frac{1}{2}$ is a pole of neither $\oL(s, \pi_\infty\otimes\chi_\infty)$ nor $\oL(1-s, \pi_\infty^\vee\otimes\chi_\infty^{-1})$.

\noindent (b) A Hecke character $\chi$ of $\rk^\times\bs\A_\rk^\times$ is said to be critical for $\pi$ if so is its archimedean component $\chi_\infty$.
\end{dfnl}

It is known that (see \cite{JST19, JLST24}) all $\sf V$-balanced characters of $\rk_\infty^\times$ are critical for $\pi$. Assume that $\chi_\infty$ is algebraic of weight ${\sf w}^{-1}$.  Then
it is $\sf V$-balanced if and only if 
\be \label{bal-Sha}
 \mu^\iota_{n+1} \leq {\sf w}_\iota \leq \mu^\iota_{n}\quad \text{ for all }\iota\in \CE_\rk.
\ee


Let $\overline{\sf Q}$ be the lower triangular parabolic subgroup of ${\sf G}$ of type $(n, n)$. Let 
\be \label{elet:gamma}
{\sf P}: = \gamma^{-1} \overline{\sf Q} \gamma,\quad \text{where}\quad \gamma: = \left( \begin{bmatrix} 1_n & 1_n \\  0 & 1_n\end{bmatrix}, 1\right){\sf G}_1'. 
\ee
Then ${\sf P}$ and $\dot{\sf G}$ are transversal, and we have that 
\[
\dot{\sf P}:={\sf P}\cap \dot{\sf G} = \set{ ( g, g )\dot{\sf G}_1' | g \in {\sf G}_n }\quad \text{and} \quad \dot{\sf P} = \dot{\sf H}.
\]

Let ${\sf B}$ be the upper  triangular Borel subgroup of ${\sf G}$. Similar to \cite{LLS24}, by using algebraic induction from ${\sf B}$ we realize ${\sf V}$ as a space of algebraic functions on ${\sf G}_{\sf E}$. Let 
${\sf v} \in {\sf V^{\overline{\sf U}}}$
be the unique  algebraic function in ${\sf V}$ which equals 1 on $\overline{\sf U}$, where $\overline{\sf U}$ is the lower triangular maximal unipotent subgroup of ${\sf G}$.

Suppose that $P ={\sf P}(\Q_p)$ so that $N$ is its unipotent radical, and that $\lambda_0 \in  \Hom_{E\otimes\dot\p}(V^\n, E)$ is the unique generator defined over $\sf E$ such that 
\[
\langle \lambda_0, \delta.{\sf v} \rangle =1,\quad \text{where}\quad 
\delta:=\left(\begin{bmatrix} 1_n & -w_n \\ 0 & w_n\end{bmatrix},1\right){\sf G}_1'.
\]
Recall that  ${\sf Z}={\sf G}_1$. 
For every $\sf V$-balanced algebraic character $\sf w$ of $\mathsf Z_{\sf E}$, let $\lambda_{\sf V, w}\in \Hom_{\dot{\sf G}(\Q)}({\sf E_w}\otimes{\sf V}, {\sf E})$ be as in Lemma \ref{lambda01}.  
Then  for all algebraic characters $\chi_\infty$ of $\rk_\infty^\times$ of weight ${\sf w}^{-1}$, 
we have the archimedean modular symbol map
\[
\widehat{\mathcal P}_\infty^{\lambda_{\sf V, w}} : \oH^0(\z_\C,K_{\mathsf Z,\infty}^\circ; \C_{\mathsf w_\infty}\otimes \chi_\infty) \times\left(\Pi_\infty' \otimes \mathrm M 
 (\dot{\mathsf H}(\R))^\vee\otimes \mathrm{O}(\dot{\mathsf G}(\R)/\dot{K}_\infty^\circ)\right) \to \C
\]
defined as in \eqref{AMS}.

Let $\Pi_{0,\infty}$ be the irreducible tempered Casselman-Wallach representation of ${\sf G}(\R)$ whose infinitesimal character equals that of the trivial representation and whose central character equals that of
$({\sf F}_\mu^\vee \boxtimes {\sf w}_\eta^{-1})\otimes\Pi_\infty$. Define the cohomology group $\Pi_{0,\infty}'$ as in \eqref{Piinf'}. Let $\lambda_0' \in \Hom_{\sf H}({\sf V}, {\sf E})$ be the unique generator such that 
\[
\langle \lambda_0', \delta'.{\sf v}\rangle=1,
\quad \text{where}\quad  
\delta':=\left(\begin{bmatrix} 1_n \\ & w_n\end{bmatrix},1\right){\sf G}_1'.
\]
Following 
\cite{JLST24}, with the fixed Shalika functionals and $\lambda_0'$ we have the translation map 
\[
 \jmath_{\mu}: \Pi_{0,\infty}' \to \Pi_{\infty}'
\]
which is a $\rk_\infty^{\times,\natural}$-equivariant isomorphism. 
As a specialization of \eqref{AMS}, we have a map 
\[
 \widehat\CP_\infty: \oH^0(\z_\C,K_{\mathsf Z,\infty}^\circ; \varepsilon_\infty) \times\left(\Pi_{0,\infty}' \otimes \mathrm M 
 (\dot{\mathsf H}(\R))^\vee\otimes \mathrm{O}(\dot{\mathsf G}(\R)/\dot{K}_\infty^\circ)\right) \to \C
\]
for every $\varepsilon_\infty \in \widehat{\rk_\infty^{\times,\natural}}$. Define $\widehat{\CP}^\circ_\infty$ in \eqref{NAMS} to be the map such that for every $\varepsilon_\infty \in \widehat{\rk_\infty^{\times,\natural}}$, $\widehat{\CP}^\circ_\infty(\varepsilon_\infty, \,\cdot\,)$ equals  the composition of 
\begin{eqnarray*}
&&  \Pi_{\infty}' \otimes\mathrm M 
 (\dot{\mathsf H}(\R))^\vee\otimes \mathrm{O}(\dot{\mathsf G}(\R)/\dot{K}_\infty^\circ) \\
& \xrightarrow{(1, \, \jmath_\mu^{-1}\otimes {\rm id}\otimes {\rm id})} & 
\oH^0(\z_\C,K_{\mathsf Z,\infty}^\circ; \varepsilon_\infty) \times \left(\Pi_{0, \infty}' \otimes\mathrm M (\dot{\mathsf H}(\R))^\vee\otimes \mathrm{O}(\dot{\mathsf G}(\R)/\dot{K}_\infty^\circ)\right)  \\
 & \xrightarrow{\widehat\CP_\infty}& \C.
 \end{eqnarray*}

We have the following archimedean nonvanishing hypothesis and period relations, which are proved in \cite{Sun19, JST19, JLST24}.

\begin{thml}\label{thm:JLST}
(a) There is an element 
\[
 \widehat \phi^\circ_\infty\in \Pi_\infty'({\sf E})\otimes  \RD  
 (\dot{\mathsf H}(\R))^\vee\otimes \mathrm{O}(\dot{\mathsf G}(\R)/\dot{K}_\infty^\circ)
\]
such that $\widehat\CP_\infty^\circ(\varepsilon_\infty, \widehat\phi^\circ_\infty)\neq 0$ for all $\varepsilon_\infty\in \widehat{\rk_\infty^{\times,\natural}}$. 

\noindent (b)
Let $\lambda_{\sf V, w}\in \Hom_{\dot{\sf G}(\Q)}({\sf E_w}\otimes{\sf V}, {\sf E})$ be as in Lemma \ref{lambda01}, where $\sf w$ is a  $\sf V$-balanced algebraic character of $\mathsf Z_{\sf E}$. Then 
\[
\widehat\CP^{\lambda_{\sf V, w}}_\infty(1, \cdot) = \Upsilon_{\Pi_\infty'}(\chi_\infty)\cdot \widehat\CP_\infty^\circ({\sf w}_\infty \chi_\infty, \cdot)
\]
for all algebraic characters $\chi_\infty$ of $\rk_\infty^\times$ of weight ${\sf w}^{-1}$, where $\Upsilon_{\Pi_\infty'}(\chi_\infty)$ is in \eqref{MFinfst}.
\end{thml}

Following Definition \ref{df:period},  define the Shalika periods
\be\label{sha-per}
\Omega_{\Pi}(\varepsilon_\infty):=\left(\widehat{\mathcal P}_\infty^\circ(\varepsilon_\infty,  \widehat \phi_\infty^\circ)\right)^{-1},\quad \varepsilon_\infty\in \widehat{\rk_\infty^{\times,\natural}}.
\ee

\subsection{Nearly ordinary refinement and Shalika models} \label{ssec:FJnoc}

Assume that $\Pi_p'\subset\CB_P(\Pi_p)$ is a nearly ordinary refinement of $\Pi_p$ defined over ${\sf E}$. 
As before  $\Pi_p$ is isomorphic to a quotient representation of 
$
\textrm{Ind}^G_P \,(\Pi_p'\cdot\delta_P^{1/2}).
$
Note that $P=\gamma^{-1}\overline{Q} \gamma$, where $\overline{Q}:=\overline{\sf Q}(\Q_p)$. Define a representation 
\be\label{defkappa}
\kappa:= \Pi_p' \circ {\rm Ad}(\gamma^{-1})
\ee
of $\overline Q$. Then $\Pi_p$ is also isomorphic to a quotient representation of 
\[
I(\tilde\kappa):=\Ind^G_{\overline{Q}}(\tilde\kappa),\quad \text{where}\quad \tilde\kappa:=\kappa\otimes\delta_{\overline Q}^{1/2}.
\]
Write $\tilde\kappa = \tilde\kappa_1 \otimes \tilde\kappa_2 \otimes \eta_p$, $\tilde\kappa_i = \otimes_{\wp\mid p}\tilde\kappa_{i,\wp}$ $(i=1,2)$ and $\eta_p = \otimes_{\wp \mid p}\eta_\wp$, where $\tilde\kappa_{i,\wp}$  is an irreducible admissible smooth representation of $\GL_{n, \wp}$. 

Let $\wp\mid p$ be a place of $\rk$. Similarly write
\[
I(\tilde\kappa_\wp): = \Ind^{\RG_\wp}_{\overline{\RQ}_\wp}(\tilde\kappa_\wp), \quad \text{where}\quad \tilde\kappa_\wp:=\tilde\kappa_{1,\wp}\otimes \tilde\kappa_{2,\wp} \otimes\eta_\wp^{-1}
\]
and $\overline{\RQ}_\wp$ is the lower triangular parabolic subgroup
of $\RG_\wp$ of type $(n,n)$. 

\begin{leml} \label{lem:hida}
 Assume that $\tilde\kappa_{1,\wp}$ is isomorphic to a quotient representation of 
\[
\sigma_1\times\cdots\times\sigma_k\qquad (k\geq 1)
\]
(the normalized smooth induction from the corresponding lower triangular parabolic subgroup to $\GL_{n, \wp}$), where $\sigma_i$ ($i=1,2,\dots, k$) is an irreducible admissible smooth representation of $\GL_{n_i, \wp}$ ($n_i>0$ with $n_1+\cdots+n_k=n$). Then 
\[
\frac{1}{n_k}{\rm ex}_p(\chi_{\sigma_k}) + \frac{n_k}{2} \leq -  \mu^\wp_n,
\]
where  $\mu^\wp_i$'s are defined as in Section \ref{ssec:PLEZ}.
\end{leml}

\begin{proof}
Write $\sigma':=\sigma_1\times\cdots\times\sigma_{k-1}$. Applying Hida's inequality (Proposition 
\ref{hidaineq}) for the lower triangular parabolic subgroup of $\GL_{2n, \wp}$ of type $(n-n_k, n_k, n)$ corresponding to the induction
$
\sigma' \times \sigma_k \times \tilde\kappa_{2,\wp}
$
and for the element
\[
\begin{bmatrix} p \cdot 1_{n-n_k}  \\ & 1_{n+n_k}\end{bmatrix},
\]
we obtain that 
\[
\sum^{k-1}_{i=1}{\rm ex}_p(\chi_{\sigma_i})+\frac{(n-n_k)(n+n_k)}{2} \geq -\sum^{n-n_k}_{j=1} \mu^\wp_j.
\]
On the other hand, since $\Pi_p'$ is nearly ordinary we have that 
\[
\sum^{k}_{i=1}{\rm ex}_p(\chi_{\sigma_i})+\frac{n^2}{2} = -\sum^{n}_{j=1} \mu^\wp_j.
\]
It follows that
\[
\frac{1}{n_k}{\rm ex}_p(\chi_{\sigma_k}) + \frac{n_k}{2} \leq -\frac{1}{n_k}\sum^n_{j=n-n_k+1}\mu^\wp_j \leq -  \mu^\wp_n. 
\]
\end{proof}

   \begin{thml}\label{thm:sha} (a) The restriction map induces an isomorphism 
   \[
   \Hom_{\RH_\wp}(I(\tilde\kappa_\wp),\psi_{\RH_\wp})
   \cong \Hom_{\RH_\wp}(I(\tilde\kappa_\wp)^\circ, \psi_{\RH_\wp}),
   \]
   where $I(\tilde\kappa_\wp)^\circ\subset I(\tilde\kappa_\wp)$ is the subspace of the smooth functions  supported in $\overline\RQ_\wp\RH_\wp$.
 \noindent (b) There is a natural isomorphism 
  \[
  \Hom_{\RH_\wp}(I(\tilde\kappa_\wp)^\circ, \psi_{\RH_\wp})\cong \Hom_{\dot\RH_\wp}(\tilde\kappa_\wp, \C).
  \]
In particular $\tilde\kappa_{2,\wp}\cong \tilde\kappa_{1,\wp}^\vee\otimes \eta_\wp$ and the above spaces are one-dimensional. 
    \end{thml}
    
\begin{proof}
   The  space $\CQ_\wp:=\overline{\RQ}_\wp\bs \RG_\wp$ is naturally  a $\RG_\wp$-homegeneous totally connected space and we have that
\[
I(\tilde\kappa_\wp) = \Gamma(\CQ_\wp, \CF)
\]
for a certain $\RG_\wp$-homogeneous vector bundle $\CF$ over $\CQ_\wp$. Here and henceforth $\Gamma$  indicates the space of the Schwartz sections.  The right action of $\RH_\wp$ on $\CQ_\wp$ has a unique open orbit $\mathscr O:=\overline\RQ_\wp\cdot \RH_\wp$. Then
\[
I(\tilde\kappa_\wp)^\circ = \Gamma({\mathscr O}, \CF|_{\mathscr O}) \hookrightarrow I(\tilde\kappa_\wp), 
\]
where the embedding is the extension by zero homomorphism.  

 For every permutation $i_1, i_2,\dots, i_{2n}$ of $1,2,\dots, 2n$, write $w=(i_1, i_2,\dots,i_{2n})$ for the permutation matrix such that $wE_{k, l}w^{-1} = E_{i_k, i_l}$, where $E_{k, l}$ $(k, l=1,2,\dots 2n)$ is the matrix in $\GL_{2n}(\Z)$ whose $(k, l)$-entry is 1 and all other entries are zero.  Let $W_{2n}$ be the group of permutation matrices in $\GL_{2n}(\Z)$, and for $w\in W_{2n}$ write
 \[
   {\mathscr O}_w:= \overline\RQ_\wp \dot w \cdot \RH_\wp \subset \CQ_\wp, \quad \text{where}\quad \dot w := (w, 1){\sf G}_1' \in {\sf G}.
 \]
 
 Let $\RQ_\wp$ be the upper triangular parabolic subgroup of $\RG_\wp$ of type $(n, n)$. Then we have the Bruhat decomposition 
\[
\RG_\wp = \bigcup^n_{j=0} \overline\RQ_\wp \dot\gamma_j \RQ_\wp,  \quad\text{where}\quad \gamma_j: =\left[\begin{smallmatrix} & 1_{n-j} \\& & & 1_j \\ 1_j \\ & & 1_{n-j} \end{smallmatrix}\right].
\]
In the notation of permutations we have that 
\[
\gamma_j = (n+1, \dots, n+j, 1, \dots, n-j, n+j+1, \dots, 2n, n-j+1, \dots, n).
\]
Using the $\dot\RH_\wp$-orbits in the flag variety of 
$\dot\RG_\wp$, it is easy to show that 
\[
\CQ_\wp = {\mathscr O}\cup \bigcup^n_{j=1} \bigcup_{w\in \Omega_j} \mathscr O_w,
\]
where $\Omega_j\subset W_{2n}$ $(j=1,2,\dots, n)$ consists of the elements of the form 
\be \label{Omegaj}
w=(i_1, \dots, i_{2n}, n+j+1,\dots, 2n, n-j+1,\dotsm n)
\ee
such that $n+1,\dots, n+j$ and $1,\dots, n-j$  are subsequences of $i_1, \dots, i_n$. To prove (a), it suffices to show that 
\[
\Hom_{\RH_\wp}(\Gamma(\mathscr O_w, \CF|_{\mathscr O_w}), \psi_{\RH_\wp})=\{0\}
\]
for all $j=1,2\dots, n$ and $w\in \Omega_j$. 

Let $w$ be as in \eqref{Omegaj}. Write $\overline\RQ_{w,\wp}:= \overline\RQ_\wp \cap \dot w \RH_\wp \dot w^{-1}$, and define a character 
\[
\psi_{\RH_\wp}^w: \overline\RQ_{w,\wp}\to\C^\times,\quad g\mapsto \psi_{\RH_\wp}(\dot w^{-1}g\dot w).
 \]
 By Frobenius reciprocity and noting that $\RH_\wp$ is unimodular, to prove (a) we only need to show that
 \be \label{smallorb}
\Hom_{\overline\RQ_{w,\wp}}(\tilde\kappa_\wp\otimes \delta_{\overline\RQ_\wp}^{1/2} \otimes \delta_{\overline\RQ_{w,\wp}}^{-1}, \psi_{\RH_\wp}^w)=\{0\}.
 \ee
For the contrary, suppose that the hom space in \eqref{smallorb} is nonzero. Let $\RN_{\overline \RQ,\wp}$ be the unipotent radical of $\overline\RQ_\wp$ and $\RL_{\overline\RQ,\wp}:=\overline \RQ_\wp/ \RN_{\overline \RQ,\wp}$. Then $\psi^w_{\RH_\wp}$ is trivial on
$\RN_{\overline\RQ,\wp}\cap \overline\RQ_{w,\wp}$, which implies that $j\leq n-j$ and $w$ is of the form 
\[
w = (i_1, \dots, i_{n-j}, n-2j+1,\dots, n-j, n+j+1,\dots, 2n, n-j+1,\dots, n).
\]
From this it is easy to check that $\overline\RQ_{w,\wp}$ contains the torus 
\[
\RT_{j,\wp}:=\left\{\left( \begin{bmatrix} 1_{n-2j} \\ & a\cdot 1_{2j} \\ & &  1_n \end{bmatrix}, a^j\right)\RG_{1,\wp}' \,:\, a\in \rk_\wp^\times\right\},
 \]
 and the image of $\ker(\psi^w_{\RH,\wp})\subset \overline\RQ_{w,\wp}$ in $\oL_{\overline\RQ,\wp}$ contains the unipotent radical of the upper triangular maximal parabolic subgroup of  $\RL_{\overline\RQ,\wp}$ of type $(n-2j,2j, n)$.
 By the second adjointness theorem, $\tilde\kappa_{1,\wp}$ is isomorphic to a quotient representation 
$
\sigma_1 \times \sigma_2
$
as in Lemma \ref{lem:hida}, where $\sigma_1$ and $\sigma_2$ are irreducible admissible smooth representations of $\GL_{n-2j,\wp}$ and 
$\GL_{2j, \wp}$ respectively such that 
\[
\Hom_{\RT_{j,\wp}}\left( \left(\sigma_1\abs{\det(\cdot)}_\wp^j\otimes\sigma_2\abs{\det(\cdot)}_\wp^{j-n/2} \otimes \tilde\kappa_{2,\wp}\otimes\eta_\wp^{-1}\right)\otimes \delta^{1/2}_{\overline\RQ_\wp}\otimes \delta_{\overline\RQ_{w,\wp}}^{-1}, \C\right) \neq \{0\}.
\]
Then a direct calculation shows that 
$
\chi_{\sigma_2} =  \eta_\wp^j.
$
By \cite[Proposition 2.17]{JST19}, 
\[
\mu^\iota_1+\mu^\iota_{2n}=\mu^\iota_2+\mu^\iota_{2n-1}=\cdots = \mu^\iota_n+\mu^\iota_{n+1} \quad \text{for all $\iota\in \CE_\rk$}.
\]
It follows  that 
\[
\frac{1}{2j} {\rm ex}_p(\chi_{\sigma_2}) =  \frac{1}{2}{\rm ex}(\eta_\wp) = \frac{1}{2n} {\rm ex}(\chi_{\pi_\wp})= -\frac{1}{2n}\sum^{2n}_{i=1}\mu^\wp_i 
=-\frac{\mu^\wp_n+\mu^\wp_{n+1}}{2}\geq -\mu^\wp_n,
\]
which contradicts Lemma \ref{lem:hida}.

This proves that \eqref{smallorb} holds, hence finishes the proof of (a). Noting that $\overline\RQ_\wp \cap\RH_\wp= \dot\RH_\wp$,  (b) is obvious by Frobenius reciprocity. 
\end{proof}

\subsection{Open orbit integrals and normalized refined period} \label{ssec:FJopen}
To evaluate the modifying factor at $p$, similar to the method in the Rankin-Selberg case where we have applied Theorem \ref{thm:llss}, the idea is to
compare the Friedberg-Jacquet integral with an open orbit integral. We first formulate and prove a general result.

By Theorem \ref{thm:sha}, $\tilde\kappa_{2,\wp}\cong \tilde\kappa_{1,\wp}^\vee\otimes \eta_\wp$ and  $\Hom_{\RH_\wp}(I(\tilde\kappa_\wp), \psi_{\RH_\wp})$ is one-dimensional with a generator $\lambda_{\RH_\wp}'$ such that
\[
\langle \lambda_{\RH_\wp}', f\rangle = \int_{\dot \RH_\wp\bs \RH_\wp} \langle \lambda_{\tilde\kappa_\wp}, f(h)\rangle\psi_{\RH_\wp}^{-1}(h)\od\!h
\]
for all $f\in I(\tilde\kappa_\wp)^\circ$, where
$\lambda_{\tilde\kappa_\wp}$ is a generator of  $\Hom_{\dot\RH_\wp}(\tilde\kappa_\wp,\C)$ and  $\od\!h \in \RM(\dot\RH_\wp\bs\RH_\wp)$ is given by the product of self-dual Haar measures on $\rk_\wp$ with respect to $\psi_\wp$ ($\dot\RH_\wp\bs\RH_\wp$ is identified with the unipotent radical of $\RH_\wp$).

Then we have the unnormalized Friedberg-Jacquet integral 
\[
\begin{array}{rcl}
\CP_{\wp}: \CX_\wp\times \left(I(\tilde\kappa_\wp)\otimes \RM(\dot\RH_\wp\bs \dot\RG_\wp)\right) & \rightarrow &\C\cup\{\infty\},\\
  (\chi_\wp', f\otimes  \tau) &\mapsto & \int_{ \dot\RH_\wp\bs \dot\RG_\wp}\chi_\wp'(\jmath(g)) \langle \lambda_{\RH_\wp}', g.f\rangle \od\!\tau(g)
\end{array}
\]
defined by meromorphic continuation of absolutely convergent integrals. Define
\[
\begin{array}{rcl}
\Lambda_{\wp}: \CX_\wp\times \left(I(\tilde\kappa_\wp)\otimes \RM(\dot\RH_\wp\bs \dot\RG_\wp)\right) & \rightarrow &\C\cup\{\infty\},\\
  (\chi_\wp', f\otimes  \tau) &\mapsto &  \gamma\left(\frac{1}{2}, \tilde\kappa_{1,\wp} \otimes \chi_\wp', \psi_\wp\right) \cdot \CP_\wp(\chi_\wp', f\otimes\tau),
\end{array}
\]
which is meromorphic in $\chi_\wp'\in\CX_\wp$. 

We have the following comparison between Friedberg-Jacquet integral and an open orbit integral, the proof of which uses the functional equation of Godement-Jacquet integrals in \cite{GJ72}. 

\begin{thml} \label{thm:FJ-int} Let
\[
 I(\tilde\kappa_\wp)^\sharp:= \set{f\in I(\tilde\kappa_\wp) \,:\, {\rm supp}(f)\subset \overline\RQ_\wp \gamma \dot\RG_\wp}.
\]
 Then for every $\chi_\wp' \in \CX_\wp$ and $f\otimes\tau \in I(\tilde\kappa_\wp)^\sharp\otimes\RM(\dot\RH_\wp\bs \dot\RG_\wp)$, 
\be \label{eq:FJZ}
\Lambda_\wp(\chi_\wp', f\otimes\tau) = \int_{\dot\RH_\wp\bs \dot\RG_\wp} \chi_\wp'(\jmath(g)) \langle \lambda_{\tilde\kappa_\wp},  f(\gamma g)\rangle\od\!\tau(g).
\ee
Moreover the integral \eqref{eq:FJZ} is algebraic in $\chi_\wp' \in \CX_\wp$.
\end{thml}

\begin{proof} Note that $I(\tilde\kappa_\wp)^\sharp\subset I(\tilde\kappa_\wp)^\circ$
since $\overline\RQ_\wp\gamma\dot\RG_\wp\subset \overline\RQ_\wp\RH_\wp$. 
Assume that $f\in I(\tilde\kappa_\wp)^\sharp$. Then 
\[
 \langle \lambda'_{\RH_\wp}, f \rangle = \int_{x\in \RM_{n, \wp}}\left\langle \lambda_{\tilde\kappa_\wp}, f\left(\begin{bmatrix} 1_n & x \\ 0 & 1_n\end{bmatrix}\right)\right \rangle \psi_\wp^{-1}(\tr \, x) \od\! x,
\]
where  $\RM_{n,\wp}:=\RM_n(\rk_\wp)$ and $\od\!x$ is the product of self-dual Haar measures on $\rk_\wp$ with respect to $\psi_\wp$. Here and below, for convenience we use the same notations for  elements of 
$\RS_\wp$ and $\GL_{n,\wp}\times \GL_{n,\wp}$ and their images in $\RH_\wp$ and $\dot\RG_\wp$ respectively, which should not cause any confusion.

Since ${\rm supp}(g.f)\subset \overline\RQ_\wp\RH_\wp$ for all $g\in \dot\RG_\wp$, when $\Re(\chi_\wp')$ is sufficiently large (here $\Re(\chi_\wp')$ is the real number such that $\abs{\chi_\wp'(a)}=\abs{a}_\wp^{\Re(\chi_\wp')}$, $a\in \rk_\wp^\times$) we have that 
\begin{eqnarray*}
 & &\CP_\wp(\chi_\wp', f\otimes \tau)   \\
 &= & \int_{\GL_{n,\wp}} \int_{\RM_{n,\wp}} \left \langle \lambda_{\tilde\kappa_\wp},  f  \left(  \begin{bmatrix} 1_n &  x \\ 0 & 1_n\end{bmatrix} \begin{bmatrix} g & 0 \\ 0 & 1_n\end{bmatrix} \right) \right\rangle \psi_\wp^{-1}(\tr\, x)\od\!x  \, \chi_\wp'(\det g) \od\! \tau(g) \\
 &\stackrel{x\mapsto gx}{=} &  \int_{\GL_{n,\wp}}  \int_{\RM_{n,\wp}}  \left\langle \lambda_{\tilde\kappa_\wp}, \tilde\kappa_{1,\wp}(g). f\left(\begin{bmatrix} 1_n & x \\ 0 & 1_n\end{bmatrix}\right) \right\rangle\psi_\wp^{-1}(\tr (gx))\od\!x \, \chi_\wp'(\det g)\abs{\det g}_\wp^n\od\!\tau(g).
\end{eqnarray*}
Here and below we use the identification $\RM(\dot\RH_\wp\bs\dot\RG_\wp)= \RM(\GL_{n,\wp})$.
The support condition on $f$ implies that the function
\be \label{FUN}
(g, x) \mapsto   \left\langle \lambda_{\tilde\kappa_\wp}, \tilde\kappa_{1,\wp}(g). f\left(\begin{bmatrix} 1_n & x \\ 0 & 1_n\end{bmatrix}\right) \right\rangle,\quad (g,x)\in \GL_{n,\wp}\times \RM_{n,\wp}, 
\ee
lies in the space ${\rm MC}(\tilde\kappa_{1,\wp})\otimes \CS(\RM_{n,\wp})$, where ${\rm MC}(\tilde\kappa_{1,\wp})$ denotes the space spanned by the matrix coefficients of $\tilde\kappa_{1,\wp}$. More precisely,
\[
{\rm MC}(\tilde\kappa_{1,\wp}):={\rm Span}\{ \varphi_{u, u^\vee} \,:\, u\in \tilde\kappa_{1,\wp}, u^\vee\in \tilde\kappa_{1,\wp}^\vee \},
\]
where $\varphi_{u, u^\vee}(g):= \langle \tilde\kappa_{1,\wp}(g)u, u^\vee\rangle$, $g\in \GL_{n,\wp}$.
  Then the above inner integral is the Fourier transform of the function \eqref{FUN}  in the variable $x$ using $\psi_\wp^{-1}$, evaluated at $(g, g)\in \GL_{n,\wp}\times \RM_{n,\wp}$.  Hence the inner integral, as a function in $g\in \GL_{n,\wp}$, is the pull-back through the diagonal embedding $\GL_{n,\wp}\rightarrow \GL_{n,\wp}\times \RM_{n,\wp}$ of a function in ${\rm MC}(\tilde\kappa_{1,\wp})\otimes \CS(\RM_{n,\wp})$.

Thus $\CP_\wp(\chi_\wp',f\otimes\tau)$ is a Godement-Jacquet integral (\cite{GJ72}) for the representation $\tilde\kappa_{1,\wp}\otimes\chi_\wp'$ of $\GL_{n,\wp}$. By the functional equation of Godement-Jacquet integrals  and the uniqueness of meromorphic continuation, for $-\Re(\chi_\wp')$ sufficiently large we have that
\[
\begin{aligned}
&   \, \Lambda_\wp(\chi_\wp', f\otimes\tau)\\
 = & \,  \gamma(\frac{1}{2}, \tilde\kappa_{1,\wp}\otimes\chi_\wp', \psi_\wp) \cdot \CP_\wp(\chi_\wp',f\otimes\tau)\\
  = &  \int_{\GL_{n,\wp}}  \left\langle \lambda_{\tilde\kappa_\wp}, \tilde\kappa_{1,\wp}(g^{-1}). f\left(\begin{bmatrix} 1_n & g \\ 0 & 1_n\end{bmatrix}\right) \right\rangle \chi_\wp'^{-1}(\det g)\od\!\tau(g) \\
  = &  \int_{\GL_{n,\wp}}  \left\langle \lambda_{\tilde\kappa_\wp},  f\left(\begin{bmatrix} g^{-1} & 1_n \\ 0 & 1_n\end{bmatrix}\right) \right\rangle \chi_\wp'^{-1}(\det g)\od\!\tau(g) \\
= & \int_{\GL_{n,\wp}}  \left\langle \lambda_{\tilde\kappa_\wp},  f\left(\begin{bmatrix} g & 1_n \\ 0 & 1_n\end{bmatrix}\right) \right\rangle \chi_\wp'(\det g)\od\!\tau(g)\\
= & \int_{\dot\RH_\wp\bs \dot\RG_\wp} \langle \lambda_{\tilde\kappa_\wp}, f(\gamma g)\rangle  \chi_\wp'(\jmath(g))  \od\!\tau(g).
\end{aligned}
\] 
This proves that \eqref{eq:FJZ} holds for $-\Re(\chi_\wp')$ sufficiently large, hence for all $\chi_\wp'\in\CX_\wp$ by the uniqueness of meromorphic continuation. 
\end{proof}

By tensor products, the maps  $\CP_\wp$ and $\Lambda_\wp$ for  $\wp\mid p$ yield two maps
 \[
\CP_p,\, \Lambda_p:   \CX_p\times \left({I(\tilde \kappa)}\otimes \RM(\dot H\bs \dot G)\right)\rightarrow \C\cup\{\infty\},
\]
which naturally extend to maps 
\[\CP_p,\, \Lambda_p:   \CX_p\times \left(\widehat{I(\tilde \kappa)}_{\p\mathrm{-sm}}\otimes \RM(\dot H\bs \dot G)\right)\rightarrow \C\cup\{\infty\}.\]

Recall the Shalika functional $\lambda_{\RH_\wp}$ on $\Pi_\wp$.
Write 
\be\label{surjhom0sha}
\xi_p: I(\tilde\kappa)\rightarrow \Pi_p=\otimes_{\wp\mid p} \Pi_\wp
\ee
for the $G$-homomorphism 
whose composition with $\otimes_{\wp\mid p}\lambda_{\RH_\wp}$ equals $\otimes_{\wp\mid p}\lambda_{\RH_\wp}'$. It is surjective and naturally extends to a surjective $G$-homomorphism 
\be\label{surjhomsha}
\xi_p: \widehat{I(\tilde\kappa)}\twoheadrightarrow \widehat{\Pi_p}. 
\ee
Note that $\widehat{I(\tilde\kappa)}$ is identified with a space of generalized functions on $G$ with values in the formal completion of 
$\kappa$.
Denote by $\widehat{I(\tilde\kappa)}_\gamma\subset \widehat{I(\tilde\kappa)}_{\p\mathrm{-sm}}$
the subspace of the generalized functions  supported in  $\overline{ Q} \gamma$. Then  the map $\xi_p$ in \eqref{surjhomsha} restricts to a $P$-isomorphism
\[
  \xi_p: \widehat{I(\tilde\kappa)}_\gamma\xrightarrow{\sim}\Pi_p'. 
\]

Now we define the normalized refined period map to be the composition 
\[
   \widehat\CP_p^\circ: \CX_p \times \left(\Pi_p'\otimes \RM(\dot H \bs \dot G)\right) \xrightarrow{\xi_p^{-1}} \CX_p \times \left(\widehat{I(\tilde\kappa)}_\gamma\otimes \RM(\dot H \bs \dot G)\right)\xrightarrow{\Lambda_p}\C.
\]

\subsection{Rational test vectors and modifying factors at $p$}  \label{ssec13.7}

In what follows we define a rational test vector $\widehat\phi_p^\circ\in \Pi_p'({\sf E})\otimes \RD(\dot H \bs \dot G)$.  
Recall the $\Q(\Pi_\wp)$-form of  $\Pi_\wp$ given by \eqref{autcSha}. By tensor product, we have a $\Q(\Pi_p)$-form  of $\Pi_{p}$. Denote by $\kappa({\sf E})$ the ${\sf E}$-form of $\kappa$ induced from $\Pi_p'({\sf E})$ via ${\rm Ad}(\gamma^{-1})$ (see \eqref{defkappa}).  Define an $\Aut(\C/ {\sf E})$-action on $I(\tilde\kappa)$ by 
\[
{}^\sigma f(g) = \chi_{\kappa_{1}}(t_{\sigma, p}^{-1})\cdot \sigma(f(g)),\quad \sigma\in \Aut(\C/{\sf E}), \  f\in I(\tilde\kappa), \ g\in G.
\]
Then by \eqref{gauss} we have that 
\[
  I(\tilde\kappa)^{\Aut(\C/ {\sf E})}=\{f\in I(\tilde\kappa)\,:\, \textrm{$\prod_{\wp\mid p}\mathscr{G}_{\psi_\wp}(\chi_{\kappa_{1,\wp}})^{-1}\cdot f$ is 
$\kappa({\sf E})$-valued}
  \},
\]
which is an ${\sf E}$-form  of $I(\tilde\kappa_\wp)$.

The same proof as that of Proposition \ref{prat} proves the following result.

\begin{prpl}   \label{pratSha}
The surjective homomorphism $c_p^{-n^2} \xi_p: I(\tilde\kappa) \twoheadrightarrow \Pi_p$ is ${\sf E}$-rational, where $\xi_p$ is in \eqref{surjhom0sha}.
\end{prpl}

Let $\widehat\phi_p$ be an element  of $\Pi_p'\otimes \RM(\dot H \bs \dot G)$
such that $\widehat\CP_p^\circ(\,\cdot\, ,\widehat\phi_p)$ equals the constant function $1$ on $\CX_p$.  Write $\widehat\phi_p = \phi_p' \otimes  \tau$ such that $\phi_p'\in \Pi_p'$
and $\tau\in \RD(\dot H\bs \dot G)$.  By Proposition \ref{pratSha}, 
\[
\Omega_{\Pi_p'}^{-1}\cdot \phi_p'\in \Pi_p'({\sf E}),
\]
where $\Omega_{\Pi_p'}$ is in \eqref{omegapi'st}.
Define
\be \label{ptestst}
\widehat\phi_p^\circ: = \Omega_{\Pi_p'}^{-1}\cdot \widehat\phi_p\in 
\Pi_p'({\sf E})\otimes \RD(\dot H \bs \dot G).
\ee
Then 
\[
\widehat\CP_p^\circ(\chi_p', \widehat\phi_p^\circ) = \Omega_{\Pi_p'}^{-1}
\quad \text{for all }\chi_p' \in \CX_p.
\]
From \eqref{MFpst} it is clear that
\[
\Upsilon_{\Pi_p'}(\chi_p') =  \frac{1}{ \prod_{\wp\mid p}\left(\oL(\frac{1}{2}, \pi_\wp\otimes\chi_\wp') \cdot\gamma(\frac{1}{2}, \tilde\kappa_{1,\wp}\otimes\chi_\wp', \psi_\wp)\right)},\quad \chi_p'=\otimes_{\wp\mid p}\chi_\wp'\in\CX_p.
\]
Then the following is immediate from Theorem \ref{thm:FJ-int}.

\begin{prpl} \label{prp:stSha}
It holds that 
\[
\CP^\circ_p(\chi_p', \widehat\phi_p^\circ)= \Upsilon_{\Pi_p'}(\chi_p')\cdot  \Omega_{\Pi_p'}^{-1}\quad \text{for all } \chi_p'\in \CX_p.
\]
Consequently $\Upsilon_{\Pi_p'}$ is algebraic on $\CX_p$.
\end{prpl}

By Proposition \ref{prp:stSha}, $\Upsilon_{\Pi_p'}(\chi_p)$ is the modifying factor at $p$ as in Definition \ref{df:MFp}. It is again consistent with the conjecture given by Coates and Perrin-Riou in \cite{CPR89, Co89}.

\subsection{$p$-adic L-functions and exceptional zeros}
Now we  construct standard $p$-adic L-function $\mathscr L_\Pi$ by combining the previous results. Suppose that $Z_0$ is trivial and recall  $\varepsilon = \otimes_{v\nmid\infty p}\varepsilon_v$ in \eqref{ram}.

\begin{itemize}
    \item For  $v\nmid \infty p$, let  
$\phi^\circ_v\in \Pi_v({\sf E})\otimes \RD(\dot\RH_v \bs \dot\RG_v)$ be as in Section \ref{ssec:FJI}   such that 
\[
\CP^\circ_v(\chi_v', \phi^\circ_v) = \frac{1}{\mathscr G_{\psi_v}(\chi_v')^n},\quad \chi_v'\in \CX(\varepsilon_v).
\]
\item  Let $\lambda_0 \in  \Hom_E(V^\n, E)$ and $\widehat \phi^\circ_\infty\in \Pi_\infty'({\sf E})\otimes  \RD  
 (\dot{\mathsf H}(\R))^\vee\otimes \mathrm{O}(\dot{\mathsf G}(\R)/\dot{K}_\infty^\circ)$ be as in Section \ref{ssec:AMSha} such that 
 \[
 \widehat\CP_\infty^{\lambda_{\sf V, w}}(1,\widehat\phi^\circ_\infty)  = \Upsilon_{\Pi_\infty'}(\chi_\infty) \cdot \widehat\CP_\infty^\circ({\sf w}_\infty\chi_\infty, \widehat\phi^\circ_\infty)
  = \frac{\Upsilon_{\Pi_\infty'}(\chi_\infty)}{\Omega_\Pi({\sf w}_\infty\chi_\infty)}
 \]
for all ${\sf V}$-balanced characters ${\sf w}$ and all algebraic characters $\chi_\infty$ of weight ${\sf w}^{-1}$, where  $\lambda_{\sf V, w}\in \Hom_{\dot{\sf G}_{\sf E}}(\sf E_{\sf w}\otimes {\sf V}, {\sf E})$ is the generator such that $ \lambda_{\sf V, w}|_{V^\n}=\lambda_0 $.
 \item Let $\widehat\phi_p^\circ\in  \Pi_p'({\sf E})\otimes \RD(\dot H \bs \dot G)$ be as in \eqref{ptestst}.
\end{itemize}
Using all the local test vectors, let
\[
\widehat\phi^\circ:= \widehat\phi_\infty^\circ \otimes \widehat\phi_p^\circ \otimes (\otimes_{v\nmid\infty p} \phi_v^\circ)
\in \mathscr{H}\otimes \RD(\dot{\sf G}, \dot\p),
\]
and following Definition \ref{df:padicL} let
\[
\mathscr{L}_\Pi:=   \mathcal L_{\varepsilon \otimes  \mathscr H } :=(\mathcal L_{\lambda_0\otimes \widehat \phi^\circ})|_{\con({\sf Z}, E)(\varepsilon)}.
\]

We restate Theorem \ref{padicLst0} below, which is now an immediate consequence of the above results and \eqref{pintp}.

\begin{thml} \label{padicLst}
Let the notations and assumptions be as above. Then 
\[
\mathscr L_\Pi(\chi^\flat)  = \frac{\Upsilon_{\Pi_\infty'}(\chi_\infty) \cdot \Upsilon_{\Pi_p'}(\chi_p)\cdot \oL(\frac{1}{2}, \pi\otimes \chi)}{\mathscr G_\psi(\chi^{(p)})^n \cdot \Omega_{\Pi_p'}\cdot \Omega_\Pi({\sf w}_\infty\chi_\infty)}
\]
  for all ${\sf V}$-balanced Hecke characters $\chi=\otimes_\ell \chi_\ell  \in \CX(\varepsilon)$, where ${\sf w}$ is the inverse of the weight of $\chi$.
\end{thml}

We end this section by applying Theorem \ref{padicLst} to determine the exceptional zeros of $\mathscr L_\Pi$. The strategy extends that of Proposition \ref{excep0rs} for the Borel case.

\begin{prpl} \label{excep0st}
Under the assumptions in  Theorem \ref{padicLst}, $\chi^\flat$ is an exceptional zero of $\mathscr L_\Pi$ if and only if there exists  $\wp\mid p$ such that 
\[
\oL\left(s, \pi_\wp\otimes\chi_\wp\right)\cdot \oL\left(s+\frac{1}{2}, \kappa_{1,\wp}^\vee\otimes\chi_\wp^{-1}\right)
\]
has a pole at $s=\frac{1}{2}$,
where $\kappa_{1,\wp}$ is as in \eqref{MFpst}.
\end{prpl}

\begin{proof} By \eqref{MFpst}, it suffices to show that 
\[
\oL\left(\frac{1}{2}, \tilde\kappa_{1,\wp}\otimes\chi_\wp\right)\quad \text{is finite for all $\wp\mid p$}.
\quad   
\]
Since $\tilde\kappa_{1,\wp}$ is irreducible generic, it is isomorphic to a module of the form
\[
\sigma_1\times\cdots\times\sigma_k
\]
as in Lemma \ref{lem:hida}, where $\sigma_i$ ($i=1,2,\dots, k$) is an irreducible admissible smooth representation of $\GL_{n_i, \wp}$ whose restriction to $\{g\in \GL_{n_i, \wp}\,:\,\det g\in \CO_\wp\}$ is square-integrable.  
Let $i=1,2,\dots, k$. 
It follows from Lemma \ref{lem:hida} that 
\be \label{maxexp}
\frac{1}{n_i}{\rm }{\rm ex}_p(\chi_{\sigma_i}) + \frac{n_i}{2} \leq -\mu^\wp_n.
\ee
If 
$\sigma_i\otimes \chi_\wp$ is not an unramified twist of the Steinberg representation, then $\oL(s, \sigma_i \otimes \chi_\wp)$ is the constant function $1$.
Otherwise, write 
\[
\sigma_i\otimes\chi_\wp =  {\rm St}\otimes \xi_i \quad (\text{${\rm St}$ is the Steinberg representation)},
\]
where $\xi_i$ is an unramified character of $\rk_\wp^\times$. Then
$\chi_{\sigma_i}  \chi_\wp^{n_i} = \xi_i^{n_i}$ and 
\[
\oL(s, \sigma_i\otimes \chi_\wp) = \oL\left(s+\frac{n_i-1}{2}, \xi_i\right)
\]
(\cf \cite[(0.1)]{JPSS83}). By \eqref{bal-Sha} and \eqref{maxexp},
\[
{\rm ex}_p(\xi_i) -\frac{n_i}{2} ={\sf w}_\wp+\frac{1}{n_i}{\rm ex}_p(\chi_{\sigma_i})-\frac{n_i}{2} \leq {\sf w}_\wp - \mu_n^\wp - n_i \leq -n_i <0,
\]
where ${\sf w}_\wp$ is as in the proof of Lemma \ref{lem:llss}. Hence $\oL(\frac{1}{2}, \sigma_i\otimes\chi_\wp)$ is finite, which finishes the proof.
\end{proof}

\section*{Acknowledgement}

We thank Yifeng Liu for several helpful discussions and for informing us of the papers \cite{Liu23, Liu24, LTX24}. We thank
Daniel Disegni and Wei Zhang for reading an earlier version of this paper and sending their paper \cite{DZ24}. 
We thank Dihua Jiang and Fangyang Tian for discussions related to Shalika models, and thank Liang Xiao for comments on Emerton's completed cohomology. We also thank Chris Williams for corrections concerning critical characters in an earlier version of this paper and for clarifying the results in \cite{BDGJW22, Wi23}.

D. Liu was supported in part by National Key R \& D Program of China (No. 2022YFA1005300)  and Zhejiang Provincial Natural Science Foundation of China (No. LZ22A010006).  B. Sun was supported in part by  National Key R \& D Program of China (No. 2022YFA1005300 and 2020YFA0712600) and New Cornerstone Investigator Program.

\end{document}